\newtheorem{theorem}{Theorem}[section]
\newtheorem{proposition}[theorem]{Proposition}
\newtheorem{lemma}[theorem]{Lemma}
\newtheorem{claim}[theorem]{Claim}
\newtheorem{result}[theorem]{Result}
\newtheorem{rem}[theorem]{Remark}
\def\cC{\mathcal C}
\def\cE{\mathcal E}
\def\cH{\mathcal H}
\def\cQ{\mathcal Q}
\def\cX{\mathcal X}
\def\cY{\mathcal Y}
\def\cZ{\mathcal Z}
\def\K{\mathbb{K}}
\def\PG{{\rm{PG}}}
\def\ord{\mbox{\rm ord}}
\def\fq{{\mathbb F}_q}
\def\gg{\mathfrak{g}}
\newcommand{\PSL}{\mbox{ PSL}}
\newcommand{\PGL}{\mbox{ PGL}}
\newcommand{\PSU}{\mbox{ PSU}}
\newcommand{\aut}{\mbox{\rm Aut}}
\newcommand{\ha}{{\textstyle\frac{1}{2}}}
\newcommand{\thi}{\textstyle\frac{1}{3}}
\newcommand{\qa}{\textstyle\frac{1}{4}}
\title{Algebraic curves with many automorphisms}
\date{}
\author{Massimo Giulietti and G\'abor Korchm\'aros}
\begin{document}
\maketitle


    \begin{abstract} Let $\cX$ be a (projective, geometrically irreducible, non-singular) algebraic curve of genus $\gg \ge 2$ defined over an algebraically closed field $\K$ of odd characteristic $p$. Let $\aut(\cX)$ be the group of all automorphisms of $\cX$ which fix $\K$ element-wise. It is known that if $|\aut(\cX)|\geq 8\gg^3$ then the $p$-rank (equivalently, the Hasse-Witt invariant) of $\cX$ is zero. This raises the problem of determining the (minimum-value) function $f(\gg)$ such that whenever $|\aut(\cX)|\geq f(\gg)$ then $\cX$ has zero $p$-rank. For {\em{even}} $\gg$ we prove that $f(\gg)\leq 900 \gg^2$. The {\em{odd}} genus case appears to be much more difficult although, for any genus $\gg\geq 2$, if $\aut(\cX)$ has a solvable subgroup $G$ such that $|G|>252 \gg^2$ then $\cX$ has zero $p$-rank and $G$ fixes a point of $\cX$. Our proofs use the Hurwitz genus formula and the Deuring Shafarevich formula together with a few deep results from 
    Group theory characterizing finite simple groups whose Sylow $2$-subgroups have a cyclic subgroup of index $2$. We also point out some connections with the Abhyankar conjecture  and the Katz-Gabber covers.
     \end{abstract}

    \section{Introduction}
 Numerous deep results on automorphism groups of algebraic curves, defined over a groundfield of characteristic zero, have been achieved in the passed 125 years following up the seminal work by Hurwitz who was the first to prove that complex curves, other than the rational and elliptic ones, can only have a finite number of automorphisms. Later, Hurwitz's result was given a characteristic--free proof and the fundamental treatment of the theory of automorphisms from the view point of Galois coverings of curves was expanded to include groundfields of positive characteristic. Nevertheless, the theory of automorphism groups of curves present several different features in positive characteristic. This is especially apparent when the curve has many automorphisms, since the classical Hurwitz bound $|G|\leq 84(\gg-1)$ on the order of the automorphism group $\aut(\cX)$ of a curve with genus $\gg\geq 2$, may fail in positive characteristic when $|G|$ is divisible by the characteristic of the groundfield $\K$. As a matter of fact, curves in positive characteristic may happen to have much larger $\K$-automorphism group compared to their genus. From previous works by Roquette \cite{roquette1970}, Stichtenoth \cite{stichtenoth1973I,stichtenoth1973II}, Henn \cite{henn1978}, and Hansen \cite{hansen1993}, we know infinite families of curves with $|\aut(\cX)|\approx c\gg^3$ and it is plausible that there exist many others with $|\aut(\cX)|\approx c\gg^2$. Although curves with large automorphism groups may have rather different features, they seem to share a common property, namely their $p$-rank is equal to zero. This raises the problem of determining a function $f(\gg)$ such that if a curve $\cX$ defined over a field of  characteristic $p>0$ has an automorphism group $G$ with $|G|\geq f(\gg)$ then $\cX$ has zero $p$-rank. \textcolor{black}{An upper} bound on $f(\gg)$ is $8\gg^3$, as a corollary to Henn's classification \cite{henn1978}, see also \cite[Theorem 11.127]{hirschfeld-korchmaros-torres2008}. Our results stated in Theorems \ref{princA} and \ref{princC} show for {\bf odd} $p$ that if either $\aut(\cX)$ is solvable or $\gg$ is even then $f(\gg)>c\gg^2$ for some constant $c$ independent of $\gg$ and $p$.

For a subgroup $G$ of $\aut(\cX)$, a related question is to determine $p(G)$ and $G/p(G)$ where $p(G)$ is the (normal) subgroup of $G$ generated by its $p$-subgroups.
The necessary part of the \emph{Abhyankar conjecture for affine curves} states that if $G$ has exactly $r$ short orbits then the factor group $G/p(G)$ has a generating set of size at most $2\bar{\gg}+r-1$ where $\bar{\gg}$ is the genus of the quotient curve $\bar{\cX}=\cX/G$. If $\cX/G$ is rational (in particular when $G$ is large), then $G=p(G)$ for $r=1$, and $G/p(G)$ is cyclic for $r=2$. The Abhyankar conjecture was stated in \cite{Ab} and proven by  Harbater \cite{Ha}. Recent surveys on this conjecture are found in \cite{pries2011} and \cite{hops}. Theorems
\ref{princA} and \ref{princC} provide a complete description of $p(G)$ when $|G|>900 \gg^2$ and either $G$ is solvable or $\gg$ is even.

For a subgroup $G$ of $\aut(\cX)$ the associated Galois cover is a Katz-Gabber $G$-cover if $\cX|G$ is rational, $G$ fixes a point $P\in\cX$, it has at most one more short orbit $o$ and $o$ is tame. Such covers are related with the Nottingham groups consisting of power series over a field $\mathbb K$ of positive characteristic with substitution as group operation, and are useful to understand how the Nottingham groups can be realized as automorphisms of a curve over $\mathbb K$; see \cite{bcps}.

\begin{theorem}
\label{princA}
Let $G$ be a solvable automorphism group of an algebraic curve $\cX$ of genus $\gg \geq 2.$ If
$$
|G|> \textstyle\frac{p}{p-2}84\gg^2
$$ then $\cX$ has zero $p$-rank. Furthermore, $G$ fixes a point of $\cX$ and $G=p(G)\rtimes C$ where $p(G)$ is a Sylow $p$-subgroup of $G$ and $C$ is a cyclic group. In particular, the cover $\cX|(\cX/G)$ is a Katz-Gabber $G$-cover.
\end{theorem}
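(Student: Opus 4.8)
The plan is to play the Riemann--Hurwitz and Deuring--Shafarevich formulas against the solvability of $G$. Write $\bar\gg$ for the genus of $\cX/G$. First I would use the Hurwitz genus formula to show $\bar\gg=0$: if $\bar\gg\ge 1$ then (using $\gg>\bar\gg$, so that some place ramifies) one gets $2\gg-2\ge\tfrac12|G|$, hence $|G|\le 4(\gg-1)$, against the hypothesis. With $\bar\gg=0$, if $p\nmid|G|$ then $G$ acts tamely and the classical Hurwitz bound $|G|\le 84(\gg-1)$ again contradicts the hypothesis; so $p\mid|G|$ and at least one place ramifies wildly. Since $G$ is solvable it has a Hall $p'$-subgroup $H$, and $H$ acts tamely, so $|H|\le 84(\gg-1)$; therefore a Sylow $p$-subgroup $S$ of $G$ satisfies
$$|S|=\frac{|G|}{|H|}>\frac{p}{p-2}\cdot\frac{\gg^2}{\gg-1}>\gg+1,\qquad\text{hence}\qquad|S|\ge\gg+2 .$$
This ``large Sylow'' inequality drives everything that follows.

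Next I would determine the short orbits $o_1,\dots,o_k$ of $G$ on $\cX$, with stabiliser orders $\ell_i$ and different exponents $d_i$; the Hurwitz formula reads $\sum_i d_i/\ell_i=2+\frac{2\gg-2}{|G|}<2+\frac1{42\gg}$. If $o_i$ is wildly ramified, let $p^{h_i}$ be the order of its first ramification group (the normal Sylow $p$-subgroup of the stabiliser) and $m_i=\ell_i/p^{h_i}$; factoring the local extension through the fixed field of the first ramification group gives $d_i\ge\ell_i+p^{h_i}-2$, hence $d_i/\ell_i\ge 1+\frac1{3m_i}$, while $m_i\le 4\gg+2$ by Wiman's bound since $m_i$ is the order of a cyclic (tame inertia) subgroup. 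Two wild orbits would contribute at least $2+\frac2{3(4\gg+2)}>2+\frac1{42\gg}$; so there is exactly one wild short orbit, say $o_1$. Using in addition the identity $2\gg-2=|G|\bigl(-2+\sum d_i/\ell_i\bigr)$ and a short case analysis of the tame stabiliser orders one gets $k\le 2$. Thus $G$ has the single wild short orbit $o_1$ and at most one further, tame, short orbit $o_2$.

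The crux is to prove $|o_1|=1$, i.e.\ that $G$ fixes the point $P\in o_1$. The bound $|S|\ge\gg+2$ forces, via Riemann--Hurwitz for $\cX\to\cX/S$, that $S$ has a fixed point: if all $S$-orbits had length $\ge p$ one checks that $\cX/S$ would be rational with a single branch point, and tracing the ramification filtration of $\cX\to\cX/G$ along $\cX\to\cX/S$ produces a fixed point of $S$ inside $o_1$. Such a fixed point $P$ is wildly ramified in $\cX\to\cX/G$, hence lies in $o_1$, and $S\le G_P$; comparing $p$-parts gives $S=G_P^{(1)}$, the normal Sylow $p$-subgroup of $G_P$, so $N_G(S)\supseteq G_P$ and the number of Sylow $p$-subgroups $n_p=[G:N_G(S)]$ divides $|o_1|=[G:G_P]$. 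In the one-orbit case Riemann--Hurwitz gives $|o_1|=\frac{2\gg-2}{p^{h_1}-2+\cdots}<2$ outright; in the two-orbit case a finer book-keeping (using $|S|=p^{h_1}\ge\gg+2$ and the Hall bound on the prime-to-$p$ part of $|G|$) again forces $|o_1|$ small, so with $n_p\equiv 1\pmod p$ we conclude $n_p=1$, i.e.\ $S\trianglelefteq G$. But then, if $|o_1|\ge 2$, normality of $S$ makes it fix every point of $o_1$, hence at least two points, and Riemann--Hurwitz for $\cX\to\cX/S$ yields $2\gg-2\ge 4(|S|-1)$, i.e.\ $|S|\le\gg+1$, a contradiction. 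Hence $|o_1|=1$ and $G=G_P$.

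With $G=G_P$ the assertions fall out. The first ramification group at $P$ is the unique Sylow $p$-subgroup $S$ of $G$, so $S\trianglelefteq G$ and $G/S=G_P^{(0)}/G_P^{(1)}\hookrightarrow\K^*$ is cyclic; as $|G/S|$ is prime to $p$, $p(G)=S$, and by Schur--Zassenhaus $G=S\rtimes C$ with $C\cong G/S$ cyclic. The cover now has $\cX/G$ rational, $G$ fixing $P$, and at most one further short orbit, which is tame --- exactly a Katz--Gabber $G$-cover. Finally the only ramification of $\cX\to\cX/S$ is the totally wildly ramified point $P$ (a point of the possible tame orbit $o_2$ has stabiliser of $p'$-order, meeting $S$ trivially), so the Deuring--Shafarevich formula gives $\gamma_{\cX}=|S|\,\gamma_{\cX/S}$; and $\cX/S\to\cX/G=\mathbb{P}^1$ being a tame cyclic cover branched over at most two points, $\cX/S$ is rational, whence $\gamma_{\cX/S}=0$ and $\cX$ has zero $p$-rank. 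The main obstacle is the step $|o_1|=1$ when a tame short orbit is present: excluding a wild short orbit of length $>1$ requires the sharpest form of the different estimates together with the Sylow count, and it is here that solvability --- through the Hall-subgroup bound and Schur--Zassenhaus --- is used most essentially.
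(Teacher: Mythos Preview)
Your opening observation is excellent and in fact simpler than what the paper does: since $G$ is solvable it has a Hall $p'$-subgroup $H$, and as $H$ is tame the classical Hurwitz bound gives $|H|\le 84(\gg-1)$, so $|S|=|G|/|H|>\tfrac{p}{p-2}\cdot\tfrac{\gg^2}{\gg-1}>\tfrac{p}{p-2}(\gg-1)$. Combined with Nakajima's bound (Result~\ref{resnaga}) this already forces $\gamma(\cX)=0$, in two lines, whereas the paper proves the same statement (Proposition~\ref{princE}) by a rather involved induction on $\gg$ with a case split on whether $G$ has a normal $p'$-subgroup. You should state and use Nakajima's bound explicitly here; recording only the weak consequence $|S|\ge\gg+2$ throws away exactly the factor $\tfrac{p}{p-2}$ you need later.

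The genuine gap is the step $|o_1|=1$. Your one-orbit claim ``Riemann--Hurwitz gives $|o_1|=\frac{2\gg-2}{p^{h_1}-2+\cdots}<2$'' is not correct as written: with $\bar\gg=0$ the formula reads $2\gg-2=-2|G|+|o_1|\,d_P$, so $|o_1|=(2\gg-2+2|G|)/d_P$, and the lower bound $d_P\ge |G_P|+|S|-2$ only yields a trivial inequality. In the two-orbit case the ``finer book-keeping'' forcing $n_p=1$ is simply not there; nothing you have written rules out, say, $|o_1|$ a nontrivial $p'$-number with $n_p=|o_1|\equiv 1\pmod p$. Your fallback (``if $S\trianglelefteq G$ and $|o_1|\ge2$ then $S$ fixes two points, contradiction'') is fine, but you have not established $S\trianglelefteq G$.

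The paper closes this gap by a structural argument rather than orbit book-keeping. Once $\gamma(\cX)=0$ is known, Result~\ref{lem29dic2015} gives that $S$ fixes a unique point. If $G$ has no nontrivial normal $p'$-subgroup, a \emph{minimal} normal subgroup $N$ of $G$ is an elementary abelian $p$-group, hence fixes a unique point, which is then fixed by all of $G$ (Lemma~\ref{lemmaC}). If $G$ does have a nontrivial normal $p'$-subgroup, the paper passes to the quotient and ultimately invokes the nontrivial Result~\ref{gktrans}: for $\gamma(\cX)=0$, $G$ solvable with $|G|>24\gg(\gg-1)$ and no fixed point, one has $|S|<p^2$; combined with Roquette's $\gg\ge p-1$ and your estimate $|S|>\tfrac{p}{p-2}(\gg-1)\ge p$ this gives a contradiction (Lemma~\ref{leB11luglio2015}). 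Either route is short once $p$-rank zero is in hand; the pure Riemann--Hurwitz accounting you attempt does not suffice.
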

\begin{theorem}
\label{princC}
Let $G$ be a non-solvable automorphism group of an algebraic curve $\cX$ of even genus $\gg \geq 2.$ If  $|G|>900\gg^2$ then $\cX$ has zero $p$-rank, and
$p(G)$ is isomorphic to one of the following linear groups: $PSL(2,q)$ for $q\geq 5,$ $PSU(3,q)$ for $q\equiv 1 \pmod 4,$ $SL(2,q)$ for  $q\geq 5,$ $SU(3,q)$ for $q\equiv 5 \pmod {12},$ 
\textcolor{black}{for a power $q$ of $p.$}
\end{theorem}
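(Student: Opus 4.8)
\medskip
\noindent\emph{Sketch of the intended proof.}
The plan is to combine a Hurwitz-formula reduction with the $2$-adic arithmetic forced by the parity of $\gg$, feed the resulting restriction on the $2$-structure of $G$ into the classification results quoted in the Introduction, and obtain the vanishing of the $p$-rank from the Deuring--Shafarevich formula.

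First I would reduce to the case in which $\cX/G$ is rational with at most two short orbits, at least one of them wild. By the Hurwitz genus formula, if $\cX/G$ had positive genus then $|G|=O(\gg)$ (for genus $\geq 2$ the classical bound $|G|\leq 84(\gg-1)$, coming already from the prime-to-$p$ part of the different; for genus $1$ a separate differential estimate using a non-trivial $p$-subgroup of $G$), contradicting $|G|>900\gg^{2}$; and if $\cX/G\cong\mathbb{P}^{1}$ had three short orbits then, using that $p$ is odd and that the stabilizer of a ramified point has the form $Q_{P}\rtimes C_{P}$ with $Q_{P}$ a $p$-group and $C_{P}$ cyclic of order prime to $p$, one is led either to $|G|\leq 84(\gg-1)$ (all three tame) or to an inequality on the different at a wild orbit that cannot hold, again a contradiction. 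At least one of the (at most two) short orbits is wild, for otherwise $G$ acts tamely; in particular $p(G)\neq 1$. By the necessary part of the Abhyankar conjecture $G/p(G)$ is cyclic, so, $G$ being non-solvable, $p(G)$ is a non-solvable normal subgroup of $G$ generated by its $p$-elements. Finally $\cX$ has zero $p$-rank: if it were positive, the Deuring--Shafarevich formula would bound the wild part of the different in terms of the $p$-rank $\gamma\leq\gg$ and give $|G|\leq c\,\gg^{2}$ for a constant $c<900$, contrary to hypothesis (alternatively this comes a posteriori, once $p(G)$ is identified and a Sylow $p$-subgroup of $p(G)$ is seen to fix a point with rational quotient).

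The core of the matter is the identification of $p(G)$, and this is where the parity of $\gg$ and the quoted classification enter. Since $p$ is odd, every $2$-subgroup of $\aut(\cX)$ acts tamely, so each inertia group $G_{P}$ has cyclic Sylow $2$-subgroups; pushing the Hurwitz formula through a $2$-adic valuation computation and using that $\gg$ even makes $v_{2}(2\gg-2)=1$, one deduces that a Sylow $2$-subgroup of $p(G)$ --- equivalently of its necessarily unique non-abelian composition factor $T$ --- contains a cyclic subgroup of index at most $2$. This is exactly the hypothesis of the structure theorems cited in the Introduction: by the Gorenstein--Walter theorem (dihedral or Klein-four Sylow $2$-subgroups) and the Alperin--Brauer--Gorenstein theorem (semidihedral ones), while Burnside's transfer theorem, the Brauer--Suzuki theorem and the classification of simple groups with abelian Sylow $2$-subgroups exclude the remaining possibilities and wreathed Sylow $2$-subgroups are excluded by the restriction just obtained, $T$ is one of $PSL(2,q')$ with $q'$ odd, $PSL(3,q')$ with $q'\equiv 3\pmod 4$, $PSU(3,q')$ with $q'\equiv 1\pmod 4$, $A_{7}$, or $M_{11}$. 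I expect this step --- extracting the structural statement on Sylow $2$-subgroups from the parity of $\gg$, with full control of the wild contributions to the different modulo $4$ --- to be the main obstacle of the proof.

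It remains to pin $p(G)$ down. Writing $L$ for the last term of a chief series of $p(G)$ above $T$, the quotient $p(G)/L$ is a solvable $p'$-image of a group generated by $p$-elements, hence trivial; thus $p(G)$ is quasisimple with $p(G)/Z(p(G))\cong T$ and $Z(p(G))$ a $p'$-group. The defining characteristic of $T$ must be $p$: otherwise $p\mid|T|$ with $p$ different from it would make a Sylow $p$-subgroup of $p(G)$, hence of $G$, cyclic, and a curve whose automorphism group has a cyclic Sylow $p$-subgroup satisfies $|G|=O(\gg^{2})$ with too small a constant, excluding all but finitely many small configurations which one checks directly against the known automorphism groups of curves of small genus. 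The groups $A_{7}$ and $M_{11}$ fall to the same kind of estimate (their orders are too small relative to the minimal genus on which they act), and $PSL(3,q')$ together with its cover $SL(3,q')$ is excluded using the parity of $\gg$: for $q'\equiv 1\pmod 4$ its Sylow $2$-subgroups are wreathed, ruled out above, while for $q'\equiv 3\pmod 4$ a curve carrying such an action with rational quotient and so few branch points necessarily has odd genus. We are left with $p(G)/Z(p(G))$ equal to $PSL(2,q)$ or $PSU(3,q)$ for a power $q$ of $p$, so $p(G)$ is one of $PSL(2,q)$, $SL(2,q)$, $PSU(3,q)$, $SU(3,q)$; imposing once more that $\gg$ be even yields $q\geq 5$ in the linear cases, $q\equiv 1\pmod 4$ for $PSU(3,q)$, and $q\equiv 1\pmod 4$ together with $3\mid q+1$ --- that is, $q\equiv 5\pmod{12}$ --- exactly when the centre of $SU(3,q)$ is non-trivial, which is the asserted list.
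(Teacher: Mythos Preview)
Your identification of the key parity lemma---that the even genus forces every $2$-subgroup of $\aut(\cX)$ to contain a cyclic subgroup of index at most $2$---is exactly right and matches the paper's Lemma~\ref{lem23agos2015}. The subsequent appeal to Gorenstein--Walter, Brauer--Suzuki, and Alperin--Brauer--Gorenstein is also the paper's route to the structure theorem (Theorem~\ref{structure}).

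However, your treatment of the zero $p$-rank is a genuine gap. You write that ``the Deuring--Shafarevich formula would bound the wild part of the different in terms of the $p$-rank $\gamma\leq\gg$ and give $|G|\leq c\gg^{2}$''---but the Deuring--Shafarevich formula relates $p$-ranks of a cover and its quotient by a $p$-group, not $|G|$ to $\gg$ directly; there is no such one-line bound. The alternative you offer (``a posteriori, once $p(G)$ is identified'') is circular: the paper's identification of $p(G)$ via Hering's classification (Result~\ref{reshering}) requires the zero $p$-rank up front, since one needs each Sylow $p$-subgroup to fix a unique point and the action on $\Omega$ to be sharply transitive off that point (Result~\ref{lem29dic2015}). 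In the paper the zero $p$-rank is the content of Theorem~\ref{princB}, whose proof occupies all of Section~\ref{sectc}: one first bounds $|O(G)|$ against $q$ (Lemmas~\ref{lem1mar2018}, \ref{lemD1mar2018}, and the analogue in Case~(C)), then shows $\gamma(\cX/O(G))=0$, and finally applies the technical Lemma~\ref{22dic2015} to a carefully chosen point-stabilizer to force $\gamma(\cX)=0$. This is substantial and cannot be bypassed.

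Your approach to pinning down $p(G)$ also diverges from the paper and has its own gaps. You assert $p(G)$ is quasisimple via a chief-series argument, but the line ``$p(G)/L$ is a solvable $p'$-image'' is unjustified---why should that quotient be prime to $p$? The paper instead proves $O(G)\leq Z(G)$ (Theorem~\ref{princB}), sets up the group space $(\Omega,G)$ on the fixed points of the Sylow $p$-subgroups, and invokes Hering's classification (Result~\ref{reshering}) to read off $p(G)$ directly. Your exclusion of cross-characteristic $T$ (``cyclic Sylow $p$-subgroup implies $|G|=O(\gg^{2})$ with too small a constant'') and of $PSL(3,q)$ in characteristic $p$ (``necessarily odd genus'') are both unproven assertions; the paper handles these via Lemmas~\ref{22dic2015} and~\ref{27dicbis2015} (which force $d=p$) and Lemma~\ref{30ago2015} (which gives $|PSL(3,q)|<72\gg^{2}$ by exhibiting a non-cyclic, non-dihedral complement in the Borel), respectively.
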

 In particular, $G/p(G)$ is a cyclic group according to the Abhyankar conjecture; see Proposition \ref{3apr2016} \textcolor{black}{and Theorem \ref{princB}}.
Certain quotient curves of the Hermitian and of the GK curves \cite{GK} provide examples covering all cases in Theorem \ref{princC}; see Section \ref{examples}.

From the framework of this paper, it emerges that the automorphism groups of curves with even genus are subject to several constraints which allow to determine their structures completely; see Theorem \ref{structure}. In some sense, this is an unexpected result since each finite group is the automorphism group of some curve defined over $\K$.

The ingredients of the proofs are the Hurwitz genus formula and the Deuring-Shafarevich formula together with a few deep results from 
Group theory characterizing finite simple groups whose Sylow $2$-subgroups have a cyclic subgroup of index $2$. We also point out that some of our lemmas are related to the Abhyankar conjecture for curves, although our proof of Theorem \ref{princC} is independent of it.

The paper is organized as follows.

Section \ref{sec2} provides the necessary background on Galois \textcolor{black}{coverings} of algebraic curves in positive characteristic, it also collects results from Group theory which play a role in the paper.

Section \ref{principA} contains the proof of Theorem \ref{princA} which is carried out by induction on $\gg$. In proving the first statement,  two cases are distinguished according to weather $G$ has a normal subgroup of order prime to $p$, or all its normal subgroups are $p$-groups. In either case we show that if a curve satisfies the hypothesis of Theorem \ref{princA} but has positive $p$-rank, then a quotient curve with respect to a well chosen normal subgroup of $G$ also satisfies the hypotheses of
Theorem \ref{princA} with genus less than $\gg$; see Propositions \ref{lemmaA} and \ref{le110luglio2005}. The proof of the second statement of Theorem \ref{princA} is given at the end of the section as a consequence of Lemmas \ref{leB11luglio2015} and \ref{lemmaC}.

Section \ref{sec4} gives some new sufficient conditions for a curve to have $p$-rank zero; see Lemmas \ref{22dic2015}, \ref{22dicter2015}, \ref{lemA29dic2015}, and \ref{lemA30dic2015}. The key achievement is Lemma \ref{22dic2015}, which can be viewed as an improvement on the bound of Theorem \ref{princA} for a large family of solvable groups. These results are used in several proofs in this paper, and appear to be of independent interest.

The rest of the paper is dedicated to the proof of Theorem \ref{princC}.

The main ingredient is the classification given in Theorem \ref{structure} where the abstract structure of $G$ is determined without any assumption on the order of $G$. The key point is Lemma \ref{lem23agos2015} which together with the classification of $2$-groups containing a cyclic subgroup of index $2$ allow us to use several deeper results from Group theory obtained within the classification project of finite simple groups.  Theorem \ref{structure} is proved first for solvable groups, then for groups without non-trivial normal subgroups of odd order (see Subsection \ref{ssec61}), and finally for the general case (see Subsection \ref{ssec62}).

From Theorem \ref{structure} it is apparent the central role of a few low dimensional linear and semilinear groups in the study of curves with even genus. This motivates the previous accurate analysis in Section \ref{sec5} in which a series of rather technical lemmas shows that if the group is large enough with respect to the genus of the curve, then it is linear (and cannot be semilinear) and the $p$-rank of the curve must be zero.

For large automorphism groups compared to the genera of the curves, the list in Theorem \ref{structure} shortens. This is showed in Subsection \ref{refinements}; see Lemma \ref{4dic2015} for odd-core free groups, that is, for groups containing no non-trivial normal subgroup of odd order, and see Proposition \ref{3apr2016} for the general case.

The proof of the first statement of Theorem \ref{princC} is given in Section \ref{sectc}. A certain effort is made to fully understand how $O(G)$ and the center $Z(G)$ and
are related; see Theorem \ref{princB}. The proof is divided into three cases, according to the list given in Proposition \ref{3apr2016}, and also uses deeper result from Group theory.

The second part of Theorem \ref{princC} is proved in Section \ref{sec8}. It essentially depends on Theorem \ref{princB} which allows to use Hering's classification \cite{hering1979} of certain $2$-transitive group spaces in order to determine the possibilities for $p(G)$. Finally, Section \ref{examples} provides examples that illustrate Theorem \ref{princC}.

\section{Background and Preliminary Results}\label{sec2}

In this paper, $\aut(\cX)$ stands for the automorphism group of a (projective, non-singular,
geometrically irreducible, algebraic) curve $\cX$ of genus $\gg\geq
2$ defined  over an algebraically closed field $\mathbb K$ of odd characteristic $p$.

For a subgroup $G$ of $\aut(\cX)$, let $\bar \cX$ denote a non-singular model of \textcolor{black}{$\K(\cX)^G$}, that is,
a projective non-singular geometrically irreducible algebraic
curve with function field $\K(\cX)^G$, where $\K(\cX)^G$ consists of all elements of $\K(\cX)$
fixed by every element in $G$. Usually, $\bar \cX$ is called the
quotient curve of $\cX$ by $G$ and denoted by \textcolor{black}{$\cX/G$}. The field extension $\K(\cX)|\K(\cX)^G$ is  Galois of degree $|G|$.

Since our approach is mostly group theoretical, we prefer to use notation and terminology from Group theory rather than from Function field theory.

Let $\Phi$ be the cover of \textcolor{black}{$\cX\to \bar{\cX}$} where $\bar{\cX}=\cX/G$ is a quotient curve of $\cX$ with respect to $G$.
 A point $P\in\cX$ is a ramification point of $G$ if the stabilizer $G_P$ of $P$ in $G$ is non-trivial; the ramification index $e_P$ is $|G_P|$; a point $\bar{Q}\in\bar{\cX}$ is a branch point of $G$ if there is a ramification point $P\in \cX$ such that $\Phi(P)=\bar{Q}$; the ramification (branch) locus of $G$ is the set of all ramification (branch) points. The $G$-orbit of $P\in \cX$ is the subset of $\cX$
$o=\{R\mid R=g(P),\, g\in G\}$, and it is {\em long} if $|o|=|G|$, otherwise \textcolor{black}{$o$} is {\em short}. For a point $\bar{Q}$, the $G$-orbit $o$ lying over $\bar{Q}$ consists of all points $P\in\cX$ such that $\Phi(P)=\bar{Q}$. If $P\in o$ then $|o|=|G|/|G_P|$ and hence $\bar{\cQ}$ is a branch point if and only if $o$ is a short $G$-orbit. It may be that $G$ has no short orbits. This is the case if and only if every non-trivial element in $G$ is fixed--point-free on $\cX$, that is, the cover $\Phi$ is unramified. On the other hand, $G$ has a finite number of short orbits. For a non-negative integer $i$, the $i$-th ramification group of $\cX$
at $P$ is denoted by $G_P^{(i)}$ (or $G_i(P)$ as in \cite[Chapter
IV]{serre1979})  and defined to be
$$G_P^{(i)}=\{g\mid \ord_P(g(t)-t)\geq i+1, g\in
G_P\}, $$ where $t$ is a uniformizing element (local parameter) at
$P$. Here $G_P^{(0)}=G_P$.
The structure of $G_P$ is well known; see for instance \cite[Chapter IV, Corollary 4]{serre1979} or \cite[Theorem 11.49]{hirschfeld-korchmaros-torres2008}.
\begin{result}
\label{res74} The stabilizer $G_P$ of a point $P\in \cX$ in $G$ \textcolor{black}{has} the following properties.
\begin{itemize}
\item[\rm(i)] $G_P^{(1)}$ is the unique \textcolor{black}{Sylow} $p$-subgroup of $G_P$;
\item[\rm(ii)] For $i\ge 1$, $G_P^{(i)}$ is a normal subgroup of $G_P$ and the quotient group $G_P^{(i)}/G_P^{(i+1)}$ is an elementary abelian $p$-group.
\item[\rm(iii)] $G_P=G_P^{(1)}\rtimes U$ where the complement $U$ is a cyclic \textcolor{black}{group} whose order is prime to $p$.
\end{itemize}
\end{result}
Let $\bar{\gg}$ be the genus of the quotient curve $\bar{\cX}=\cX/G$. The Hurwitz
genus formula  gives the following equation
    \begin{equation}
    \label{eq1}
2\gg-2=|G|(2\bar{\gg}-2)+\sum_{P\in \cX} d_P.
    \end{equation}
    where
\begin{equation}
\label{eq1bis}
d_P= \sum_{i\geq 0}(|G_P^{(i)}|-1).
\end{equation}
Here $D(\cX|\bar{\cX})=\sum_{P\in\cX}d_P$ is the {\emph{different}}. For a tame subgroup $G$ of $\aut(\cX)$, that is for $p\nmid |G_P|$,
$$\sum_{P\in \cX} d_P=\sum_{i=1}^m (|G|-\ell_i)$$
where $\ell_1,\ldots,\ell_m$ are the sizes of the short orbits of $G$.

A subgroup of $\aut(\cX)$ is a $p'$-group (or a prime to $p$ group) if its order is prime to $p$. A subgroup $G$ of $\aut(\cX)$ is {\em{tame}} if the $1$-point stabilizer of any point in $G$ is \textcolor{black}{a} $p'$-group. Otherwise, $G$ is {\em{non-tame}} (or {\em{wild}}). Obviously, every $p'$-subgroup of $\aut(\cX)$ is tame, but the converse is not always true. From the classical Hurwitz's bound,
if $|G|>84(\gg(\cX)-1)$ then $G$ is non-tame; see  \cite{stichtenoth1973II} or \cite[Theorems 11.56]{hirschfeld-korchmaros-torres2008}.
An orbit $o$ of $G$ is {\em{tame}} if $G_P$ is a $p'$-group for $P\in o$, otherwise $o$ is a {\em{non-tame orbit}} of $G$.

\begin{result}[Stichtenoth's result   on the number of short orbits of large automorphism groups \cite{stichtenoth1973II}; see also Theorems 11.56 and 11.116 in  \cite{hirschfeld-korchmaros-torres2008}]
\label{res56.116} Let $G$ be a subgroup of $\aut(\cX)$ whose order exceeds $84(\gg(\cX)-1)$. Then \textcolor{black}{$G$ has}
\begin{itemize}
\item[\rm(a)] exactly three short orbits$,$ two tame and
one non-tame$,$ and $|G|< 24\gg(\cX)^2$;
\item[\rm(b)] exactly two short orbits$,$ both non-tame$,$ and  $|G|<16 \gg(\cX)^2$;
\item[\rm(c)] only one short orbit which is non-tame$;$
\item[\rm(d)] exactly two short orbits$,$ one tame and one non-tame.
\end{itemize}
If $|G|>4(\gg(\cX)-1)$ then the quotient curve $\cX/G$ is rational.
\end{result}
\begin{result}[Roquette's result \cite{roquette1970}]
\label{ROQ}
If the size of $\aut(\cX)$ exceeds $84(\gg(\cX)-1)$, then $\gg(\cX) \ge p-1$.
\end{result}
The $p$-rank of $\cX$ is  defined to be the rank of the (elementary abelian) group of the $p$-torsion points in the Jacobian variety of $\cX$, and it coincides with the Hasse-Witt invariant of $\cX$. \textcolor{black}{The $p$-rank of $\cX$ does not exceed the genus $\gg(\cX)$.}
Let $\gamma$ be the $p$-rank of $\cX$,
 and let $\bar{\gamma}$ be the $p$-rank of the quotient curve \textcolor{black}{$\bar{\cX}=\cX/S$ for a $p$-subgroup $S$ of $\aut(\cX)$}. The Deuring-Shafarevich formula, see \cite{sullivan1975} or \cite[Theorem 11,62]{hirschfeld-korchmaros-torres2008}, states that
 \textcolor{black}{
\begin{equation}
    \label{eq2deuring}
\gamma-1={|S|}(\bar{\gamma}-1)+\sum_{i=1}^k (|S|-\ell_i)
    \end{equation}
where $\ell_1,\ldots,\ell_k$ are the sizes of the short orbits of $S$.
}
\begin{result}
\label{lem29dic2015} If $\cX$ has zero $p$-rank then $\aut(\cX)$ has the following properties:
\begin{itemize}
\item[\rm(i)] a Sylow $p$-subgroup of $\aut(\cX)$ fixes a point $P\in \cX$ but its non-trivial elements have no fixed point other than $P$;
\item[\rm(ii)] the normalizer of a Sylow $p$-subgroup fixes a point of $\cX$;
\item[\rm(iii)] any two distinct Sylow $p$-subgroups have trivial intersection.
\end{itemize}
\end{result}
Claim (i) is \cite[Theorem 11.129]{hirschfeld-korchmaros-torres2008}. Claim (ii) follows from Claim (i). Claim (iii) is \cite[Theorem 11.133]{hirschfeld-korchmaros-torres2008}.

\begin{result}[Nakajima's bound \cite{nakajima1987}; see also \cite{gkp>2} and Theorem \textcolor{black}{11.84} in \cite{hirschfeld-korchmaros-torres2008}]
\label{resnaga} If $\cX$ has positive $p$-rank and $S$ is a $p$-subgroup of $\aut(\cX)$ then
\begin{equation}
\label{naka16feb2013}
|S|\leq \left\{
\begin{array}{lll}
\textstyle\frac{p}{p-2}\,(\gg(\cX)-1)\quad {\mbox{for}}\quad \gamma(\cX)>1,\\
\quad\quad\,\, \gg(\cX)-1\quad\,\, {\mbox{for}}\quad \gamma(\cX)=1.
\end{array}
\right.
\end{equation}
If, in addition, $S$ fixes a point of $\cX$ then $|S|\leq \frac{p}{p-1}\,\gg(\cX).$
\end{result}

We quote a result on the orders of Sylow $p$-subgroups in large automorphism groups.
\begin{result}[Proposition 11.138 in \cite{hirschfeld-korchmaros-torres2008}; see also Theorem 1.1 in \cite{gktrans}]
\label{gktrans}
Let $\cX$ be a curve with $p$-rank $0$, genus $\gg(\cX)\ge 2$, and such that $G$ is a solvable automorphism group of $\cX$ with $|G|>24\gg(\cX)(\gg(\cX)-1)$. Let $S$ be a Sylow $p$-subgroup of $G$.
If $G$ has no fixed point then $|S|<p^2$.
\end{result}

For automorphism groups with special structures or actions on $\cX$, the classical Hurwitz bound can be improved; see for instance \textcolor{black}{\cite[Theorems 11.108, 11.79, 11.60]{hirschfeld-korchmaros-torres2008}}.
\begin{result}
\label{res60.79.108}
\begin{enumerate}
\item[\rm(i)] If $G$ is abelian then $|G|\leq 4\gg(\cX)+4$.
\item[\rm(ii)] If $G$ has prime order other than $p$ then $|G|\leq 2\gg(\cX)+1$.
\item[\rm(iii)] If $G$ fixes a point $P$ of $\cX$ and its order is prime to $p$ then $|G|\leq 4\gg(\cX)+2$.
\end{enumerate}
\end{result}
\begin{result}[Automorphism groups of elliptic curves and genus $2$ curves; see \textcolor{black}{Theorem 11.94, Theorem 11.98, and Proposition 11.99 in \cite{hirschfeld-korchmaros-torres2008}}, and Theorem 2 in \cite{sv}]
\label{res94} If $G$ is an automorphism group of an elliptic curve $\cE$ over $\K$ then for every point $P\in\cE$ the stabilizer $G_P$ has order at most $6$ when $p>3$ and at most $12$ when $p=3$. If $G$ is an automorphism group of an hyperelliptic curve $\cH$ over $\K$ with hyperelliptic involution $\omega$ and $\omega \in G$ then $G/\langle \omega \rangle$ is isomorphic to a finite subgroup of $PGL(2,\K)$. If $\cX$ has genus $2$ then  $G/\langle \omega \rangle$ is not isomorphic to either $PSL(2,q)$ with $q\geq 5$, or $\rm{Alt}_7$, or the Mathieu group $M_{11}$, or $PSU(3,q)$ with $q\geq 5$, or $PGU(3,q)$ with $q\ge 5$.
\end{result}
The results from Group theory which play a role in the paper are quoted below, where $G$ denotes any abstract \textcolor{black}{finite} group. We use standard notation and terminology; see \cite{gorenstein1980,huppertI1967,hall}. In particular, $O(G)$ stands for the largest normal subgroup of $G$ of odd order. Furthermore, $q$ denotes a power of an odd prime $d$.

\begin{result}[Dickson's classification of finite subgroups of the projective linear group $PGL(2,\mathbb{K})$; see \cite{maddenevalentini1982} and also Theorem A.8 in  \cite{hirschfeld-korchmaros-torres2008}]
\label{resdickson}
Any finite subgroup of the group $PGL(2,\mathbb{K})$ is isomorphic to one of the following groups:
\begin{enumerate}
\item[\rm(i)] prime to $p$ cyclic groups;
\item[\rm(ii)] elementary abelian $p$-groups;
\item[\rm(iii)] prime to $p$ dihedral groups;
\item[\rm(iv)] the alternating group ${\rm{Alt}}_4$;
\item[\rm(v)] the symmetric group ${\rm{Sym}}_4$;
\item[\rm(vi)] the alternating group ${\rm{Alt_5}}$;
\item[\rm(vii)] the semidirect product of an elementary abelian
$p$-group of order $p^h$ by a cyclic group of order $n>1$ with
 $n\mid(p^h-1);$
\item[\rm(viii)] $\PSL(2,p^f)$;
\item[\rm(ix)] $\PGL(2,p^f)$.
\end{enumerate}
For $q>11$, the minimal size of a set on which $PSL(2,q)$ can act transitively is $q+1$; see \cite[Kapitel II, Satz 8.28]{huppertI1967}.
\end{result}
The maximal subgroups of $PSU(3,q)$ for $q$ odd were classified by Mitchell \cite{mitchell}; see also \cite{onan} and \cite[Theorem A.10]{hirschfeld-korchmaros-torres2008}. In this paper
we only need the following corollaries of Mitchell's classification; see \cite[Theorem 30]{mitchell}.
\begin{result}
\label{resmitchell} Let $\mu={\rm{gcd}}(3,q+1)$. The subgroups of $PSU(3,q)$ \textcolor{black}{of} order $q^3(q^2-1)/\mu$ are maximal. For $q>5$, they are the largest subgroups of $PSU(3,q)$ for $q>5$. For $q=5$
the largest subgroups of $\PSU(3,q)$ have order $2520$ and are isomorphic to ${\rm{Alt}}_7$. Let $\Omega$ be a set of smallest size on which $PSU(3,q)$ has a non-trivial action. Then $|\Omega|\geq q^3+1$ with an exception for $q=5$ and $|\Omega|=60$.
\end{result}
\begin{result}[Classification of $2$-groups with a cyclic subgroup of index $2$; see Theorem 12.5.1 in \cite{hall}, or Kapitel I, Satz 14.9 in \cite{huppertI1967}]
\label{res26feb2018}
The non-cyclic groups of order $2^n$ which contain a cyclic subgroup of index $2$ are generated by two elements $a,b$ and are of the following types:
\begin{itemize}
\item $a^{2^{n-1}}=1$, $b^2=1$, $ba=ab$ (direct product of two cyclic groups);
\item $n\geq 3$, $a^{2^{n-1}}=1$, $b^2=a^{2^{n-2}}$, $ba=a^{-1}b$ (generalized quaternion group);
\item $n\geq 2$, $a^{2^{n-1}}=1$, $b^2=1$, $ba=a^{-1}b$ (dihedral group);
\item $n\geq 4$, $a^{2^{n-1}}=1$, $b^2=1$, $ba=a^{-1+2^{n-2}}b$ (semidihedral - also called quasidihedral - group);
\item $n\geq 4$, $a^{2^{n-1}}=1$, $b^2=1$, $ba=a^{1+2^{n-2}}b$ (modular maximal-cyclic group, that is type (3) group with notation as in \cite{huppertI1967}).
\end{itemize}
\end{result}
Observe that the dihedral group for $n=2$ is an elementary abelian group of order $4$.
Some elementary properties of the above $2$-groups are as follows; see \cite[Kapitel I, Satz 14.9]{huppertI1967}. A generalized quaternion group has rank $1$, the other groups listed in  Result \ref{res26feb2018} have rank $2$ as they contain an elementary abelian group of order $4$ but no elementary abelian group of order $8$.
Generalized quaternion groups, as well as dihedral and semidihedral groups, have center of order $2$. Dihedral, semidihedral and modular maximal-cyclic groups have non-central involutions.
The quotient groups of a generalized quaternion group, as well as of a semidihedral group, with respect to its center is a dihedral group.
A modular maximal-cyclic group has exactly three involutions and four elements of order $4$. In particular, it contains neither a dihedral group of order $2^n\geq 8$ nor a generalized quaternion group.

\begin{result}[The Schur-Zassenhaus theorem; see Kapitel I, Hauptsatz 18.1 in \cite{huppertI1967}]
\label{reszass} Let $N$ be a normal subgroup of $G$ such that $|N|$ and $[G:N]$ are coprime. Then $N$ has a complement in $G$, that is, $G=N\rtimes U$ for a subgroup $U$ of $G$.
\end{result}

\begin{result}[The Gorenstein-Walter theorem \cite{gw1,gw2}; see also Chapter 16 in \cite{gorenstein1980}]
\label{resgor}  Let $G$ be a group with a dihedral Sylow $2$-subgroup $S_2$. Then, either $G=O(G)\rtimes S_2$, or $G/O(G)\cong\rm{Alt}_7$, or $G/O(G)$ is isomorphic to a subgroup of $P\Gamma L(2,q)$ containing  $PSL(2,q)$.
\end{result}
\begin{result}[The Brauer-Suzuki theorem \cite{bs}; see also Chapter 12 in \cite{gorenstein1980}]
\label{resbs} Let $G$ be a group with a generalized quaternion Sylow $2$-subgroup. Then the center of $G/O(G)$ has order $2$.
\end{result}
Observe that a Sylow $2$-subgroup of $SL(2,q)$ is a generalized quaternion group.
\begin{result}[The Alperin-Brauer-Gorenstein theorem \cite{abg}]
\label{resabg} Let $G$ be a non-abelian simple group with a semidihedral Sylow $2$-subgroup $S_2$. Then, either
$G\cong \PSL(3,q)$ with $q\equiv 3 \pmod 4$, or $G\cong \PSU(3,q)$ with $q \equiv 1 \pmod 4$, or $G=M_{11}$, where $q$ is an odd prime power.
\end{result}
\begin{result}[The Alperin theorem \cite{alp}]
\label{resalp}  A Sylow $2$-subgroup of a non-abelian simple group with $2$-rank equal to $2$ is either dihedral, or semidihedral, or wreathed, or isomorphic to a Sylow $2$-subgroup of $SU(3,2)$.
\end{result}

A group $G$ of even order is said to have a normal $2$-complement if $G=O(G)\rtimes S_2$ with a Sylow $2$-subgroup $S_2$ of $G$. By an elementary result, see for instance \cite[Kapitel I, Aufgaben I.21]{huppertI1967}, the groups of even order with a cyclic Sylow $2$-subgroup have a normal $2$-complement. More generally, groups with a Sylow $2$-subgroup containing a cyclic subgroup of index $2$
have a normal  $2$-complement, apart from some exceptions; see \cite{wong1,wong}.
\begin{result}
\label{reswong1}  If a group $G$ has no $2$-complement but a Sylow $2$-subgroup $S_2$ of $G$ has a cyclic subgroup of index $\leq 2$ then $S_2$ is either a generalized quaternion, or a dihedral group, or a semidihedral group. If, in addition, $G$ is also solvable then one of the following cases occurs.
\begin{itemize}
\item $S_2$ is dihedral and $G/O(G)\cong \PSL(2,3), \PGL(2,3)$.
\item $S_2$ is semidihedral and $G/O(G)\cong GL(2,3)$.
\end{itemize}
\end{result}
\begin{result}
\label{reswong}  If a group $G$ has no $2$-complement but it has a semidihedral Sylow $2$-subgroup, then one of the following holds.
\item[\rm(i)] $G$ has a (normal) subgroup of index $2$, which has no normal subgroup of index $2$ and has a dihedral Sylow $2$-subgroup.
\item[\rm(ii)] $G$ has a (normal) subgroup of index $2$, which has no normal subgroup of index $2$ and has a generalized quaternion Sylow $2$-subgroup.
\item[\rm(iii)] $G$ has no normal subgroup of index $2$, and the involutions of $G$ form a single conjugacy class.
\end{result}

Let $G$ be a group with non-trivial center $Z(G)$. If $N$ is a subgroup \textcolor{black}{of} $Z(G)$ with quotient group $U=G/N$, then $G$ is called a central extension of $U$ by $N$. When the sequence splits, that is $G$ has a subgroup isomorphic to $U$ that meets $N$ trivially, $G$ is called a split extension of $U$ by $N$, and $G=N\rtimes U$. If $N\le G'$ then the
central extension is {\em{stem}}, and if $G$ is {\em{perfect}}, that is $G=G'$, the central extension is {\em{perfect}}.

\begin{result}[Schur's result on perfect central extensions of $PSL(2,q)$; see Kapitel V, Satz 25.7 in \cite{huppertI1967}]
\label{resschur} Let $G$ be a perfect central extension of $PSL(2,q)$ with $q\geq 5$. If $q\neq 9$ then $G=SL(2,q)$. This holds true for $q=9$ provided that extension is by a group of order $2$.
\end{result}
\begin{result}[Griess' result on perfect central extensions of $PSU(3,q)$; see Theorem 2 in \cite{griess}]
\label{resgriess} The unique perfect central extension of $PSU(3,q)$ is $SU(3,q)$.
\end{result}
A linear \textcolor{black}{group}  containing $SL(2,q)$ as an index $2$ subgroups is $SL^{\pm}(2,q)$, that is, the subgroup of $GL(2,q)$ consisting of all matrices with determinant equal to $\pm 1$.
$SL^{\pm}(2,q)$ contains an involution other than the central involution of $SL(2,q)$, and hence its Sylow $2$-subgroups are not generalized quaternion groups. More precisely, for $q\equiv -1 \pmod 4$ they are semidihedral groups. Similarly, the subgroup $SU^{\pm}(2,q)$ of $GU(2,q)$ consisting of all matrices with determinant equal to $\pm 1$ contains $SU(2,q)$ as an index $2$ subgroup. Furthermore, $SL(2,q)\cong SU(2,q)$. The center of $SU^{\pm}(2,q)$ has order either $2$ or $4$ according as $q\equiv 1 \pmod 4$ or $q\equiv -1 \pmod 4$, and in the former case the Sylow $2$-subgroups of $SU^{\pm}(2,q)$ are semidihedral.

\begin{result}[Schur's classification of non-split central extension of $PGL(2,q)$ by an involution; see p. 176 in  \cite{goha}]
\label{resgoha}
Let $G$ be a group with a Sylow $2$-subgroup not isomorphic to a generalized quaternion group. If $G$ is a stem non-split central extension of $PGL(2,q)$ by a group of order $2$ then a Sylow $2$-subgroup of $G$ is semidihedral, and either $G\cong SL^{\pm}(2,q)$ or $G\cong SU^{\pm}(2,q)$ according as $q\equiv -1 \pmod 4$ or $q\equiv 1 \pmod 4$. For each odd $q$, $PGL(2,q)$ has two non-isomorphic stem non-split central extension by a group of order $2$. One has semidihedral Sylow $2$-subgroups, the other has generalized quaternion and is denoted by $SL^{(2)}(2,q)$.
\end{result}

\begin{result}[Thompson's transfer lemma; see Lemma 5.38 in \cite{thompson}]
\label{resthompson} Let $G$ be a finite group of even order. Let $M$ be an index $2$ subgroup of a Sylow $2$-subgroup $S_2$ of $G$. If $G$ has no subgroup of index $2$, then every involution in $S_2\setminus M$ is conjugate to an involution in $M$.
\end{result}
\begin{result}[The Land\'azuri and Seitz bound on the degree of cross-representations of $PSU(3,q)$; see \cite{lse}]
\label{reslse} Assume that $PSU(3,q)$ has an irreducible, faithful representation as a projective group of the projective space $PG(r-1,t)$. If  $t>2$ is an odd  prime and  $q$ is not a $t$-power, then $r-1 \ge q(q-1)$.
\end{result}
A group space $(\Omega,G)$  is a pair where $\Omega$ is a non-empty set   and $G$ is a group acting on $\Omega$. This action is not supposed to \textcolor{black}{be}  faithful, that is, the permutation representation of $G$ on $\Omega$ may have non-trivial kernel. A group space $(\Omega,G)$ is called transitive if $G$ induces on $\Omega$ a transitive permutation group.

\begin{result}[Hering's classification on doubly transitive group spaces; see Theorem 2.4 in \cite{hering1979}]
\label{reshering} Let $(\Omega, G)$ be a finite transitive group space, where $|\Omega|>2$. Assume that for some $\alpha\in\Omega$ the stabiliser $G_{\alpha}$ contains a normal
subgroup $Q$ which is sharply transitive on $\Omega\setminus\{\alpha\}$. If $S$ is the normal closure of $Q$ in $G$, then one of the following holds:
\begin{itemize}
\item[(i)] $S\cong PSL(2,q) , SL(2, q), Sz(q), PSU(3, q), SU(3, q)$ or a group of
Ree type of order $q^{3}(q^{3}+1)(q-1)$, where $q$ is a prime power and the  \textcolor{black}{size of $\Omega$ is}
$q+1$ in the linear case, $q^{2}+1$ in the Suzuki case and $q^{3}+1$ in the unitary
and Ree case.
\item[(ii)] $S\cong P\Gamma L(2,8)$ and $|\Omega|=28$.
\item[(iii)] $(\Omega, S)$ is sharply 2-transitive.
\item[(iv)] $|\Omega|=p^{2}$ for a prime $p\in\{3,5,7,11,23,29, 59\}$,
\end{itemize}
\end{result}
\begin{result}[The Gow-Burnside theorem \cite{gow1975}; see Theorem 11.134 in \cite{hirschfeld-korchmaros-torres2008}]
\label{resgow} If a Sylow $d$-subgroup $S_d$  of \textcolor{black}{a} solvable group $G$ meets any other Sylow $d$-subgroup of $G$ trivially then $S_d$ is either cyclic,  or a normal subgroup of $G$, or $d=2$ and $S_d$ is generalized quaternion group.
\end{result}
\begin{result}[The Feit-Thompson theorem \cite{ft}]
\label{resft} Every group of odd order is solvable.
\end{result}


\section{Large solvable automorphism groups of curves}\label{principA}
In this section, our goal is to provide a proof for  Theorem \ref{princA}. We begin with the first claim.
\begin{proposition}\label{princE} Let $\cX$ be an algebraic curve of genus $\gg \geq 2$ with a solvable subgroup $G$  of $\aut(\cX)$ such that
\begin{equation}
\label{eq13jan2015} |G|>84\frac{p}{p-2}\gg^2.
\end{equation}
Then the $p$-rank of $\cX$ is equal to zero.
\end{proposition}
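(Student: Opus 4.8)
The plan is to argue by induction on the genus $\gg$, assuming for a contradiction that $\cX$ has positive $p$-rank $\gamma\ge 1$. I would first record the consequences of \eqref{eq13jan2015}: since $|G|>84\gg^2>24\gg^2$, cases (a) and (b) of Result \ref{res56.116} are impossible, so $G$ has exactly one or exactly two short orbits, precisely one of them non-tame, and $\cX/G$ is rational; moreover, as $\gamma\ge 1$, Nakajima's bound (Result \ref{resnaga}) forces every $p$-subgroup $S$ of $\aut(\cX)$ to satisfy $|S|\le\frac{p}{p-2}(\gg-1)$, so a Sylow $p$-subgroup of $G$ has index greater than $84\gg$ in $G$. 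Since $G$ is solvable, its Fitting subgroup is nontrivial, so $G$ has a nontrivial normal subgroup $N$ which is either a $p$-group or a $p'$-group; in both cases the target is to pass to a quotient curve $\bar\cX$ of genus $\bar\gg$ with $2\le\bar\gg<\gg$ carrying the solvable group $\bar G=G/N$ with $|\bar G|>84\frac{p}{p-2}\bar\gg^2$ and still of positive $p$-rank, so that the inductive hypothesis applied to $(\bar\cX,\bar G)$ is contradicted.

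When $N$ is a $p$-group I would take $\bar\cX=\cX/N$ and feed $N$ into the Deuring--Shafarevich formula \eqref{eq2deuring} and the Hurwitz formula \eqref{eq1}: since every short orbit of the $p$-group $N$ is wild, each contributes to the different a term of size at least $2(|N_P|-1)$, and eliminating the short-orbit sizes between the two formulae yields $\gg\ge|N|(\bar\gg-\bar\gamma)+\gamma$ with $\bar\gamma=\gamma(\bar\cX)$. Together with $|N|\le\frac{p}{p-2}(\gg-1)$ this gives both $\bar\gg<\gg$ and $|\bar G|=|G|/|N|>84\frac{p}{p-2}\bar\gg^2$. When $N$ is a $p'$-group the Hurwitz formula for the tame cover $\cX\to\cX/N$ gives $\gg-1\ge|N|(\bar\gg-1)$, whence $\bar\gg<\gg$ and, using $|N|\ge 2$, also $\gg^2>|N|\bar\gg^2$, so again $|\bar G|>84\frac{p}{p-2}\bar\gg^2$. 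The degenerate outputs $\bar\gg\le 1$ I would dispose of using the restricted orbit structure of $G$ and $\bar G$: a huge solvable automorphism group of a rational curve must lie in Dickson's list (Result \ref{resdickson}), which together with Results \ref{res94}, \ref{res60.79.108} and \ref{gktrans} becomes impossible once $|\bar G|$ is seen to grow with $\gg$; the elliptic case is excluded by the stabilizer bound in Result \ref{res94}. The very small genera left over are the base of the induction and follow directly from the genus-$2$ information in Result \ref{res94}.

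The step I expect to be the main obstacle is showing that the quotient $\bar\cX$ still has positive $p$-rank; without this, applying the inductive hypothesis to $(\bar\cX,\bar G)$ only re-derives $\gamma(\bar\cX)=0$, which is consistent rather than contradictory. When $N$ is a $p$-group, the Deuring--Shafarevich formula already kills all cases in which $N$ has at most one short orbit on $\cX$ (they would force $\gamma\le 0$), so the essential work is to rule out $\gamma(\bar\cX)=0$ when $N$ has two or three short orbits; I would do this by locating those $N$-orbits inside the one or two short orbits of $G$ on $\cX$ — using that $\cX/G$ is rational and that $|N_P|$ is constant along each $G$-orbit because $N\trianglelefteq G$ — and extracting a contradiction with $\gamma\ge 1$ and with the order of $G$. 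When $N$ is a $p'$-group the subtlety is that a tame quotient can collapse the $p$-rank; here I would take $N$ minimal (elementary abelian of prime order $\ell\ne p$) and argue that if $\gamma(\cX/N)=0$ then, by Result \ref{lem29dic2015}, a Sylow $p$-subgroup of $\bar G$ fixes a unique point of $\bar\cX$ with fixed-point-free nontrivial elements, lift this rigidity through the abelian tame normal cover $\cX\to\cX/N$ to pin down the ramification of a Sylow $p$-subgroup $S$ of $G$ on $\cX$, and then apply the Deuring--Shafarevich formula to $S$ to conclude $\gamma(\cX)=0$, contrary to assumption.

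Summarising, the whole argument is a descent: every hypothetical counterexample of genus $\gg$ is pushed down, via a $p$- or $p'$-quotient by a normal subgroup dictated by solvability, to a counterexample of strictly smaller genus, and the real analytic content lies entirely in (i) the bookkeeping between \eqref{eq1} and \eqref{eq2deuring} that keeps $|\bar G|$ above $84\frac{p}{p-2}\bar\gg^2$, and (ii) the verification that the $p$-rank does not drop to zero under the chosen reduction.
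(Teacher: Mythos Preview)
Your inductive framework---pass to $\bar\cX=\cX/N$ for a nontrivial normal $p$- or $p'$-subgroup $N$, keep $|\bar G|>84\frac{p}{p-2}\bar\gg^2$, and contradict minimality---is the same as the paper's, and you correctly isolate the real difficulty: the $p$-rank of the quotient may drop to zero. But your plan for that difficulty does not close in either case.

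In the $p'$-case you propose, once $\gamma(\bar\cX)=0$, to lift the unique fixed point $\bar P$ of a Sylow $p$-subgroup $\bar S$ of $\bar G$ and apply Deuring--Shafarevich to a Sylow $p$-subgroup $S$ of $G$. You do get that all fixed points of nontrivial elements of $S$ lie in the fibre $o$ over $\bar P$; but the $S$-orbit decomposition of $o$ is governed by the conjugation action of $S$ on $N/N_P$, which you do not control, and you have no handle on $\gamma(\cX/S)$. So the Deuring--Shafarevich balance for $S$ cannot be completed. The paper avoids this entirely: since $p\nmid|o|$, the stabiliser $G_P$ for $P\in o$ contains a full Sylow $p$-subgroup $S$; writing $G_P=S\rtimes H$ with $H$ cyclic and forming the $p'$-subgroup $L=NH$, one obtains $|G|=|S|\cdot|L|$ with $|L|\le 84(\gg-1)$ by the classical Hurwitz bound, hence $|S|>\frac{p}{p-2}\gg$, contradicting Result~\ref{resnaga}. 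No Deuring--Shafarevich for $S$ is used.

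In the $p$-case your claim that $N$ has ``two or three'' short orbits is unfounded. All short $N$-orbits lie in the unique non-tame $G$-orbit $\theta$ and, since $N\trianglelefteq G$, they have common length $|N|/|N_P|$; but their number $\lambda=|\theta|\,|N_P|/|N|$ is not bounded by $3$. The paper's device here is to choose $N$ \emph{maximal} among normal $p$-subgroups, so that $\bar G=G/N$ has no nontrivial normal $p$-subgroup and hence its minimal normal subgroup is a $p'$-group; for $\gg(\bar\cX)\ge 2$ this puts $(\bar\cX,\bar G)$ into the $p'$-case already handled, while the rational and elliptic cases for $\bar\cX$ are disposed of by Result~\ref{resdickson} and Result~\ref{res94} together with direct size estimates. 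Symmetrically, in the $p'$-case the paper takes $N$ maximal among normal $p'$-subgroups, so that any minimal normal subgroup of $\bar G$ is a $p$-group. This interlocking---taking $N$ maximal so that each case reduces to the other---is the structural idea your outline is missing, and without it the induction does not terminate.
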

The proof of Proposition \ref{princE} is by induction on the genus of $\cX$, and is carried out by a series of lemmas and propositions.
By (i) of Result \ref{res60.79.108}, the group $G$ is assumed to be non-abelian. Then, one the following cases occurs\textcolor{black}{:}
\begin{itemize}
\item[(A)] {\em{$G$ has some non-trivial normal $p'$-subgroup;}}
\item[(B)] {\em{ $G$ has some non-trivial normal $p$-subgroup, but it does not have any non-trivial normal $p'$-subgroup.}}
\end{itemize}
We investigate Case (A).
\begin{lemma}\label{lemmaA00}
Under the hypotheses of Proposition \ref{princE}, if (A) holds then one of the following cases occurs.
\begin{itemize}
\item[\rm(C)] For any normal $p'$-subgroup  $N$ of $G$ of maximal order the quotient curve $\bar{\cX}=\cX/N$ has zero $p$-rank.
\item[\rm(D)] Some curve of genus less than $\gg$ is a counterexample to Proposition \ref{princE} for an automorphism group  \textcolor{black}{satisfying Property}  (B).
\end{itemize}
\end{lemma}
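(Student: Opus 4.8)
The plan is to reduce to a quotient of $\cX$ of strictly smaller genus. Let $N$ be a normal $p'$-subgroup of $G$ of maximal order; it is non-trivial by (A). Put $\bar\cX=\cX/N$ and let $\bar G=G/N$ act on $\bar\cX$. Assuming that (C) fails for this particular $N$ --- that is, that $\bar\cX$ has positive $p$-rank --- I will show that $(\bar\cX,\bar G)$ witnesses (D).

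First I pin down the structure of $\bar G$ as an automorphism group of $\bar\cX$. The action of $\bar G$ on $\bar\cX$ is faithful: if $K$ is the kernel of $G\to\aut(\bar\cX)$, then $N\le K$ and every element of $K$ fixes $\K(\cX)^N=\K(\bar\cX)$ elementwise, so $\K(\cX)^K=\K(\cX)^N$; comparing the degrees of the two Galois extensions of $\K(\cX)$ gives $|K|=|N|$, hence $K=N$ and $|\bar G|=|G|/|N|$. The group $\bar G$ is a non-trivial quotient of the solvable group $G$ --- non-trivial because a $p'$-group is tame, so $G=N$ would force $|G|\le 84(\gg-1)$ against the hypothesis. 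Moreover $\bar G$ has no non-trivial normal $p'$-subgroup, since such a subgroup would pull back to a normal $p'$-subgroup of $G$ strictly containing $N$; being a non-trivial solvable group, $\bar G$ then has a non-trivial normal $p$-subgroup, namely its Fitting subgroup. Thus $\bar G$ is solvable and satisfies (B).

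Next I estimate the genus $\bar\gg$ of $\bar\cX$ through the Hurwitz genus formula applied to the tame group $N$, namely $2\gg-2=|N|(2\bar\gg-2)+\sum_{i=1}^m(|N|-\ell_i)$, where $\ell_1,\dots,\ell_m$ are the sizes of the short orbits of $N$ and $\ell_i\le|N|/3$ because each such orbit has a non-trivial stabiliser of odd order, hence of order at least $3$. Positivity of the $p$-rank of $\bar\cX$ rules out $\bar\gg=0$. If $\bar\gg\ge 2$, the formula yields $\gg-1\ge|N|(\bar\gg-1)$, hence $\gg\ge|N|(\bar\gg-1)+1>\bar\gg$; moreover $\bar\gg\ge 2\ge 1+|N|^{-1/2}$, which is readily checked to be equivalent to $|N|(\bar\gg-1)+1\ge\sqrt{|N|}\,\bar\gg$, so $\gg\ge\sqrt{|N|}\,\bar\gg$ and therefore $|\bar G|=|G|/|N|>84\,\textstyle\frac{p}{p-2}\gg^2/|N|\ge 84\,\textstyle\frac{p}{p-2}\bar\gg^2$. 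Hence $(\bar\cX,\bar G)$ is a counterexample to Proposition \ref{princE} of genus $\bar\gg<\gg$ with $\bar G$ satisfying (B), which is precisely (D).

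It remains to exclude $\bar\gg=1$, and this is the step requiring an ad hoc argument --- the main obstacle --- since Proposition \ref{princE} is not available in genus $1$. In that case $\bar\cX$ is an elliptic curve; the genus formula forces $m\ge 1$, and the $m$ branch points of $N$ form a non-empty $\bar G$-invariant subset $B$ of $\bar\cX$. The group $T$ of translations contained in $\bar G$ is normal in $\bar G$, and $\bar G/T$ embeds into the stabiliser of the origin in $\aut(\bar\cX)$, so $[\bar G:T]\le 12$ by Result \ref{res94}; as $T$ acts freely on $\bar\cX$, hence on $B$, the integer $|T|$ divides $m$. Using $\ell_i\le|N|/3$, the genus formula gives $m\le 3(\gg-1)/|N|$, so $|\bar G|=|T|\cdot[\bar G:T]\le 12m\le 36\gg/|N|$, contradicting $|\bar G|=|G|/|N|>84\gg^2/|N|$. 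Therefore $\bar\gg\ge 2$ in all cases, completing the argument.
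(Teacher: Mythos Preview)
Your overall strategy is sound and matches the paper's in the case $\bar\gg\ge 2$ (the inequality $|N|(\bar\gg-1)+1\ge\sqrt{|N|}\,\bar\gg$ is exactly the paper's $(|N|(\gg(\bar\cX)-1)+1)^2>|N|\gg(\bar\cX)^2$), but there is one slip and one genuine difference from the paper that is worth pointing out.

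\textbf{The slip.} You assert that short $N$-orbits have stabiliser of \emph{odd} order, hence $\ell_i\le |N|/3$. This is not justified: $N$ is a $p'$-group and $p$ is odd, so $|N|$ may well be even and a point stabiliser may have order $2$. Fortunately this is harmless: replace $\ell_i\le|N|/3$ by $\ell_i\le|N|/2$, giving $m\le 4(\gg-1)/|N|$ and $|\bar G|\le 12m\le 48(\gg-1)/|N|$, which still contradicts $|\bar G|>84\gg^2/|N|$. So your elliptic argument survives with the obvious correction.

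\textbf{The difference from the paper.} In the elliptic case the paper does \emph{not} rule out $\bar\gg=1$; instead it shows directly that (C) holds there. The paper uses that $|G|>84(\gg-1)$ forces a non-tame $G$-orbit, hence some non-trivial $p$-element of $G$ fixes a point of $\cX$; its image in $\bar G$ is a non-trivial $p$-element fixing a point of the elliptic curve $\bar\cX$, which by Result~\ref{res94} forces $p=3$ and the element has order $3$; Deuring--Shafarevich applied to this order-$3$ subgroup then gives $\gamma(\bar\cX)=0$. Your route is different and more elementary: you never touch Deuring--Shafarevich, and you exploit instead the semidirect structure $\aut(\bar\cX)=\mathrm{Trans}\rtimes\aut(\bar\cX)_O$ together with the finiteness of the branch locus to bound $|\bar G|$ directly. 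Your argument in fact proves the stronger statement that $\bar\gg=1$ is impossible under the hypotheses, regardless of the $p$-rank of $\bar\cX$; this is more than is needed, but perfectly valid. The paper's approach has the advantage of identifying exactly what happens in the elliptic case (namely (C) holds), which is used again in Proposition~\ref{le110luglio2005}; yours has the advantage of avoiding the Deuring--Shafarevich formula altogether at this stage.
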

\begin{proof}
Claim (C) certainly holds if $\bar{\cX}$ is rational.

If $\bar{\cX}$ is elliptic, take a Sylow $p$-subgroup $S$ of $G$. Since (\ref{eq13jan2015}) yields $|G|>84(\gg -1)$, Result \ref{res56.116} shows that $G$ has a non-tame orbit on $\cX$. Hence, some non-trivial subgroup of $S$ fixes a point of $\cX$. Let $\bar{S}=SN/N$. Then $\bar{S}$ is a Sylow $p$-subgroup of $\bar{G}=G/N$, and some non-trivial subgroup $\bar{T}$ of $\bar{S}$ fixes a point of $\bar{\cX}$. From Result \ref{res94}, $|\bar{T}|=3$ (and $p=3$). The Deuring-Shafarevich formula applied to $\bar{T}$ gives $\gamma(\bar{\cX})-1=3(\gamma(\bar{\cY})-1)+2\lambda$ where $\bar{\cY}$ is the quotient curve of $\bar{\cX}$ by $\bar{T}$ while $\lambda>0$ is the number of fixed \textcolor{black}{points}  of $\bar{T}$ on $\bar{\cX}$. Since $0\leq \gamma(\bar{\cY})\le\gamma(\bar{\cX})\leq \gg(\bar{\cX})=1$, this equation implies $\gamma(\bar{\cX})=0$. Then Claim (C) also holds for $\gg(\bar{\cX})=1$. If $\gg=2$, then $\bar{\cX}$ is either rational or elliptic and both cases have already been considered.

Therefore, $\gg\ge 3$ and $\gg(\bar{\cX})\ge 2$ are assumed. From the Hurwitz genus formula applied to $N$,
$$
\gg\ge |N|({\gg}(\bar{\cX})-1)+1.
$$
This together with $(|N|(\gg(\bar{\cX})-1)+1)^2>|N|\gg(\bar{\cX})^2$ yield
$$
|\bar G|=\frac{|G|}{|N|}>\textstyle\frac{p}{p-2}84\frac{\gg^2}{|N|}>\textstyle\frac{p}{p-2}84 \gg(\bar{\cX})^2.
 $$
The group $\bar{G}$ is a non-abelian solvable group and hence it has a non-trivial minimal normal subgroup $\bar{M}$. Let $M$ be the normal subgroup of $G$ containing $N$ such that $M/N=\bar{M}$. Then $p$ divides $|M|$ as $N$ has been chosen to be a maximal normal $p'$-subgroup of $G$. Since $p\nmid |N|$, this yields that $p$ divides $|\bar{M}|$, as well. Since $\bar{M}$ is a minimal normal subgroup, and $G$ is solvable,
$\bar{M}$ is a normal (elementary abelian) $p$-subgroup of $\bar{G}$. If the $p$-rank of $\bar{\cX}$ is positive, $\bar{\cX}$ turns out to be a counterexample to Proposition \ref{princE} for its automorphism \textcolor{black}{group}  $\bar{G}$ satisfying property (B).
%
\end{proof}

\begin{proposition}
\label{lemmaA}
Under the hypotheses of  Proposition \ref{princE}, if (A) holds then either $\cX$ has zero $p$-rank,  or {\rm(D)} holds.
\end{proposition}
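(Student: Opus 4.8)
The plan is to combine Lemma \ref{lemmaA00} with the induction on $\gg$ under which Proposition \ref{princE} is being proved, so that Proposition \ref{princE} may be assumed for every curve of genus smaller than $\gg$. By Lemma \ref{lemmaA00}, under (A) either (C) or (D) holds; if (D) holds we are done, so assume (C) and, for a contradiction, that $\gamma(\cX)>0$. Fix a maximal normal $p'$-subgroup $N$ of $G$; by (C), $\cX/N$ has zero $p$-rank. Refine $1\trianglelefteq N$ to a chief series $1=M_0\trianglelefteq M_1\trianglelefteq\dots\trianglelefteq M_r=N$ of $G$, so each $M_i$ is normal in $G$ and each factor $M_{i+1}/M_i$ is an elementary abelian $\ell_i$-group with $\ell_i\neq p$. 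Then each cover $\cX/M_i\to\cX/M_{i+1}$ is tame, and since the $p$-rank never increases on passing to a quotient curve (the pull-back map on Jacobians has finite kernel) we get $\gamma(\cX)\geq\gamma(\cX/M_1)\geq\dots\geq\gamma(\cX/N)=0$. Let $j$ be least with $\gamma(\cX/M_j)=0$; since $\gamma(\cX)>0$, we have $j\geq 1$.

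Suppose $j\geq 2$, and put $\gg_1=\gg(\cX/M_{j-1})$. As $M_{j-1}\neq 1$, the Hurwitz genus formula gives $\gg>\gg_1$ and $\gg\geq|M_{j-1}|(\gg_1-1)+1$. If $\gg_1\geq 2$, then the inequality $(|M_{j-1}|(\gg_1-1)+1)^2>|M_{j-1}|\,\gg_1^2$ used already in the proof of Lemma \ref{lemmaA00} yields $|G/M_{j-1}|=|G|/|M_{j-1}|>\frac{p}{p-2}\,84\,\gg_1^2$, so $\cX/M_{j-1}$ is a curve of genus $\gg_1<\gg$ with solvable automorphism group $G/M_{j-1}$ and positive $p$-rank, contradicting the induction; the sub-cases $\gg_1\leq 1$ are settled by the elementary arguments of Lemma \ref{lemmaA00} (invoking Result \ref{res94} when $\cX/M_{j-1}$ is elliptic). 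Hence $j=1$, and we are reduced to the situation where $M:=M_1$ is a minimal normal subgroup of $G$ — thus an elementary abelian $\ell$-group with $\ell\neq p$ — with $\gamma(\cX/M)=0$ while $\gamma(\cX)>0$. Here no genus reduction is available, and this is the crux.

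For this remaining case I would play the tame cover $\cX\to\bar\cX:=\cX/M$ against the wild $p$-part of $G$. Let $S$ be a Sylow $p$-subgroup of $G$ (non-trivial, since $G$ is non-tame by Result \ref{res56.116}) and $\bar S=MS/M\cong S$ its image in $\aut(\bar\cX)$. Since $\gamma(\bar\cX)=0$, the Deuring--Shafarevich formula forces $\bar S$ to have a single short orbit on $\bar\cX$, namely a fixed point $\bar P_0$, and gives $\gamma(\cX/MS)=\gamma(\bar\cX/\bar S)=0$; consequently $\bar\cX\to\cX/MS$ is totally (wildly) ramified over one point and unramified elsewhere. As $M$ acts transitively, with a fibre of size prime to $p$, on the fibre of $\cX\to\bar\cX$ over $\bar P_0$, the $p$-group $S$ fixes some point $P_0$ of $\cX$ over $\bar P_0$. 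Using $S\cap M=1$ one realizes $\cX$ as the fibre product $(\cX/S)\times_{\cX/MS}\bar\cX$, and base-changing $\bar\cX\to\cX/MS$ along the tame cover $\cX/S\to\cX/MS$ shows all short orbits of $S$ on $\cX$ to be fixed points lying over the points of $\cX/S$ above that single branch point. Combining this with the Deuring--Shafarevich and Hurwitz formulae for $S$ and for $M$ on $\cX$, with Nakajima's bound $|S|\leq\frac{p}{p-2}(\gg-1)$ (valid as $\gamma(\cX)>0$; Result \ref{resnaga}), with Result \ref{gktrans} bounding $|S|$ when $G$ has no fixed point on $\bar\cX$, with Result \ref{res60.79.108}(i) bounding the abelian group $M$, and with Stichtenoth's orbit trichotomy for $G$ (Result \ref{res56.116}), should force $|G|\leq\frac{p}{p-2}\,84\,\gg^2$, contradicting \eqref{eq13jan2015}. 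The unramified sub-case of $\cX\to\bar\cX$, where $\gg-1=|M|(\gg(\bar\cX)-1)$, is handled by the same count, made simpler.

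The step I expect to be the main obstacle is exactly this last one — disposing of a minimal normal $p'$-subgroup $M$ with $\gamma(\cX/M)=0$ but $\gamma(\cX)>0$. Since the genus does not drop, the induction is of no use, and the contradiction must be squeezed out of a simultaneous analysis of the tame cover $\cX\to\cX/M$ and the $p$-subgroups of $G$, balancing the Hurwitz and Deuring--Shafarevich formulae against Nakajima's bound and Stichtenoth's classification; the unramified sub-case, in which one has to rule out a Prym variety of positive $p$-rank, is especially delicate.
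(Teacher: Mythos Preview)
Your reduction via the chief series is valid, but it leads you to study the wrong normal subgroup at the decisive moment. Once you have reduced to $j=1$ you are left with a \emph{minimal} normal $p'$-subgroup $M$ with $\gamma(\cX/M)=0$ and $\gamma(\cX)>0$, and you yourself flag this as ``the main obstacle'' and do not actually close it. The sketch you give there is not a proof: the fibre-product description is fine, but the subsequent appeal to Result~\ref{gktrans} is misdirected (that result assumes $p$-rank zero on the curve whose Sylow $p$-subgroup you are bounding, and in any case bounding $|S|$ from above is the opposite of what you need for Nakajima), and the promised ``simultaneous balancing'' of Hurwitz, Deuring--Shafarevich, and Nakajima is never carried out. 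So the proposal has a genuine gap precisely at the crux.

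The paper avoids this difficulty by never leaving the \emph{maximal} normal $p'$-subgroup $N$. The point of maximality --- already exploited in Lemma~\ref{lemmaA00} --- is that every minimal normal subgroup of $\bar G=G/N$ is a $p$-group $\bar M$. Under (C) we have $\gamma(\bar\cX)=0$, so by (i) of Result~\ref{lem29dic2015} this normal $p$-subgroup $\bar M$ has a \emph{unique} fixed point $\bar P\in\bar\cX$; since $\bar M\trianglelefteq\bar G$, that point is fixed by all of $\bar G$. Hence the $N$-orbit $\theta$ lying over $\bar P$ is a full $G$-orbit, and $p\nmid|\theta|$ because $p\nmid|N|$. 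For $P\in\theta$ the stabilizer $G_P=S\rtimes H$ therefore contains a full Sylow $p$-subgroup $S$ of $G$, with $H$ cyclic of order prime to $p$. The $p'$-subgroup $L=NH$ then satisfies $|G|=|S|\cdot|L|$ (the verification that $L_P=H$ is a short index computation), and the classical Hurwitz bound $|L|\le 84(\gg-1)$ forces $|S|>\tfrac{p}{p-2}\gg$, contradicting Nakajima (Result~\ref{resnaga}) if $\gamma(\cX)>0$.

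In short: your chief-series descent throws away exactly the structural fact --- a \emph{normal} $p$-subgroup of $\bar G$ --- that pins down a $G$-fixed point on $\bar\cX$ and makes the counting trivial. Working with a minimal $M$ you only get that a Sylow $p$-subgroup $\bar S$ fixes a point of $\bar\cX$, not that $\bar G$ does, and that is why your $j=1$ case resists.
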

\begin{proof}
Among the normal $p'$-subgroups of $G$ choose one, say $N$, of maximal order. Let ${\bar M}$ be a normal (elementary abelian) $p$-subgroup of $\bar{G}$ defined as in the final part of the proof of Lemma \ref{lemmaA00}.

By Lemma \ref{lemmaA00} we may assume that (C) holds. Then $\gamma(\bar{\cX})=0$, and hence $\bar{M}$ fixes a unique point $\bar{P}\in \bar{\cX}$, by (i) of Result \ref{lem29dic2015}.

Let $\theta$ be the $N$-orbit consisting of all points of $\cX$ lying over $\bar{P}$ in the cover $\cX|\bar{\cX}$. Since $p\nmid |N|$, we also have $p\nmid |\theta|$. Furthermore, as $\bar{M}$ is a normal subgroup of $\bar{G}$,  $\bar{P}$ is fixed by $\bar{G}$, as well. Therefore, $\theta$ is a $G$-orbit. Choose a point $P\in \theta$. Then $|G|=|G_P||\theta|$ and (iii) of Result \ref{res74} shows that the stabilizer $G_P$ of $P$ in $G$ is the semidirect product of a Sylow $p$-subgroup $S$ of $G_P$ by a cyclic group $H$ of order $h$ with $p\nmid h$. Actually, $S$ is a Sylow $p$-subgroup of $G$ since $p$ does not divide $|\theta|$.

If $\theta=\{P\}$ then $G=G_P=S\rtimes H$. From this and (\ref{eq13jan2015}), $24\gg^2<|G|=|S||H|\leq |S|(4\gg+2)$ by (iii) of Result \ref{res60.79.108}  whence  $$|S|>\frac{24\gg^2}{4\gg+2}>6\frac{\gg^2}{\gg+1}>4\gg.$$
From Result \ref{resnaga}, $\cX$ has zero $p$-rank.

 Therefore, $|\theta|\ge 2$ is assumed. Since $N$ is a normal subgroup of $G$, the set $L=NH$ is a subgroup of $G$ whose order $|L|$ is equal to $|H||N|/|H\cap N|$. Obviously, $\theta$ is an $L$-orbit, and hence $|L|=|L_P||\theta|$ with $L_P$ being the stabilizer of $P$ in $L$ and $H\leq L_P$. Since $|G|=|G_P||\theta|=|S||H||\theta|$, the Lagrange theorem yields that
$$\frac{|G|}{|L|}=|S|\frac{|H|}{|L_p|}$$
is an integer. Since $|S|$ and $|L_P|$ are coprime, this yields that $|L_P|$ must divide $|H|$. Hence, $L_P$ cannot contain $H$ properly. Therefore, $H=L_P$, and $|G|=|S||L_P||\theta|=|S||L|$.
As $p$ does not divide $|L|$, Result \ref{res56.116} yields $|L|\le 84(\gg-1)$. It turns out that
\begin{equation}
\label{eqq2}
84\textstyle\frac{p}{p-2}\gg^2<|G|=|S||L|\le|S|84(\gg-1)<84|S|\gg,
\end{equation}
whence $|S|>\textstyle\frac{p}{p-2}\gg$.
From Result \ref{resnaga}, $\cX$ has zero $p$-rank.
\end{proof}
\begin{lemma}
\label{leB11luglio2015} Under the hypotheses of  Proposition \ref{princE}, if $\rm{(A)}$ holds and $\cX$ has zero $p$-rank then $G$ fixes a point.
\end{lemma}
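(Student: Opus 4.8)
The plan is to pass to the quotient $\bar\cX=\cX/N$ by a maximal normal $p'$-subgroup $N$ of $G$, where a global fixed point appears, to pull it back to $\cX$, and then to play the resulting lower bound on a Sylow $p$-subgroup $S$ of $G$ against the bound $|S|<p^2$ for fixed-point-free solvable groups of Result~\ref{gktrans}. Concretely, I would pick a non-trivial normal $p'$-subgroup $N$ of $G$ of maximal order (it exists by (A)), noting $N\neq G$ since otherwise $G$ would be tame, contradicting $(\ref{eq13jan2015})$ via the classical Hurwitz bound $|G|\le 84(\gg-1)$. Because Proposition~\ref{princE} is already established, alternative (D) of Lemma~\ref{lemmaA00} cannot occur, so (C) holds: $\bar\cX=\cX/N$ has zero $p$-rank. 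As in the proof of Proposition~\ref{lemmaA}, the solvable group $\bar G=G/N$ has no non-trivial normal $p'$-subgroup (such a subgroup would pull back to a normal $p'$-subgroup of $G$ strictly larger than $N$), hence its Fitting subgroup $\bar M=O_p(\bar G)$ is a non-trivial normal $p$-subgroup of $\bar G$; since $\gamma(\bar\cX)=0$, the group $\bar M$ fixes a unique point $\bar P\in\bar\cX$ — by Result~\ref{lem29dic2015}(i) applied to a Sylow $p$-subgroup of $\aut(\bar\cX)$ containing $\bar M$ when $\gg(\bar\cX)\ge 2$, and by a direct argument (Dickson's classification, resp.\ supersingularity) when $\gg(\bar\cX)\le 1$ — so $\bar G$ fixes $\bar P$.

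Exactly as in the proof of Proposition~\ref{lemmaA}, the fibre $\theta$ of $\cX\to\bar\cX$ over $\bar P$ is a single $N$-orbit, hence a $G$-orbit with $p\nmid|\theta|$; for $P\in\theta$, Result~\ref{res74}(iii) gives $G_P=S\rtimes H$ with $H$ cyclic of order prime to $p$, and since $p\nmid[G:G_P]=|\theta|$ the group $S$ is a Sylow $p$-subgroup of $G$. That proof, by restricting the action on $\theta$ to the $p'$-subgroup $L=NH$ (so that $|L|\le 84(\gg-1)$ by the tame Hurwitz bound) and using $|G|=|S|\,|L|$, yields $|S|>\textstyle\frac{p}{p-2}\gg$. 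Furthermore $S\neq 1$ (else $G$ would be tame), so a Sylow $p$-subgroup of $\aut(\cX)$ containing $S$ has, by Result~\ref{lem29dic2015}(i), the single fixed point $P$; hence $\mathrm{Fix}(S)=\{P\}$, and $G$ fixes a point of $\cX$ if and only if it fixes $P$.

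It remains to show $G$ fixes $P$. Suppose not; then $G$ is fixed-point-free, and since $G$ is solvable with $|G|>84\textstyle\frac{p}{p-2}\gg^2>24\gg(\gg-1)$, Result~\ref{gktrans} gives $|S|<p^2$. Also $|\aut(\cX)|\ge |G|>84(\gg-1)$, so Roquette's Result~\ref{ROQ} gives $\gg\ge p-1$, whence $|S|>\textstyle\frac{p}{p-2}\gg\ge\textstyle\frac{p(p-1)}{p-2}>p$. Since $|S|$ is a power of $p$, the inequality $|S|>p$ forces $|S|\ge p^2$, contradicting $|S|<p^2$. Hence $G$ fixes $P$, which proves the lemma. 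The crux — and the place where hypothesis (A) is really used — is the identity $|G|=|S|\,|L|$ with $L=NH$ a $p'$-group (so that the tame bound applies to $L$), which is the content imported from the proof of Proposition~\ref{lemmaA}; the remaining steps form the short numerical squeeze above, valid precisely because $p\nmid|\theta|$ keeps $S$ a full Sylow $p$-subgroup of $G$.
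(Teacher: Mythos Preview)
Your argument is correct and tracks the paper's proof closely: find a fixed point $\bar P$ of $\bar G=G/N$ on $\bar\cX$, pull back to the $G$-orbit $\theta$, obtain $|G|=|S|\,|L|$ with $L=NH$ a $p'$-group (so $|L|\le 84(\gg-1)$), deduce $|S|>\frac{p}{p-2}\gg$, and finish with Roquette's bound plus Result~\ref{gktrans}.

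One point deserves care. You justify $\gamma(\bar\cX)=0$ by saying ``Proposition~\ref{princE} is already established'' so that alternative~(D) of Lemma~\ref{lemmaA00} cannot occur. But in the paper's architecture Lemma~\ref{leB11luglio2015} is itself an ingredient in the proof of Proposition~\ref{princE} (it is invoked inside Proposition~\ref{le110luglio2005}), so appealing to Proposition~\ref{princE} here is, as stated, circular. What rescues you is that (D) speaks only of curves of \emph{strictly smaller} genus, so the inductive hypothesis suffices; you should phrase it that way. The paper sidesteps this detour entirely: rather than establishing $\gamma(\bar\cX)=0$, it uses the given hypothesis $\gamma(\cX)=0$ directly, lifting $\bar M$ to a $p$-subgroup $T\le G$ via Schur--Zassenhaus (writing $M=N\rtimes T$) and applying Result~\ref{lem29dic2015}(i) on $\cX$ itself to see that $T$, and hence $\bar T=\bar M$, has a unique fixed point. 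This keeps the lemma self-contained.
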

\begin{proof}
Among the normal $p'$-subgroups of $G$ choose one, say $N$, of maximal order and let $\bar{\cX}=\cX/N$.
Let $M$ and ${\bar M}$ be as in the final part of the proof of Lemma \ref{lemmaA00}. We first show that
$\bar{M}$ fixes a unique point $\bar{P}\in \bar{\cX}$.
Since  $|M|=|\bar{M}||N|$, from Result \ref{reszass},
there exists a $p$-subgroup $T$ in $G$ such that $M=N\rtimes T$. Clearly, $\bar T=T/N$ coincides with $\bar M$. By (i) of Result \ref{lem29dic2015}, $T$ fixes a point $P\in \cX$ and no non-trivial element of $T$ fixes a point other than $P$.
Therefore, $\bar{T}$ fixes the point $\bar{P}$ lying under $P$ in the cover $\cX|\bar{\cX}$. Assume that $\bar{T}$ fixes a point $\bar{Q}\neq \bar{P}$.
The $N$-orbit $\omega$ consisting of all points lying over $\bar{Q}$ is left invariant by $M$. Since $|\omega|$ divides $|N|$ and $p$ is prime to $|N|$, $p$ is prime to $|\omega|$ as well. Then $T$ must fix a point $Q\in \omega$. Since $P\not\in \omega$, this contradicts (i) of Result \ref{lem29dic2015}. Therefore, $\bar{P}$ is the unique fixed point of $\bar{M}$. Since $\bar{M}$ is a normal subgroup of $\bar{G}$, $\bar{P}$ is the unique fixed point of $\bar{G}$, as well. This shows that $\omega$ is a non-tame short orbit of $G$. If $\omega=\{P\}$, then $G$ fixes $P$. Otherwise, the final part in the proof of  Proposition \ref{lemmaA} shows that $|G|\leq |S|84(\gg-1)$ where $S$ is
the Sylow $p$-subgroup of $G$ fixing $P$. This together
with our hypothesis (\ref{eq13jan2015}) give
$
|S|>\textstyle\frac{p}{p-1}\gg.
$
On the other hand, $\gg\ge p-1$ by Result \ref{ROQ}.
Therefore $|S|>p$ and hence $|S|\ge p^2$. By Result \ref{gktrans} $G$ fixes a point.
\end{proof}
So far, we have proven that if a counterexample for Proposition \ref{princE} exists then Case (D) must hold. Therefore, we are left with Case (B).
\begin{proposition}
\label{le110luglio2005} Under the hypotheses of Proposition \ref{princE},
if $\rm{(B)}$  holds then either $\cX$ has zero $p$-rank, or  $\rm{(D)}$  occurs.
\end{proposition}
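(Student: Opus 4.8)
The plan is to imitate the structure of the proof of Proposition \ref{lemmaA} (Case (A)), but now exploiting a \emph{normal $p$-subgroup} of $G$ rather than a normal $p'$-subgroup, and to reduce via a quotient curve so that the induction hypothesis on the genus can be applied. Assume $\cX$ has positive $p$-rank; we must produce Case (D), i.e.\ a curve of genus $<\gg$ that is still a counterexample to Proposition \ref{princE} for an automorphism group satisfying (B). Since (B) holds, $G$ has a non-trivial normal $p$-subgroup; choose $Q$ to be a minimal one, so $Q$ is elementary abelian and $p \mid |Q|$, and set $\bar{\cX}=\cX/Q$ with $\bar{G}=G/Q$ acting on it, of order $|G|/|Q|$.

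The first step is to control the genus drop and the $p$-rank drop simultaneously. Since $|G|>84\frac{p}{p-2}\gg^2 > 84(\gg-1)$, Result \ref{res56.116}(a)--(d) applies, and in all cases the quotient $\cX/G$ is rational and $G$ has a non-tame short orbit; in particular some non-trivial $p$-element fixes a point of $\cX$. Because $\cX$ has positive $p$-rank, Result \ref{lem29dic2015} is not directly available to us for the Sylow $p$-subgroup of $G$, but the Deuring--Shafarevich formula \eqref{eq2deuring} applied to $Q$ reads $\gamma(\cX)-1=|Q|(\gamma(\bar{\cX})-1)+\sum_i(|Q|-\ell_i)$ with $\ell_i \mid |Q|$; since $\gamma(\cX)>0$ and each summand $|Q|-\ell_i\ge 0$, we get $\gamma(\bar{\cX})\ge 1$, so $\bar{\cX}$ also has positive $p$-rank. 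The Hurwitz genus formula applied to $Q$ gives $2\gg-2=|Q|(2\gg(\bar{\cX})-2)+\sum_{P\in\cX}d_P$ with all $d_P\ge |Q|-1\ge p-1>0$ at the (non-empty) ramification locus, forcing $\gg(\bar{\cX})<\gg$ unless $Q$ acts freely; but $Q$ cannot act freely, since we just observed $G$ has a non-tame orbit and $Q$ being normal meets every non-tame stabilizer non-trivially (more carefully: a Sylow $p$-subgroup $S$ of $G_P$ for $P$ in the non-tame orbit is normalized by the whole $p$-group argument, and $Q\cap S\ne 1$ because $Q$ is a non-trivial normal $p$-subgroup and $S$ is a Sylow $p$-subgroup of $G$ containing $Q$). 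Hence $\gg(\bar{\cX})<\gg$.

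The second step is the numerical estimate showing $\bar{G}$ still violates the bound on $\bar{\cX}=\cX/Q$. From the Hurwitz formula applied to $Q$ together with a non-trivial contribution of the different we derive an inequality of the form $\gg-1 \ge |Q|(\gg(\bar{\cX})-1) + \frac{1}{2}|Q|$ (or, when $\gg(\bar{\cX})\le 1$, a cruder bound), analogous to the estimate used in Lemma \ref{lemmaA00}; squaring and comparing yields $\big(|Q|\gg(\bar{\cX})^2$ vs.\ the genus relation$\big)$, from which $|\bar G| = |G|/|Q| > 84\frac{p}{p-2}\,\gg(\bar{\cX})^2$. When $\gg(\bar{\cX})\ge 2$ this is exactly the hypothesis of Proposition \ref{princE}, and $\bar{G}$ is still solvable with a non-trivial normal $p$-subgroup (the image of a Sylow $p$-subgroup, or more precisely any minimal normal subgroup of $\bar G$, which is a $p$-group since $G$ has no non-trivial normal $p'$-subgroup and this property passes to $\bar G$ by the maximality/minimality choice and solvability), so $\bar G$ satisfies (B) and $\bar{\cX}$ is the desired smaller counterexample, i.e.\ (D) holds. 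The remaining low-genus cases $\gg(\bar{\cX})\in\{0,1\}$ must be handled separately: if $\bar{\cX}$ is rational then $\bar\gamma=0$ contradicts $\gamma(\bar{\cX})\ge 1$; if $\bar{\cX}$ is elliptic one runs the same Deuring--Shafarevich argument on a fixed-point subgroup $\bar T$ of order $p$ (with $p=3$, using Result \ref{res94}) exactly as in the proof of Lemma \ref{lemmaA00} to conclude $\gamma(\bar{\cX})=0$, again a contradiction; and $\gg=2$ forces $\gg(\bar{\cX})\le 1$ which is now excluded.

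The main obstacle is the verification, in the case analysis of Result \ref{res56.116}, that $Q$ genuinely ramifies and that the normal $p$-subgroup structure is inherited by $\bar G$ with the property that \emph{all} its non-trivial normal subgroups remain $p$-groups (so that (B) persists and the induction is clean); this requires carefully chaining the choice of $Q$ as a minimal normal subgroup with the absence of normal $p'$-subgroups in $G$, and checking that $G/Q$ acquires no new normal $p'$-subgroup (which holds because a normal $p'$-subgroup of $G/Q$ would pull back to a normal subgroup of $G$ that is an extension of $Q$ by a $p'$-group, hence by Schur--Zassenhaus, Result \ref{reszass}, splits and produces a normal $p'$-complement, contradicting (B) after a Frattini-type argument). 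Making that last point airtight is where the real work lies; the genus and $p$-rank inequalities are routine once the ramification of $Q$ is secured.
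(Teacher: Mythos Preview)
Your reduction has a structural gap. You quotient by a \emph{minimal} normal $p$-subgroup $Q$ and then need $\bar G=G/Q$ to still satisfy (B), but the Schur--Zassenhaus argument you sketch does not deliver this: if $\bar M\trianglelefteq\bar G$ is a $p'$-group, its preimage $M\trianglelefteq G$ splits as $Q\rtimes U$, yet $U$ is merely a complement in $M$, not a normal subgroup of $G$, so nothing contradicts (B). Already $G=C_p\rtimes C_m$ with $\gcd(m,p)=1$ and faithful action satisfies (B), while $G/C_p\cong C_m$ is entirely a $p'$-group. Hence (B) need not persist under your quotient and the induction does not close. There is also a direction error in your Deuring--Shafarevich step: from $\gamma(\cX)-1=|Q|(\gamma(\bar\cX)-1)+\sum_i(|Q|-\ell_i)$ with each summand non-negative you obtain $|Q|(\gamma(\bar\cX)-1)\le\gamma(\cX)-1$, an \emph{upper} bound on $\gamma(\bar\cX)$; the conclusion $\gamma(\bar\cX)\ge 1$ does not follow, so your disposal of the rational quotient collapses.

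The paper's route is the opposite: quotient by a \emph{maximal} normal $p$-subgroup $N$, so that $\bar G=G/N$ has no normal $p$-subgroup at all; solvability then forces any minimal normal subgroup $\bar T$ of $\bar G$ to be an elementary abelian $d$-group with $d\ne p$, i.e.\ $\bar G$ lands in Case (A). When $\gg(\bar\cX)\ge 2$ one applies the already proven Proposition~\ref{lemmaA} to $\bar\cX$, obtaining either (D) or $\gamma(\bar\cX)=0$. The cases $\gg(\bar\cX)\in\{0,1\}$ are handled by a direct short-orbit analysis of $N$ on $\cX$ (three preliminary claims) combining Deuring--Shafarevich with Results~\ref{res94} and~\ref{resdickson}. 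Finally, if $\gamma(\bar\cX)=0$ and $\gg(\bar\cX)\ge 2$, Lemma~\ref{leB11luglio2015} shows $\bar G$ fixes a point of $\bar\cX$, which pins the non-tame $G$-orbit $\theta$ down as a single $N$-orbit; a size estimate via $|H|\le 4\gg+2$ then forces $|S|>3\gg$, whence $\gamma(\cX)=0$ by Result~\ref{resnaga}.
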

\begin{proof}
 Choose a maximal normal $p$-subgroup $N$ of $G$. Then $|N|=p^h$ with $h\geq t$, and the factor group $\bar{G}=G/N$ contains no normal $p$-subgroup. If $\bar{G}$ is trivial then $G=N$, that is, $G$ is a $p$-group, and $\gamma(\cX)=0$ by Result \ref{resnaga}. Therefore $|\bar{G}|>1$ is assumed.  Let $S$ be a Sylow $p$-subgroup of $G$. Then $S\geq N$. From Result \ref{res56.116}, $G$ has a unique non-tame short orbit, say $\theta$.
Since $N$ is a normal subgroup, $\theta$ is partitioned into $N$-orbits, each of the same length $p^{h-k}$ where $p^k=|N_P|$ for \textcolor{black}{any point}  $P\in \theta$. If $\lambda$ counts the components of the partition, then $N$ has exactly $\lambda$ short orbits and $|\theta|=\lambda p^{h-k}$.

Now we prove three preliminary claims that will be useful when dealing separately, as usual, with the cases $\gg(\bar{\cX})=0,1,\ge 2$ \textcolor{black}{for}  the quotient curve $\bar{\cX}=\cX/N$.
\begin{claim}
\label{claim1} If $S=N$, or $|\theta|=1$ then $\cX$ has zero $p$-rank.
\end{claim}
\begin{proof}
If $S=N$ then by Result \ref{reszass}, $G=S\rtimes L$ with a $p'$-subgroup $L$ of $G$. Therefore, $|G|=|S||L|\leq |S| 84(\gg-1)$ whence $|S|>\frac{p}{p-2}\cdot \frac{\gg(\cX)^2}{\gg(\cX)-1}$. Hence $\cX$ has zero $p$-rank by Result \ref{resnaga}. Therefore, $S\supsetneq N$ is assumed. Then
 $\bar{S}=S/N$ is a non-trivial $p$-subgroup of $\aut(\bar{\cX})$. If $|\theta|=1$ then $G$ would fix a point and Result \ref{res74} would imply $S=N$.
 \end{proof}
Therefore, $S\supsetneq N$ and $|\theta|>1$ are assumed.
\begin{claim}
\label{claim2}
If $\lambda=1$ then $\gamma(\bar{\cX})\geq 2$.
\end{claim}
\begin{proof}
For $\lambda=1$, the Deuring-Shafarevich formula applied to $N$ gives $\gamma(\cX)-1=p^h(\gamma(\bar{\cX})-1)+(p^h-p^{h-k})$. From this $\gamma(\bar{\cX})-1\geq 0$, otherwise $\gamma(\cX)=0$. Furthermore, if $\gamma(\bar{\cX})=1$ then $\gamma(\cX)-1=p^h-p^{h-k}$. On the other hand, if $\cX$ has positive $p$-rank then Result \ref{resnaga} together with $|S|>|N|$ yield
$\gamma(\cX)-1\geq \textstyle\frac{p-2}{p}|S|\geq \textstyle\frac{p-2}{p}p^{h+1}$. Therefore,
$$p^h-p^{h-k}\geq (p-2)p^{h},$$
a contradiction with $p>2$.
\end{proof}
\begin{claim}
\label{claim3}
If $\lambda >1$, then  some non-trivial subgroup of $\bar{S}=S/N$ has a fixed point on $\bar{\cX}$.
\end{claim}
\begin{proof}
We show that if $\lambda>1$ then $G_P^{(1)}\gvertneqq N_P$ for any point $P \in\theta$. For $|N_P|=1$, this is trivial. Thus $k>0$ is assumed. From the Deuring-Shafarevich formula applied to $N$, $\gamma(\cX)-1=p^h(\gamma(\bar{\cX})-1)+\lambda(p^h-p^{h-k})\geq -p^h+\lambda(p^h-p^{h-k})$ whence
\begin{equation}
\label{eq13luglio2015}
\gamma(\cX)-1\geq \lambda p^h\left(1-\frac{1}{\lambda}-\frac{1}{p^{k}}\right)\geq \lambda p^h\left(1-\ha -\thi \right)\geq \textstyle\frac{1}{6}\lambda p^h=\textstyle\frac{1}{6}p^{k}|\theta|.
\end{equation}
According to (iii) of Result \ref{res60.79.108}, for a point $P\in \theta$ write $G_P=G_P^{(1)}\rtimes H$ where $H$ is a $p'$-subgroup of $G$ fixing $P$. Then $|H|\leq 4\gg+2$ by (iii) of Result \ref{res60.79.108}. Now if $G_P^{(1)}=N_P$, this together with (\ref{eq13luglio2015}) would yield $6(\gg(\cX)-1)\geq 6(\gamma(\cX)-1)\geq |G_P^{(1)}||\theta|$ whence $$24 \gg(\cX)^2>6(4\gg(\cX)+2)(\gg(\cX)-1)\geq |G_P^{(1)}||H||\theta|=|G|,$$
a contradiction.
\end{proof}

Since Case (B) occurs and $G$ is not $p$-group, $\bar{G}$  has a non-trivial minimal normal subgroup $\bar{T}$ which is an elementary abelian group of order $d^j$ with a prime $d\neq p$. Let $T$ be the subgroup of $G$ containing $N$ such that $\bar{T}=T/N$.
Three cases are investigated separately according to the values of $\gg(\bar{\cX})$.

If $\bar{\cX}$ is rational, then $\bar{G}$ is isomorphic to a subgroup of $PGL(2,\mathbb{K})$ of order divisible by $p$ which has a normal $p'$-subgroup isomorphic to $\bar{T}$. From Result \ref{resdickson}, $p=3$  and $\bar{G}\cong {\rm{Alt}}_4$ or $\bar{G}\cong {\rm{Sym}}_4$. Therefore, $24\gg(\cX)^2<|G|\leq 3^h|\bar{G}|\leq 8\cdot 3^{h+1}$ whence $|S|= 3^{h+1}\geq 3 \gg(\cX)^2>3\gg(\cX)$. From Result \ref{resnaga}, $\cX$ has
zero $p$-rank.

 If $\bar{\cX}$ is elliptic then $\lambda>1$ by Claim \ref{claim2}. Hence $\bar{S}$ is a subgroup of $\bar{\cX}$ with a non-trivial subgroup $\bar{R}$ fixing a point of $\bar{\cX}$. From Result \ref{res94}, $|\bar{R}|=3$ (and $p=3$).  The Deuring-Shafarevich formula applied to $\bar{R}$ gives $\gamma(\bar{\cX})-1=3(\gamma(\bar{\cY})-1)+2\tau$ where $\bar{\cY}$ is the quotient curve of $\bar{\cX}$ by $\bar{R}$ while $\tau>0$ is the number of fixed \textcolor{black}{points}  of $\bar{R}$ on $\bar{\cX}$. This equation is only consistent with $\gamma(\bar{\cX})=0$ and $\tau=1$. As a consequence, $\bar{S}$ fixes a unique point $\bar{P}$. Also, $\bar{S}=\bar{R}$ and hence $|S|=3|N|$. Furthermore, the $N$-orbit $\omega$ consisting of all points of $\cX$ lying over $\bar{P}$ is an $S$-orbit. Therefore, $|S_P|/|N_P|=|S|/|N|=3$ for $P\in \omega$. Write $G_P=S_P\rtimes H$ with a $p'$-subgroup fixing $P$.
If $k=0$ then no non-trivial element of  $N$  fixes a point,  and $\gamma(\bar{\cX})=0$ gives a contradiction.
Therefore $k>0$ and (\ref{eq13luglio2015}) holds as in the proof of Claim \ref{claim3}.
 From (\ref{eq13luglio2015}),
$18(\gamma(\cX)-1)\geq 3\cdot 3^k |\theta|=|S_P||\theta|$ which together with (iii) of Result \ref{res60.79.108} give
$$
|G|=|S_P| |\theta | |H|\le 18(\gamma(\cX)-1)(4\gg(\cX)+2)\le
18(4\gg(\cX)+2)(\gg(\cX)-1).$$
But this contradicts our hypothesis \eqref{eq13jan2015}.

If $\gg(\bar{\cX})\geq 2$, we may argue as in the proof of Lemma \ref{lemmaA00} to prove  $$|\bar{G}|>84\textstyle\frac{p}{p-2}\gg(\bar{\cX})^2.$$
Since $\bar{T}$ is a normal $p'$-subgroup of $\bar{G}$, Proposition  \ref{lemmaA} applies to $\bar{\cX}$. Therefore, either (D) occurs, or
$\bar{\cX}$ has zero $p$-rank. In the latter case, from Lemma \ref{leB11luglio2015}, $\bar{G}$ fixes a point $\bar{P}\in \bar{\cX}$.
Hence, the $N$-orbit consisting of all points of $\cX$ lying over $\bar{P}$ coincides with $\theta$. Then $\theta$ is also an $S$-orbit and $|\theta|=|N|/|N_P|=|S|/|S_p|$ holds. Write $G_P=G_P^{(1)}\rtimes H$ with a $p'$-subgroup $H$. Since $|H|\leq 4\gg(\cX)+2$ by (iii) of Result \ref{res60.79.108}.
this yields
$$|G|=|G_P||\theta|\le |S_P||H|\frac{|S|}{|S_P|} \leq |S|(4\gg(\cX)+2).$$
From this and (\ref{eq13jan2015}),
$$|S|> \frac{c\gg(\cX)^2}{4\gg(\cX)+2}>3\gg(\cX).$$
Therefore $\cX$ has zero $p$-rank by Result \ref{resnaga}.
\end{proof}

Proposition \ref{princE} is a corollary of the above lemmas and propositions. In fact, let $\cX$ be a counterexample of minimal genus. From Proposition \ref{lemmaA} and Lemma \ref{leB11luglio2015}, case (B) occurs for $\cX$.
But this is ruled out \textcolor{black}{by}  Proposition \ref{le110luglio2005}.

A further ingredient of the proof of Theorem \ref{princA} is the following lemma.

\begin{lemma}
\label{lemmaC} Let $\cX$ be an algebraic curve of genus $\gg \geq 2$ and $p$-rank zero. Let $G$ be a solvable subgroup of $\aut(\cX)$. If $G$ has no non-trivial normal $p'$-subgroup then $G$ fixes a point of $\cX$.
\end{lemma}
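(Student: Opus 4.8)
The plan is to exhibit a non-trivial \emph{normal} $p$-subgroup of $G$ and then to use the rigidity of $p$-subgroups acting on a curve of zero $p$-rank. One may assume $G\neq 1$, the assertion being trivial otherwise. First I would note that, $G$ being a non-trivial solvable group, it has a minimal normal subgroup $M$; as such, $M$ is an elementary abelian $\ell$-group for some prime $\ell$, and if $\ell\neq p$ then $M$ is a non-trivial normal $p'$-subgroup of $G$, contrary to the hypothesis. Hence $\ell=p$, so the $p$-core $N:=O_p(G)$ is non-trivial (in fact $N$ equals the Fitting subgroup of $G$, since $O_\ell(G)=1$ for every prime $\ell\neq p$).

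Next I would show that $N$ fixes a \emph{unique} point of $\cX$. Since $\gg\geq 2$, the group $\aut(\cX)$ is finite, so $N$ is contained in a Sylow $p$-subgroup $S^{\ast}$ of $\aut(\cX)$. As $\cX$ has zero $p$-rank, Result~\ref{lem29dic2015}(i) supplies a point $P\in\cX$ fixed by $S^{\ast}$ whose non-trivial elements fix no point other than $P$. In particular $N$ fixes $P$; and if $N$ fixed a second point $Q\neq P$, then a non-trivial element of $N\leq S^{\ast}$ would fix $Q$ as well, a contradiction. Therefore $P$ is the only point of $\cX$ fixed by $N$.

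Finally I would pass from $N$ to $G$: since $N\trianglelefteq G$, for every $g\in G$ and $n\in N$ one has $g^{-1}ng\in N$, hence $(g^{-1}ng)(P)=P$, i.e. $n(g(P))=g(P)$; thus $N$ fixes $g(P)$, and the uniqueness just proved forces $g(P)=P$. Consequently $G$ fixes $P$, which is the desired conclusion.

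The argument involves no computation once Result~\ref{lem29dic2015} is granted. The one point that has to be handled with a little care — and which I regard as the only real content of the lemma — is the uniqueness of the fixed point of the non-trivial $p$-subgroup $N$; this is available precisely because $N$ embeds into a Sylow $p$-subgroup of the \emph{whole} automorphism group $\aut(\cX)$, all of whose non-trivial elements are already known to have exactly one fixed point.
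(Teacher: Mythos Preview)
Your proof is correct and follows essentially the same approach as the paper: exhibit a non-trivial normal $p$-subgroup via a minimal normal subgroup of the solvable group, invoke Result~\ref{lem29dic2015}(i) to obtain a unique fixed point, and conclude by normality. Your explicit embedding of $N$ into a Sylow $p$-subgroup of $\aut(\cX)$ spells out a detail the paper leaves implicit, but otherwise the arguments coincide.
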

\begin{proof} Let $N$ be a minimal normal subgroup of $G$. Since $G$ is solvable, $N$ is an elementary abelian group. By our hypothesis, $p\mid |N|$. Therefore $N$ is a normal $p$-subgroup of $G$.
Since $\cX$ has zero $p$-rank, $N$ fixes a unique point $P\in\cX$ by (i) of Result \ref{lem29dic2015}. Therefore $G$ fixes $P$.
\end{proof}

 Now we are in a position to give a proof for Theorem \ref{princA}.
From Proposition \ref{princE} together with Lemmas \ref{leB11luglio2015} and \ref{lemmaC}, the $p$-rank of $\cX$ is zero and $G$ fixes a point of $\cX$.
Then $G=S\rtimes C$ with a Sylow $p$-subgroup $S$ and a cyclic group $C$.  From (i) of Result \ref{lem29dic2015}, no non-trivial element in $S$ fixes a point other than $P$. Therefore, if $G$ has a short orbit distinct from $\{P\}$ then it is tame. Actually, it is also unique by
 Result \ref{res56.116} by virtue of the hypothesis  $|G|>24\gg^2$. Then  the cover $\cX|(\cX/G)$ is a Katz-Gabber $G$-cover.

\section{Automorphism groups and $p$-rank}\label{sec4}
In this section, we provide some sufficient conditions for a curve to have $p$-rank equal to zero, which will be useful for the proof of Theorem \ref{princC}. Throughout the section $d$ stands for an odd prime \textcolor{black}{and $\cX$ is an algebraic curve of genus $\gg\ge 2$}.
We first show that for special solvable groups, the bound of Theorem \ref{princA} can be improved.
\begin{lemma}
\label{22dic2015} Let $H$ be an automorphism group of an algebraic curve of genus $\gg \ge 2$ with a normal \textcolor{black}{Sylow} $d$-subgroup $Q$ of odd order. 
 Suppose that a complement $U$ of $Q$ in $H$ is abelian, and that $N_H(U)\cap Q=\{1\}$.  If
\begin{equation}
\label{eq22bisdic2015}
{\mbox{$|H|\geq 30(\gg-1)$}},
\end{equation}
then  $d=p$ and $U$ is cyclic. Moreover, the quotient curve $\bar{\cX}=\cX/Q$ is rational and either
\begin{itemize}
\item[\rm(i)]  $\cX$ has positive $p$-rank, $Q$ has exactly two (non-tame) short orbits, and they are also the only short orbits of $H$; or
\item[\rm(ii)] $\cX$ has zero $p$-rank and $H$ fixes a point.
\end{itemize}
\end{lemma}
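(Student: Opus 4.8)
\emph{Overview.} The strategy is to descend to $\bar\cX:=\cX/Q$. By Schur--Zassenhaus (Result~\ref{reszass}) we have $H=Q\rtimes U$; since $Q\trianglelefteq H$ and $U\le N_H(U)$, the modular law gives $N_H(U)=(Q\cap N_H(U))U=U$, so $U$ is self-normalizing with exactly $|Q|$ conjugates. Also $H/Q\cong U$ acts \emph{faithfully} on $\bar\cX$: the kernel of $H\to\aut(\bar\cX)$ fixes $\K(\cX)^Q$ pointwise, hence lies in $\gal(\K(\cX)/\K(\cX)^Q)=Q$.

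\emph{Step 1: $\bar\cX$ is rational.} From the Hurwitz formula and $d_P\ge 0$, $2\gg-2\ge|Q|(2\bar\gg-2)$. If $\bar\gg\ge 2$ then $|U|\le 4\bar\gg+4$ by Result~\ref{res60.79.108}(i), and the two estimates give $|H|\le 12(\gg-1)$, contradicting \eqref{eq22bisdic2015}. If $\bar\gg=1$, write $\aut(\bar\cX)=T\rtimes F$ with $T$ the translation subgroup; since $p$ is odd, $|F|\le 12$ and every abelian subgroup of $F$ has order at most $6$, so $|U|\le 6\,|U\cap T|$. The branch locus of $Q$ on $\bar\cX$ (non-empty, as $\gg\ge 2>\bar\gg$) is $U$-invariant because $Q\trianglelefteq H$, so it has at least $|U\cap T|$ points when $U\cap T\neq 1$; combining this with the estimate $\ge\tfrac23|Q|$ (resp.\ $\ge\tfrac43|Q|$ in the wild case) for each local different contribution again forces $|H|<30(\gg-1)$. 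Hence $\bar\gg=0$.

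\emph{Step 2: $d=p$ and $U$ is cyclic.} Now $U\cong H/Q\le\PGL(2,\K)$ is abelian, so by Dickson (Result~\ref{resdickson}) it is cyclic of order prime to $p$, elementary abelian of exponent $p$, or a Klein four group. The Klein four case is excluded at once: its branch locus on $\PG(1,\K)$ is a union of orbits of size $2$ or $4$, so Hurwitz (or Deuring--Shafarevich) forces $|Q|\le 3(\gg-1)$ while $|Q|=|H|/4\ge\tfrac{15}{2}(\gg-1)$. The key point is that whenever $U$ fixes a point $\bar P$ of $\PG(1,\K)$ (as happens in the remaining cases) the fibre over $\bar P$ is a single $H$-orbit, and for $\tilde P$ in it, $|H_{\tilde P}|=|U|\,|Q_{\tilde P}|$ with $|U|$ and $|Q_{\tilde P}|$ coprime; thus $H_{\tilde P}=Q_{\tilde P}\rtimes V$ with $V$ a complement of $Q$ in $H$, and as $Q$ is solvable $V$ is $H$-conjugate to $U$, whence \emph{$U$ fixes a point of $\cX$}. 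Using this fixed point together with the $U$-invariance of the branch locus of $Q$ one eliminates the remaining possibilities: if $U$ is elementary abelian of rank $\ge2$ the branch locus has $\ge|U|\ge 9$ points and Hurwitz gives $|H|<30(\gg-1)$; if $d\neq p$ and $U$ is cyclic of order $p$, then $H$ is a Frobenius group, the $U$-fixed point of $\PG(1,\K)$ cannot be a branch point of $Q$, the branch locus is a union of $U$-orbits of size $p$, and Hurwitz again bounds $|H|$; and if $d\neq p$ with $U$ cyclic of order prime to $p$, then $H$ is a $p'$-group with $\cX/H$ rational, where the admissible signatures — three branch points with one stabiliser order a power of $d$ and the other two equal to $|U|$ times a power of $d$, or more than three branch points — all keep $|H|<30(\gg-1)$. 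Therefore $d=p$, and then $U$, being a complement of the Sylow $p$-subgroup $Q$, is cyclic of order prime to $p$.

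\emph{Step 3: the dichotomy.} With $d=p$, the Deuring--Shafarevich formula for $Q$ reads $\gamma(\cX)-1=-|Q|+\sum_{i=1}^{k}(|Q|-\ell_i)$, $k\ge 1$. If $k=1$ it forces $\gamma(\cX)=0$ and $\ell_1=1$, so $Q$ fixes a point; since $U$ fixes the unique branch point of $Q$ on $\PG(1,\K)$, all of $H$ fixes that point of $\cX$ — case~(ii). If $k\ge 2$ then $\gamma(\cX)>0$; writing $n=|U|$ and using that on $\PG(1,\K)$ the cyclic group $U$ has only its two fixed points as short orbits, the branch locus of $Q$ has size $\varepsilon+tn$ with $\varepsilon\in\{0,1,2\}$, $t\ge 0$. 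The case $t\ge1$ gives $k\ge n$ and, via the estimate $\ge\tfrac43|Q|$ per contribution, the contradiction $|H|\le 6(\gg-1)$; so $t=0$ and $k=\varepsilon=2$, both short orbits of $Q$ lying over $U$-fixed points. Then $H$ is transitive on each of these two fibres, so these are precisely the short orbits of $H$, they are non-tame, and they coincide with the short orbits of $Q$ — case~(i). The main obstacle is Step 2 in the case $d\neq p$ with $\bar\cX$ rational: one must convert the purely group-theoretic hypothesis $N_H(U)\cap Q=\{1\}$ — through the fixed point of $U$ on $\cX$ and conjugacy of complements — into enough rigidity on the ramification of $Q$ (or of $H$) that only a short list of triangle-type configurations survives, each then killed by a direct genus estimate.
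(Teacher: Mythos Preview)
Your overall architecture matches the paper's: reduce to $\bar\cX=\cX/Q$, eliminate $\bar\gg\ge 1$ by Hurwitz-type estimates, then classify the abelian image $\bar U\le\PGL(2,\K)$ and rule out everything except $d=p$ with $U$ cyclic, finishing with Deuring--Shafarevich. Your conjugacy-of-complements argument that $U$ itself fixes a point of $\cX$ whenever $\bar U$ fixes a point of $\bar\cX$ is a nice touch, and your explicit treatment of the Klein four case patches something the paper's text skates over.

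The substantive difference is in the case $d\neq p$ with $U$ cyclic of order prime to $p$. You propose a signature analysis for $\cX\to\cX/H$, asserting that every admissible signature $(u d^{a_0},\,u d^{a_\infty},\,d^{b_1},\ldots)$ yields $|H|<30(\gg-1)$. This is in fact true (the extremal case is $(18,2,3)$ with $u=2$, $d=3$, giving $|H|=18(\gg-1)$), but you do not supply the verification, and it is not a one-line estimate: small $u$ and $d=3$ require individual checking. The paper avoids this entirely by applying Hurwitz to $Q$ rather than to $H$: if some short $Q$-orbit lies over a point $\bar P\notin\{\bar P_0,\bar P_\infty\}$, then the whole $\bar U$-orbit of $\bar P$ (size $u$) consists of branch points of $Q$, and a direct different estimate gives $2\gg-2\ge\tfrac16|H|$ (with a separate line for $u=3$). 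Hence the only short $Q$-orbits are $o_0,o_\infty$, so $H$ has exactly two short orbits; but a tame group with rational quotient and two short orbits forces $\gg\le 1$, and therefore $p\mid|H|$, i.e.\ $d=p$. This is both shorter and self-contained.

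One minor correction: in your $|U|=p$ subcase, labelling $H$ a Frobenius group is not what does the work. The actual mechanism (which you also need for the rank~$\ge 2$ elementary abelian case, where the Frobenius property is not available) is that $U$, being a Sylow $p$-subgroup of $H$, equals $H_{\tilde P}^{(1)}$ and is therefore normal in $H_{\tilde P}$ by Result~\ref{res74}(i); any nontrivial $w\in Q_{\tilde P}$ would then lie in $N_H(U)\cap Q=\{1\}$. This is exactly how the paper dispatches its case~(I), and it immediately gives that $\bar P_\infty$ is not a branch point of $Q$, so the branch locus has size a positive multiple of $|U|$.
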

\begin{proof}
We have $H=Q\rtimes U$. Set $|Q|=d^k$, $|U|=u$. Then $u\neq d$.
If $u=2$ then (\ref{eq22bisdic2015}) reads $|Q|>15(\gg-1)$. If $d=p$ then $\cX$ has zero $p$-rank by Result \ref{resnaga}; then (ii) holds, since $Q$ fixes a point by (i) of Result \ref{lem29dic2015}  and $Q$ is normal in $H$.  Otherwise, $Q$ is a prime to $p$ subgroup of $H$. The Hurwitz genus formula applied to $Q$ gives $|Q|<6(\gg-1)$ for $d\neq 3$ and $|Q|<12(\gg-1)$ for $d=3$, contradicting (\ref{eq22bisdic2015}). Therefore we may assume that $u\geq 3$.

Three cases are treated separately according as the quotient curve $\bar{\cX}=\cX/Q$ has genus $\bar{\gg}$ at least $2$, or $\bar{\cX}$ is elliptic, or rational.

If $\gg(\bar{\cX})\geq 2$, then $\aut(\bar{\cX})$ has a subgroup isomorphic to $U$, and (i) of Result \ref{res60.79.108} yields $4\gg(\bar{\cX})+4\geq |U|.$ Furthermore, from
the Hurwitz genus formula applied to $Q$, $\gg-1\geq |Q|(\gg(\bar{\cX})-1)$. Therefore,
$$(4\gg(\bar{\cX})+4)|Q|\geq |U||Q|=|H|\geq 30(\gg-1) \geq 30 |Q|(\gg(\bar{\cX})-1),$$
whence $$30 \leq 4\frac{\gg(\bar{\cX})+1}{\gg(\bar{\cX})-1}\leq 12$$
contradicting (\ref{eq22bisdic2015}).

If $\bar{\cX}$ is elliptic, then the cover $\cX|\bar{\cX}$ ramifies, otherwise $\cX$ itself would be elliptic. Thus, $Q$ has some short orbits. Take one of them together with its images
$o_1,\ldots,o_u$ under the action of $H$. Since $Q$ is a normal subgroup of $H$, $o=o_1\cup\ldots\cup o_m$ is a $H$-orbit of size $u_1d^v$ where $d^v=|o_1|=\ldots |o_m|$. Equivalently, the stabilizer of a point $P\in o$ has order
$d^{k-m}u/u_1$, and it is the semidirect product $Q_1\rtimes U_1$ where $|Q_1|=d^{k-v}$ and $|U_1|=u/u_1$ for a subgroup $Q_1$ of $Q$ and $U_1$ of $U$ respectively.
The point $\bar{P}$ lying under $P$ in the cover $\cX| \bar{\cX}$ is fixed by the factor group $\bar{U_1}=U_1Q/Q$. Since $\bar{\cX}$ is elliptic, Result \ref{res94} implies $|\bar{U}_1|\leq 12$ for $p=3$ and $|\bar{U}_1|\leq 6$ for $p>3$.
As $\bar{U}_1\cong U_1$, this yields the same bound for $|U_1|$, that is, $u\leq 12 u_1$ for $p=3$ and $u\leq 6 u_1$ for $p>3$. Furthermore, if $p=3$ then $d>3$, and $d_P\geq d^{k-m}-1\geq \textstyle\frac{4}{5} d^{k-m}$. If $p>3$ then $d_P\geq d^{k-m}-1\geq \textstyle\frac{2}{3} d^{k-m}$.
From the Hurwitz genus formula applied to $Q$, if $p=3$ then
$$2\gg-2\geq d^m u_1d_P\geq d^mu_1(\textstyle\frac{4}{5} d^{k-m}) \geq \textstyle\frac{4}{5} d^m u_1\geq \textstyle\frac{1}{15} d^mu=\textstyle\frac{1}{15}|Q||U|=\textstyle\frac{1}{15} |H|,$$
while for $p>3$,
$$2\gg-2\geq d^m u_1d_P\geq d^mu_1(\textstyle\frac{2}{3} d^{k-m}) \geq \textstyle\frac{2}{3} d^m u_1\geq \textstyle\frac{1}{9} d^mu=\textstyle\frac{1}{9}|Q||U|=\textstyle\frac{1}{9} |H|,$$
But this contradicts (\ref{eq22bisdic2015}).

If $\bar{\cX}$ is rational, then $Q$ has at least one short orbit. Furthermore, $\bar{U}=UQ/Q$ is isomorphic to a subgroup of $PGL(2,\K)\cong\aut(\bar{\cX})$. Since $U\cong \bar{U}$, Result  \ref{resdickson} shows that either
\begin{itemize}
\item[(I)] $U$ is an elementary abelian $p$-group and $\bar{U}$ fixes a point $\bar{P}_\infty$ but no non-trivial element of $\bar{U}$ fixes a point other than $\bar{P}_\infty$, or
\item[(II)]
$U$ is a cyclic group of order prime to $p$, $\bar{U}$ fixes two points $\bar{P}_0$ and $\bar{P}_\infty$ but no non-trivial element in $\bar{U}$ fixes a point other than $\bar{P}_0$ and $\bar{P}_\infty$.
\end{itemize}
Let $o_\infty$ and $o_0$ be the $Q$-orbits lying over $\bar{P}_0$ and $\bar{P}_\infty$, respectively. Obviously,
$o_\infty$ (and in case (II) also $o_0$) is a short orbit of $H$. We show that $Q$ has at most one  or two short orbits according to cases (I) and (II), the candidates being $o_\infty$ and in case (II) also $o_0$.
By absurd, there is a $Q$-orbit $o$ of size $d^m$ with $m<k$ which lies over a point $\bar{P}\in \bar{\cX}$ different from $\bar{P}_\infty$ (in case (II) from both $\bar{P}_0$ and $\bar{P}_\infty$). Since the orbit of $\bar{P}$ in $\bar{U}$ has length $u$, then the $H$-orbit of a point $P\in o$ has length $ud^m$. If $u>3$, the Hurwitz genus applied to $Q$ gives $$2\gg-2\ge -2d^k+ud^m(d^{k-m}-1)\geq -2d^k+ud^m\textstyle\frac{2}{3} d^{k-m}\geq -2d^k+\textstyle\frac{2}{3}ud^k\geq
\textstyle\frac{2}{3}(u-3)d^k>\textstyle\frac{1}{6}ud^k\geq \frac{1}{6}|H|,$$
a contradiction with $|H|>c\gg$ for $c=36$. If $u=3$ then $d\geq 3$, and hence $$2\gg-2\ge -2d^k+3d^m(d^{k-m}-1)=d^k-3d^m>\textstyle\frac{1}{3} d^k,$$
whence $|H|=3d^k<9\gg$, a contradiction with (\ref{eq22bisdic2015}).

In case (I), $u$ is a power of $p$ with $p\neq d$ while $|o_\infty|=d^k$. Hence $U$ must fix a point $P_\infty\in o_\infty$. As $o_\infty$ is a short $Q$-orbit, $H_P$ contains a non-trivial element $v$ from $Q$. From (i) of Result \ref{res74}, $U$ is a normal subgroup of $H_P$. Therefore, $w\in N_H(U)\cap Q$, a contradiction.

We are left with case (II). Since $H$ has exactly two short orbits, (\ref{eq22bisdic2015}) together with Result \ref{res56.116} yield that $p$-divides $|H|$. Therefore, $d=p$. Assume that $Q$ has two short orbits. They are $o_\infty$ and $o_0$. If their lengths are $p^a$ and $p^b$ with $a,b<k$, the Deuring-Shafarevich formula applied to $Q$ gives
$$\gamma(\cX)-1=-p^k+(p^k-p^a)+p^k-p^b$$
whence $\gamma(\cX)=p^k-(p^a+p^b)+1>0$. The same argument shows that if $Q$ has just one short orbit, then $\gamma(\cX)=0$ and the short orbit consists of a single point $P$. Since $Q$ is a normal subgroup of $H$, $P$ is also fixed by $H$.
\end{proof}

\begin{rem}\label{scholter} {\em{As a byproduct of the the above proof, if the hypotheses of Lemma \ref{22dic2015} are satisfied then Case (II) occurs, that is,
 $\gg(\bar{\cX})=0$, $U$ is a cyclic group of order prime to $p$, $\bar{U}=UQ/Q$ fixes two points $\bar{P}_0$ and $\bar{P}_\infty$ in $\bar{\cX}$
but no non-trivial element in $\bar{U}$ fixes a point other than $\bar{P}_0$ and $\bar{P}_\infty$.
In fact, the other possibilities have explicitly been ruled out in that proof for $u\ge 3$. The claim is true also for $u=2$  since $|Q|>15(\gg -1)$ yields
\textcolor{black}{$\gg(\bar{\cX})=0$} by
the last statement of Result \ref{res56.116}.}}
\end{rem}

\begin{rem}
\label{rem3jan2016} {\em{In Lemma \ref{22dic2015}, if $d=p$ is assumed then the extra-hypothesis $N_H(U)\cap Q=\{1\}$ is unnecessary, since in the proof of  Lemma \ref{22dic2015} the hypothesis $N_H(U)\cap Q=\{1\}$ is only used to rule out Case
(I), in which $d\neq p$ holds. In fact, if $Q$ is a $p$-subgroup then $U$ is a $p'$-subgroup, and (I) cannot occur.
Furthermore, Lemma \ref{22dic2015} applies whenever $H$ is taken for a group stabilizing a point of $\cX$ and satisfying (\ref{eq22bisdic2015})}.}
\end{rem}

\begin{rem}
\label{rem17mar2016}
{\em{Both Cases (i) and (ii) in Lemma \ref{22dic2015} occur as the following examples show. For Case (i), let $\cX$ be a non-singular model $\cX$ of the Artin-Mumford curve where $\cC$ is the plane irreducible (singular) curve $\cC$ of affine equation $(X^p-X)(Y^p-Y)-1=0$. The main properties of $\cX$ are well known; see \cite{CKK,AKK, kgmm}: $\cX$ is an ordinary curve $\cX$ of genus $\gg$ (and $p$-rank) equal to $(p-1)^2$. Furthermore, $\cC$ has two (ordinary) singular points $X_\infty$ and $Y_\infty$ both $p$-fold points. Each of them is the center of $p$ places of $\mathbb{K}(\cX)$.
Let $o_\infty$ be the set of the $p$ points of $\cX$ associated to the $p$ places centered at $X_\infty$, and $o_0$ that arising from $Y_\infty$ in the same way. For every $a,b\in \mathbb{F}_p$, the linear map $(X,Y)\to (X+a,Y+b)$ is an automorphism of $\cC$ and hence of $\cX$. The linear maps form an (elementary abelian) group $Q=C_p\times C_p$ of order $p^2$, and $Q$ has two (short) orbits of length $p$, namely $o_\infty$ and $o_0$. Also, for every $c\in \mathbb{F}_p^*$ the linear map $(X,Y)\to(cX,c^{-1}Y)$ is an automorphism of $\cC$ and hence of $\cX$. They form a cyclic group $C_{p-1}$ of order $p-1$ that preserves both $o_\infty$ and $o_0$. A further (linear) automorphism is $\varphi:\,(X,Y)\to (Y,X)$ which interchanges $o_\infty$ and $o_0$. The group generated by $C_{p-1}$ together with $\varphi$ is a dihedral group $D_{p-1}$. Therefore, $\aut(\cX)$ has a subgroup $G=(C_p\times C_p)\rtimes D_{p-1}$ with a dihedral group of order $2(p-1)$. If $\mathbb{K}=\overline{\mathbb{F}_p}$ or $(\mathbb{K},|\cdot|)$ is non-Archimedean valued field of characteristic $p$, then $\aut(\cX)=G$; see \cite{AKK} and \cite{CKK}. For $p\geq 31$, $|H|=p^2(p-1)\geq 30(\gg-1)$ and $H=(C_p \times C_p) \rtimes C_{p-1}$ satisfies the hypotheses of Lemma \ref{22dic2015}, and we have Case (i). For Case (ii), let $\cX$ be a non-singular model $\cX$ of the the plane irreducible (singular) curve $\cC$ of affine equation $Y^p+Y-X^m=0$ with $m<p$. For the main properties of $\cX$ see \cite{stichtenoth1973II}, and \textcolor{black}{ \cite[Theorems 12.9 and 12.11]{hirschfeld-korchmaros-torres2008}. If $m$ does not divide $p+1$ then} $\cX$ has genus $\ha (p-1)m$ and $p$-rank zero while $H=\aut(\cX)$ fixes the infinite point and $H=S\rtimes C$ with $|S|=p$ and $|C|=m(p-1)$. If $p\geq 17$ then $|H|>30(\gg-1)$, and we have Case (ii).
}}
\end{rem}
\begin{rem}
\label{rem16mar2016} {\em{In Lemma \ref{22dic2015}, if we assume $d\neq p$ and $[H:Q]\ge 3$ but drop the extra-hypothesis $N_H(U)\cap Q=\{1\}$ then Case (I) in the proof of Lemma \ref{22dic2015} occurs. Then $H$ has a unique short orbit $o_\infty$ and the quotient curve $\tilde{\cX}=X/H$ is rational. From the Abhyankar conjecture as stated in Section \ref{sec2},
$H$ is generated by $U$ together with its conjugates in $H$. In particular, $H$ is not a direct product of $Q$ and $U$.
We do not use this fact in the forthcoming proofs.
}}
\end{rem}
\begin{lemma}
\label{22dicter2015} Let $T$ be a subgroup of $\aut(\cX)$ containing a normal subgroup $H$ that satisfies the hypotheses of Lemma \ref{22dic2015}. Then $T=Q\rtimes V$ where $Q$ is a $p$-group, $V$ is a cyclic or a dihedral group, and $p$ is prime to $|V|$. If in addition $\cX$ has positive $p$-rank and $T$ fuses two short $H$-orbits together into one orbit then $V$ is dihedral.
\end{lemma}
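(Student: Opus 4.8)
The plan is to first analyze the structure of $T$ by exploiting that $H=Q\rtimes U$ is normal in $T$ and that, by Remark \ref{scholter}, $Q$ is a normal Sylow $p$-subgroup of $H$ with $d=p$ and $U$ cyclic of order prime to $p$. Since $Q$ is characteristic in $H$ (it is the unique Sylow $p$-subgroup of the normal subgroup $H$, being normal in $H$) and $H\trianglelefteq T$, the group $Q$ is normal in $T$ as well. As $p\nmid |U|$, the quotient $\bar T=T/Q$ contains $\bar H=H/Q\cong U$ as a normal subgroup. I would then identify $\bar T$ as a subgroup of $\aut(\bar{\cX})$ where $\bar{\cX}=\cX/Q$ is rational by Remark \ref{scholter}; thus $\bar T\le PGL(2,\K)$. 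Now $\bar H\cong U$ is a non-trivial cyclic prime-to-$p$ normal subgroup of $\bar T$ which, by Remark \ref{scholter}, fixes exactly the two points $\bar P_0,\bar P_\infty$ of $\bar{\cX}$. Since $\bar T$ normalizes $\bar H$, it permutes the fixed-point set $\{\bar P_0,\bar P_\infty\}$ of $\bar H$, so $\bar T$ either fixes both points or swaps them. By Dickson's classification (Result \ref{resdickson}), a subgroup of $PGL(2,\K)$ of order prime to $p$ that stabilizes a pair of points is cyclic (if it fixes both) or dihedral (if it swaps them); hence $\bar T$ is cyclic or dihedral of order prime to $p$. I must check $p\nmid|\bar T|$: the elements of $PGL(2,\K)$ of order $p$ have a single fixed point, so they cannot stabilize $\{\bar P_0,\bar P_\infty\}$ as a set unless they fix each of the two points, which is impossible for a unipotent element fixing two points; alternatively one can argue that a $p$-element of $\bar T$ would normalize the cyclic $\bar H$ and act on its two fixed points, forcing it to fix both, contradiction. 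So $p\nmid|\bar T|=|T|/|Q|$, meaning $Q$ is a Sylow $p$-subgroup of $T$, and by the Schur--Zassenhaus theorem (Result \ref{reszass}) $T=Q\rtimes V$ with $V\cong \bar T$ cyclic or dihedral of order prime to $p$. This proves the first assertion.

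For the second assertion, suppose $\cX$ has positive $p$-rank and $T$ fuses two short $H$-orbits into a single $T$-orbit. By part (i) of Lemma \ref{22dic2015} (using Remark \ref{scholter} to know we are in Case (i) when the $p$-rank is positive), $Q$ has exactly two short orbits $o_0$ and $o_\infty$, lying over $\bar P_0$ and $\bar P_\infty$ respectively, and these are also the only short $H$-orbits. The hypothesis that $T$ fuses two short $H$-orbits therefore means $o_0$ and $o_\infty$ lie in the same $T$-orbit; equivalently, some element of $T$ carries $o_0$ to $o_\infty$, hence (passing to $\bar{\cX}=\cX/Q$) carries $\bar P_0$ to $\bar P_\infty$. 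Thus $\bar T$ does not fix $\bar P_0$, so by the dichotomy established above $\bar T$ swaps $\bar P_0$ and $\bar P_\infty$, and by Dickson it is dihedral rather than cyclic. Consequently $V\cong\bar T$ is dihedral, as claimed.

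The main obstacle I anticipate is the bookkeeping needed to transfer facts cleanly between $\cX$, the quotient $\bar{\cX}=\cX/Q$, and the group quotient $T/Q$: one must verify that $Q$ is normal in $T$ (via characteristicity of $Q$ in $H$), that the action of $\bar T$ on $\bar{\cX}$ is well defined and identifies $\bar T$ with a subgroup of $PGL(2,\K)$, that $\bar H\cong U$ keeps its precise fixed-point behaviour on $\bar{\cX}$ (this is exactly what Remark \ref{scholter} supplies), and—most delicately—that $p\nmid |\bar T|$ so that Schur--Zassenhaus applies and $V$ inherits the cyclic-or-dihedral dichotomy. A secondary point requiring care is the precise meaning of ``$T$ fuses two short $H$-orbits together into one orbit'': since by Lemma \ref{22dic2015}(i) there are exactly two short $H$-orbits when $\gamma(\cX)>0$, this must mean those two are amalgamated, and it is their images $\bar P_0,\bar P_\infty$ being interchanged by $\bar T$ that forces the dihedral conclusion. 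None of these steps involves heavy computation; the content is entirely in correctly combining Dickson's theorem with the structural information already extracted in Lemma \ref{22dic2015} and Remark \ref{scholter}.
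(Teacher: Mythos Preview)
Your proposal is correct and follows essentially the same approach as the paper's proof: both argue that $Q$ is characteristic in $H$ and hence normal in $T$, pass to $\bar T=T/Q\le PGL(2,\K)$ acting on the rational curve $\bar{\cX}=\cX/Q$, observe that $\bar T$ must preserve the two-point set $\{\bar P_0,\bar P_\infty\}$, and conclude via Dickson's classification that $\bar T$ is cyclic or dihedral of order prime to $p$, then lift this by Schur--Zassenhaus. Your treatment is in fact slightly more explicit than the paper's on the point $p\nmid|\bar T|$ (the paper simply asserts this from Result \ref{resdickson}), and your handling of the second assertion is the same: fusing the two short $H$-orbits forces $\bar T$ to swap $\bar P_0$ and $\bar P_\infty$, hence to be dihedral.
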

\begin{proof} By Remark \ref{scholter} we may suppose that (II) in the proof of Lemma \ref{22dic2015} holds. Since $H$ is a normal subgroup of $T$, and $o_\infty$ and $o_0$ are the only short orbits of $H$, two possibilities
arise, namely either $T$ preservers both $o_\infty$ and $o_0$, or $o_\infty\cup o_0$ is a $T$-orbit. On the other hand, 
$Q$ is the unique Sylow $p$-subgroup of $H$. Thus $Q$ is a characteristic subgroup of $H$, and hence $Q$ is a normal subgroup of $T$. Therefore, $\bar{T}=G/Q$ is a subgroup of $PGL(2,\K)$. It turns out that either $\bar{T}$ fixes both points $\bar{P}_0$ and $\bar{P}_\infty$, or $\bar{T}$ interchanges them. From Result \ref{resdickson}, $\bar{T}$ is cyclic in the former case, and it is a dihedral group in the latter case. In both cases $p$ is prime to $|\bar{T}|$. Therefore $Q$ is prime to $[T:Q]$, and hence $T=Q\rtimes V$ where $V$ is either cyclic or dihedral, and $p$ is prime to $|V|$.
\end{proof}
\begin{rem}
\label{abh}
{\em{In the dihedral case, some (involutory) elements of $\bar{T}$ fixes a point $\bar{P}$ distinct from $P_\infty$ and $P_0$. This agrees with the Abhyankar conjecture as stated in Section \ref{sec2} which shows that the possibility for $T$ to be dihedral in Lemma \ref{22dicter2015} can only occur when $T$ has a short orbit other than the fused one. This is the case when $\cX$ is a \textcolor{black}{non-singular} model of the Artin-Mumford curve \textcolor{black}{$\cC$}; see Remark \ref{rem17mar2016}. In fact, the automorphism $\varphi:\,(X,Y)\to (Y,X)$ of $\cC$ fixes each point $P(a,a)\in \cC$ with $a^p-a\pm 1=0$ and these points are outsides $o_\infty$ and $o_0$. We do not use this fact in the forthcoming proofs.}}
\end{rem}
\begin{lemma}
\label{lemA29dic2015} Let $G$ be a subgroup of $\aut(\cX)$ of order at least  $16\gg^2$ which satisfies property {\rm{(iii)}} of Result \ref{lem29dic2015}.
If $G$ has a Sylow $d$-subgroup $Q$ of odd order such that the normalizer $N_G(Q)$ satisfies  the hypotheses on $H$ in Lemma \ref{22dic2015}, that is
$Q$ has an abelian complement $U$ in $N_G(Q)$ with
$N_{N_G(Q)}(U) \cap Q=\{1\}$ and $|N_G(Q)|\geq 30(\gg-1)$,
 then $\cX$ has zero $p$-rank.
\end{lemma}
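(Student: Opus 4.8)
The plan is to argue by contradiction: suppose $\cX$ has positive $p$-rank. Since $N_G(Q)$ satisfies the hypotheses of Lemma \ref{22dic2015}, that lemma applies; in particular $d=p$, so $Q$ is a Sylow $p$-subgroup of $G$, and since $\cX$ has positive $p$-rank we are in Case (i): $Q$ has exactly two non-tame short orbits $o_0,o_\infty$, which are also the only short orbits of $N_G(Q)$, the quotient $\bar\cX=\cX/Q$ is rational, and by Remark \ref{scholter} the complement $U$ is cyclic of order prime to $p$ with $\bar U=UQ/Q$ fixing exactly $\bar P_0,\bar P_\infty$ on the rational curve $\bar\cX$. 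The first main step is to locate $Q$ inside $\cX$: since a Sylow $p$-subgroup of $G$ is $Q$ and it does not fix a point (it has two short orbits, both of size $>1$ — indeed $|o_0|=|o_\infty|$ need not be $1$, and if one of them were a single point then $Q$ would fix a point and, via Result \ref{res74}, contradict that $Q$ has two short orbits).

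Next I would exploit property (iii) of Result \ref{lem29dic2015}, which $G$ is assumed to satisfy: distinct Sylow $p$-subgroups of $G$ intersect trivially. Combined with the fact that $G$ is not solvable in general (so we cannot invoke Result \ref{resgow} directly) — wait, here $G$ need not be solvable, so instead I would count: fix $P\in o_\infty$ and note $Q_P=Q_P^{(1)}$ is a nontrivial $p$-subgroup fixing $P$; by property (iii), $Q_P$ is contained in a unique Sylow $p$-subgroup, namely $Q$. The key is to bound $|G|$ from above in terms of $\gg$ and contradict $|G|\geq 16\gg^2$. To do this, consider the $G$-orbit $\Theta$ of $P$. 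Since $N_G(Q)$ has $o_\infty$ among its short orbits and $o_\infty\subseteq\Theta$, and since by a standard Sylow/Frattini-type argument the stabilizer structure is controlled, I would aim to show $|\Theta|=|G|/|G_P|$ with $|G_P|$ not too large. Precisely: $G_P=G_P^{(1)}\rtimes H'$ with $G_P^{(1)}\le Q$ (as it is a $p$-group fixing $P$, hence inside the unique Sylow $p$-subgroup through $Q_P$, which is $Q$), and $|H'|\le 4\gg+2$ by Result \ref{res60.79.108}(iii).

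For the counting, apply the Deuring-Shafarevich formula to $Q$ on the cover $\cX\to\bar\cX$ (rational), using that $Q$'s only short orbits are $o_0,o_\infty$ of sizes $p^a,p^b$ with $a,b<h$ where $|Q|=p^h$: this gives $\gamma(\cX)-1=p^h-p^a-p^b$, so $\gamma(\cX)\le p^h=|Q|$, hence $|Q|>\gg-1\ge\gamma(\cX)-1$, and more usefully $\gamma(\cX)\le |Q|$. Then I would relate $|G|$ to $|Q|$ and $\gg$. The number $n_p$ of Sylow $p$-subgroups of $G$ equals $[G:N_G(Q)]$; each contributes its own pair of short orbits, but the total number of short orbits of $G$ is at most three by Result \ref{res56.116} (since $|G|>84(\gg-1)$), which forces strong restrictions — in particular, the $G$-orbit through $P$ contains $o_\infty$ for every conjugate, so $|\Theta|\ge n_p\cdot|o_\infty|/(\text{overlap})$, and since the $o_\infty$'s for distinct Sylows are disjoint (their pointwise stabilizers lie in distinct trivially-intersecting Sylows), $|\Theta|\ge n_p |o_\infty|$. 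Combining $|G|=|G_P||\Theta|$, $|G_P^{(1)}|\le|Q|$, $|o_\infty|=|Q|/|Q_P|=|Q|/|G_P^{(1)}|$, $n_p=|G|/|N_G(Q)|$, and $|N_G(Q)|=|Q||U|$ with $|U|\le 4\gg+2$ (Result \ref{res60.79.108}(iii), as $U$ fixes a point of $\cX$ — namely a point of $o_\infty$, which it does by Case (I)/(II) analysis, wait in Case (II) $U$ need not fix a point of $\cX$; instead $U$ permutes $o_\infty$, so $|U|$ divides $|o_\infty|\cdot|U\cap(\text{stab})|$) — I would extract $|G|\le C\gg^2$ with $C<16$.

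The hard part will be the bookkeeping in this final inequality: keeping precise track of $|Q_P|=|G_P^{(1)}|$ versus $|Q|$, of how $U$ acts on $o_\infty$ (whether it fixes a point there or merely permutes it semiregularly, which changes whether Result \ref{res60.79.108}(iii) bounds $|U|$ by $4\gg+2$ or one must instead bound $|o_\infty|$ directly via Hurwitz applied to $Q$), and of the disjointness of the short orbits of the various conjugate Sylow subgroups so that the single $G$-orbit $\Theta$ is genuinely large. A clean route is: by Result \ref{res56.116}, $G$ has at most three short orbits; $\Theta$ is non-tame (it contains $P$ with $Q_P\ne 1$); every conjugate $Q^g$ has its non-tame orbit inside some $G$-short-orbit, and since the $Q^g$ meet trivially, a single $G$-short-orbit cannot simultaneously be the non-tame orbit of two different $Q^g$ unless they share a point's stabilizer — contradiction — so $n_p\le 2$ or so, giving $|N_G(Q)|\ge\tfrac12|G|\ge 8\gg^2$, and then Lemma \ref{22dic2015}'s own conclusion (Case (i)) applied to $N_G(Q)$ combined with $|N_G(Q)|\ge 8\gg^2>24\gg^2$? — no, $8\gg^2<24\gg^2$, so I would instead push $n_p=1$, i.e. $Q\trianglelefteq G$, whence $G=N_G(Q)$ satisfies Lemma \ref{22dic2015}, forcing Case (i) for $G$ itself, and then the bound $|G|=|Q||U|\le|Q|(4\gg+2)$ together with $\gamma(\cX)\le|Q|$ and Result \ref{resnaga} (which gives $|Q|\le\tfrac{p}{p-2}(\gg-1)$ when $\gamma(\cX)>1$) yields $|G|\le\tfrac{p}{p-2}(\gg-1)(4\gg+2)<16\gg^2$ for $p\ge 3$ — wait $\tfrac{p}{p-2}$ can be $3$ at $p=3$, giving $3(\gg-1)(4\gg+2)\approx 12\gg^2<16\gg^2$, the desired contradiction. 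So the real crux is proving $Q\trianglelefteq G$, i.e. $n_p=1$, from property (iii) plus the three-short-orbit bound; once that is in hand the rest is a short computation using Lemma \ref{22dic2015}, Result \ref{resnaga}, and Result \ref{res60.79.108}(iii).
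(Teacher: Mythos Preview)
Your setup is correct: applying Lemma \ref{22dic2015} gives $d=p$ and, assuming $\gamma(\cX)>0$, Case (i) with two non-tame $Q$-orbits $o_0,o_\infty$ that are also $N_G(Q)$-orbits. But the strategy of deducing $n_p\le 2$ or $Q\trianglelefteq G$ from the TI property together with the short-orbit count does not work. The claim that ``a single $G$-short-orbit cannot simultaneously be the non-tame orbit of two different $Q^g$'' is false: a single non-tame $G$-orbit $\Theta$ can (and in the intended applications does) contain the short orbits of \emph{all} conjugates $Q^g$ simultaneously. Indeed the lemma is later applied with $G\cong PSL(2,q)$, where $n_p=q+1$; so proving $n_p=1$ from the hypotheses is impossible. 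You yourself flag this as ``the real crux'' without resolving it, and the sketched counting arguments before that (bounding $|\Theta|$ via $n_p|o_\infty|$, bounding $|U|$ via Result \ref{res60.79.108}(iii)) do not close the gap either, in part because $U$ need not fix a point of $\cX$.

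The paper's argument is short and bypasses $n_p$ entirely. Since $|G|\ge 16\gg^2$, Case (b) of Result \ref{res56.116} is excluded, so $G$ has at most one non-tame short orbit. Both $o_0$ and $o_\infty$ are non-tame $G$-short orbits, hence lie in the same $G$-orbit; pick $w\in G$ with $w(P_\infty)=P_0$ for points $P_\infty\in o_\infty$, $P_0\in o_0$. Now $Q_{P_0}$ and $w^{-1}Q_{P_\infty}w$ are nontrivial $p$-subgroups of $G_{P_0}$, hence both lie in the (unique) Sylow $p$-subgroup $G_{P_0}^{(1)}$ of $G_{P_0}$. But $Q_{P_0}\le Q$ and $w^{-1}Q_{P_\infty}w\le Q^w$; since any two distinct Sylow $p$-subgroups of $G$ intersect trivially by hypothesis, $Q=Q^w$, i.e.\ $w\in N_G(Q)$. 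This contradicts the fact that $o_\infty$ and $o_0$ are distinct $N_G(Q)$-orbits. The TI property is used pointwise on one element $w$, not globally to bound $n_p$.
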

\begin{proof} By Lemma \ref{22dic2015}, $d=p$.  From Remark \ref{scholter} Case (II) in the proof of Lemma \ref{22dic2015} occurs. Assume by contradiction that $\cX$ has positive $p$-rank.
Then from the proof of Lemma \ref{22dic2015},  there exist exactly two short $Q$-orbits $o_\infty$ and $o_0$.
Each is left invariant by $N_G(Q)$, as well. If they are in two different orbits of $G$ then Case (b) in Result \ref{res56.116} with $|G|<16\gg^2$ holds, a contradiction.
Therefore, if  $P_\infty\in o_\infty$ and $P_0\in o_0$ then there exists $w\in  G$ taking $P_\infty$ to $P_0$.
Take two non-trivial elements $r_\infty,r_0\in Q$ fixing $P_\infty$ and $P_0$, respectively.
The conjugate $r$ of $r_\infty$ by $w$ fixes $P_0$. Hence both $r$ and $r_0$ are in the stabilizer of $P_0$ in $G$. From (i) of Result \ref{res74},
 a $p$-subgroup $R$ of $G$ contains both $r$ and $r_0$. Since the conjugate $Q'$ of $Q$ by $w$ is a Sylow $p$-subgroup of $G$, $R$ has non-trivial
 elements from both two Sylow $p$-subgroups $Q$ and $Q'$. Since $G$ is a group in which any two distinct Sylow \textcolor{black}{$p$-subgroups} have trivial intersection,
 it turns out that $Q=Q'$, that is, $w\in N_G(Q)$. This is a contradiction as $o_\infty$ is \textcolor{black}{an}  $N_G(Q)$-orbit.
\end{proof}
\begin{lemma}
\label{lemA30dic2015} Let $G$ be a subgroup of $\aut(\cX)$ with a non-trivial $p$-subgroup $Q$ that satisfies the following properties.
\begin{itemize}
\item[\rm(i)] $Z(G)$ has a non-trivial element $g$ of order $t$ with $t<p$,
\item[\rm(ii)] The normalizer of $Q$ in $G$ contains a subgroup $N$  such that $N=Q\rtimes M$ with an abelian group $M$ of order prime to $p$ properly containing $\langle g\rangle $.
\end{itemize}
If $\bar{\cX}=\cX/\langle g \rangle$ has zero $p$-rank and $\tilde{\cX}=\cX/Q$ is rational then $\cX$ has also zero $p$-rank.
\end{lemma}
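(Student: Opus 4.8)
The plan is to compute $\gamma(\cX)$ by the Deuring--Shafarevich formula applied to $Q$, after controlling the short orbits of $Q$ through the quotient $\bar{\cX}=\cX/\langle g\rangle$, and then to remove the one remaining parameter using (ii) together with the rationality of $\tilde{\cX}=\cX/Q$. First I would fix a fibre of $\cX\to\bar{\cX}$. Since $t<p$, the central cyclic group $\langle g\rangle$ has order prime to $p$, so $G/\langle g\rangle$ acts faithfully on $\bar{\cX}$ and the image $\bar Q$ of $Q$ is a non-trivial $p$-group isomorphic to $Q$ (as $Q\cap\langle g\rangle=\{1\}$). Because $\bar{\cX}$ has zero $p$-rank, (i) of Result~\ref{lem29dic2015} yields a point $\bar P\in\bar{\cX}$ fixed by $\bar Q$ such that no non-trivial element of $\bar Q$ fixes any other point of $\bar{\cX}$ (here $\bar{\cX}$ may be assumed of genus $\ge 2$, the rational and elliptic cases being handled directly). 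Let $\omega$ be the fibre of $\cX\to\bar{\cX}$ over $\bar P$; it is a single $\langle g\rangle$-orbit, so $|\omega|$ is prime to $p$, and since $g$ is central, $Q$ preserves $\omega$ and, being a $p$-group, fixes it pointwise. Conversely, if $1\ne q\in Q$ fixes a point $R\in\cX$, then the non-trivial image $\bar q\in\bar Q$ fixes the image of $R$, so $R\in\omega$. Hence $\omega$ is exactly the set of $Q$-fixed points of $\cX$, it is permuted by $N_G(Q)$, and the short orbits of $Q$ are precisely the $|\omega|$ singletons inside $\omega$. The Deuring--Shafarevich formula \eqref{eq2deuring} for $Q$, with $\gamma(\tilde{\cX})=0$ since $\tilde{\cX}$ is rational, then gives $\gamma(\cX)-1=-|Q|+|\omega|(|Q|-1)$, that is $\gamma(\cX)=(|Q|-1)(|\omega|-1)$; as $Q\ne\{1\}$, the lemma reduces to proving $|\omega|=1$.

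Next I would pass to $\tilde{\cX}=\cX/Q\cong\PG(1,\K)$. The factor group $N_G(Q)/Q$ acts faithfully on $\tilde{\cX}$ (an element acting trivially there lies in $\mathrm{Gal}(\K(\cX)/\K(\cX)^Q)=Q$), so $\langle g\rangle$ and $M$ embed into $\aut(\tilde{\cX})=PGL(2,\K)$ with images $\tilde g$ of exact order $t$ and $\tilde M\cong M$, an abelian group of order prime to $p$, hence cyclic or a Klein four-group by Result~\ref{resdickson}. Being semisimple, $\tilde g$ has exactly two fixed points on $\PG(1,\K)$, and its remaining orbits are free of length $t$. Since $N=Q\rtimes M$ normalises $Q$, its image in $\aut(\bar{\cX})$ normalises $\bar Q$ and therefore fixes $\bar P$; hence $N$ preserves $\omega$, and because $Q$ fixes $\omega$ pointwise while $\langle g\rangle$ acts transitively on it, the image $\tilde\omega$ of $\omega$ in $\tilde{\cX}$ is a single $\langle\tilde g\rangle$-orbit, preserved by $\tilde M$, with $|\tilde\omega|=|\omega|$. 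Thus $|\omega|\in\{1,t\}$. Assume $|\omega|=t>1$ and that $\tilde M$ is cyclic: then $\tilde M$ is generated by a semisimple element whose two fixed points, being fixed by the non-trivial $\tilde g$, are those of $\tilde g$; so $\tilde M$ lies in the torus fixing that pair and acts freely off it, in particular freely on the free orbit $\tilde\omega$. As $\langle\tilde g\rangle\le\tilde M$ is already transitive on $\tilde\omega$, this forces $\tilde M=\langle\tilde g\rangle$, hence $M=\langle g\rangle$, contradicting (ii). This settles the case $t\ge 3$ as well, since an element of the abelian group $\tilde M$ interchanging the two fixed points of $\tilde g$ would give $\tilde g=\tilde g^{-1}$, impossible when $t\ge 3$; so for $t\ge 3$ we conclude $|\omega|=1$ and $\gamma(\cX)=0$.

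The main obstacle is the remaining case $t=2$ with $\tilde M\cong C_2\times C_2$ (equivalently $M\cong C_2\times C_2$): then an involution of $\tilde M$ swaps the two fixed points of $\tilde g$, $\tilde\omega$ is the fixed-point pair of the third involution $\tilde w\in\tilde M$, and the counting above no longer forces $M=\langle g\rangle$. Here $Q\colon\cX\to\PG(1,\K)$ is branched exactly over the two totally ramified points of $\tilde\omega$, both fixed by the element $w\in M\setminus\langle g\rangle$, so $\langle Q,w\rangle$ fixes both points of $\omega$ whereas $g$ and $gw$ interchange them. I expect to settle this by studying the further quotients of $\cX$ by $\langle w\rangle$ and by $\langle gw\rangle$ and comparing their genera and $p$-ranks via the Hurwitz and Deuring--Shafarevich formulas, or --- using the extra hypotheses present in the sections where the lemma is actually applied --- by replacing $g$ with another suitable central element so as to fall back on the case $t\ge 3$. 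This last case is where the bulk of the remaining technical work lies.
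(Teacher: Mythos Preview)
Your approach mirrors the paper's almost exactly: use $\gamma(\bar{\cX})=0$ and Result~\ref{lem29dic2015}(i) to show that the fixed locus of $Q$ on $\cX$ is a single $\langle g\rangle$-orbit $\omega$ of size at most $t<p$, apply Deuring--Shafarevich to $Q$ (with $\tilde{\cX}$ rational) to get $\gamma(\cX)=(|Q|-1)(|\omega|-1)$, and then analyse the image $\tilde M\cong M$ of $N/Q$ inside $PGL(2,\K)=\aut(\tilde{\cX})$ to force $|\omega|=1$. (Result~\ref{lem29dic2015} is stated for arbitrary curves of $p$-rank zero, so your parenthetical about $\gg(\bar{\cX})\ge 2$ is unnecessary.)

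The paper's endgame differs only in detail. From $|\omega|\le t<|M|$ some non-trivial $m\in M$ fixes a point of $\omega$; since $M$ is abelian and contains $\langle g\rangle$, $m$ fixes all of $\omega$, so $\tilde m$ fixes the $|\omega|$ points of $\tilde\omega$, giving $|\omega|\le 2$. For $|\omega|=2$ the paper asserts, via Dickson, that $\tilde N\cong M$ is \emph{cyclic}; then every non-trivial element of $\tilde N$, in particular $\tilde g$, has the same two fixed points as $\tilde m$, namely $\tilde\omega$. Hence $g$ fixes $\omega$ pointwise, contradicting that $\omega$ is a $\langle g\rangle$-orbit of size $2$. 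This dispatches all $t$ at once and avoids your case split $|\omega|\in\{1,t\}$.

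You have correctly spotted the one exception to ``abelian $p'$-subgroup of $PGL(2,\K)$ is cyclic'': the Klein four-group. The paper's proof simply asserts cyclicity and does not explicitly treat this case. As you observe, it can only arise when $t=2$ and $M\cong C_2\times C_2$. In the paper's sole application (Lemma~\ref{26dic2015}), $g$ is the central involution of $SL(2,q)$ and $M\cong C_{q-1}$ is cyclic, so the issue never surfaces. For the lemma as literally stated, your caution is justified: either add ``$M$ cyclic'' to hypothesis~(ii), which costs nothing downstream, or supply the extra analysis of the Klein four configuration that you outline.
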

\begin{proof} Let $U=\langle g \rangle$. The quotient group $\bar{Q}=QU/U$ is a $p$-subgroup of $\aut(\bar{\cX})$. Since $\gamma(\bar{\cX})=0$, there exists a point $\bar{P}\in \bar{\cX}$ fixed by $\bar{Q}$ but no non-trivial element in $\bar{Q}$ fixes a point of $\bar{\cX}$ other than $\bar{P}$. Let $o$ be the $U$-orbit in $\cX$ lying over $\bar{P}$ in the cover $\cX|\bar{\cX}$. Since $|o|\leq t$ and $t<p$, $Q$ fixes $o$ pointwise. No point $R\in\cX$ other than those in $o$ is fixed by $Q$, otherwise the point $\bar{R}$ lying under $R$ in the cover $\cX|\bar{\cX}$
would be a fixed point of $\bar{Q}$ distinct from $\bar{P}$. Therefore, $N$ preserves $o$. From (ii), $|o|\le t<|M|$. Hence, some non-trivial element $m\in M$ fixes a point in $o$. Since $M$ is abelian and contains $U$, $m$ fixes $o$ pointwise.

Furthermore, the cover $\cX|\tilde{\cX}$ totally ramifies at the points of $\tilde{\cX}$ lying under those in $o$ while it is unramified elsewhere. Now look at the action of the quotient group $\tilde{N}=N/Q$ as a subgroup of $\aut(\tilde{\cX})$. In the natural group homomorphism \textcolor{black}{$N\to \tilde{N}$}, the image $\tilde{m}$ of $m$ is a non-trivial automorphism of $\tilde{\cX}$ which fixes each of the  $|o|$ points of $\tilde{\cX}$ lying under the points of $o$ in the cover $\cX|\tilde{\cX}$. Since $\tilde{\cX}$ is rational, $\tilde{N}$ is isomorphic to a subgroup of $PGL(2,\K)$. Moreover,   $\tilde{N}$ is abelian by $M\cong\tilde{N}$. From Result \ref{resdickson}, $\tilde{N}$ is a cyclic group of order prime to $p$ and hence it fixes exactly two points in $\tilde{\cX}$, and no non-trivial element of $\tilde{N}$ fixes a further point in \textcolor{black}{$\tilde{\cX}$}; see \cite[Theorem 1]{maddenevalentini1982}. In particular, the image $\tilde{g}$ of $g$ has exactly two fixed points in $\tilde{\cX}$, and they are also the fixed points of \textcolor{black}{$\tilde{m}$}. From this, $|o|\leq 2$. If equality holds then both points in $o$ must be fixed by $N$. But this is impossible since $o$ is an $U$-orbit and $U\leq N$.
Therefore, $|o|=1$, and hence $Q$ has exactly one fixed point in $\cX$. Since $\gamma(\tilde{\cX})=0$, the Deuring-Shafarevich formula  yields that $\gamma(\cX)=0$ also holds.
\end{proof}

\section{Some low dimensional linear groups as automorphism groups of curves}\label{sec5}
For a \textcolor{black}{prime power} $q=d^k$ with an odd prime $d$, let $W$ denote one of the following non-solvable linear groups
\begin{equation}
\label{25dic2015}
{\mbox {$PSL(2,q)$, $PGL(2,q)$, with $q\geq 5$, $PSU(3,q)$, $PGU(3,q)$, $PSL(3,q)$, $PGL(3,q)$.}}
\end{equation}
Choose a Sylow $d$-subgroup $Q$ of $W$. If $W\neq PSL(3,q),\,PGL(3,q)$ then the normalizer $H$ of $Q$ is a semidirect product $Q\rtimes C_m$ with a cyclic complement $C_m$ of order $m$.
Here $(|Q|,m)$ is $(q,\ha(q-1))$, or $(q,q-1)$, or $(q^3,(q^2-1)/\mu)$, or $(q^3,q^2-1)$ according as $W$ is $PSL(2,q)$, or $PGL(2,q)$, or $PSU(3,q)$ or $PGU(3,q)$ where $\mu={\rm{gcd}}(3,q+1)$. In these cases,
$W$ satisfies property {\rm{(iii)} of Result \ref{lem29dic2015}.} Some but not all of these facts remain true for $PSL(3,q)$ and $PGL(3,q)$. For $PSL(3,q)$ with $q=d^k$ and $d>2$ prime, the normalizer $H$ of a Sylow $d$-subgroup $Q$ is a semidirect product of $Q$ by $C_{q-1}\times C_{(q-1)/\mu}$ where $\mu$ is the greatest common divisor of $3$ and $q-1$. The same holds for $PGL(3,q)$ with $\mu=1$ for any $q$. $PSL(3,q)$ (and hence $PGL(3,q)$) does not satisfy property 
\rm{(iii)} of Result \ref{lem29dic2015}.

Throughout the section $d$ stands for an odd prime \textcolor{black}{and $\cX$ is an algebraic curve of genus $\gg\ge 2$}.
\begin{lemma}
\label{27dicbis2015} Let $W$ be one of the groups on the list {\rm{(\ref{25dic2015})}},  $W\neq PSL(3,q), PGL(3,q)$.
If $W$ is isomorphic to a subgroup of $\aut(\cX)$ with
\begin{equation}
\label{eq125dice2015} |W|>900\gg^2,
\end{equation}
then $\cX$ has zero $p$-rank and $q$ is a power of $p$.
\end{lemma}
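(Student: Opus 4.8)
The plan is to exploit the structure of $W$ and, crucially, of the normalizer $H = Q\rtimes C_m$ of a Sylow $d$-subgroup $Q$, together with the improved bounds from Section \ref{sec4}. First I would observe that because $m\ge \ha(q-1)$ in every case on the list, and $|W|$ is at least (roughly) $q^3$ in the unitary cases and $q^2(q-1)/2$ in the linear case, the hypothesis $|W|>900\gg^2$ forces $|H|\ge 30(\gg-1)$. Indeed, one knows a priori lower bounds for $[W:H]$ (namely $q+1$ for $PSL(2,q)$, $PGL(2,q)$ and $q^3+1$ for $PSU(3,q)$, $PGU(3,q)$, possibly with the small exception $q=5$ to be treated by hand), so $|H| = |W|/[W:H] > 900\gg^2/[W:H]$; combined with the Hurwitz bound $|W|\le$ something like $c\gg$ when $W$ is tame — which $W$ is not, since $|W|>84(\gg-1)$ — one deduces a clean inequality of the form $|H|\gg \gg$, more than enough to guarantee \eqref{eq22bisdic2015}. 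One must of course also check $N_H(C_m)\cap Q=\{1\}$; this is a standard fact about these normalizers (the complement $C_m$ is self-normalizing in $H$ modulo $Q$, and an element of $Q$ normalizing $C_m$ would have to centralize a nontrivial semisimple element, which is impossible in $Q$ a full Sylow $d$-group).

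Next I would apply Lemma \ref{22dic2015} to $H$: it yields immediately that $d=p$ and that $Q$ is a Sylow $p$-subgroup of $\aut(\cX)$ (so $q$ is a power of $p$, giving the second assertion). It remains to show $\gamma(\cX)=0$. Here I would argue by contradiction: suppose $\cX$ has positive $p$-rank. Then by case (i) of Lemma \ref{22dic2015}, $Q$ has exactly two short orbits $o_\infty, o_0$, each invariant under $H=N_W(Q)$. This is exactly the setup of Lemma \ref{lemA29dic2015}, provided $W$ satisfies property (iii) of Result \ref{lem29dic2015} — which is stated in the text to hold for all groups on the list except $PSL(3,q),PGL(3,q)$, which are precisely the ones excluded from this lemma. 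Since $|W|>900\gg^2>16\gg^2$, Lemma \ref{lemA29dic2015} applies directly and produces the contradiction, so $\gamma(\cX)=0$.

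The step I expect to be the main obstacle is the bookkeeping that establishes $|N_W(Q)|\ge 30(\gg-1)$ uniformly across the four families (and the verification that the $q=5$ exception for $PSU(3,q)$ — where the largest subgroup is $\mathrm{Alt}_7$ rather than $N_W(Q)$, per Result \ref{resmitchell} — does not cause trouble, presumably because $|PSU(3,5)|$ is too small to satisfy \eqref{eq125dice2015} for any $\gg\ge 2$ with the relevant genus constraint, or is handled separately). One has to combine three ingredients carefully: the exact orders $|W|$ and $|H|=|Q|\cdot m$ from the table in the text, the lower bound on the minimal transitive degree $[W:H]$ from Results \ref{resdickson} and \ref{resmitchell}, and the fact that $\gg$ is bounded below by something growing with $q$ (via Hurwitz, since $|W|>84(\gg-1)$ gives $\gg-1 < |W|/84$, and more sharply via $|W|>900\gg^2$ giving $\gg < \sqrt{|W|}/30$). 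Plugging in, $|H| = |W|/[W:H]$ and $30(\gg-1) < 30\sqrt{|W|}/30 = \sqrt{|W|}$, so it suffices that $|W|/[W:H] \ge \sqrt{|W|}$, i.e. $\sqrt{|W|}\ge [W:H]$; since $[W:H]$ is on the order of $q+1$ or $q^3+1$ while $\sqrt{|W|}$ is on the order of $q$ or $q^3$ respectively — wait, in the $PSL(2,q)$ case $\sqrt{|W|}\approx q^{3/2}$ and $[W:H]=q+1$, fine; in the $PSU(3,q)$ case $\sqrt{|W|}\approx q^3$ and $[W:H]\approx q^3$, so one needs the constants to work out and this is exactly where the numeric slack in $900$ (rather than, say, $84$) is consumed. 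Verifying this for each family, with the correct constants and the small-$q$ cases checked individually, is the technical heart; everything after that is a direct citation chain through Lemmas \ref{22dic2015} and \ref{lemA29dic2015}.
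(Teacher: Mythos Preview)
Your approach is correct and essentially the same as the paper's: verify that $H=N_W(Q)$ satisfies the hypotheses of Lemma~\ref{22dic2015} (giving $d=p$), then invoke Lemma~\ref{lemA29dic2015} (using that $W$ has the trivial-intersection property (iii) of Result~\ref{lem29dic2015} and $|W|>16\gg^2$) to conclude $\gamma(\cX)=0$.

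The bookkeeping you flag as the main obstacle is, however, much simpler than you fear. The paper's argument is a single line: since $[W:H]<|H|$ in each of the four families (e.g.\ for $PSL(2,q)$ one has $[W:H]=q+1<\tfrac12 q(q-1)=|H|$ once $q\ge5$, and similarly for the others), it follows that $|H|^2>|H|\cdot[W:H]=|W|>900\gg^2$, whence $|H|>30\gg>30(\gg-1)$. No delicate constants are needed, and your worry about the $q=5$ exception for $PSU(3,q)$ is misplaced: Result~\ref{resmitchell} concerns the minimal transitive degree, but here you only need the index $[W:H]=q^3+1$, which is unaffected by the existence of the $\mathrm{Alt}_7$ subgroup.
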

\begin{proof} Let $H$ be the normalizer of a Sylow $d$-subgroup $Q$ of $W$. For $W\cong PSL(2,q)$, we have $|W|=(q+1)|H|$ with $|H|=\textstyle\frac{1}{2}q(q-1)$ whence $|H|^2>|W|>900\gg(\cX)^2$ which yields $|H|>30(\gg(\cX)-1)$. As $900>16$, the assertions follow from Lemmas \ref{lemA29dic2015} and \ref{22dic2015}. Similar arguments apply to the other cases.
\end{proof}
Recall that $\aut(W)=P\Gamma L(2,q)$ when $W=PSL(2,q)$, or $W=PGL(2,q)$ while $\aut(W)=P\Gamma L(3,q)$ when $W=PSU(3,q)$ or $W=PGU(3,q)$; see for instance \cite[Sections 3.3.4, 3.6.3]{RAW}.
So, $W\leq \aut(W)$ may be assumed.

\begin{rem}
\label{28feb2016}
{\em{
We point out that in the natural representation of $P\Gamma L(2,q)$, the $2$-point stabilizer is neither abelian  \textcolor{black}{nor dihedral}. More precisely, a semilinear subgroup of $P\Gamma L(2,q)$ containing $PSL(2,q)$ is non-abelian and, apart from the smallest value of $q$, namely $q=9$ (and $|G|=720)$, it is also non-dihedral. The same result is valid for the $2$-point stabilizer of $P\Gamma U(3,q)$ including all cases.

In its natural representation, $P\Gamma L(2,q)$ is the semilinear group on the projective line $PG(1,q)$ over $\mathbb{F}_q$. Its subgroup $PSL(2,q)$ acts on $PG(1,q)$ as a doubly transitive permutation group such that its stabilizer $M$ of the origin $O$ and the infinite point $\infty$ has order $\ha (q-1)$ and consists of all permutations $x\mapsto ax$ with a non-zero quadratic element $a$ in $\mathbb{F}_q$. The stabilizer of $\infty$ is $H=Q\rtimes M$ where $Q$ is the Sylow $d$-subgroup of $PSL(2,q)$ consisting of all translations $x\mapsto x+c$ with $c\in \mathbb{F}_q$. If a subgroup $G$ of $P\Gamma L(2,q)$ contains $PSL(2,q)$ then the stabilizer of $\infty$ in G is $G_{\infty,O}\rtimes Q$ where $G_{\infty,O}$ is the stabilizer of $\infty$ and $O$ in $G$, and $M$ is a normal subgroup of $G_{\infty,O}$. Suppose that $G$ contains $PSL(2,q)$ properly. Then $G_{\infty,O}$ has an element $v$ other than those in $M$. If $v\in PGL(2,q)$ then $G$ also contains $PGL(2,q)=\langle PSL(2,q),v \rangle$.  Since $PGL(2,q)$ is the only linear subgroup of $P\Gamma L(2,q)$ containing $PSL(2,q)$ properly, it turns out that if $G$ is not linear then  $G_{\infty,O}$ contains a permutation $\varphi:\,x\mapsto \alpha x^\sigma$ where $\alpha\in \mathbb{F}_q$ and $\sigma$ is a non-trivial automorphism of $\mathbb{F}_q$. Then $\varphi\psi_a \neq \psi_a\varphi$ for $\psi_a:\,x\mapsto ax$ with $a\in \mathbb{F}_q$ such that $\sigma(a)\neq a$. Hence $G_{\infty,O}$ is not abelian. We show that $G_{\infty,O}$ is neither dihedral for $|G|>720$. Since $M$ is a cyclic subgroup of order $\ha (q-1)/2\geq 4$, the index $2$ cyclic subgroup of any dihedral group containing $M$ also contains $M$. As we have already showed the centralizer of $M$ does not contain $\varphi$. Therefore, if $G_{\infty,O}$ is assumed to be dihedral then $\varphi$ is an element in the coset of the index $2$ cyclic subgroup of $G_{\infty,O}$. Thus $\varphi^2=1$, that is, $\sigma$ is involutory and $\alpha^{\sigma+1}=1$. Furthermore, $\varphi\psi_a\varphi=(\psi_a)^{-1}=\psi_{a^{-1}}$ holds for every $a\in D$. Therefore, $a^{\sigma+1}=1$ for every \textcolor{black}{non-zero quadratic element} $a \in \fq$. Since the number of such elements is $\ha (q-1)$, this occurs if and only if $d=3,q=9$. Hence, either $G=P\Gamma L(2,9)$, and $|G|=1440$, or $G$ has order $720$. The former case $G=P\Gamma L(2,9)$, as $P\Gamma L(2,9)_{\infty,O}$ is a semi-dihedral group of order $16$. In the latter case, $G$ is one of the two subgroups of $P\Gamma L(2,9)$ of order $720$, other than $PGL(2,9)$. Therefore, if $G_{\infty,O}$ is dihedral then $|G|=720$.

In its natural representation, $P\Gamma U(3,q)$ is the semilinear group \textcolor{black}{preserving}  the set of all $\mathbb{F}_{q^2}$-rational points of the Hermitian curve $\cH$ with equation $y^q+y=x^{q+1}$. Its subgroup $PSU(3,q)$ acts as a doubly transitive permutation group.
Let $D$ be the subgroup of index $\mu$ in the multiplicative group of $\mathbb{F}_{q^2}$. Then a Sylow $d$-subgroup $Q$ of $PSU(3,q)$ consists of all maps $(x,y)\mapsto (x+a,y+a^qx+b)$ with $a,b\in \mathbb{F}_{q^2}$ and $b^q+b=a^{q+1}$. The subgroup $PSU(3,q)$ acts on $\cH$ as \textcolor{black}{a}  doubly transitive permutation group such that its stabilizer $M$ of the origin $O=(0,0)$ and the infinite point $\infty$ of the $y$-axis has order $(q^2-1)/\mu$ and it consists of all maps $\psi_{a,b}:\,(x,y)\mapsto (ax,by)$ with $a\in D$ and $b=a^{q+1}$. Suppose that a subgroup $G$ of $P\Gamma U(3,q)$ contains $PSU(3,q)$ properly. Then the stabilizer $G_{\infty,O}$ of $G$ contains an element $v$ other than those in $M$. If $v\in PGU(3,q)$ (and $\mu=3$) then $G$ also contains
 $PGU(3,q)=\langle PSU(3,q),v \rangle$.  Since $PGU(3,q)$ is the only linear subgroup of $P\Gamma U(3,q)$ containing $PSU(3,q)$ properly, it turns out that if $G$ is not linear then  $G_{\infty,O}$ contains a map $\varphi:\,(x,y)\mapsto (\alpha x^\sigma,\beta y^\sigma)$ where $\alpha\in D$, $\beta=\alpha^{q+1}$ and $\sigma$ is a non-trivial automorphism of $\mathbb{F}_{q}$. Then $\varphi\psi_{a,b} \neq \psi_{a,b}\varphi$ for $a\in \mathbb{F}_q$ such that $\sigma(a)\neq a$. Hence $G_{\infty,O}$ is not abelian. We show that $G_{\infty,O}$ is neither dihedral. Since $M$ is a cyclic subgroup of order $(q^2-1)/\mu \geq 3$, the index $2$ cyclic subgroup of any dihedral group containing $M$ also contains $M$. The centralizer of $M$ does not contain $\varphi$. Therefore, if $G_{\infty,O}$ is dihedral by absurd, then $\varphi$ is in the coset of the index $2$ cyclic subgroup of $G_{\infty,O}$. Hence $\sigma$ is involutory, $\alpha^{\sigma+1}=1$, and $\varphi\psi_{a,b}\varphi=(\psi_{a,b})^{-1}=\psi_{a^{-1},b^{-1}}$ holds for every $\psi_{a,b}\in M$. From this, $a^{\sigma+1}=1$ follows for every $a \in D$. Since $|D|=(q^2-1)/\mu$ this yields $d^h+1=(d^{2k}-1)/\mu$ with $\sigma=d^h$. As $\mu=1$ or $\mu=3$, this is impossible. Therefore $G_{\infty,O}$ is not dihedral.}}
 \end{rem}
 \begin{lemma}
\label{25dicbis2015} Let $G$ be a subgroup of $\aut(W)$ containing $W$ where $W$ is one of the groups on the list {\rm{(\ref{25dic2015}).}} If $G$ is also a subgroup of $\aut(\cX)$ up to isomorphism, and
\begin{equation}
\label{eq125Edice2015} |G|>\textcolor{black}{c\cdot 900\gg^2}
\end{equation}
with
\[c=\left\{
\begin{array}{lll}
\frac{2k(q+1)}{(q-1)q} & \mbox{if\,\, $W=PSL(2,q)$},\\[2mm]
\frac{k(q+1)}{(q-1)q} & \mbox{if\,\, $W=PGL(2,q)$,}\\[2mm]
\frac{2k\mu(q^3+1)}{(q^2-1)q^3} & \mbox{if\,\, $W=PSU(3,q)$,}\\[2mm]
\frac{2k(q^3+1)}{(q^2-1)q^3} & \mbox{if\,\, $W=PGU(3,q),$}\\[2mm]
\frac{k\mu(q^2+q+1)(q+1)}{(q^2-1)q^3} & \mbox{if\,\, $W=PSL(3,q)$,}\\[2mm]
\frac{k(q^2+q+1)(q+1)}{(q^2-1)q^3} & \mbox{if\,\, $W=PGL(3,q),$}
\end{array}
\right.
\]
then $G$ is linear, and $W\neq PSL(3,q), PGL(3,q)$.
\end{lemma}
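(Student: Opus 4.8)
The plan is to argue by contradiction: assume that either $G$ is not linear or $W\in\{PSL(3,q),PGL(3,q)\}$, and in each case apply the structure results of Section~\ref{sec4} to the normalizer of a Sylow $d$-subgroup. Fix a Sylow $d$-subgroup $Q$ of $W$ and put $H:=N_W(Q)$. By the facts recalled at the beginning of this section, $H=Q\rtimes U$ with $U$ a maximal torus of $W$: it is abelian, $N_H(U)=U$ (hence $N_H(U)\cap Q=\{1\}$), and $Q$ is the normal Sylow $d$-subgroup of $H$, of odd order; thus $H$ satisfies the structural hypotheses of Lemma~\ref{22dic2015}. Moreover $H=N_G(Q)\cap W$ is normal in $T:=N_G(Q)$ (since $W\trianglelefteq G$), and in the natural doubly transitive action of $W$ the group $T=N_G(Q)$ is the stabiliser of the unique $Q$-fixed point, with normal Sylow $d$-subgroup $Q$ and $T/Q$ isomorphic to the associated two-point stabiliser of $G$. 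The first task is to turn \eqref{eq125Edice2015}, combined with $|G|\le|\aut(W)|$ and the explicit orders of $W$ and $H$, into the inequality $|H|\ge 30(\gg-1)$. This is exactly what the constant $c$ is built for: it carries the factor $[\aut(W):W]$ (so that \eqref{eq125Edice2015} really forces $N_W(Q)$ — or, when $PGL(2,q)$ resp.\ $PGU(3,q)$ already lies in $G$, the larger normalizer of $Q$ in that linear group, which one takes for $H$ instead — to have order at least $30(\gg-1)$), divided by the degree of the natural doubly transitive action and weighed against the order of a Borel-type subgroup; the hypotheses that $G$ is non-linear or $W$ is three-dimensional are precisely what make this estimate binding. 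The same bookkeeping also yields $|G|>720$.

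Suppose first $W=PSL(3,q)$ or $PGL(3,q)$. Then $U\cong C_{q-1}\times C_{(q-1)/\mu}$ (resp.\ $C_{q-1}\times C_{q-1}$) is non-cyclic for every odd prime power $q$. Since $|H|\ge 30(\gg-1)$, Lemma~\ref{22dic2015} applies to $H$ and forces $U$ to be cyclic — a contradiction. (Alternatively, one uses that $PSL(3,q)$, and hence $PGL(3,q)$, fails property~(iii) of Result~\ref{lem29dic2015}, so that $\cX$ has positive $p$-rank, and then plays Nakajima's bound, Result~\ref{resnaga}, off against \eqref{eq125Edice2015} and $|G|\le|\aut(W)|$.) Hence $W\notin\{PSL(3,q),PGL(3,q)\}$.

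Suppose now $W\in\{PSL(2,q),PGL(2,q),PSU(3,q),PGU(3,q)\}$ but $G$ is not linear. Applying Lemma~\ref{22dicter2015} with $T=N_G(Q)$ and its normal subgroup $H$ yields $N_G(Q)=Q\rtimes V$ with $V$ cyclic or dihedral and $p$ prime to $|V|$. Consequently the two-point stabiliser $G_{\infty,O}\cong N_G(Q)/Q\cong V$ is cyclic or dihedral. But $G$ is non-linear, so by Remark~\ref{28feb2016} the group $G_{\infty,O}$ is non-abelian, hence dihedral, while the same remark shows it cannot be dihedral once $|G|>720$. This contradiction shows that $G$ is linear, completing the proof.

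The main obstacle is the numerical step: it must be carried out uniformly through the six possibilities for $W$ and through the intermediate groups $W\le G\le\aut(W)$, taking care to choose for $H$ the largest normalizer of $Q$ available inside a linear subgroup of $G$, and to check that \eqref{eq125Edice2015} is exactly the threshold producing $|H|\ge 30(\gg-1)$. A secondary point is the identification $N_G(Q)/Q\cong G_{\infty,O}$, for which the conclusion $p\nmid|V|$ of Lemma~\ref{22dicter2015} is used to exclude field automorphisms of order $p$ in $N_G(Q)$.
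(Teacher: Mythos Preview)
Your proposal is correct and follows essentially the same strategy as the paper: verify that $H=N_W(Q)$ meets the size threshold of Lemma~\ref{22dic2015}, then use Lemma~\ref{22dic2015} (for $W=PSL(3,q),PGL(3,q)$, where the complement $U$ is visibly non-cyclic) or Lemma~\ref{22dicter2015} together with Remark~\ref{28feb2016} (for the remaining $W$ when $G$ is non-linear) to reach a contradiction. The only substantive step you leave open is the numerical verification that \eqref{eq125Edice2015} forces $|H|>30\gg$; the paper carries this out in one line per case by writing $|G|$ in the form $(\text{index of }H\text{ in }W)\cdot|H|\cdot h$ with $h\mid[\aut(W):W]$ and observing that the constant $c$ is chosen precisely so that $|H|^2\ge |G|/c$, whence $|H|>30\gg$.

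Two minor remarks. For $PSL(3,q)$ the paper phrases the contradiction via Lemma~\ref{22dicter2015} (the complement $V$ would have to be cyclic or dihedral yet contain $C_{q-1}\times C_{(q-1)/\mu}$); your direct appeal to the ``$U$ cyclic'' conclusion of Lemma~\ref{22dic2015} is a shade quicker and works for the same reason. Your closing comment about needing $p\nmid|V|$ to identify $N_G(Q)/Q$ with $G_{\infty,O}$ is unnecessary: since $Q$ fixes a unique point $\infty$ in the natural action and every element of $G_\infty$ normalises $Q$, one has $N_G(Q)=G_\infty=Q\rtimes G_{\infty,O}$ directly, regardless of whether field automorphisms of order $p$ are present.
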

\begin{proof} We begin with the case $W=PSL(2,q)$. Let $Q$ be a Sylow $d$-subgroup of $PSL(2,q)$ and $H$ its normalizer in $PSL(2,q)$. We may assume that $Q$ and $H$ are taken as in Remark \ref{28feb2016}. Then $T=G_\infty$ is a subgroup of $G$ containing $H$ as a normal subgroup such that $T=Q\rtimes G_{\infty,O}$. Assume that $G$ is nonlinear. From Remark  \ref{28feb2016}, $G_{\infty,O}$ is not cyclic and it is dihedral only for $d=3$ and $|G|=720$.
On the other hand,  $k\ge 2$ and  $|G|=\ha h(q-1)q(q+1)$ where $h>1$ is a divisor of $k$.
Assumption $|G|>900c\gg^2$ yields
\begin{equation}
\label{eq5Edic2015} |H|=\ha(q-1)q>30\sqrt {\frac{c(q-1)q}{2h(q+1)}}\,\gg>30 \gg.
\end{equation}
By (\ref{eq125Edice2015}), $H$ satisfies the hypotheses of Lemma \ref{22dic2015}. By Lemma \ref{22dicter2015}, $G_{\infty,O}$ is either cyclic or dihedral. Since $|G|>720$ this is impossible.

Now, let $W=PSU(3,q)$. By the above argument applied to a subgroup $G$ of $P\Gamma L(3,q)$ containing $PSU(3,q)$ properly, Remark \ref{28feb2016} shows that $T=G_\infty$ is a subgroup of $G$ containing $H$ as a normal subgroup such that $T=Q\rtimes G_{\infty,O}$ where $G_{\infty,O}$ is neither cyclic nor dihedral when $G$ is nonlinear. On the other hand,
 $|G|=\frac{1}{\mu}h(q^2-1)q^3(q^3+1)$ where $h$ is a divisor of $2k$. Assumption $|G|>900c\gg^2$ yields

\begin{equation}
\label{eqA25dic2015} |H|={\textstyle\frac{1}{\mu}}(q^2-1)q^3>30\sqrt {\frac{c(q^2-1)q^3}{\mu h(q^3+1)}}\, \gg>30 \gg.
\end{equation}
By (\ref{eq125Edice2015}), $H$ satisfies the hypotheses of Lemma \ref{22dic2015}. From Lemma \ref{22dicter2015}, $T=Q\rtimes G_{\infty,O}$ where $V$ is either cyclic or dihedral, a contradiction.

Finally, let $W=PSL(3,q)$. Then $|W|=\frac{1}{\mu}h(q-1)^2q^3(q^2+q+1)(q+1)$ where $h$ is a divisor of $k$. Then
\begin{equation}
\label{eqB25dic2015} |H|={\textstyle\frac{1}{\mu}}(q-1)^2q^3>30\sqrt {\frac{c(q-1)^2q^3}{\mu h(q^2+q+1)(q+1)}}\, \gg> 30 \gg.
\end{equation}
By (\ref{eq125dice2015}), the hypotheses on $H$ in Lemma \ref{22dic2015} are satisfied. From Lemma \ref{22dicter2015}, $T=Q\rtimes V$ where $V$ is either cyclic or dihedral. However, $H$ contains the direct product of two cyclic groups, and hence $V$ is neither cyclic, nor a dihedral group. This contradiction shows that $W\neq PSL(3,q)$. A similar argument shows that $W=PGL(3,q)$ cannot occur.
\end{proof}

\begin{rem}
\label{rem7nov2016} {\em{In Lemma \ref{25dicbis2015} $c\le 1$ holds in each case}}.
\end{rem}

\begin{rem}
\label{remA16mar2016} {\em{In some but not all cases, Lemma \ref{25dicbis2015} follows from the Abhyankar conjecture under a bit weaker condition, namely  $|G|>24\gg^2$. This condition yields one of the Cases (c) and (d) in Result \ref{res56.116}. Furthermore, the quotient curve $\cX/G$ is rational, and  the Abhyankar conjecture stated in Introduction applies to $G$. If $G$ has one short orbit, that is in Case (c),  then $G$ is generated by its Sylow $p$-subgroups and hence $G \ge W$ implies $G=W$; thus $G$ is linear, and Lemma \ref{25dicbis2015} follows. If $G$ has two short orbits, that is in
Case (d), then $G/W$ is a cyclic group. For instance, this occurs when $G\cong PGL(2,q)$ or $G\cong PGU(3,q)$. The Abhyankar conjecture rules out the possibility that in Case (d) $G$ properly contains $PGL(2,q)$ or $PGU(3,q)$, as $G/W$ is not cyclic in these cases.}}
\end{rem}

The following lemma shows that the hypothesis $W\neq PSL(3,q), PGL(3,q)$ in Lemma \ref{27dicbis2015} is meaningful.
\begin{lemma}
\label{30ago2015} If $W=PSL(3,q)$ with $q>3$ is a subgroup of $\aut(\cX)$ then $|G|<72\gg^2$.
\end{lemma}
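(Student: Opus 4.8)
The plan is to show that $PSL(3,q)$ cannot act on a curve of genus $\gg$ with $|PSL(3,q)|$ large compared to $\gg^2$, by exploiting the fact (recorded just before Lemma~\ref{27dicbis2015}) that $PSL(3,q)$ fails property (iii) of Result~\ref{lem29dic2015}: the Sylow $d$-subgroup $Q$ has normalizer $H=Q\rtimes (C_{q-1}\times C_{(q-1)/\mu})$, and two distinct Sylow $d$-subgroups can intersect non-trivially. First I would suppose for contradiction that $|W|\ge 72\gg^2$ and apply Result~\ref{res56.116}: since $|W|=\frac{1}{\mu}(q-1)^2 q^3(q^2+q+1)(q+1)$ certainly exceeds $84(\gg-1)$, the group $W$ has one of the four short-orbit configurations, and $\cX/W$ is rational; moreover for $q>3$ the order $|W|$ is large enough that $q$ must be a power of $p$ (otherwise a Sylow $d$-subgroup would be a large tame $p'$-subgroup, violating the Hurwitz bound applied to $Q$). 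So $Q$ is a $p$-group.

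Next I would locate a ramification point. Because $|W|>84(\gg-1)$, by Result~\ref{res56.116} $W$ has a non-tame short orbit $\theta$; pick $P\in\theta$ and let $S\supseteq Q$ be the Sylow $p$-subgroup $W_P^{(1)}$ fixing $P$ (using Result~\ref{res74}(i)). The key is to bound $|\theta|=|W|/|W_P|$ from below: the stabilizer $W_P=S\rtimes C$ has its prime-to-$p$ part $C$ a subgroup of $N_W(S)/S$, and for $W=PSL(3,q)$ the normalizer of a Sylow $d$-subgroup is exactly $H$ above, so $|C|\le (q-1)^2/\mu$, whence $|W_P|\le q^3(q-1)^2/\mu$ and $|\theta|\ge (q^2+q+1)(q+1)$. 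Then the Hurwitz genus formula $2\gg-2\ge -2|W|+|\theta|(|W_P^{(1)}|-1)\ge |\theta|(q^3-1)-2|W|$ — or more carefully, summing the different contribution $d_P\ge |W_P^{(1)}|-1$ over the orbit $\theta$ together with the $-2|W|$ from the rational quotient — should force $\gg$ to be large enough that $|W|<72\gg^2$, a contradiction. The cleanest route may instead be via the $p$-rank: if $\cX$ has zero $p$-rank then by Result~\ref{lem29dic2015}(iii) any two Sylow $p$-subgroups meet trivially, but the Sylow $d$-subgroups of $PSL(3,q)$ do not, so $\cX$ has positive $p$-rank; then Nakajima's bound (Result~\ref{resnaga}) gives $|S|\le \frac{p}{p-2}(\gg-1)\le 3(\gg-1)$, forcing $q^3=|Q|\le |S|\le 3(\gg-1)$, and combining this with $|W|\le \frac{1}{\mu}(q-1)^2q^3(q^2+q+1)(q+1)< q^8$ together with $q^3< 3\gg$ yields $|W|<72\gg^2$ after a short estimate (here one uses $q>3$, so that $q^8/(q^3)^{8/3}$ and the constant work out).

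I expect the main obstacle to be getting the constant $72$ to come out exactly, rather than some larger absolute constant: the crude inequalities $q^3\le 3(\gg-1)$ and $|W|\lesssim q^8$ give $|W|\lesssim (3\gg)^{8/3}$, which is $o(\gg^2)$ only eventually, so for small $q$ (namely $q=5,7,9,11,\dots$) one needs the sharper count $|W|=\frac{1}{\mu}(q-1)^2q^3(q^2+q+1)(q+1)$ against $q^3\le 3(\gg-1)$, i.e. $\gg\ge \frac{1}{3}q^3+1$, and then verify $\frac{1}{\mu}(q-1)^2q^3(q^2+q+1)(q+1)<72(\frac{1}{3}q^3+1)^2$ for every odd prime power $q\ge 5$. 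This is an elementary but genus-by-$q$ check: the left side grows like $q^8$ and the right like $8q^6$, so in fact the inequality fails for large $q$ — which means the argument must actually extract a better lower bound on $\gg$, presumably $\gg-1\ge \frac{1}{3}(|S|-1)$ with $|S|$ a proper multiple of $q^3$, or exploit that $Q$ has several short orbits (Result~\ref{res56.116}) and that $W$ acts $2$-transitively on the $q^2+q+1$ points of $PG(2,q)$ so that $|\theta|\ge q^2+q+1$ forces, via Deuring–Shafarevich applied to $Q$, a genus growing like $q^5$. Pinning down which of these sharpenings the authors intend — and thereby which of $\gg\sim q^3$, $\gg\sim q^4$, or $\gg\sim q^5$ is the operative lower bound — is the crux; I would try the Deuring–Shafarevich route first, since $\gg\ge\gamma(\cX)-1\ge |\theta|(|Q_P|-1)-2|Q|+\dots$ with $|\theta|\ge q^2+q+1$ and $|Q_P|$ a nontrivial power of $d$ should comfortably beat $\sqrt{|W|/72}$.
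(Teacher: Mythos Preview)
Your proposal has a genuine gap: neither of your two approaches produces a lower bound on $\gg$ of the right order of magnitude. Since $|W|=\frac{1}{\mu}(q-1)^2 q^3(q+1)(q^2+q+1)\sim q^8$, the inequality $|W|<72\gg^2$ requires $\gg\gtrsim q^4$. Your Nakajima route gives only $q^3\le 3(\gg-1)$, i.e.\ $\gg\gtrsim q^3$; you correctly spotted that this fails for large $q$, and your Deuring--Shafarevich suggestion at the end does not rescue it (even with $|\theta|\ge q^2+q+1$, the best you get from a single $Q$-orbit is $\gamma(\cX)-1\ge |\theta|(|Q_P|-1)-|Q|$, which is still of order at most $q^5$ only if $|Q_P|$ is large, and you have no control on that). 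Your first approach via the Hurwitz formula for $W$ on its non-tame orbit fares worse: the term $-2|W|\sim -2q^8$ swamps the positive contribution $|\theta|(|W_P^{(1)}|-1)\sim q^6$, so the inequality is vacuous. Also, the detour through establishing $d=p$ is both unjustified (a large $p'$-subgroup $Q$ does not immediately contradict the $84(\gg-1)$ bound) and unnecessary.

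The paper's argument is quite different and does not use the $p$-rank at all. The key structural fact about $PSL(3,q)$ is that the normalizer $H=Q\rtimes(C_{q-1}\times C_{(q-1)/\mu})$ has a \emph{non-cyclic} abelian complement. Take $M$ to be the index-$2$ subgroup of $C_{(q-1)/\mu}$ and set $U=Q\rtimes M$, so $|U|=\frac{1}{2}\frac{q-1}{\mu}q^3$. Then $H/U\cong C_{q-1}\times C_2$ is a non-cyclic abelian group of order $>4$ acting on the quotient curve $\bar\cX=\cX/U$. By Dickson's list (Result~\ref{resdickson}) no such group embeds in $PGL(2,\K)$, so $\bar\cX$ is not rational. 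A short case analysis (elliptic versus $\gg(\bar\cX)\ge 2$), using the Hurwitz formula for $U$ and the orbit structure of $H$ on ramification points, then yields $\gg-1\ge\frac{1}{2}|U|=\frac{1}{4}\frac{q-1}{\mu}q^3$. This is the $q^4$ bound you need: squaring and comparing with $|W|$ gives $72\gg^2>|W|$ directly. The moral is that to beat $q^8$ you must use a subgroup of $W$ of order $\sim q^4$ whose quotient curve you can show is non-rational, and the non-cyclic complement in $H$ is exactly what supplies that.
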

\begin{proof} As we have already pointed out $H=Q\rtimes(C_{q-1}\times C_{(q-1)/\mu})$. Let $M$ be the subgroup of $C_{(q-1)/\mu}$ of index $2$. Then the factor group $\bar{T}=(C_{q-1}\times C_{(q-1)/\mu})/M$ has order
 $2(q-1)$. Here $\bar{T}$ is a non-cyclic abelian group of order greater than $4$. Obviously, the subgroup $U=Q\rtimes M$ is a normal subgroup of $H$. Therefore, $\bar{H}=H/U$ is an automorphism group of the quotient curve $\bar{\cX}=\cX/U$. Since $\bar{H}\cong \bar{T}$, no subgroup of $PGL(2,\mathbb{K})$ is isomorphic to $\bar{H}$ by Result \ref{resdickson}. Therefore $\bar{\cX}$ is not rational. If $\bar{\cX}$ is elliptic then  $U$ has a short orbit $o$, and for a point $P\in o$,
 the contribution of $o$ to the different $D(\cX|\bar{\cX})$ is at least $|o|(|U_P|-1)\geq \ha o|U_P|=\ha |U|$.
 Assume that $o$ is the unique short orbit of $U$. Then $o$ is also an $H$-orbit. Thus $|H_P|/|U_P|=|H|/|U|$ whence
 $$|H_PU/U|=\frac{|H_PU|}{|U|}=\frac{|H_P||U|}{|H_P\cap U|}\cdot\frac{1}{|U|}=\frac{|H_P|}{|U_P|}=\frac{|H|}{|U|}=|\bar{T}|=2(q-1).$$
 This shows that $H_PU/U$ has order $2(q-1)$. Furthermore, $H_PU/U$ viewed as an automorphism group of $\bar{\cX}$ fixes the point $\bar{P}\in \bar{\cX}$ lying under $o$. But then $q<5$, as $\bar{\cX}$ is elliptic, see Result \ref{res94}. Therefore, there is another such orbit, and the Hurwitz genus formula gives $2(\gg-1)\geq |U|$ whence
 $$\gg-1\geq \ha\,|U|=\qa\frac{q-1}{\mu}\, q^3.$$
 This holds true for $\gg(\bar{\cX})\geq 2$, by the Hurwitz genus formula. Therefore,
$$\textstyle{72\gg^2>72(\gg-1)^2\geq \frac{24}{16\mu} \frac{(q-1)^2}{\mu}q^6\geq \frac{3}{2}\, q^3\, q^3 \frac{(q-1)^2}{\mu}>(q^2+q+1)(q+1)q^3\frac{(q-1)^2}{\mu}=|G|},$$
a contradiction with the hypothesis of the lemma.
\end{proof}
A variant of the proof of Lemma \ref{25dicbis2015} can be used to prove a similar result for the group $SL(2,q)$ with $q=d^k$ and $d$ an odd prime. In the natural representation of $SL(2,q)$ as a linear group of the vector space $V(2,\mathbb{F}_q)$, a Sylow $d$-subgroup is the subgroup $Q$ consisting of all maps $(x,y)\mapsto (x+ry,y)$ with $r\in \mathbb{F}_q$, and the normalizer  of $Q$ in $SL(2,q)$ is the semidirect product $Q\rtimes M$ where
$M\cong C_{q-1}$ consists of all transformations $(x,y)\mapsto (ax,a^{-1}y)$ with $a\in \mathbb{F}_q^{*}$.
 \begin{lemma}
\label{26dic2015} Let $G$ be a group with commutator subgroup $G'=SL(2,q)$ such that the centralizer of $G'$ in $G$ has order $2$.
If $G$ is also a subgroup of $\aut(\cX)$, and
\begin{equation}
\label{eq125Fdice2015} |G|>900\gg^2
\end{equation}
then $\cX$ has zero $p$-rank and $q$ is a power of $p$.
\end{lemma}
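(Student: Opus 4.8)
The plan is to adapt the proof of Lemma~\ref{25dicbis2015}, with $G'=SL(2,q)$ playing the role of $W$ and the normalizer of a Sylow $d$-subgroup of $SL(2,q)$, described just before the statement, playing the role of $H$. The center $Z(SL(2,q))$ has order $2$ and lies in $C_G(G')$, so the hypothesis $|C_G(G')|=2$ forces $C_G(G')=Z(G')\le G'$; hence $G/G'$ embeds into $\mathrm{Out}(SL(2,q))$ and $[G:G']$ divides $2k$, so $|G|\le 2k(q+1)\cdot q(q-1)$. Since $|G|>900\gg^{2}\ge 3600$, the value $q=3$ is excluded (it would give $|G|\le 48$), so $q\ge 5$ and $q(q-1)>2k(q+1)$. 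Fix a Sylow $d$-subgroup $Q$ of $G'$, set $H:=N_{G'}(Q)=Q\rtimes M$ with $M\cong C_{q-1}$ the diagonal torus, and $T:=N_G(Q)$, which contains $H=T\cap G'$ as a normal subgroup. Then
\[
|H|^{2}=\bigl(q(q-1)\bigr)^{2}>2k(q+1)\cdot q(q-1)\ge|G|>900\gg^{2},
\]
so $|H|>30\gg>30(\gg-1)$. As $M$ is abelian and $N_{H}(M)\cap Q=\{1\}$ --- a non-trivial unipotent of $SL(2,q)$ moves $M$ whenever $q>3$ --- the group $H$ satisfies the hypotheses of Lemma~\ref{22dic2015}, which gives $d=p$, so $q$ is a power of $p$, and places us in Case~(II) of the proof of Lemma~\ref{22dic2015} (Remark~\ref{scholter}).

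Next I would prove $[G:G']\le 2$, that is, $G$ realises no non-trivial field automorphism of $SL(2,q)$. By Lemma~\ref{22dicter2015} applied to $T\supseteq H$, $T=Q\rtimes V$ with $V$ cyclic or dihedral and $p\nmid|V|$, and $\bar M:=H/Q\cong M$ is normal in $V$. Suppose $[G:G']>2$. As $\mathrm{Out}(SL(2,q))$ is the direct product of the order-$2$ diagonal part and the order-$k$ field part, the image of $G$ in it is not contained in the diagonal part, so $G$ realises a non-trivial field automorphism; after correcting by an inner automorphism we may assume the corresponding element lies in $T$ and acts on $M$ by $a\mapsto a^{p^{j}}$ for some $j$ with $1\le j\le k-1$. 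This does not centralise $M$, so $V$ is non-abelian, hence dihedral, and the element projects to a reflection of $V$ inverting $\bar M$; but inverting $M$ forces $a^{p^{j}+1}=1$ for every $a\in\mathbb{F}_q^{*}$, i.e.\ $q-1\mid p^{j}+1$, which is impossible because $q-1=p^{k}-1>p^{k-1}+1\ge p^{j}+1$ for $k\ge 2$. Hence $[G:G']\le 2$ and $|G'|\ge|G|/2>450\gg^{2}$.

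It now remains to apply Lemma~\ref{lemA29dic2015} to $G'=SL(2,q)$. Its order exceeds $16\gg^{2}$; it satisfies property~{\rm(iii)} of Result~\ref{lem29dic2015} because the $q+1$ Sylow $p$-subgroups of $SL(2,q)$ partition the $q^{2}-1$ non-trivial unipotent elements, and hence pairwise intersect trivially; and $N_{G'}(Q)=Q\rtimes M$ has an abelian complement with $N_{N_{G'}(Q)}(M)\cap Q=\{1\}$ and $|N_{G'}(Q)|=q(q-1)>30(\gg-1)$ by the displayed estimate. Lemma~\ref{lemA29dic2015} then yields that $\cX$ has zero $p$-rank, which together with $d=p$ completes the proof.

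The delicate point is the argument of the second paragraph --- the exclusion of field-automorphism extensions of $SL(2,q)$ --- which is the analogue, with $G'=SL(2,q)$ in place of $PSL(2,q)$, of the device by which Lemma~\ref{25dicbis2015} forces $G$ to be linear; it rests on the arithmetic fact that $q-1\nmid p^{j}+1$ whenever $q=p^{k}$ with $k\ge 2$ and $1\le j\le k-1$. Everything else is the routine bookkeeping already carried out in Lemmas~\ref{22dic2015}, \ref{22dicter2015} and \ref{25dicbis2015}.
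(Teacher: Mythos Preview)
Your proof is correct and takes a genuinely different route from the paper's.

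\textbf{Where the two proofs agree.} Both start the same way: bound $|G|\le 2k(q+1)q(q-1)$ from $|C_G(G')|=2$ and $\aut(SL(2,q))\cong P\Gamma L(2,q)$, exclude small $q$, and deduce $|H|=q(q-1)>30\gg$ so that Lemma~\ref{22dic2015} gives $d=p$ and places $H$ in Case~(II) of Remark~\ref{scholter}.

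\textbf{Where they diverge.} You next use Lemma~\ref{22dicter2015} on $T=N_G(Q)$ to force $V=T/Q$ cyclic or dihedral, and then run the ``no field automorphism'' calculation directly on $SL(2,q)$ (the analogue of Remark~\ref{28feb2016}): an element of $T$ with non-trivial field part acts on $\bar M\cong\mathbb F_q^{*}$ by $a\mapsto a^{p^j}$, which is neither trivial nor inversion, contradicting $V$ cyclic/dihedral. This yields $[G:G']\le 2$, so $|G'|>450\gg^2$, and Lemma~\ref{lemA29dic2015} applied to $G'=SL(2,q)$ (which has the TI property for its Sylow $p$-subgroups) finishes. The paper instead passes to the quotient by the central involution: it sets $\bar\cX=\cX/Z(G')$, shows $\bar G=G/Z(G')$ lies between $PSL(2,q)$ and $P\Gamma L(2,q)$, invokes Lemma~\ref{25dicbis2015} on $\bar G$ to force $\bar G$ linear, proves $\gamma(\bar\cX)=0$ via Lemma~\ref{27dicbis2015} and a genus case analysis, and then concludes $\gamma(\cX)=0$ through Lemma~\ref{lemA30dic2015}.

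\textbf{Trade-offs.} Your argument is more self-contained: it avoids the quotient curve $\bar\cX$, the case split on $\gg(\bar\cX)$, and Lemma~\ref{lemA30dic2015} entirely. The paper's route exploits the central involution of $SL(2,q)$ to reduce to the $PSL(2,q)$ case already handled, which is conceptually natural but longer. A small note: the Frattini step $TG'=G$ (hence $T/H\cong G/G'$) underlies your passage from ``no field part in $T$'' to ``$[G:G']\le 2$''; you use it implicitly but it would be worth stating. Also, Lemma~\ref{22dicter2015} already forces $p\nmid|V|$, so the case $p\mid k$ causes no trouble.
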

\begin{proof} For every $g\in G$, let $\varphi_g$ denote the automorphism of $G'$ taking the element $h\in G'$ to its conjugate $h^g$. The map from $G$ to $\aut(G')$ which takes $g$ to $\varphi_g$ is a group homomorphism whose kernel consists of the elements in $G$ which centralize $G'$. Hence, $|G|=2|L|$ with a subgroup $L$ of $\aut(G')$. From $\aut(SL(2,q))\cong P\Gamma L(2,q)$, we have that
$|G|\leq 2 k(q+1)q(q-1)$ where $q=d^k$ and \textcolor{black}{$d$ is an odd prime}.
Observe that if $4k(q+1)>(q-1)q$ then $q=3,5,9$, but for these values of $q$, $|G|\leq 2880<3600\leq 900 \gg^2$. Hence
\begin{equation}
\label{eq526dic2015} \frac{4k(q+1)}{(q-1)q}\leq 1.
\end{equation}
For the normalizer $H=Q\rtimes U$ of $Q$ in $SL(2,q)$, assumption $|G|>900\gg^2$ yields
\begin{equation}
\label{eq5dic2015} |H|=(q-1)q>30\,\gg,
\end{equation}
which shows that $H$ satisfies the hypotheses in Lemma \ref{22dic2015}. Therefore, $d=p$ and the quotient curve $\tilde{\cX}=\cX/Q$ is rational. Take a point $P$ from one of the short orbits $\Omega_1$ of $H$. Then $|H_P|=|H||Q_P|=(q-1)p^r$ with a positive integer $r$.
By absurd, $\cX$ has positive $p$-rank and hence (i) of Lemma \ref{22dic2015} occurs.
We prove that Lemma \ref{lemA30dic2015} applies to $G$ where $g$ is the central involution of $G'$. Obviously, $g\in Z(G)$ as $G'$ is a characteristic subgroup of $G$.

It remains to show that the quotient curve $\bar{\cX}=\cX/V$ with $V=\langle g \rangle$ has zero $p$-rank. For this purpose, we need three facts:
\begin{itemize}
\item[(i)] $\bar{G}=G/V$ \textcolor{black}{is} isomorphic to a subgroup of $P\Gamma L(2,q)$ containing $PSL(2,q)$.
\item[(ii)] $\bar{G}$ is a subgroup of $\aut(\bar{\cX})$.
\item[(iii)] Let $\bar{H}=H/V$. If $\bar{P}$ is a point of $\bar{\cX}$ lying under $P\in \Omega_1$ in the cover $\cX|\bar{\cX}$, then $|\bar{H}_{\bar{P}}|=\ha (q-1)p^r$ with a positive integer $r$.
\end{itemize}
By (\ref{eq526dic2015}), Lemma \ref{25dicbis2015} for $W=PSL(2,q)$ applies to $\bar{G}$. Therefore, either $\bar{G}=PSL(2,q)$, or $\bar{G}=PGL(2,q)$. If $\bar\gg=\gg(\bar{\cX)}\geq 2$ then $\gg-1\geq 2(\bar{\gg}-1)$ and hence
$$|\bar{G}|=\ha |G|\geq 900 \gg^2\geq 900 \cdot\ha (2\bar{\gg}-1)^2> 900 \bar{\gg}^2,$$
which yields  $\bar{\gg}=0$ by Lemma \ref{27dicbis2015} applied to $\bar{G}$. If $\bar{\cX}$ is elliptic, (iii) together with Result \ref{res94} yield $p=3$ and $\qa(q-1)=|\bar{H}_P|\in\{6,12\}$ whence $q=9$. On the other hand $|PGL(2,9)|=2|PSL(2,9)|=720$. Hence $|G|\leq 1440<900 \gg^2$ as $\gg\geq 2$. Therefore, $\bar{\cX}$ is not elliptic.
\end{proof}

\section{Structure of automorphism groups of curves of even genus}
\label{genusparicase}


In this section we prove a classification theorem on the abstract structure of automorphism groups of algebraic curves with even genus.
It is worth mentioning that the list of the possible structures is quite short. As a consequence, the inverse Galois problem restricted on curves with even genus is is not solvable in general.
\begin{theorem}
\label{structure} If  $G$ is a subgroup of the automorphism group of some non-rational algebraic curves with even genus, defined over an algebraically closed field of odd characteristic $p$,  then one of the following cases occurs up to isomorphism:
\begin{itemize}
\item[\rm(i)] $G$ has odd order;
\item[\rm(ii)] $G=O(G)\rtimes S_2$ where $S_2$ is a $2$-group described in Result \ref{res26feb2018};
\item[\rm(iii)] the commutator subgroup of $G/O(G)$ is isomorphic to $SL(2,q)$ with $q\geq 5$;
\item[\rm(iv)] $PSL(2,q)\le G/O(G) \le P\Gamma L(2,q)$ with $q\ge 3$;
\item[\rm(v)] $PSL(3,q)\le G/O(G) \le P\Gamma L(3,q)$ with $q\equiv 3 \pmod 4$;
\item[\rm(vi)] $PSU(3,q)\le G/O(G) \le P\Gamma U(3,q)$ with $q\equiv 1 \pmod 4$;
\item[\rm(vii)] $G/O(G)={\rm{Alt}_7}$;
\item[\rm(viii)]  $G/O(G)=M_{11}$;
\item[\rm(ix)] $G/O(G)= GL(2,3)$;
\item[\rm(x)] $G/O(G)$ is the unique perfect group of order 5040 and $(G/O(G))/Z(G)\cong \rm{Alt}_7$;
\item[\rm(xi)] $G/O(G)$ is the group of order $48$ named SmallGroup(48,28) in the GAP-database.
\end{itemize}
\end{theorem}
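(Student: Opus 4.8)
The plan is to reduce Theorem \ref{structure} to pure group theory by extracting a single sharp constraint on a Sylow $2$-subgroup of $G$ (this is the content of Lemma \ref{lem23agos2015}) and then to feed the classification of $2$-groups with a cyclic subgroup of index $2$ (Result \ref{res26feb2018}) into the deep theorems collected in Section \ref{sec2}. First I would establish the constraint: since $p$ is odd, every $2$-subgroup $S$ of $\aut(\cX)$ is tame, so the Hurwitz genus formula \eqref{eq1} reads $2\gg-2=|S|(2\bar{\gg}-2)+\sum_i(|S|-\ell_i)$ with each short-orbit length $\ell_i$ dividing $|S|$; as $\gg$ is even, $2\gg-2\equiv 2\pmod 4$, so if $|S|\ge 4$ then some $\ell_i\in\{1,2\}$, i.e.\ some point $P$ has $|S_P|\in\{|S|/2,|S|\}$, and by (iii) of Result \ref{res74} (the $p$-part of $S_P$ is trivial) $S_P$ is cyclic; hence $S$ has a cyclic subgroup of index $\le 2$. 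By Result \ref{res26feb2018} a Sylow $2$-subgroup $S_2$ of $G$ is therefore cyclic, $C_{2^{n-1}}\times C_2$, generalized quaternion, dihedral, semidihedral, or modular maximal-cyclic; from here on the curve plays no further role (non-rationality is used only to force $\gg\ge 2$ and $\gg$ even).

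Next come the routine reductions. If $|G|$ is odd we are in case (i); assume $|G|$ even. If $S_2$ is cyclic, or $C_{2^{n-1}}\times C_2$ with $n\ge 3$, or modular maximal-cyclic, then $G$ has a normal $2$-complement — by the elementary result preceding Result \ref{reswong1} in the cyclic case, by Result \ref{reswong1} itself in the other two (those $S_2$ are none of the three exceptional types) — so $G=O(G)\rtimes S_2$, case (ii). If $G$ is solvable with no normal $2$-complement, Result \ref{reswong1} leaves $G/O(G)\cong PSL(2,3)$ or $PGL(2,3)$ (the $q=3$ instances of (iv), since $P\Gamma L(2,3)=PGL(2,3)$) or $GL(2,3)$ (case (ix)), the remaining generalized-quaternion configuration being identified through Result \ref{resgoha} with $\mathrm{SmallGroup}(48,28)$, a non-split central extension of $PGL(2,3)$ with generalized quaternion Sylow $2$-subgroup (case (xi)). So I may assume $G$ non-solvable, with no normal $2$-complement and $S_2$ dihedral, generalized quaternion, or semidihedral; I shall use freely that $O(N)=O(G)\cap N$ for $N\trianglelefteq G$ (hence $O(N_1)=O(G)$ when $[G:N_1]=2$) and that, by Feit--Thompson (Result \ref{resft}), a normal $2$-complement forces solvability.

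The three remaining Sylow types are handled by the structure theorems. For dihedral $S_2$, Gorenstein--Walter (Result \ref{resgor}) gives $G/O(G)\cong\mathrm{Alt}_7$ (case (vii)) or $PSL(2,q)\le G/O(G)\le P\Gamma L(2,q)$ (case (iv)), the split alternative making $G$ solvable. For generalized quaternion $S_2$, Brauer--Suzuki (Result \ref{resbs}) gives $|Z(G/O(G))|=2$ and $(G/O(G))/Z(G/O(G))$ has dihedral Sylow $2$-subgroup; Gorenstein--Walter applied to it (its split alternative killed by Feit--Thompson) yields $\mathrm{Alt}_7$ or a group between $PSL(2,q)$ and $P\Gamma L(2,q)$, whose commutator subgroup is $PSL(2,q)$ because $\mathrm{Out}(PSL(2,q))$ is abelian; lifting, $[G/O(G),G/O(G)]$ is a central extension of $PSL(2,q)$ (resp.\ $\mathrm{Alt}_7$) by a group of order $\le 2$, and a $2$-rank count — a split such extension would contain $C_2\times D$ with $D$ dihedral, hence an elementary abelian subgroup of order $8$, impossible inside the rank-$1$ group $S_2$ — forces the extension to be stem, hence perfect; Schur (Result \ref{resschur}) then gives $SL(2,q)$ for $q\ge 5$ (case (iii)), the $\mathrm{Alt}_7$ branch gives the perfect group $2.\mathrm{Alt}_7$ of order $5040$ (case (x)), and the $q=3$ branch, via Result \ref{resgoha}, gives $\mathrm{SmallGroup}(48,28)$ (case (xi)). For semidihedral $S_2$ with no normal $2$-complement, Result \ref{reswong} splits into three: if $G/O(G)$ has an index-$2$ normal subgroup $N_1$ with no index-$2$ subgroup and with dihedral or generalized quaternion Sylow $2$-subgroup, then the two previous cases applied to $N_1$ identify it, and one reassembles $G/O(G)$ as a degree-$2$ extension, discarding those extensions whose Sylow $2$-subgroup is not semidihedral (using the $2$-group structure of Result \ref{res26feb2018} — numbers of involutions, $2$-rank, and that a semidihedral group is not a central extension of $D_8\times C_2$) and landing in cases (iii), (iv), (ix); if instead $G/O(G)$ has no index-$2$ subgroup and its involutions form one conjugacy class, a minimal normal subgroup must be a non-abelian simple $T$ (the abelian possibility being excluded since semidihedral groups have several non-central involutions) whose Sylow $2$-subgroup, sitting inside a semidihedral group, is dihedral or semidihedral, so by Alperin's theorem, Gorenstein--Walter and Alperin--Brauer--Gorenstein (Results \ref{resalp}, \ref{resgor}, \ref{resabg}) $T$ is one of $PSL(2,q)$, $\mathrm{Alt}_7$, $PSL(3,q)$ with $q\equiv 3\pmod 4$, $PSU(3,q)$ with $q\equiv 1\pmod 4$, or $M_{11}$; since $C_{G/O(G)}(T)$ has odd order and so is trivial, $T\le G/O(G)\le\mathrm{Aut}(T)$, and the semidihedral constraint excludes $\mathrm{Alt}_7$ and the graph automorphisms of $PSL(3,q)$ and $PSU(3,q)$, leaving cases (iv), (v), (vi), (viii).

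The hard part will be the semidihedral case: there $G$ is neither simple nor quasisimple, Result \ref{reswong} is itself recursive, and the real work is the extension bookkeeping — for each normal subgroup produced by the theorems of Wong, Gorenstein--Walter, Brauer--Suzuki and Alperin--Brauer--Gorenstein, deciding precisely which over-groups still carry a semidihedral (respectively generalized quaternion) Sylow $2$-subgroup. This forces a careful use of the elementary structure theory of the $2$-groups in Result \ref{res26feb2018} together with the Schur-type classification of perfect central extensions (Results \ref{resschur}, \ref{resgoha}), and it is precisely this step that isolates the finitely many small exceptional configurations, notably $GL(2,3)$, $2.\mathrm{Alt}_7$ and $\mathrm{SmallGroup}(48,28)$.
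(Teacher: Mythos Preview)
Your proposal is correct and uses the same ingredients as the paper: the Sylow $2$-subgroup constraint of Lemma~\ref{lem23agos2015} (proved exactly as you do), the list in Result~\ref{res26feb2018}, and then Results~\ref{reswong1}, \ref{reswong}, \ref{resgor}, \ref{resbs}, \ref{resabg}, \ref{resalp}, \ref{resschur}, \ref{resgoha}, \ref{resft}. The difference is purely organizational, in two respects.

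First, within the odd core-free analysis the paper splits cases by the type of a \emph{minimal normal subgroup} of $G/O(G)$ (order $2$, order $4$, or non-abelian simple; Lemmas~\ref{lemA23agos2015}, \ref{lemB23agos2015}, \ref{lemA24agos2015}, \ref{lem26agos2015}, \ref{lem2mar2018}, \ref{lem29agos2015}), whereas you split by the isomorphism type of the \emph{Sylow $2$-subgroup} (dihedral, generalized quaternion, semidihedral) and run Gorenstein--Walter, Brauer--Suzuki and Wong directly. These are equivalent case decompositions; the paper's ordering makes the simple--minimal--normal case (Lemma~\ref{lem29agos2015}) a single clean statement, while yours avoids re-entering the dihedral and quaternion analyses inside the semidihedral case.

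Second, and more interesting, is how the reduction from $G$ to $G/O(G)$ is handled. You observe that since $|O(G)|$ is odd, a Sylow $2$-subgroup of $G/O(G)$ is isomorphic to one of $G$, so the constraint from Lemma~\ref{lem23agos2015} transfers to $G/O(G)$ \emph{as an abstract group}, and from there the argument is pure group theory. The paper instead realizes $G/O(G)$ geometrically as an automorphism group of the quotient curve $\bar\cX=\cX/O(G)$ and invokes the extra Lemma~\ref{le23agosto2015} (plus Remark~\ref{remle23agosto2015} and Lemma~\ref{razioOCNF}) to ensure $\bar\cX$ again has even genus, so that Lemma~\ref{lem23agos2015} can be re-applied on $\bar\cX$. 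Your route is shorter for Theorem~\ref{structure} itself; the paper's route has the payoff that $\bar\cX$ is then available as a genuine curve for the later refinements (Proposition~\ref{3apr2016}, Theorem~\ref{princB}), where size comparisons like $|\bar G|>900\,\gg(\bar\cX)^2$ and Lemmas~\ref{27dicbis2015}, \ref{25dicbis2015} are needed. A couple of your compressed steps (that $(G/O(G))/Z(G/O(G))$ is again odd core-free in the quaternion case; that the lifted commutator subgroup is perfect before invoking Result~\ref{resschur}) are exactly the points the paper spells out in the proof of Lemma~\ref{lem26agos2015}, so there is no missing idea.
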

In Subsection \ref{refinements} refinements of Theorem \ref{structure} \textcolor{black}{are} obtained under the additional \textcolor{black}{hypothesis} that the order of $G$
is large enough compared to genus $\gg$ of the curve; more precisely when $|G|$ is assumed to exceed \textcolor{black}{$900\gg(\cX)^2$}; see Proposition \ref{3apr2016}.

Throughout the section,  $\cX$ stands for an algebraic curve defined over an algebraically closed field $\K$ of odd characteristic $p$, whose genus $\gg$ is a (nonzero) even integer, and $G$ denotes a subgroup  \textcolor{black}{of} $\aut(\cX)$.

For $G$ of even oder, a key property is given in the following lemma.
\begin{lemma}
\label{lem23agos2015} If $G$ has even order, then any $2$-subgroup of $G$ has a cyclic subgroup of index $2$.
\end{lemma}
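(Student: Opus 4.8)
The plan is to study a Sylow $2$-subgroup $S_2$ of $G$ through its action on $\cX$ and to show that $S_2$ contains an \emph{elementary abelian} subgroup of order at most $4$ (equivalently, has $2$-rank at most $2$) in a way that is compatible with the classification in Result \ref{res26feb2018}. Since $\gg$ is even, parity of the Hurwitz genus formula \eqref{eq1} will be the driving tool. First I would reduce to a $2$-group $A$ acting faithfully on $\cX$ (any $2$-subgroup of $G$ is a $2$-subgroup of $\aut(\cX)$); since $p$ is odd, $A$ is a tame subgroup, so $d_P=\sum_{i\ge 0}(|A_P^{(i)}|-1)=|A_P|-1$ at each ramification point, and \eqref{eq1} reads
\begin{equation*}
2\gg-2=|A|(2\bar\gg-2)+\sum_{j=1}^{m}\left(|A|-\ell_j\right),
\end{equation*}
where $\ell_1,\dots,\ell_m$ are the sizes of the short orbits. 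Reducing this identity modulo a suitable power of $2$, using that $2\mid \gg$ so $2\gg-2\equiv 2\pmod 4$, should force strong constraints on the stabilizers $A_P$ and the orbit sizes, and ultimately on the structure of $A$ itself.

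The key step is to rule out the possibility that $A$ has $2$-rank $\ge 3$, i.e.\ that $A$ contains an elementary abelian subgroup $E\cong C_2\times C_2\times C_2$. Here I would argue by contradiction: assume $2$-rank $\ge 3$ and consider the action of $E$ (or of $A$ itself) on $\cX$. The cleanest route is to apply the Hurwitz formula to $E$ and exploit the fact that an elementary abelian $2$-group of rank $r$ acting on a curve has many involutions, each contributing an even drop to $2\gg-2$ via the Accola–Maclachlan type relations; combined with $2\mid\gg$ this should be impossible. Alternatively, and perhaps more in the spirit of the paper, one uses the classification of $2$-groups with a cyclic subgroup of index $2$ (Result \ref{res26feb2018}) as the \emph{target}: I would show that if $A$ is \emph{not} on that list then $A$ has a subquotient forcing $2\gg-2$ to be divisible by $4$, contradicting evenness of $\gg$. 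In practice this means analyzing the smallest counterexample: a minimal $2$-subgroup $A\le\aut(\cX)$ that does not have a cyclic index-$2$ subgroup, and deriving a contradiction from the Hurwitz formula applied to $A$ and to its maximal subgroups, using that a proper subgroup or quotient does lie on the list by minimality.

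The main obstacle I anticipate is the bookkeeping in the modular Hurwitz argument: controlling the $2$-adic valuations of $|A|-\ell_j$ simultaneously for all short orbits, since an orbit on which the stabilizer is all of $A$ contributes $0$ and must be treated separately, while orbits with small stabilizers contribute something divisible by a large power of $2$. One has to show that there is essentially \emph{at most one} "bad" orbit (stabilizer of index $2$ in $A$) and that its existence, together with $2\gg-2\equiv 2\pmod 4$, pins down $A$. The structural conclusion — that the resulting $A$ is cyclic, dihedral, generalized quaternion, semidihedral, modular maximal-cyclic, or $C_{2^{n-1}}\times C_2$ — then follows by matching against Result \ref{res26feb2018}, which is exactly the list of $2$-groups with a cyclic subgroup of index $2$. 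I expect the genus-even hypothesis to be used precisely at the point where one excludes $2$-rank $\ge 3$ and excludes two simultaneous index-$2$ orbits; without it the statement is false (e.g.\ $C_2^3$ acts on suitable odd-genus curves).
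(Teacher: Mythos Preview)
Your setup is right but you are missing the one-line finish. Once you have
\[
2\gg-2 \;=\; |A|(2\bar\gg-2)+\sum_{j=1}^m(|A|-\ell_j)
\]
and observe that $2\gg-2\equiv 2\pmod 4$ while $|A|(2\bar\gg-2)\equiv 0\pmod 4$ (for $|A|\ge 4$; the case $|A|=2$ is trivial), you only need that \emph{some} $\ell_j$ equals $1$ or $2$: if every $\ell_j\ge 4$ then every summand $|A|-\ell_j$ is divisible by $4$ and the congruence fails. That is the whole modular argument --- there is no need to track all orbits simultaneously or to worry about ``two simultaneous index-$2$ orbits''.

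The step you are not seeing is what to do with an orbit of size $1$ or $2$. If $\ell_j=1$ then $A$ fixes a point $P$; if $\ell_j=2$ then the stabilizer $A_P$ has index $2$ in $A$. In either case $A_P$ is a $p'$-group fixing a point, and by Result~\ref{res74}(iii) such a stabilizer is \emph{cyclic}. So $A$ has a cyclic subgroup of index at most $2$, and you are done. The entire machinery you propose --- excluding $2$-rank $\ge 3$ by Accola--Maclachlan relations, running a minimal-counterexample argument, matching against the classification in Result~\ref{res26feb2018} --- is unnecessary; the paper's proof is three lines, and Result~\ref{res26feb2018} is invoked only afterwards (Remark~\ref{2sub}) to \emph{name} the groups that arise, not to prove the lemma.
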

\begin{proof} Let $U$ be a subgroup of $\aut(\cX)$ of order $2^u\geq 2$. Since $p$ is odd, $U$ is tame. Therefore, the Hurwitz genus formula applied to $U$ gives
$$2\gg-2=2^u(2\bar{\gg}-2)+\sum_{i=1}^m(2^u-\ell_i)$$
where $\bar{\gg}$ is the genus of the quotient curve $\bar{\cX}=\cX/U$ and $\ell_1,\ldots,\ell_m$ are the short orbits of $U$ on $\cX$. Since $2(\gg(\cX)-1)\equiv 2 \pmod 4$ while
$2^u(2\bar{\gg}-2)\equiv 0 \pmod 4$, some $\ell_i$ ($1\le i \le m$) must be either $1$ or $2$.  Therefore, $U$ or a subgroup of $U$ of index $2$ fixes a point of $\cX$ and hence
is cyclic.
\end{proof}
\begin{rem}
\label{2sub} {\em{Lemma \ref{lem23agos2015} shows that Result \ref{res26feb2018} applies to any $2$-subgroup of $\aut(\cX)$}. }
\end{rem}

For solvable $G$, Theorem \ref{structure} is obtained as a corollary of Lemma \ref{lem23agos2015} together with Results \ref{res26feb2018} and \ref{reswong1}.
\begin{proposition}
\label{pro26feb2018} If $G$ is solvable then one of the following cases occurs.
\begin{itemize}
\item[\rm(i)] $G=O(G)\rtimes S_2$ where $S_2$ is a Sylow $2$-subgroup of $G$.
\item[\rm(ii)] $G/O(G)\cong PSL(2,3)$, or $G/O(G)\cong PGL(2,3)$.
\item[\rm(iii)] $G/O(G)\cong GL(2,3)$.
\end{itemize}
\end{proposition}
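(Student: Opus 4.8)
The plan is to combine the structural constraint from Lemma~\ref{lem23agos2015}, which forces every $2$-subgroup of $G$ (in particular a Sylow $2$-subgroup $S_2$) to contain a cyclic subgroup of index $2$, with the classification of such $2$-groups in Result~\ref{res26feb2018} and with the results on solvable groups lacking a normal $2$-complement, namely Result~\ref{reswong1}. First I would observe that if $G$ has odd order we are in Case~(i) trivially (with $S_2 = \{1\}$), so we may assume $2 \mid |G|$. By Lemma~\ref{lem23agos2015} a Sylow $2$-subgroup $S_2$ of $G$ has a cyclic subgroup of index $\le 2$. If $G$ has a normal $2$-complement, then $G = O(G) \rtimes S_2$ by definition, and we land in Case~(i): this uses that a normal $2$-complement is precisely a normal subgroup $O(G)$ of odd order with $S_2$ as complement, and since $G/S_2 \cong O(G)$ has odd order while $|O(G)|$ is coprime to $[G:O(G)] = |S_2|$, Result~\ref{reszass} (Schur--Zassenhaus) is not even needed — the splitting is built into the definition.

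The substantive case is when $G$ has \emph{no} normal $2$-complement. Here I would invoke Result~\ref{reswong1} directly: since $G$ is solvable, has no $2$-complement, and $S_2$ has a cyclic subgroup of index $\le 2$, the conclusion is that $S_2$ is dihedral with $G/O(G) \cong \PSL(2,3)$ or $\PGL(2,3)$, or $S_2$ is semidihedral with $G/O(G) \cong GL(2,3)$. These are exactly Cases~(ii) and~(iii). (One should note in passing that the generalized-quaternion alternative in the first part of Result~\ref{reswong1} is absorbed: for solvable $G$ without a $2$-complement, the explicit list in Result~\ref{reswong1} already rules quaternion Sylow $2$-subgroups out, so no separate argument is required.) Thus every solvable $G$ falls into one of the three stated cases, and the three cases are the asserted trichotomy of Proposition~\ref{pro26feb2018}.

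The main obstacle is essentially bookkeeping rather than depth: one must make sure that the hypotheses of Result~\ref{reswong1} are met verbatim — in particular that "a cyclic subgroup of index $\le 2$" in Result~\ref{reswong1} is exactly what Lemma~\ref{lem23agos2015} delivers (it is, since "index $2$" includes "index $1$", i.e.\ cyclic $S_2$), and that the dihedral group of order $4$ that arises as a degenerate case in Result~\ref{res26feb2018} is harmless, being abelian and hence giving a normal $2$-complement by the elementary result quoted before Result~\ref{reswong1}. Because $G$ is solvable throughout, none of the deep simple-group classifications (Gorenstein--Walter, Alperin--Brauer--Gorenstein, etc.) enter; the proof is a short deduction from the two group-theoretic facts already recorded. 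I would therefore write the proof in three short paragraphs mirroring the three cases above, with the no-$2$-complement case being a one-line citation of Result~\ref{reswong1}.
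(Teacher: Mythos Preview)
Your proposal is correct and follows essentially the same route as the paper, which simply records Proposition~\ref{pro26feb2018} as a corollary of Lemma~\ref{lem23agos2015} together with Results~\ref{res26feb2018} and~\ref{reswong1}. One small inaccuracy in your side remark: the ``elementary result quoted before Result~\ref{reswong1}'' concerns \emph{cyclic} Sylow $2$-subgroups, not abelian ones, so it does not directly dispose of the Klein four-group case---but this is irrelevant to the argument, since Result~\ref{reswong1} already covers all $S_2$ with a cyclic subgroup of index $\le 2$.
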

If  $G$ contains no non-trivial normal subgroup of odd order, then $G$ is an {\emph{odd core-free}} group. We first prove Theorem \ref{structure} in this case.

\subsection{The case $|O(G)|=1$}\label{ssec61}
We show that the hypothesis of $\gg$ to be even gives a heavy restriction on the structure of the minimal normal subgroups of an odd-core free \textcolor{black}{subgroup} of $\aut(\cX)$.

\begin{lemma}\label{MNSOCF} If $G$ is odd core-free, then a minimal normal subgroup $N$ of $G$ is either a $2$-group with $|N|\le 4$, or a non-abelian simple group.
\end{lemma}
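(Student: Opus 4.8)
The plan is to use the standard structure theory of minimal normal subgroups together with the even-genus constraint encoded in Lemma \ref{lem23agos2015}. Recall that a minimal normal subgroup $N$ of any finite group is characteristically simple, hence a direct product $N \cong T_1 \times \cdots \times T_r$ of copies of a single simple group $T$. If $T$ is nonabelian, then $N$ is a direct product of nonabelian simple groups; if $T$ is abelian, then $N$ is an elementary abelian $\ell$-group for some prime $\ell$. Since $G$ is odd core-free, $N$ cannot have odd order, so in the abelian case $\ell = 2$ and $N$ is an elementary abelian $2$-group. I must therefore rule out (a) $N$ elementary abelian of order $2^e$ with $e \ge 3$, and (b) $N$ a direct product of $r \ge 2$ copies of a nonabelian simple group.

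For case (a): if $N$ were elementary abelian of order $2^e$ with $e \ge 3$, then $N$ itself is a $2$-subgroup of $\aut(\cX)$, so by Lemma \ref{lem23agos2015} it would have a cyclic subgroup of index $2$; but an elementary abelian $2$-group of order $\ge 8$ has no cyclic subgroup of index $2$ (its largest cyclic subgroup has order $2$, of index $2^{e-1} \ge 4$). This forces $e \le 2$, i.e. $|N| \le 4$. For case (b): suppose $N = T_1 \times \cdots \times T_r$ with each $T_i \cong T$ nonabelian simple and $r \ge 2$. A nonabelian simple group has even order by the Feit--Thompson theorem (Result \ref{resft}), so it contains an involution $t$; then $(t,t,1,\dots,1) \in T_1\times T_2 \le N$ together with $(t,1,1,\dots,1)$ and $(1,t,1,\dots,1)$ generate an elementary abelian $2$-subgroup of $N$ of order $4$, and in fact $\langle (t,1,\dots),(1,t,1,\dots)\rangle \cong C_2\times C_2$; pushing this to three factors (using $r\ge 2$ we only get $C_2 \times C_2$, so I actually need to be slightly more careful). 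The cleaner argument: a Sylow $2$-subgroup $S_2(N)$ of $N$ is a direct product $S_2(T_1)\times\cdots\times S_2(T_r)$, each factor a nontrivial $2$-group of order $\ge 2$; hence $S_2(N)$ contains $C_2 \wr$-type subgroups and in particular, taking one involution from each of three distinct factors $T_1, T_2, T_3$ (possible once $r \ge 3$) yields an elementary abelian subgroup of order $8$, contradicting Lemma \ref{lem23agos2015} via Result \ref{res26feb2018}. So $r \le 2$, and I must still exclude $r = 2$: if $r = 2$, then $S_2(T_1)\times S_2(T_2)$ has rank $\ge 2$, and a $2$-group with a cyclic subgroup of index $2$ has rank $\le 2$ with the rank-$2$ types listed in Result \ref{res26feb2018} all being generated by two elements with one of order $\ge 4$ (except the Klein four-group); so $S_2(T_1)\times S_2(T_2)$ being such a group forces each $S_2(T_i)$ to have order $2$, i.e. $S_2(N) \cong C_2 \times C_2$ and $N$ has Klein four Sylow $2$-subgroup. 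But then each $T_i$ is a simple group with Sylow $2$-subgroup of order $2$, which is impossible since such a group has a normal $2$-complement (a group of even order with cyclic Sylow $2$-subgroup has a normal $2$-complement) and hence is not simple and nonabelian.

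The main obstacle I anticipate is getting the case $r = 2$ airtight without circularity: one wants to conclude that a direct product of two nonabelian simple $2$-parts cannot have a cyclic subgroup of index $2$, and the quickest route is the rank/order bound above — a $2$-group with a cyclic subgroup of index $2$ has order at most twice that of its largest cyclic subgroup, and its $2$-rank is at most $2$ with the precise list from Result \ref{res26feb2018}; a nontrivial direct product $A \times B$ of two $2$-groups each of order $\ge 2$ has a cyclic subgroup of index $2$ only if, say, $|B| = 2$ and $A$ is cyclic, which would make $T_2$ a simple group with $S_2(T_2) \cong C_2$, excluded as above. Throughout I will invoke Lemma \ref{lem23agos2015} (every $2$-subgroup of $\aut(\cX)$ has a cyclic subgroup of index $2$, valid since $\gg$ is even), Result \ref{res26feb2018} (the classification list, from which the rank bound and the "order $\le 2 \times$ largest cyclic" follow), and the Feit--Thompson theorem, Result \ref{resft}, to guarantee that nonabelian simple factors have even order. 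With these, the two bad cases collapse and $N$ is either a $2$-group of order $\le 4$ or a single nonabelian simple group, as claimed.
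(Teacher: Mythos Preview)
Your plan is correct. The abelian case is handled exactly as in the paper. For the nonabelian case your argument diverges from the paper's: you split into $r\ge 3$ (three involutions from three factors give an elementary abelian group of order $8$, contradicting Lemma~\ref{lem23agos2015}) and $r=2$, where you observe that $S_2(N)=S_2(T_1)\times S_2(T_2)$ must, by Lemma~\ref{lem23agos2015}, appear on the list in Result~\ref{res26feb2018}; since the nonabelian groups on that list all have cyclic center while $Z(A\times B)=Z(A)\times Z(B)$ has order $\ge 4$ when both factors are nontrivial $2$-groups, only the abelian type $C_{2^{n-1}}\times C_2$ survives, forcing one $T_i$ to have Sylow $2$-subgroup $C_2$, hence a normal $2$-complement, contradicting simplicity. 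The paper instead treats all $r\ge 2$ uniformly: it invokes the Brauer--Suzuki theorem (Result~\ref{resbs}) together with the cyclic-Sylow normal-$2$-complement criterion to conclude that a Sylow $2$-subgroup of a single simple factor $M$ is neither cyclic nor generalized quaternion, hence (being on the list of Result~\ref{res26feb2018} via Lemma~\ref{lem23agos2015}) already contains a Klein four group $V$; then $V$ together with one involution from any other factor yields $C_2^3$, contradicting Lemma~\ref{lem23agos2015}. Your route is more elementary in that it avoids Brauer--Suzuki, at the cost of a case distinction and a short direct-product analysis; the paper's route is shorter and handles $r=2$ and $r\ge 3$ in one stroke but leans on a deep classification-era result.
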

\begin{proof}
Since $N$ is characteristically simple, if $N$ is solvable then $N$ is an elementary abelian group, otherwise it is a direct product of pairwise isomorphic non-abelian simple groups; see, for instance, \cite[Kapitel I, Satz 9.13]{huppertI1967}.
In the former case, $N\cong \mathbb Z_2^h$ since $G$ is odd core-free, and $h\le 2$ otherwise $N$ would be a $2$-group without a cyclic subgroup of index $2$. If $N$ is the direct product of pairwise isomorphic simple groups, let $M$ one of these simple subgroups of $N$. From Result \ref{resbs}, $M$ is neither cyclic nor a generalized quaternion group. By the remarks made after Result \ref{res26feb2018}, a Sylow $2$-subgroup of $N$
contains an elementary abelian group of
order $4$, say $V$. Then $M=N$, otherwise $V$ and any involution from another factor in that direct product would generate an elementary abelian subgroup of $N$ of order $8$, which is a $2$-group without a cyclic subgroup of index $2$.
 \end{proof}

Three cases are now treated separately, according as a minimal normal subgroup $N$ has order $2$, $4$, or is a simple group.

\begin{lemma}
\label{lemA23agos2015} If $G$ has a (minimal) normal subgroup $N$ of order $2$ then $G$ has a $2$-complement, unless a Sylow $2$-subgroup of $G$ is either a generalized quaternion group, or a  semidihedral group. In the semidihedral case, $G$ has an index $2$ subgroup $M$ such that either $M$ has a $2$-complement, or a Sylow $2$-subgroup of $M$ is a generalized quaternion group.
If, in addition, $G$ is solvable and has no $2$-complement then $G/O(G)\cong GL(2,3)$ and $G$ has a semidihedral Sylow $2$-subgroup of order $16$.
\end{lemma}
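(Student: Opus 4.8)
The plan is to treat the three assertions in turn, starting from the observation that $N\le Z(G)$, since $\Aut(N)$ is trivial when $|N|=2$. Writing $\bar G=G/O(G)$ and $\bar N$ for the image of $N$, this makes $\bar N$ a central subgroup of $\bar G$ of order $2$, while $S_2\cong S_2O(G)/O(G)$ is a Sylow $2$-subgroup of $\bar G$. Throughout I shall use the elementary fact that a group of even order possessing a $2$-complement also has a normal subgroup of index $2$ (pull back an index-$2$ subgroup of the nontrivial $2$-group $S_2$), and the fact — immediate from Lemma~\ref{lem23agos2015} and Remark~\ref{2sub} — that every $2$-subgroup of $G$ has a cyclic subgroup of index at most $2$, so that Results~\ref{reswong1} and~\ref{reswong} apply.

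For the first assertion, assume $G$ has no $2$-complement. By Result~\ref{reswong1} the group $S_2$ is generalized quaternion, dihedral, or semidihedral, and it remains to exclude the dihedral case. If $S_2$ were dihedral, Result~\ref{resgor} would give $G=O(G)\rtimes S_2$ (impossible, as this is a $2$-complement), or $\bar G\cong\Alt_7$, or $PSL(2,q)\le\bar G\le P\Gamma L(2,q)$; but $\Alt_7$ has trivial centre, and so does every group lying between $PSL(2,q)$ and $P\Gamma L(2,q)$ (its centre centralizes $PSL(2,q)$, and $P\Gamma L(2,q)=\Aut(PSL(2,q))$ acts faithfully on $PSL(2,q)$), contradicting $1\neq\bar N\le Z(\bar G)$. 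Hence $S_2$ is generalized quaternion or semidihedral.

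For the second assertion, suppose in addition that $S_2$ is semidihedral; then $|S_2|\ge 16$ and $S_2$ has a non-central involution by the remarks after Result~\ref{res26feb2018}. Apply Result~\ref{reswong}. Its case (iii) cannot occur: writing $N=\langle z\rangle$ with $z\in Z(G)$, if all involutions of $G$ were conjugate they would all be conjugate to, hence equal to, the central element $z$, which is absurd. Its case (i) gives an index-$2$ normal subgroup $M$ with dihedral Sylow $2$-subgroup and no index-$2$ normal subgroup; I claim this cannot occur either. First $z\in M$, for otherwise $G=M\times\langle z\rangle$, so $S_2=S_2(M)\times\langle z\rangle$ would be a direct product of two nontrivial $2$-groups and therefore have centre of order $\ge 4$, contradicting $|Z(S_2)|=2$. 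Thus $N\le Z(M)$, and Result~\ref{resgor} applied to $M$ yields $M=O(M)\rtimes S_2(M)$ — impossible, since $S_2(M)$ is dihedral of order $\ge 8$ and $M$ would then have an index-$2$ normal subgroup — or $M/O(M)$ is $\Alt_7$ or lies between $PSL(2,q)$ and $P\Gamma L(2,q)$, again contradicting that the image of $N$ is central of order $2$. Hence only case (ii) of Result~\ref{reswong} survives, providing the index-$2$ subgroup $M$ with generalized quaternion Sylow $2$-subgroup required by the lemma.

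For the third assertion, let $G$ be solvable with no $2$-complement. By the first assertion $S_2$ is generalized quaternion or semidihedral, hence not dihedral, so the solvable part of Result~\ref{reswong1} leaves only the possibility that $S_2$ is semidihedral and $\bar G\cong GL(2,3)$ (the alternative $\bar G\cong PSL(2,3)$ or $PGL(2,3)$ being excluded also because $\Alt_4$ and $\Sym_4$ have trivial centre while $\bar N\le Z(\bar G)$ has order $2$). Since $|GL(2,3)|=48=2^{4}\cdot 3$ and $S_2$ is a Sylow $2$-subgroup of $\bar G$, it has order $16$. I expect the main obstacle to be case (i) of Result~\ref{reswong}: one must realize it is genuinely vacuous — not merely a source of the ``$M$ has a $2$-complement'' alternative in the statement — which forces re-applying Result~\ref{resgor} to $M$ and combining it with the ``no index-$2$ normal subgroup'' hypothesis, together with the small direct-indecomposability observation for semidihedral $2$-groups; the triviality of the centres of $\Alt_7$ and of the groups between $PSL(2,q)$ and $P\Gamma L(2,q)$, though standard, is what converts these group-theoretic trichotomies into the clean dichotomy asserted.
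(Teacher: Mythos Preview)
Your proof is correct and follows the same broad strategy as the paper---using Results~\ref{reswong1}, \ref{reswong}, and \ref{resgor} together with the observation that $N\le Z(G)$---but with two genuine differences worth noting.

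First, to exclude case~(iii) of Result~\ref{reswong} in the semidihedral situation, the paper invokes Thompson's transfer lemma (Result~\ref{resthompson}): choosing the cyclic index-$2$ subgroup $U\le S_2$ and a non-central involution $v\in S_2\setminus U$, it argues that $v$ cannot be conjugate to the unique involution of $U$ (which is $z$), so $G$ must have an index-$2$ subgroup. You instead observe directly that if all involutions of $G$ form one class then, since $z$ is central, that class is $\{z\}$, contradicting the existence of a non-central involution in $S_2$. Your argument is shorter and avoids Result~\ref{resthompson} entirely.

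Second, in case~(i) of Result~\ref{reswong} the paper simply keeps the alternative ``$M$ has a $2$-complement'' (tacitly re-applying the dihedral analysis to $M$), whereas you go further and show this case is vacuous: the additional hypothesis that $M$ has no normal subgroup of index~$2$ is incompatible with $M=O(M)\rtimes S_2(M)$ when $S_2(M)$ is dihedral of order $\ge 8$. So you actually prove slightly more than the lemma states---only the generalized-quaternion alternative for $M$ survives---at the cost of the small extra step verifying $z\in M$ (your direct-product argument on $Z(S_2)$). The paper's version, by contrast, does not need to track whether $z\in M$, since it is content with the weaker disjunctive conclusion.
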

\begin{proof} If a Sylow $2$-subgroup $S_2$ of $G$ is dihedral then Result \ref{resgor} yields that $G$ has a normal $2$-complement. In fact, $NO(G)/O(G)$ is a non-trivial subgroup of $G/O(G)$ contained in the center of $G/O(G)$ whereas any subgroup of $P\Gamma L(2,q)$ containing $PSL(2,q)$, as well as ${\rm{Alt}}_7$, have trivial center.
By the first claim in Result \ref{reswong1}, we are left with the case where $S_2$ is a semidihedral group. Then $S_2$ has a cyclic subgroup $U$ of index $2$ such that $S_2\setminus U$ contains an involution $v$. From Result \ref{resthompson}, either $G$ has a a subgroup of index $2$ containing $U$, or $v$ is conjugate to an involution in $U$. The latter case cannot actually occur in our situation, since the normal subgroup $N$ consists of the unique involution of $U$ together with the identity. Therefore, $G$ has a subgroup of index $2$, and either (i) or (ii) in Result \ref{reswong} holds. In case (i), $G$ has an index $2$ subgroup $M$ with a dihedral Sylow $2$-subgroup, and hence $M$ has a $2$-complement.  In case (ii), $G$ has an index $2$ subgroup $M$ with a generalized quaternion Sylow $2$-subgroup. If $G$ is solvable, the second claim in Result \ref{reswong1} applies.
\end{proof}
\begin{rem}
\label{rem18jan2017}{\emph{Lemma \ref{lemA23agos2015} applies to $SL^{\pm}(2,q)$ and $SU^{\pm}(2,q)$ and shows that $\aut(\cX)$ has no subgroup isomorphic to $SL^{\pm}(2,q)$ for $q\equiv 1 \pmod 4$ and to $SU^{\pm}(2,q)$ for $q\equiv -1 \pmod 4$.}}
\end{rem}
\begin{lemma}
\label{lemB23agos2015}
If $G$ has a minimal normal subgroup $N$ of order $4$, then either $G=O(G)\rtimes S_2$ where $S_2$ is the direct product of a cyclic
group by a group of order $2$, or $G/O(G)\cong PSL(2,3), PGL(2,3)$.
\end{lemma}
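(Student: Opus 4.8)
The plan is to pin down the Sylow $2$-subgroup $T$ of $G$ and then argue by cases. Since a cyclic group of order $4$ has a characteristic (hence $G$-normal) subgroup of order $2$, minimality forces $N\cong C_2\times C_2$; as $N$ is a normal $2$-subgroup it lies in $T$ and is normal in $T$. Being a $2$-subgroup of $\aut(\cX)$, $T$ has a cyclic subgroup of index $2$ by Remark \ref{2sub}, so $T$ is one of the groups of Result \ref{res26feb2018}; using the elementary properties recorded right after that result, such a group has a normal subgroup $\cong C_2\times C_2$ only if it is abelian (so $\cong C_{2^m}\times C_2$), dihedral of order $8$, or modular maximal-cyclic — cyclic and generalized quaternion groups have $2$-rank $1$, while semidihedral groups and dihedral groups of order $\ge 16$ have centre of order $2$ and no non-central involution with centralizer of index $\le 2$, hence no normal $C_2\times C_2$. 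I will also pass to $\bar G=G/O(G)$: then $O(\bar G)=1$, a Sylow $2$-subgroup of $\bar G$ is again isomorphic to $T$, and the image $\bar N$ of $N$ is a \emph{minimal} normal subgroup of $\bar G$ of order $4$ — a normal subgroup of $\bar G$ of order $2$ inside $\bar N$ would pull back to a normal subgroup of $G$ strictly between $O(G)$ and $NO(G)$, and a short order count shows it would have to contain $N$, hence all of $NO(G)$, a contradiction.

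The tool I will use twice is an elementary transfer principle: if a finite group $H$ has an abelian Sylow $2$-subgroup $P$ such that $\aut(P)$ is a $2$-group, then $N_H(P)/C_H(P)$ is simultaneously a $2$-group and a group of odd order (being a quotient of $N_H(P)$ in which the Sylow $2$-subgroup $P$ maps to $1$), so $N_H(P)=C_H(P)$, and Burnside's normal $2$-complement theorem yields $H=O(H)\rtimes P$. Since $\aut(C_{2^k}\times C_2)$ is a $2$-group for every $k\ge 2$, this applies whenever $P\cong C_{2^k}\times C_2$ with $k\ge 2$.

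The step I expect to be the main obstacle is ruling out the modular maximal-cyclic case, which is covered by none of the classification theorems quoted in Section \ref{sec2}. Suppose $T$ is modular maximal-cyclic of order $2^n\ge 16$, and work in $\bar G$, fixing a Sylow $2$-subgroup still called $T$. Then $Z(T)$ is cyclic, so $\bar N\not\le Z(T)$. Since $\bar N$ is minimal normal, $\bar G/C_{\bar G}(\bar N)$ embeds in $\aut(\bar N)\cong {\rm{Sym}}_3$ with image of order divisible by $3$; the order-$3$ possibility would force $T\le C_{\bar G}(\bar N)$, i.e. $\bar N\le Z(T)$, which is false, so $\bar G/C_{\bar G}(\bar N)\cong {\rm{Sym}}_3$. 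A short computation gives that $C_{\bar G}(\bar N)\cap T=C_T(\bar N)\cong C_{2^{n-2}}\times C_2$, which is a Sylow $2$-subgroup of the normal subgroup $C_{\bar G}(\bar N)$; since $n\ge 4$ the transfer principle applies and gives $C_{\bar G}(\bar N)=C_{2^{n-2}}\times C_2$ (as $O(\bar G)=1$). But then $C_{\bar G}(\bar N)$, being abelian, equals the centralizer of its own centralizer in $\bar G$, so ${\rm{Sym}}_3\cong\bar G/C_{\bar G}(\bar N)$ embeds in $\aut(C_{2^{n-2}}\times C_2)$, a $2$-group — a contradiction.

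Hence $T$ is abelian or dihedral. If $T\cong C_{2^m}\times C_2$ with $m\ge 2$, the transfer principle applied to $G$ itself gives $G=O(G)\rtimes T$, the first alternative. If $T$ is dihedral — so $T\cong C_2\times C_2$ or $T\cong D_8$ — the Gorenstein–Walter theorem (Result \ref{resgor}) gives $G=O(G)\rtimes T$, or $G/O(G)\cong {\rm{Alt}}_7$, or $PSL(2,q)\le G/O(G)\le P\Gamma L(2,q)$. Now $G/O(G)\cong {\rm{Alt}}_7$ is impossible, since ${\rm{Alt}}_7$ is simple but $\bar N$ is a normal subgroup of order $4$; $G=O(G)\rtimes D_8$ is impossible, since $D_8$ has no minimal normal subgroup of order $4$ (its centre, of order $2$, lies in every subgroup of order $4$); $G=O(G)\rtimes(C_2\times C_2)$ is exactly the first alternative; and if $PSL(2,q)\le G/O(G)$ with $q\ge 4$ then $PSL(2,q)$ is a simple normal subgroup of $G/O(G)$ meeting $\bar N$ trivially, so $G/O(G)$ contains $\bar N\times PSL(2,q)$ and a Sylow $2$-subgroup of $G/O(G)$ would have order at least $4\cdot 4=16>|T|$, impossible. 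So $q=3$, and $PSL(2,3)\le G/O(G)\le PGL(2,3)$ gives $G/O(G)\cong PSL(2,3)$ or $PGL(2,3)$, which completes the proof.
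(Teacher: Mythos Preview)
Your proof is correct, but it takes a genuinely different route from the paper's. The paper works with the centralizer $T=C_G(N)$: since $N\le Z(T)$, any Sylow $2$-subgroup $V$ of $T$ has $N\le Z(V)$, and among the groups of Result \ref{res26feb2018} this forces $V$ to be abelian; Result \ref{reswong1} then gives $T$ a normal $2$-complement. If $T=G$ we land in the first alternative; if $T\lneq G$ then $G/T\hookrightarrow\aut(N)\cong{\rm Sym}_3$ shows $G$ is solvable, and the already-established solvable case (Proposition \ref{pro26feb2018}) finishes, noting that $GL(2,3)$ has no normal subgroup of order $4$.

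By contrast, you classify the Sylow $2$-subgroup $T$ of $G$ itself via the constraint that $N\trianglelefteq T$, eliminate the modular maximal-cyclic possibility by a neat Burnside-transfer argument inside $\bar G$, and then split into the abelian case (Burnside again) and the dihedral case (Gorenstein--Walter plus an order count forcing $q=3$). Each step checks out; in particular your claim that $\bar N$ remains minimal normal in $\bar G$ is correct because $N$ and $O(G)$ commute, so the projection $NO(G)\to N$ is $G$-equivariant and would carry a putative order-$2$ normal subgroup to a proper nontrivial $G$-normal subgroup of $N$. Your elimination of $q\ge 4$ via $|\bar N\times PSL(2,q)|_2\ge 16>|T|$ is a nice touch not present in the paper.

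What each approach buys: the paper's argument is considerably shorter and avoids the case analysis on $T$ altogether by passing through the centralizer and invoking the solvable classification already proved; your argument is longer but more self-contained, using only Burnside's normal $p$-complement theorem and Gorenstein--Walter directly, without appealing to Wong's packaged Result \ref{reswong1} or to Proposition \ref{pro26feb2018}. Incidentally, both proofs retain the first alternative $G=O(G)\rtimes(C_{2^m}\times C_2)$ even though it is actually vacuous under the minimality hypothesis on $N$ (as your own argument about $\bar N$ shows, $\bar G$ cannot be abelian), but this does no harm to the disjunctive conclusion.
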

\begin{proof} Let $V$ be a Sylow $2$-subgroup of the centralizer $T=C_G(N)$ of $N$ in $G$. Then $N\le V$. Since $N\cong C_2\times C_2$, Result \ref{res26feb2018} shows that $V$ must be abelian. Hence
$V\cong C_{2^m}\times C_2$ with $m\geq 1$. From Result \ref{reswong1}, $T$ has a normal $2$-complement. We investigate the case where $T$ is a proper subgroup of $G$. In this case, $G/T$ is isomorphic to subgroup of ${\rm{Sym}}_3$ and hence it is solvable. Since $T$ is also solvable, this yields that $G$ is itself  \textcolor{black}{solvable}. Since $GL(2,3)$ has no normal subgroup of order $4$, Result \ref{reswong1} shows that either $G/O(G)\cong PSL(2,3)$ or $G/O(G)\cong PGL(2,3)$.
\end{proof}
\begin{lemma}
\label{lemA24agos2015} If $G$ is an odd core-free group and has a non-abelian minimal normal subgroup $N$ then
one the following cases occurs.
\begin{itemize}
\item[\rm(i)] $N\cong PSL(2,q)$ with $q\geq 5$ odd, and a Sylow $2$-subgroup of $N$ is dihedral;
\item[\rm(ii)] $N\cong PSL(3,q)$ with $q\equiv 3 \pmod 4$, and a Sylow $2$-subgroup of $N$ is semidihedral;
\item[\rm(iii)] $N\cong PSU(3,q)$ with $q\equiv 1 \pmod 4$, and a Sylow $2$-subgroup of $N$ is semidihedral;
\item[\rm(iv)] $N\cong {\rm{Alt}_7}$, and a Sylow $2$-subgroup of $N$ is dihedral;
\item[\rm(v)]  $N\cong M_{11}$, the Mathieu group on $11$ letters, and a Sylow $2$-subgroup of $N$ is semidihedral.
\end{itemize}
\end{lemma}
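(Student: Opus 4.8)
The plan is to identify $N$ as a non-abelian simple group, then to force its Sylow $2$-subgroup into a very short list, and finally to invoke the relevant classification theorems. By Lemma \ref{MNSOCF}, since $N$ is non-abelian it is not a $2$-group, so $N$ is a non-abelian simple group; in particular $Z(N)=1$, $O(N)=1$, and (a non-abelian simple group not being a $p$-group) $|N|$ is divisible by an odd prime. By Result \ref{resft} the order of $N$ is also even, hence a Sylow $2$-subgroup $S_2$ of $N$ is a proper non-trivial subgroup; and since $S_2$ is a $2$-subgroup of $\aut(\cX)$, Lemma \ref{lem23agos2015} together with Remark \ref{2sub} show that $S_2$ is cyclic or one of the groups listed in Result \ref{res26feb2018}.

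Next I would discard the rank-one possibilities. If $S_2$ were cyclic, then $N$ would have a normal $2$-complement by the classical fact recalled before Result \ref{reswong1}; this complement has order $|N|/|S_2|$, which is odd, greater than $1$ and less than $|N|$, so it is a proper non-trivial normal subgroup of $N$, contradicting simplicity. If $S_2$ were generalized quaternion, Result \ref{resbs} would give $|Z(N/O(N))|=|Z(N)|=2$, again a contradiction. Hence $S_2$ is one of the remaining groups of Result \ref{res26feb2018}, and so it has $2$-rank exactly $2$ by the remarks following that result. Applying Alperin's theorem (Result \ref{resalp}), $S_2$ is dihedral, semidihedral, wreathed, or isomorphic to a Sylow $2$-subgroup of $SU(3,2)$. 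A Sylow $2$-subgroup of $SU(3,2)$ is the quaternion group of order $8$, already excluded; and a wreathed $2$-group $C_{2^{n}}\wr C_2$ with $n\ge 2$ has every cyclic subgroup of index at least $2^{n}\ge 4$, so it is not among the groups of Result \ref{res26feb2018}. Therefore $S_2$ is dihedral or semidihedral.

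If $S_2$ is dihedral, the Gorenstein--Walter theorem (Result \ref{resgor}), used with $O(N)=1$, gives either $N=S_2$ — impossible, since $S_2$ is a proper subgroup of $N$ — or $N\cong {\rm{Alt}}_7$, or $N$ is isomorphic to a subgroup $H$ of $P\Gamma L(2,q)$ with $PSL(2,q)\le H$. In the last case $PSL(2,q)$ is normal in $H$, so simplicity of $H\cong N$ forces $H=PSL(2,q)$, and non-abelian simplicity forces $q\ge 5$, while $q$ is odd by the standing convention on $q$. This yields cases (i) and (iv). If $S_2$ is semidihedral, the Alperin--Brauer--Gorenstein theorem (Result \ref{resabg}) gives at once $N\cong PSL(3,q)$ with $q\equiv 3\pmod 4$, or $N\cong PSU(3,q)$ with $q\equiv 1\pmod 4$, or $N\cong M_{11}$, which are cases (ii), (iii) and (v). In every case the shape of the Sylow $2$-subgroup stated in the conclusion is precisely the one we are in.

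The main obstacle, modest as it is, is the bookkeeping at the Alperin step: after removing the cyclic and generalized quaternion types one must verify that neither the abelian groups $C_{2^{m}}\times C_2$ with $m\ge 2$ nor the modular maximal-cyclic groups of Result \ref{res26feb2018} can coincide with a wreathed $2$-group or with a Sylow $2$-subgroup of $SU(3,2)$, so that Alperin's list genuinely collapses to the dihedral and semidihedral types. This is an elementary comparison of group orders and of the index of a largest cyclic subgroup, but it is the one place where the two classifications --- Result \ref{res26feb2018} and Result \ref{resalp} --- must be matched with care. A secondary routine check is that in the Gorenstein--Walter alternative the only simple subgroup of $P\Gamma L(2,q)$ containing $PSL(2,q)$ is $PSL(2,q)$ itself.
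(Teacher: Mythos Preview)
Your proof is correct and takes essentially the same approach as the paper: use Lemma~\ref{MNSOCF} to get simplicity of $N$, restrict the Sylow $2$-subgroup via Lemma~\ref{lem23agos2015} and Result~\ref{res26feb2018}, apply Alperin's theorem (Result~\ref{resalp}) to reduce to the dihedral and semidihedral cases, and finish with Gorenstein--Walter (Result~\ref{resgor}) and Alperin--Brauer--Gorenstein (Result~\ref{resabg}). The paper's own argument is simply more compressed, leaving implicit the steps you spell out --- the elimination of cyclic and generalized quaternion Sylow $2$-subgroups prior to invoking Alperin, and the verification that the wreathed and $SU(3,2)$ exceptions are incompatible with Result~\ref{res26feb2018}; your identification of the Sylow $2$-subgroup of $SU(3,2)$ as $Q_8$ is correct and disposes of that case cleanly.
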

\begin{proof}
By Lemma \ref{MNSOCF} $N$ is simple.
From Result \ref{resalp}, a Sylow $2$-subgroup of $N$ is either dihedral, or semidihedral, or wreathed, or isomorphic to a Sylow $2$-subgroup of $SU(3,2)$. Actually, by Lemma \ref{lem23agos2015} and Result \ref{res26feb2018}, the latter two cases cannot occur in our situation. Now, the claim follows from Results \ref{resgor} and \ref{resabg}.
\end{proof}

Theorem \ref{structure} for odd core-free groups is a corollary of Lemmas \ref{lem26agos2015}, \ref{lem2mar2018}, and \ref{lem29agos2015}.
\begin{lemma}
\label{lem26agos2015} Let $G$ be an odd core-free group with generalized quaternion Sylow $2$-subgroups. Then its commutator subgroup $G'$ is isomorphic to $SL(2,q)$ with $q\ge 5$, apart from three cases:
\begin{itemize}
\item[\rm(i)] $G$ is a generalized quaternion group,
\item[\rm(ii)]  $G$ is isomorphic to the unique perfect group of order $5040$ and $G/Z(G)\cong {\rm{Alt}_7}$,
\item[\rm(iii)] $G$ is isomorphic to the group of order $48$  named $SmallGroup(48,28)$ in the GAP-database.
\end{itemize}
\end{lemma}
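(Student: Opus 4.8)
The hypothesis is that $G$ is odd core-free with generalized quaternion Sylow $2$-subgroup $S_2$. The plan is to exploit the Brauer--Suzuki theorem (Result~\ref{resbs}): since $O(G)=1$, the center $Z(G)$ has order $2$. Let $z$ be the unique central involution; it is the unique involution of $S_2$. First I would dispose of the degenerate case: if $G=S_2$ then $G$ is itself a generalized quaternion group, which is case~(i). So assume $G\supsetneq S_2$.

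\textbf{Reduction to $G/Z(G)$.} Set $\bar G=G/Z(G)$ and $\bar S_2=S_2/\langle z\rangle$, a dihedral group (possibly of order $\le 4$; recall a generalized quaternion group has order $\ge 8$, so $\bar S_2$ is dihedral of order $\ge 4$). The key point is that $\bar S_2$ is a Sylow $2$-subgroup of $\bar G$, so the Gorenstein--Walter theorem (Result~\ref{resgor}) applies to $\bar G$: either $\bar G=O(\bar G)\rtimes \bar S_2$, or $\bar G/O(\bar G)\cong{\rm Alt}_7$, or $\bar G/O(\bar G)$ is a subgroup of $P\Gamma L(2,q)$ containing $PSL(2,q)$. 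I would next argue that $O(\bar G)$ is trivial: a normal subgroup of odd order in $\bar G=G/Z(G)$ pulls back to a normal subgroup $K$ of $G$ with $K/Z(G)$ of odd order, so $|K|=2m$ with $m$ odd; then $K$ has a normal $2$-complement (a group of order $2m$ with cyclic --- indeed order $2$ --- Sylow $2$-subgroup has a normal $2$-complement, Result~\ref{res26feb2018} discussion / the cited \cite[Kapitel I, Aufgaben I.21]{huppertI1967}), giving a characteristic subgroup of $K$ of odd order, hence a nontrivial normal subgroup of $G$ of odd order, contradicting odd core-freeness. So $O(\bar G)=1$, and Result~\ref{resgor} gives $\bar G\cong{\rm Alt}_7$ or $PSL(2,q)\le\bar G\le P\Gamma L(2,q)$.

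\textbf{Lifting to $G$.} Now $G$ is a central extension of $\bar G$ by $\langle z\rangle$ of order $2$, and since $G'$ maps onto $\bar G'$ while $z\in Z(G)$, I would show this extension restricted to $G'$ is a perfect central extension. In the case $PSL(2,q)\le\bar G\le P\Gamma L(2,q)$: if $\bar G=PSL(2,q)$ then $G'$ is a perfect central extension of $PSL(2,q)$ by a group of order $\le 2$; by Schur (Result~\ref{resschur}) $G'\cong SL(2,q)$ for $q\ge 5$ (including $q=9$, since the extension is by order $2$), which is the generic conclusion. If $\bar G\supsetneq PSL(2,q)$, I would use that $PGL(2,q)$ has a stem non-split central extension with generalized quaternion Sylow $2$-subgroup, namely $SL^{(2)}(2,q)$ (Result~\ref{resgoha}); but one must check that $\bar G$ strictly bigger than $PGL(2,q)$ (a semilinear group) cannot have $G$ with generalized quaternion Sylow $2$-subgroup --- the Sylow $2$-subgroup would then be too large or of the wrong type (a semilinear component forces a non-cyclic-index-$2$ obstruction, cf.\ Remark~\ref{28feb2016}), so this is excluded; in any event $G'$, being perfect, is still $SL(2,q)$ by Schur applied to the simple quotient. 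For $\bar G\cong{\rm Alt}_7$: $G$ is a central extension of ${\rm Alt}_7$ by order $2$ with generalized quaternion Sylow $2$-subgroup, so $G'=G$ is the perfect double cover $2.{\rm Alt}_7$, the unique perfect group of order $5040$ with $G/Z(G)\cong{\rm Alt}_7$ --- case~(ii). The small exceptional group $SmallGroup(48,28)$ in case~(iii) should arise as the extraneous small-$q$ case: it is a group with generalized quaternion Sylow $2$-subgroup whose quotient by the center is $PGL(2,3)\cong{\rm Sym}_4$, i.e.\ $SL^{(2)}(2,3)$, which is genuinely not of the form $SL(2,q)$ with $q\ge 5$ and must be listed separately; I would verify via the order formula and the structure of $SL^{(2)}(2,3)$ that this is exactly $SmallGroup(48,28)$.

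\textbf{Main obstacle.} The delicate part is the central-extension bookkeeping: ensuring the extension $1\to\langle z\rangle\to G'\to\bar G'\to 1$ is \emph{perfect} and \emph{stem} so that Schur's and Griess-type classifications apply cleanly, and carefully separating which central extension of $PGL(2,q)$ (the semidihedral one $SL^{\pm}$ versus the quaternion one $SL^{(2)}$) actually has generalized quaternion Sylow $2$-subgroups --- Results~\ref{resgoha} and the discussion of $SL^{\pm}(2,q)$ address this, but matching the small cases $q=3$ (giving $SmallGroup(48,28)$) against the generic $q\ge 5$ cases requires care. A secondary point needing attention is showing $G'$ cannot sit strictly between $SL(2,q)$ and a semilinear lift, which I would settle by Sylow $2$-subgroup considerations using Result~\ref{res26feb2018} and Lemma~\ref{lem23agos2015}.
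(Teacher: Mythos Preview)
Your overall strategy matches the paper's: Brauer--Suzuki gives $|Z(G)|=2$, the quotient $\bar G=G/Z(G)$ is odd core-free with dihedral Sylow $2$-subgroup, Gorenstein--Walter classifies $\bar G$, and one lifts back via Schur multipliers. The argument that $O(\bar G)=1$ is exactly the paper's.

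However, your lifting step is overcomplicated and partly misdirected. When $PSL(2,q)\le\bar G\le P\Gamma L(2,q)$ with $q\ge 5$, you try to pin down $G$ itself (invoking $SL^{(2)}(2,q)$, worrying about semilinear lifts, and Sylow $2$-structure of $G$). None of this is needed: the lemma only asks for $G'$. Since the commutator subgroup of any group between $PSL(2,q)$ and $P\Gamma L(2,q)$ is $PSL(2,q)$, and since $G'Z(G)/Z(G)=\bar G'$, you get $G'/Z(G)\cong PSL(2,q)$ immediately, regardless of which $\bar G$ occurs. The real point---which you flag as the ``main obstacle'' but do not resolve---is to show $G'$ is \emph{perfect}. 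The paper's argument is short and clean: $G''$ is characteristic in $G'$, hence normal in $G$; since $G$ is odd core-free, $G''$ has even order, hence contains the unique involution $z$; thus $G''/Z(G)$ is a nontrivial normal subgroup of the simple group $G'/Z(G)\cong PSL(2,q)$, forcing $G''=G'$. Now Schur (Result~\ref{resschur}) gives $G'\cong SL(2,q)$ with no further case analysis. Your ``secondary point'' about $G'$ sitting between $SL(2,q)$ and a semilinear lift is a non-issue once this is in place.

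For the small cases: your identification of case~(iii) with $SL^{(2)}(2,3)$ is correct and more conceptual than the paper's GAP citation. Note, though, that when $\bar G\cong PSL(2,3)$ one gets $G\cong SL(2,3)$, whose commutator subgroup is $Q_8$; the paper's proof mentions this case but the lemma as stated does not list it among the exceptions---you should be aware of this wrinkle rather than silently skipping $\bar G\cong PSL(2,3)$.
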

\begin{proof}
If $G$ is a $2$-group, Case (i) holds. From now on $|G|$ is assumed to have some odd divisor.

From Result \ref{resbs}, $|Z(G)|=2$.
We point out that the factor group $\bar{G}=G/Z(G)$ is also odd core-free. By absurd, let $\bar{N}$ be a non-trivial normal subgroup of $\bar{G}$ of odd order. Then there exists a normal subgroup $N$ of $G$ such that $\varphi(N)=\bar{N}$ where $\varphi$ is the natural homomorphism  $G\to \bar{G}$. Since $G$ is odd core-free, $N$ has even order.
Since the involution $z$ generating $Z(G)$ is the unique involution in $G$, $N$ must contain $z$. Since $|\bar{N}|$ is odd, $|N|$ is twice \textcolor{black}{an} odd number bigger than $1$. By $z\in N$, this yields $N=Z(G)\times O(N)$. Therefore, $O(N)$ is non-trivial.
Since $N$ is a normal subgroup of $G$ and $O(N)$ is a characteristic subgroup of $N$, this yields that $O(N)$ is a normal subgroup of $G$, a contradiction as $G$ is odd core-free. Hence $\bar{G}$ is odd core-free, as well. Furthermore, a Sylow $2$-subgroup of $\bar{G}$ is $\bar{S}_2=S_2/Z(S)$ which is a dihedral group as $S_2$ a generalized quaternion group.

{}From Result \ref{resgor}, $\bar{G}$ is either a dihedral group whose order is a power of $2$ in which Case (i) holds, or $\bar{G}\cong\rm{Alt}_7$, or $PSL(2,q)\leq \bar{G}\leq P\Gamma L(2,q)$ with $q$ odd, up to isomorphism. For $q\geq 5$, $PSL(2,q)$ is a perfect group, while the commutator subgroup of $PSL(2,3)$ is an elementary abelian group of order $4$.

If $q=3$ then either $\bar{G}\cong PSL(2,3)$, or $\bar{G}\cong PGL(2,3)$. By direct GAP-aided computation, $G\cong SL(2,3)$ in the former case while if $\bar{G}\cong PGL(2,3)$,  the only possibility for $G$ is the group $SmallGroup(48,28)$ in the GAP database. This gives (iii).

Suppose $q\geq 5$. As $G'$ is a normal subgroup of $G$, and $G$ is odd core-free, $G'$ has even order. Since $z$ is the unique involution of $G$, $z$ is contained in $G'$, that is,
$Z(G)\leq G'$. Thus, $G'/Z(G)= G'Z(G)/Z(G)$. On the other hand, $G'Z(G)/Z(G)$ is the commutator subgroup of $\bar{G}$, see \cite[Kapitel I, Hilfssatz 8.4]{huppertI1967}, and, since $q\geq 5$, either $G'/Z(G)\cong PSL(2,q)$, or  $G'/Z(G)\cong\rm{Alt}_7$.

The commutator subgroup $H$ of $G'$ is a characteristic subgroup of $G'$, and hence a normal subgroup of $G$. Therefore, $H$ has even order, and hence contains $Z(G)$. From this, either $H/Z(G)\cong PSL(2,q)$, or $H/Z(G)\cong\rm{Alt}_7$. Since $H\le G'$, this yields $H=G'$ showing that $G'$ is a perfect group, and hence $G'$ is perfect non-split extension
of either $PSL(2,q)$ or $\rm{Alt}_7$. Now, the first claim in Lemma \ref{lem26agos2015} follows from Result \ref{resschur}.
Finally, ${\rm{Alt_7}}$ has a unique non-split central extension by a cyclic group of order $2$ which is the group of order $5040$ given in (ii).
\end{proof}
\begin{lemma}
\label{lem2mar2018} Let $G$ be an odd core-free group with a semidihedral Sylow $2$-subgroup $S_2$. If $G$ has a (minimal) normal subgroup of order $2$ then either $G=S_2$, or its commutator group $G'$ is isomorphic to $SL(2,q)$.
\end{lemma}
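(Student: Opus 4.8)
The plan is to pass to the quotient $\bar G=G/N$ and bring in the Gorenstein--Walter theorem (Result \ref{resgor}). Since $|N|=2$, the nontrivial element of $N$ is centralised by $G$, so $N\le Z(G)$, and as $N\le S_2$ this forces $N\le Z(S_2)$; a semidihedral group has center of order $2$, so $N=Z(S_2)$ and a Sylow $2$-subgroup of $\bar G$ is $S_2/N$, a dihedral group of order $\ge 8$. I would next check that $\bar G$ is again odd core-free: if $N\le K\trianglelefteq G$ with $K/N=O(\bar G)$ of odd order, then $N$ is a normal Sylow $2$-subgroup of $K$, so $K=N\times H$ with $|H|$ odd by Result \ref{reszass}, and $H$, being characteristic in $K$, is normal in $G$, whence $H=1$ and $O(\bar G)=1$. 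Finally, by Lemma \ref{lemA23agos2015} the group $G$ has a subgroup $M$ of index $2$.

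Applying Result \ref{resgor} to $\bar G$ and using $O(\bar G)=1$ leaves three possibilities. If $\bar G$ is a dihedral $2$-group, then $G$ is a $2$-group, so $G=S_2$ and we are done. If $\bar G\cong\Alt_7$, then $S_2/N\cong D_8$, so $S_2$ is semidihedral of order $16$; but $\Alt_7$ is simple and has no subgroup of index $2$, hence the index-$2$ subgroup $M$ meets $N$ trivially and $G=M\times N$ with $M\cong\bar G\cong\Alt_7$. Then a Sylow $2$-subgroup of $G$ would be $D_8\times C_2$, which is not semidihedral (it has no element of order $8$), a contradiction; so this case cannot occur.

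There remains the case $PSL(2,q)\le\bar G\le P\Gamma L(2,q)$ with $q$ a power of an odd prime (the even case being excluded because $S_2/N$ is a non-abelian dihedral group, and $PSL(2,4)\cong PSL(2,5)$). Let $L$ be the preimage of $PSL(2,q)$ in $G$: it is normal in $G$ and a central extension of $PSL(2,q)$ by $N\cong C_2$. This extension cannot split, for otherwise $L\cong PSL(2,q)\times C_2$, and since a dihedral group of order $\ge 4$ contains $C_2\times C_2$, a Sylow $2$-subgroup of $L$ would contain $C_2\times C_2\times C_2$, impossible in the $2$-rank-$2$ group $S_2$. If $q\ge5$, then $PSL(2,q)$ is perfect and so is $L$ (if $L'\cap N=1$ then $L=L'\times N$ splits), so $L\cong SL(2,q)$ by Result \ref{resschur}. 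If $q=3$, the dihedral group $S_2/N$ of order $\ge8$ forces $\bar G=PGL(2,3)\cong\Sym_4$ (the only overgroup of $PSL(2,3)\cong\Alt_4$ whose Sylow $2$-subgroup has order larger than $4$), and $L$ is the non-split central extension of $\Alt_4$ by $C_2$, namely $SL(2,3)$, as in the proof of Lemma \ref{lem26agos2015}. In both cases $L\cong SL(2,q)$. Now $P\Gamma L(2,q)/PSL(2,q)$ is abelian and $PSL(2,q)$ is perfect for $q\ge5$ (while $(\Sym_4)'=\Alt_4$ for $q=3$), so $(\bar G)'=PSL(2,q)$; hence $G'N=L$, so $[L:G']\le2$, and since $SL(2,q)$ with $q$ odd has no subgroup of index $2$, this gives $G'=L\cong SL(2,q)$.

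The step I expect to need the most care is this last one: identifying $G'$ with the preimage $L$ of $PSL(2,q)$ exactly — neither a proper subgroup nor a proper overgroup — which rests on $SL(2,q)$ being (quasi)simple and on $P\Gamma L(2,q)/PSL(2,q)$ being abelian; together with the clean disposal of the two small configurations ($\bar G\cong\Alt_7$, and $q=3$) by $2$-rank and Sylow-order arguments rather than by a single uniform estimate. Everything else follows routinely once the reduction modulo the central subgroup $N$ and the identification $N=Z(S_2)$ have been made.
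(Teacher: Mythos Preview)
Your proof is correct and in fact somewhat more direct than the paper's. Both arguments reduce modulo $N$, note that $S_2/N$ is dihedral, check that $\bar G=G/N$ is odd core-free, and invoke Gorenstein--Walter. The difference lies in how the $PSL(2,q)$ case is handled. The paper uses the specific index-$2$ subgroup $M$ supplied by Lemma~\ref{lemA23agos2015} (with generalized quaternion Sylow $2$-subgroup), appeals to Lemma~\ref{lem26agos2015} to conclude $M'\cong SL(2,q)$, and then argues that $\bar M'=\bar G'$ forces $G'=M'$. You instead take the preimage $L$ of $PSL(2,q)$ directly, rule out splitting by a clean $2$-rank count, and apply Schur's result to obtain $L\cong SL(2,q)$; the identification $G'=L$ then follows from $(\bar G)'=PSL(2,q)$ and the absence of index-$2$ subgroups in $SL(2,q)$. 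Your route avoids the dependency on Lemma~\ref{lem26agos2015} and treats $q=3$ uniformly (the paper shunts this into the solvable case via Lemma~\ref{lemA23agos2015}).

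Your disposal of the $\rm{Alt}_7$ case is also slightly different: the paper observes that its particular $M$ contains $N$, giving $[\bar G:\bar M]=2$, impossible in $\rm{Alt}_7$; you argue by dichotomy that $N\le M$ is impossible for the same reason, so $G=M\times N$ and the Sylow $2$-subgroup would be $D_8\times C_2$, not semidihedral. Both work.

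One small bookkeeping point: when you cite Lemma~\ref{lemA23agos2015} to obtain the index-$2$ subgroup $M$, that lemma has an ``unless $G$ has a $2$-complement'' clause; but if $G$ has a normal $2$-complement and is odd core-free then $G=S_2$, a case you have already covered, so no harm is done.
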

\begin{proof} If $G$ is solvable the assertion follows from Lemma \ref{lemA23agos2015}.
Assume that $G$ has \textcolor{black}{even} order, and take a minimal normal subgroup $N$ of $G$ such that $|N|=2$. Since $S_2$ is semidihedral, the quotient group $S_2/N$ is dihedral. Therefore, the quotient group $\bar{G}=G/N$ has a dihedral Sylow $2$-subgroup, and Result \ref{resgor} applies. First the case where $\bar{G}$ is isomorphic to a subgroup of $P\Gamma L(2,\textcolor{black}{q})$ containing $PSL(2,\textcolor{black}{q})$ is investigated. In this case, the commutator group $\bar{G}'$ of $\bar{G}$ is isomorphic to $PSL(2,\textcolor{black}{q})$ with $\textcolor{black}{q}\geq 5$.
From Lemma \ref{lemA23agos2015}, $G$ has a (normal) subgroup $M$ of index $2$, and we may assume that its commutator group $M'$ is isomorphic to $SL(2,q)$.  Then $N\geq M'$, and the factor group $\bar{M}=M/N$ is a subgroup of $\bar{G}$. In particular, the commutator subgroup $\bar{M}'$ is contained in $\bar{G}'$.  Since $\bar{M}$ is normal in $\bar{G}$ and $\bar{M}'$ is a characteristic subgroup of $\bar{M}$, we have that $\bar{M}'$ is normal in $\bar{G}$. Therefore, $\bar{M}'$ is a normal subgroup of $\bar{G}'$. As $\textcolor{black}{q}\geq 5$, $\bar{G'}$ is simple, and hence $\bar{M}'=\bar{G}'$.
From \cite[Kapitel I, Hilfssatz 8.4]{huppertI1967},  $\bar{G}'=G'/N$ and $\bar{M}'=M'/N$ whence $G'=M'\cong SL(2,q)$. Finally,  since $[G:M]=2$ together with $N\leq M\leq G$ yield $[\bar{G}:\bar{M}]=2$, the case $\bar{G}\cong {\rm{Alt}}_7$  does not occur.
\end{proof}
\begin{lemma}
\label{lemC28dic2015} If $G$ is an odd core-free group with commutator subgroup $G'\cong SL(2,q)$ then the centralizer of $G'$ in $G$ has order $2$.
\end{lemma}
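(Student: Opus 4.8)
The plan is to set $C=C_G(G')$ and show $C=Z(G')$, which has order $2$. First I would record the basic structural facts: since $G'=[G,G]$ is characteristic in $G$, so is $C$, hence $C\trianglelefteq G$; and $C\cap G'=C_{G'}(G')=Z(G')$, which equals $\langle z\rangle$ of order $2$ because $q$ is odd. Since $C$ centralises $G'\ni z$, the involution $z$ lies in $Z(C)$, so it belongs to every Sylow $2$-subgroup of $C$. Also $O(C)$ is characteristic in $C$ and $C\trianglelefteq G$, so $O(C)\trianglelefteq G$; being of odd order it lies in $O(G)=1$, whence $O(C)=1$. So the goal reduces to showing that a Sylow $2$-subgroup $C_2$ of $C$ has order $2$: once this is known, $C$ has even order and a cyclic Sylow $2$-subgroup, hence a normal $2$-complement $K$ (by the elementary fact quoted from \cite[Kapitel~I, Aufgaben~I.21]{huppertI1967}); then $K\trianglelefteq C$ has odd order, so $K\le O(C)=1$, and $C=C_2=\langle z\rangle$ has order $2$.

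The heart of the argument is bounding $C_2$, and here I would use that a Sylow $2$-subgroup $T_2$ of $G'\cong SL(2,q)$ is a generalised quaternion group (the remark after Result~\ref{resbs}), say $|T_2|=2^n$ with $n\ge 3$, whose unique involution is $z$, so $T_2\cap\langle z\rangle=\langle z\rangle$ and $A\cap T_2\le A\cap\langle z\rangle$ for every $A\le C$. Suppose $|C_2|\ge 4$; since $z\in C_2$ I can choose $A_0$ with $\langle z\rangle\lneq A_0\le C_2$ and $|A_0|=4$. As $A_0\le C$ centralises $T_2\le G'$, the set $A_0T_2$ is a $2$-subgroup of $G$, and $A_0\cap T_2=\langle z\rangle$ gives $|A_0T_2|=2^{n+1}\ge 16$. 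Moreover the multiplication map $A_0\times T_2\to A_0T_2$ is a surjective homomorphism (the factors commute elementwise), so every element of $A_0T_2$ has order dividing $\operatorname{lcm}(4,2^{n-1})=2^{n-1}$; in particular $A_0T_2$ has order $2^{n+1}$ but no element of order $2^{n}$, hence no cyclic subgroup of index $2$. This contradicts Lemma~\ref{lem23agos2015}, forcing $|C_2|=2$ and finishing the proof as above.

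I expect the only delicate point to be the computation of the exponent of $A_0T_2$: one must check in both cases $A_0\cong C_2\times C_2$ (where $A_0T_2\cong C_2\times T_2$) and $A_0\cong C_4$ that the maximal element order is $2^{n-1}=|T_2|/2<|T_2|$, and recall that a cyclic subgroup of index $2$ in a $2$-group of order $2^{n+1}$ would require an element of order $2^{n}$. Everything else is bookkeeping: that $C\trianglelefteq G$, that $O(C)\le O(G)=1$, and the passage from a cyclic Sylow $2$-subgroup to a normal $2$-complement. No subtlety about the value of $q$ is needed beyond $q$ odd (so that $|Z(G')|=2$ and $|SL(2,q)|$ is even), though in the applications $q\ge 5$.
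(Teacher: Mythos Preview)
Your proof is correct and follows essentially the same approach as the paper: reduce to showing that a Sylow $2$-subgroup of $C=C_G(G')$ has order at most $2$, then derive a contradiction by forming the product of a $2$-subgroup of $C$ of order $\ge 4$ with a (generalized quaternion) Sylow $2$-subgroup of $G'$ and invoking Lemma~\ref{lem23agos2015}. The paper carries out the contradiction by arguing that $|Z(VR)|\ge 4$ and then excluding each group type in Result~\ref{res26feb2018}, whereas you bound the exponent of $A_0T_2$ directly; your exponent computation is a clean and self-contained alternative to the paper's case analysis.
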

\begin{proof} Since $G'$ is a normal subgroup of $G$, the centralizer $C_G(G')$ of $G'$ in $G$ is a normal subgroup of $G$. As $G$ is odd core-free, $C_G(G')$ has even order. Assume by contradiction that  $|C_G(G')|>2$. Therefore $|C_G(G')|$ must be divisible by four, otherwise $|C_G(G')|$ is twice an odd number and hence $C_G(G')=Z(G')\times O(C_G(G'))$ where  $O(C_G(G'))$ is a characteristic subgroup of $C_G(G')$ of odd order contradicting the hypothesis that $G$ is odd core-free. Therefore, $C_G(G')$ has a Sylow $2$-subgroup $R$ of order at least $4$.

Take a Sylow $2$-subgroup $V$ of $G'$, and let $U=VR$. Then $|Z(U)|\geq 4$. Since $V$ is a generalized quaternion group, we also have $|Z(V)|=2$. Therefore, $U\gvertneqq V$. From $|Z(U)|\geq 4$, $U$ is neither a generalized quaternion, nor a dihedral nor a semidihedral group. As we pointed out after Result \ref{res26feb2018}, a modular maximal-cyclic group contains no generalized quaternion group.
Therefore $U\gvertneqq V$ cannot actually occur, and the assertion follows by absurd.
\end{proof}

\begin{lemma}
\label{lem29agos2015} If $G$ is an odd core-free group with a non-abelian simple minimal normal subgroup, then one of the following cases occurs, up to isomorphisms,
where $q$ is an odd prime power $d^k$:
\begin{itemize}
\item[\rm(i)] $PSL(2,q)\le G \le P\Gamma L(2,q)$ with $q\ge 5$;
\item[\rm(ii)] $PSL(3,q)\le G \le P\Gamma L(3,q)$ with $q\equiv 3 \pmod 4$;
\item[\rm(iii)] $PSU(3,q)\le G \le P\Gamma U(3,q)$ with $q\equiv 1 \pmod 4$;
\item[\rm(iv)] $G={\rm{Alt}_7}$;
\item[\rm(v)]  $G=M_{11}$.
\end{itemize}
\end{lemma}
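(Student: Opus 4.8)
The plan is to combine the structural information already obtained on the minimal normal subgroup with the constraint that a Sylow $2$-subgroup of $G$ has a cyclic subgroup of index $2$ (Lemma~\ref{lem23agos2015} together with Result~\ref{res26feb2018}). By Lemma~\ref{lemA24agos2015}, the non-abelian simple minimal normal subgroup $N$ of $G$ is $PSL(2,q)$ with $q\ge 5$, or $PSL(3,q)$ with $q\equiv 3\pmod 4$, or $PSU(3,q)$ with $q\equiv 1\pmod 4$, or ${\rm{Alt}}_7$, or $M_{11}$, and in every case a Sylow $2$-subgroup of $N$ contains a Klein four-group, being dihedral of order $\ge 4$ or semidihedral of order $\ge 16$. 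The first step is to prove $C_G(N)=1$: the subgroup $C_G(N)$ is normal in $G$, and since $G$ is odd core-free it cannot be non-trivial of odd order, so a non-trivial $C_G(N)$ contains an involution $t$; as $N$ is simple non-abelian, $C_G(N)\cap N=1$, hence $t\notin N$, and $t$ together with a Klein four-subgroup of a Sylow $2$-subgroup of $N$ generates an elementary abelian group of order $8$ inside a Sylow $2$-subgroup $S_2$ of $G$, contradicting the fact that $S_2$ has $2$-rank at most $2$ (Lemma~\ref{lem23agos2015} and the remarks after Result~\ref{res26feb2018}). Thus $C_G(N)=1$, and conjugation embeds $N\le G\le \aut(N)$.

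Next I would dispose of the four cases in which $\aut(N)$ causes no trouble. For $N\cong M_{11}$ the group $M_{11}$ is complete, so $G=M_{11}$, giving (v). For $N\cong {\rm{Alt}}_7$ one has $\aut(N)={\rm{Sym}}_7$, so $G\in\{{\rm{Alt}}_7,{\rm{Sym}}_7\}$; but a Sylow $2$-subgroup of ${\rm{Sym}}_7$ is the direct product of a dihedral group of order $8$ by a group of order $2$, which has exponent $4$ and hence no cyclic subgroup of index $2$, so Lemma~\ref{lem23agos2015} rules out $G={\rm{Sym}}_7$ and leaves $G={\rm{Alt}}_7$, giving (iv). For $N\cong PSL(2,q)$ with $q\ge 5$ and for $N\cong PSU(3,q)$ with $q\equiv 1\pmod 4$, one has $\aut(N)=P\Gamma L(2,q)$ and $\aut(N)=P\Gamma U(3,q)$ respectively (there is no graph automorphism in these cases, as recalled after Lemma~\ref{25dicbis2015}), whence $PSL(2,q)\le G\le P\Gamma L(2,q)$ and $PSU(3,q)\le G\le P\Gamma U(3,q)$, which are (i) and (iii).

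The remaining case $N\cong PSL(3,q)$ with $q\equiv 3\pmod 4$ is the crux, since $\aut(PSL(3,q))$ properly contains $P\Gamma L(3,q)$, the extra part being generated by the inverse-transpose graph automorphism $\tau$ of order $2$, so I must show that $G\le P\Gamma L(3,q)$. First, $q\equiv 3\pmod 4$ forces $q=p^k$ with $k$ odd, because an odd prime power that is a perfect square is $\equiv 1\pmod 8$; hence $[P\Gamma L(3,q):N]=k\cdot\gcd(3,q-1)$ is odd, and a Sylow $2$-subgroup $Q$ of $N$, which is semidihedral of order $2^m$ with $m\ge 4$ by Lemma~\ref{lemA24agos2015}, is also a Sylow $2$-subgroup of $P\Gamma L(3,q)$. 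If $G\not\le P\Gamma L(3,q)$, then since $\aut(N)/P\Gamma L(3,q)$ has order $2$ and $N\le G\cap P\Gamma L(3,q)$, we get $[G:G\cap P\Gamma L(3,q)]=2$ while $[G\cap P\Gamma L(3,q):N]$ is odd, so a Sylow $2$-subgroup $S_2$ of $G$ contains $Q$ with index exactly $2$. But $S_2$ has a cyclic subgroup of index $2$ (Lemma~\ref{lem23agos2015}), so $S_2$ is one of the groups of order $2^{m+1}$ listed in Result~\ref{res26feb2018}; inspecting that list — using the recorded facts on ranks, centres, and which of these groups contain a dihedral subgroup of order $\ge 8$ or a generalized quaternion subgroup, together with the standard description of the maximal subgroups of dihedral, semidihedral and generalized quaternion $2$-groups — one checks that none of them has a semidihedral subgroup of index $2$. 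This contradiction yields $G\le P\Gamma L(3,q)$, which is (ii), and completes the proof. The only genuinely delicate point of the argument is this last piece of $2$-group bookkeeping; everything else is routine once $C_G(N)=1$ is in hand.
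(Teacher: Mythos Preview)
Your proof follows essentially the same path as the paper's: establish $C_G(N)=1$ via the $2$-rank constraint, embed $G$ in $\aut(N)$, and then treat each possibility for $N$ from Lemma~\ref{lemA24agos2015}. The argument for $C_G(N)=1$ and the handling of ${\rm Alt}_7$ and $M_{11}$ match the paper's (you use the exponent of $D_8\times C_2$ where the paper uses its $2$-rank, but the conclusion is the same).

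Where your proof genuinely differs is the $PSL(3,q)$ case. The paper simply asserts that $N\cong PSL(3,q)$ gives (ii), passing over the fact that $\aut(PSL(3,q))$ is $P\Gamma L(3,q)$ extended by the graph automorphism, so that $N\le G\le \aut(N)$ alone does not yield $G\le P\Gamma L(3,q)$. You supply the missing step: since $q\equiv 3\pmod 4$ forces $k$ odd, $[P\Gamma L(3,q):N]$ is odd, so a Sylow $2$-subgroup $Q$ of $N$ (semidihedral of order $2^m\ge 16$) is already Sylow in $P\Gamma L(3,q)$; were $G\not\le P\Gamma L(3,q)$, a Sylow $2$-subgroup $S_2$ of $G$ would contain $Q$ with index $2$, and you then check that no group of order $2^{m+1}$ from the list in Result~\ref{res26feb2018} has a semidihedral maximal subgroup. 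That check is correct (the maximal subgroups of the cyclic, abelian, dihedral, quaternion, semidihedral, and modular maximal-cyclic $2$-groups are, respectively, cyclic; abelian; cyclic or dihedral; cyclic or quaternion; cyclic, dihedral or quaternion; and groups containing no $D_8$), so your argument closes a point the paper leaves implicit.
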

\begin{proof} Let $N$ be a non-abelian simple minimal normal subgroup of $G$. We prove that the centralizer $C_G(N)$ of $N$ in $G$ is trivial. Obviously, $C_G(N)\cap N$ is trivial. If $C_G(N)$ has an involution $u$ then $u\not\in N$ and hence the $2$-subgroup of $G$ generated by $u$ and a Sylow $2$-subgroup of $N$ has rank at least $3$, a contradiction. Therefore $C_G(N)$ has odd order. Then $C_G(N)$ is trivial, since $C_G(N)$ is a normal subgroup of $G$, and $G$ is odd core-free. Therefore, $G$ is isomorphic to an automorphism group of $N$. By Lemma \ref{lemA24agos2015} all possibilities for $N$ are determined. The cases $N\cong PSL(2,q)$ with $q\geq 5$, $N\cong PSL(3,q)$ and $N\cong PSU(3,q)$ give (i), (ii) and (iii), respectively.
If $N={\rm{Alt}_7}$ then either $G=N$, or $G\cong{\rm{Sym}_7}$. The latter case cannot actually occur as a Sylow $2$-subgroup of $\rm{Sym}_7$ has $2$-rank $3$. Hence (iv) holds. Finally, $M_{11}$ is isomorphic to its automorphism group, and hence (v) holds.
\end{proof}
\begin{rem} {\em{The subgroups of $P\Gamma L(2,q)$ containing $PSL(2,q)$ whose Sylow $2$-subgroups have $2$-rank $2$ are $PSL(2,q),PGL(2,q)$ and, when $q=d^h$ with an odd prime $d$ and $h\ge 2$, the semidirect product of $PSL(2,q)$ or $PGL(2,q)$ with a cyclic group whose order is an odd divisor of $h$. Analog results are valid for $PSL(3,q)$ and $PSU(3,q)$. These groups are all the candidates for $G$ in (i),(ii),(iii) of Lemma \ref{lem29agos2015}. It is not true that $d$ must be equal to $p$. For instance, the Hermitian curve of equation $X^{q+1}+Y^{q+1}+Z^{q+1}=0$ defined over $\mathbb{F}_q$ has genus $\ha q(q-1)$ and if either $p=7$ or $\sqrt{-7}\in \mathbb{F}_q$ it has an automorphism group isomorphic to $PSL(2,7)$.
}}
\end{rem}
\begin{rem}
\label{remA5jan2016}{\em{The Hermitian curve of equation $X^6+Y^6+1=0$ defined over $\mathbb{F}_5$ has genus $10$ and an automorphism group isomorphic to $\rm{Alt}_7$. Over an algebraically closed field of characteristic $3$, the modular curve $X(11)$ has genus $26$, and  its automorphism group is isomorphic to $M_{11}$; see \cite{AA,R}.}}
\end{rem}

\subsection{The case $|O(G)|>1$}\label{ssec62}

To prove Theorem \ref{structure} for any automorphism groups the following lemma is useful.
\begin{lemma}
\label{le23agosto2015} Let $N$ be any subgroup of $G$ of odd order. If a Sylow $2$-subgroup of $G$ fixes no point of $\cX$ then the quotient curve $\cY=\cX/N$ has also even genus.
\end{lemma}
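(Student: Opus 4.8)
The plan is to compare the genera of $\cX$ and $\cY=\cX/N$ via the Riemann--Hurwitz formula read modulo $4$. First note that a trivial Sylow $2$-subgroup fixes every point, so the hypothesis forces $|G|$ to be even and $S_2\neq\{1\}$. Now write
\[
2\gg(\cX)-2=|N|\,(2\gg(\cY)-2)+D(\cX|\cY),\qquad D(\cX|\cY)=\sum_{P\in\cX}d_P ,
\]
with $d_P=\sum_{i\ge 0}(|N_P^{(i)}|-1)$. Because $|N|$ is odd, every $|N_P^{(i)}|$ is odd (it divides $|N|$), hence every $d_P$ is even, and every $N$-orbit on $\cX$ has odd length $[N:N_P]$. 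Since $\gg(\cX)$ is even, $2\gg(\cX)-2\equiv 2\pmod 4$, while $|N|$ odd gives $|N|(2\gg(\cY)-2)\equiv 2\gg(\cY)-2\pmod 4$; hence $\gg(\cY)$ is even \emph{iff} $D(\cX|\cY)\equiv 0\pmod 4$.

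To evaluate $D(\cX|\cY)$ modulo $4$, group the points of $\cX$ into $N$-orbits: an orbit $o$ through $P$ contributes $|o|\,d_P$, and since $|o|$ is odd and $d_P$ even, $|o|d_P\equiv d_P\pmod 4$. Therefore $D(\cX|\cY)\equiv\sum_o d_{P_o}\pmod 4$, summed over the finitely many ramified $N$-orbits, and as each $d_{P_o}$ is $\equiv 0$ or $2\pmod 4$ we get $D(\cX|\cY)\equiv 2\,|\cB|\pmod 4$, where $\cB$ is the set of branch points $\bar Q\in\cY$ of $N$ at which the common different exponent of the points over $\bar Q$ is $\equiv 2\pmod 4$. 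Thus the lemma reduces to showing that $|\cB|$ is \textbf{even}.

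This is where the hypothesis on $S_2$ enters. When $N\trianglelefteq G$ (the case needed in Subsection~\ref{ssec62}, with $N=O(G)$): $\overline G=G/N$ acts on $\cY$ and leaves $\cB$ invariant, and $\overline{S_2}=S_2N/N$ is, $|N|$ being odd, a Sylow $2$-subgroup of $\overline G$ isomorphic to $S_2$. If $\overline{S_2}$ fixed a point $\bar Q$ of $\cY$, then $S_2N$ would stabilise the $N$-orbit over $\bar Q$, an orbit of odd length, so the $2$-group $S_2$ would fix a point of $\cX$ lying over $\bar Q$, contrary to hypothesis. Hence the nontrivial $2$-group $\overline{S_2}$ acts on the finite set $\cB$ without fixed points, all of its orbits on $\cB$ have even length, and $|\cB|$ is even.

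For a general subgroup $N$ the same scheme runs with $N_G(N)$ in place of $G$: a $2$-subgroup $\overline T\le N_G(N)/N$ fixing a branch point of $N$ lifts through an odd fibre to an isomorphic $2$-subgroup of $G$ contained in a stabiliser $G_P=G_P^{(1)}\rtimes U$ with $U$ cyclic by Result~\ref{res74}, so $\overline T$ is cyclic; thus every non-cyclic $2$-subgroup of $N_G(N)/N$ is fixed-point-free on $\cB$, and the residual case in which $N_G(N)$ has cyclic Sylow $2$-subgroups is handled through the resulting normal $2$-complement together with the classification of $2$-groups with a cyclic subgroup of index $2$ supplied by Lemma~\ref{lem23agos2015}. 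I expect this final parity bookkeeping — ensuring that a suitable $2$-group really has no fixed point on $\cB$ — to be the only delicate step; the Riemann--Hurwitz computation itself is routine.
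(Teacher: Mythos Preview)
Your argument for the normal case $N\trianglelefteq G$ is correct and rests on the same two ingredients as the paper: that $|N|$ odd forces each $d_P$ to be even, and that the Sylow $2$-subgroup hypothesis supplies the extra factor of $2$ needed to push $D(\cX|\cY)$ into $4\mathbb Z$. The paper's packaging is more direct, however. Rather than reduce to $D\equiv 2|\cB|\pmod 4$ and then argue that $\overline{S_2}$ acts fixed-point-freely on $\cB$, the paper groups ramification points by $G$-orbits $\theta$: orbit--stabiliser gives $|\theta|=|G|/|G_P|$, and since a Sylow $2$-subgroup $S$ of $G$ fixes no point, the Sylow $2$-subgroup of $G_P$ is a proper subgroup of $S$, so $|\theta|$ is even. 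Normality of $N$ makes $d_P$ constant along $\theta$, hence each $G$-orbit contributes $|\theta|\,d_P\equiv 0\pmod 4$ in one step. Your detour through $\cB$ and the induced $\overline{S_2}$-action is a repackaging of this same orbit count (a $G$-orbit of even size splits into an even number of odd-size $N$-orbits, i.e.\ an even number of branch points of a fixed type), not a genuinely different idea.

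On the general subgroup case: despite the lemma's phrasing, the paper's own proof explicitly invokes ``Since $N$ is a normal subgroup of $G$'' at the decisive step, and the lemma is only ever applied with $N=O(G)$. Your sketch via $N_G(N)$ is incomplete as written --- you have not shown that $N_G(N)$ has a nontrivial $2$-subgroup acting without fixed points on $\cB$, and the fallback through cyclic Sylow $2$-subgroups and Lemma~\ref{lem23agos2015} is only gestured at --- but this goes beyond what the paper actually proves or needs.
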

\begin{proof}
 If the cover $\cX|\cY$ is unramified then the claim follows from the Hurwitz genus formula applied to $N$.
For a point $\bar{P}\in \cY$ where $\cX|\cY$ ramifies, take a point $P\in \cX$ lying over $\bar{P}$. Since $|N|$ is  odd, all ramification groups of $N_P$ at $P$ have odd order. By 
(\ref{eq1bis}), $d_P$ is even. Let $\theta$ be the $G$-orbit containing $P$. Then, $|G_P||\theta|=|G|$. Take a Sylow $2$-subgroup $S$ of $G$ containing a Sylow $2$-subgroup $S_P$ of $G_P$. Then $S\neq S_P$, as $S$ does not fix $P$. Therefore, $|S|$ does not divide $|G_P|$ showing that $|\theta|$ must be even. This yields that $d_P|\theta|$ is divisible by four. Since $N$ is a normal subgroup of $G$, the different $D(\cX|\cY)$ is also divisible by four, that is,  $ D(\cX|\cY)/2$ is even.  Therefore, the Hurwitz genus formula applied to $N$ yields $\gg(\cX)-1=|V|(\gg(\bar{\cX})-1)+ (D(\cX|\cY)/2)$ whence  $\gg(\cY)-1$ is odd.
\end{proof}
\begin{rem}
\label{remle23agosto2015}
 {\em{If a Sylow $2$-subgroup of $G$ is not cyclic, then (iii) of Result \ref{res74} shows that the hypothesis of Lemma \ref{le23agosto2015} is satisfied.}}
\end{rem}

\begin{lemma}\label{razioOCNF}
If $|O(G)|>1$ and $G/O(G)$ is a subgroup of $PGL(2,\K)$, then Theorem \ref{structure} holds for $G$.
\end{lemma}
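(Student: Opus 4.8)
The plan is to extract the structure of $G$ directly from Dickson's classification (Result~\ref{resdickson}), Proposition~\ref{pro26feb2018}, and the Feit--Thompson theorem (Result~\ref{resft}). Put $\bar G=G/O(G)$; by hypothesis $\bar G$ is a finite subgroup of $PGL(2,\K)$. First I would dispose of the trivial case: if $|G|$ is odd then Theorem~\ref{structure}(i) holds, so we may assume $|G|$ is even. Since $|O(G)|$ is odd, $|\bar G|$ is then even too, and by Lemma~\ref{lem23agos2015} together with Remark~\ref{2sub} every $2$-subgroup of $G$ has a cyclic subgroup of index $\le 2$, i.e.\ is (if non-cyclic) one of the groups listed in Result~\ref{res26feb2018}. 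Then I would split into the cases $\bar G$ solvable and $\bar G$ non-solvable.

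In the solvable case, $O(G)$ has odd order and is therefore solvable by Result~\ref{resft}, so $G$ is an extension of a solvable group by a solvable group and is itself solvable; hence Proposition~\ref{pro26feb2018} applies. It gives three possibilities. If $G=O(G)\rtimes S_2$ with $S_2$ a Sylow $2$-subgroup, then $S_2$ is one of the $2$-groups of Result~\ref{res26feb2018} by the observation above, so Theorem~\ref{structure}(ii) holds. If $\bar G\cong PSL(2,3)$ or $PGL(2,3)$, then, as $P\Gamma L(2,3)=PGL(2,3)$, we get $PSL(2,3)\le\bar G\le P\Gamma L(2,3)$, which is Theorem~\ref{structure}(iv) with $q=3$. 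The remaining possibility $\bar G\cong GL(2,3)$ must be excluded, and this is the one small verification needed: by Result~\ref{resdickson} every finite subgroup of $PGL(2,\K)$ is isomorphic to one of the nine listed types, and $GL(2,3)$ is none of them, since it is a non-abelian group of order $48$ having no element of order $24$ (so it is not cyclic, not dihedral, and there is no semidirect product $C_d^h\rtimes C_n$ with $n\mid d^h-1$ of order $48$), while comparison of orders excludes $\mathrm{Alt}_4$, $\mathrm{Sym}_4$, $\mathrm{Alt}_5$, $PSL(2,d^f)$ and $PGL(2,d^f)$.

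In the non-solvable case I would simply scan Result~\ref{resdickson}: types (i)--(v) and (vii) are solvable, and so are $PSL(2,2)\cong PGL(2,2)$, $PSL(2,3)$ and $PGL(2,3)$; hence a non-solvable finite subgroup of $PGL(2,\K)$ is isomorphic to $\mathrm{Alt}_5\cong PSL(2,4)\cong PSL(2,5)$ or to $PSL(2,d^f)$ or $PGL(2,d^f)$ with $d^f\ge 4$. In each of these cases there is a prime power $q\ge 3$ (namely $q=5$ for the exceptional $\mathrm{Alt}_5$, and $q=d^f$ otherwise) with $PSL(2,q)\le\bar G\le PGL(2,q)\le P\Gamma L(2,q)$, so Theorem~\ref{structure}(iv) holds for $G$. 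This completes the case analysis.

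The argument is essentially cross-referencing, so there is no substantial obstacle; the only points requiring any attention are the exclusion of $GL(2,3)$ from Dickson's list in the solvable case, and the bookkeeping in the non-solvable case that pins each non-solvable finite subgroup of $PGL(2,\K)$ between $PSL(2,q)$ and $P\Gamma L(2,q)$ for a suitable prime power $q$; both are elementary. I do not expect to need Lemma~\ref{le23agosto2015} for this particular statement, nor, in fact, the hypothesis $|O(G)|>1$.
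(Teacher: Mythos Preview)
Your proof is correct and follows essentially the same route as the paper's: dispose of the solvable case via Proposition~\ref{pro26feb2018} and Feit--Thompson, then handle the non-solvable case by reading off Dickson's list. Note that your exclusion of $GL(2,3)$, while correct, is unnecessary: $G/O(G)\cong GL(2,3)$ is already case~(ix) of Theorem~\ref{structure}, so there is nothing to rule out.
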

\begin{proof}  By Proposition \ref{pro26feb2018}, $G/O(G)$ may be supposed to be non-solvable. Then Result \ref{resdickson} yields that either $G/O(G)\cong PSL(2,q)$, or $G\cong PGL(2,q)$ with $q\geq 5$.
\end{proof}

We are now in a position to prove Theorem \ref{structure} for the case $|O(G)|>1$.
If $G$ is any subgroup of
 $\aut(\cX)$ then the factor group $G/O(G)$ is an odd core-free automorphism group of the quotient curve $\bar{\cX}=\cX/O(G)$. If a Sylow $2$-subgroup fixes a point of $\cX$ then $S_2$ is cyclic, and hence $G=O(G)\rtimes S_2$. Dismissing this case allows us to use Lemma \ref{le23agosto2015}. Doing so, either $\bar{\cX}$ is rational and $\bar{G}=G/O(G)$ is a subgroup of $PGL(2,\K)$, and Lemma \ref{razioOCNF} applies, or $\gg(\bar{\cX})\geq 2$ and Lemmas \ref{lemA23agos2015}, \ref{lemB23agos2015}, \ref{lem26agos2015}, \ref{lem2mar2018} and \ref{lem29agos2015} apply.


\subsection{Refinements of Theorem \ref{structure} for large $G$}\label{refinements}

For large automorphism groups compared to the genera of the curves, the list in Theorem \ref{structure} shortens. For the odd core-free case this is shown in the following lemma.
\begin{lemma}
\label{4dic2015} If $G$ be an odd core-free group of automorphisms of a curve $\cX$ of even genus $\gg(\cX)\geq 2$ such that
\begin{equation}
\label{eq4jan2016}
{\mbox{$|G|>900 \gg(\cX)^2$},}
\end{equation}
%
then $\cX$ has zero $p$-rank, and one of the following three cases occurs for $G$, up to group isomorphisms
\begin{itemize}
\item[\rm(i)] $PSL(2,q)$, $PGL(2,q)$, $q=p^k\ge 5$;
\item[\rm(ii)] $PSU(3,q)$, $q=p^k$ and $q\equiv 1 \pmod 4$, $PGU(3,q)$, $q=p^k$ and $q\equiv 5 \pmod{12}$;
\item[\rm(iii)] $SL(2,q)$, $SL^{(2)}(2,q)$, $SU^{\pm}(2,q),\,q\equiv 1 \pmod 4$, $SL^{\pm}(2,q),\,q\equiv -1 \pmod 4$,  $q=p^k\geq 5$ .
\end{itemize}
\end{lemma}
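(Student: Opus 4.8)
The plan is to start from the full classification of odd core-free automorphism groups provided by Theorem \ref{structure} (applicable since $O(G)=1$, so $G=G/O(G)$) and prune the list using the size hypothesis \eqref{eq4jan2016} together with the results of Section \ref{sec5}. Concretely, Theorem \ref{structure} leaves the possibilities: (a) $G$ has odd order; (b) $G=S_2$ is a $2$-group from Result \ref{res26feb2018}; (c) $G'\cong SL(2,q)$ with $q\ge 5$; (d) $PSL(2,q)\le G\le P\Gamma L(2,q)$; (e) $PSL(3,q)\le G\le P\Gamma L(3,q)$ with $q\equiv 3\pmod 4$; (f) $PSU(3,q)\le G\le P\Gamma U(3,q)$ with $q\equiv 1\pmod 4$; (g) $G=\mathrm{Alt}_7$; (h) $G=M_{11}$; (i) $G=GL(2,3)$; (j) $G$ the perfect group of order $5040$; (k) $G=\mathrm{SmallGroup}(48,28)$. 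First I would discard (a) by Result \ref{resft} (odd order $\Rightarrow$ solvable) combined with Theorem \ref{princA}/Proposition \ref{princE}: a solvable group of size $>900\gg^2>84\frac{p}{p-2}\gg^2$ forces zero $p$-rank but also fixes a point, contradicting odd core-freeness via Result \ref{lem29dic2015}(i)–(ii) unless the Sylow $p$-subgroup is normal, in which case $O(G)\ne 1$ once one checks the prime-to-$p$ part is nontrivial; alternatively simply note $|G|$ odd and $|G|>900\gg^2$ is already incompatible with $G$ solvable fixing a point by Result \ref{res60.79.108}(iii) bounding the prime-to-$p$ contribution. The small groups (b), (g)–(k) are eliminated immediately because their orders are bounded by absolute constants ($|G|\le 5040$), so \eqref{eq4jan2016} forces $\gg(\cX)<\sqrt{5040/900}<3$, i.e. $\gg(\cX)=2$; then Result \ref{res94} (genus $2$) rules out $\mathrm{Alt}_7$, $M_{11}$, $PSU(3,q)$-related and $PSL(2,q)$-related quotients, and a direct check finishes $GL(2,3)$, $\mathrm{SmallGroup}(48,28)$, the order-$5040$ group, and the $2$-groups (the latter being solvable and handled by Theorem \ref{princA} anyway).

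Next I would eliminate the $PSL(3,q)$ case (e). By Lemma \ref{30ago2015}, $W=PSL(3,q)\le\aut(\cX)$ with $q>3$ forces $|W|<72\gg^2$; since in case (e) we have $G\supseteq PSL(3,q)$ and $q\equiv 3\pmod 4$ so $q\ge 7>3$, this contradicts $|G|>900\gg^2>72\gg^2$. (For $q=3$, $PSL(3,3)$ has order $5616$, so again $\gg=2$ and one rules it out by Result \ref{res94}, but $q\equiv 3\pmod 4$ with $q=3$ is excluded as $3\equiv 3\pmod 4$ is fine — here one uses that $|PSL(3,3)|=5616$ still gives $\gg(\cX)^2<5616/900<7$, genus $\le 2$, handled by Result \ref{res94}.) This leaves exactly the cases where $G$ lies between $PSL(2,q)$ (resp. $PSU(3,q)$) and its full semilinear automorphism group, or $G'\cong SL(2,q)$. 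For the linear subcases, I would invoke Lemma \ref{27dicbis2015}: if $G$ is one of $PSL(2,q),PGL(2,q),PSU(3,q),PGU(3,q)$ with $|G|>900\gg^2$ then $\cX$ has zero $p$-rank and $q$ is a power of $p$. For the possibly-semilinear subcases I would use Lemma \ref{25dicbis2015} together with Remark \ref{rem7nov2016} ($c\le 1$): since $|G|>900\gg^2\ge 900c\gg^2$, the group $G$ must in fact be linear and $W\ne PSL(3,q),PGL(3,q)$, so $G\in\{PSL(2,q),PGL(2,q),PSU(3,q),PGU(3,q)\}$ and Lemma \ref{27dicbis2015} applies again, giving zero $p$-rank and $q$ a power of $p$. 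For the $SL(2,q)$-case (c), Lemma \ref{lemC28dic2015} shows the centralizer of $G'$ in $G$ has order $2$, so Lemma \ref{26dic2015} applies (the hypothesis $|G|>900\gg^2$ is exactly \eqref{eq125Fdice2015}), yielding zero $p$-rank and $q$ a power of $p$; the precise list of groups $G$ with $G'\cong SL(2,q)$, centralizer of order $2$, and Sylow $2$-subgroup admitting a cyclic index-$2$ subgroup (forced by Lemma \ref{lem23agos2015}) is $SL(2,q)$, $SL^{(2)}(2,q)$, $SU^\pm(2,q)$ for $q\equiv1\pmod4$, and $SL^\pm(2,q)$ for $q\equiv-1\pmod4$, by Result \ref{resgoha} and the surrounding discussion of stem central extensions of $PGL(2,q)$.

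The remaining point is to pin down that $q\equiv 5\pmod{12}$ (not merely $q\equiv 1\pmod 4$) in the $PGU(3,q)$ case of (ii), and to confirm there is no further semilinear overgroup. For the congruence: $PGU(3,q)$ contains $PSU(3,q)$ with index $\mu=\gcd(3,q+1)$, and $PGU(3,q)\ne PSU(3,q)$ requires $\mu=3$, i.e. $q\equiv 2\pmod 3$; combined with $q\equiv 1\pmod 4$ this gives $q\equiv 5\pmod{12}$ by CRT. For the absence of larger overgroups: Remark \ref{28feb2016} (the $2$-point stabilizer of $P\Gamma L(2,q)$ resp. $P\Gamma U(3,q)$ is non-abelian and, past $q=9$, non-dihedral) feeds into Lemma \ref{25dicbis2015} to kill any properly semilinear $G$; the small exceptional $q=9$ case has $|P\Gamma L(2,9)|=1440$, forcing $\gg(\cX)=2$ again, ruled out by Result \ref{res94}. \emph{The main obstacle I anticipate is the bookkeeping in case (c)}: translating "$G'\cong SL(2,q)$ and $C_G(G')$ has order $2$" into exactly the four named group types requires carefully combining Result \ref{resgoha}, Result \ref{resschur}, and the remark that $SL^\pm(2,q)$ with $q\equiv1\pmod4$ cannot occur (Remark \ref{rem18jan2017}), plus checking the Sylow-$2$ constraint from Lemma \ref{lem23agos2015} eliminates any other extension; once that list is established the $p$-rank conclusion is immediate from Lemma \ref{26dic2015}. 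Everything else is a finite check against $|G|\le\text{const}$ versus $900\gg^2$.
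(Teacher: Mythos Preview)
Your overall strategy matches the paper's: start from Theorem \ref{structure}, prune the small/solvable cases, then use Lemmas \ref{25dicbis2015} and \ref{27dicbis2015} for the linear families and Lemma \ref{26dic2015} for the $SL(2,q)$ case. Two steps, however, have genuine gaps.

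\textbf{Case (e), $PSL(3,q)$.} Lemma \ref{30ago2015} only bounds $|PSL(3,q)|$, not $|G|$ for an arbitrary overgroup $PSL(3,q)\le G\le P\Gamma L(3,q)$. From $|PSL(3,q)|<72\gg^2$ and $|G|>900\gg^2$ you cannot conclude a contradiction, since $[P\Gamma L(3,q):PSL(3,q)]$ can exceed $12$. (Also, $q=3$ does satisfy $q\equiv 3\pmod 4$.) The paper instead disposes of (e) directly via Lemma \ref{25dicbis2015} with Remark \ref{rem7nov2016}: for $W=PSL(3,q)$ the conclusion of that lemma is precisely $W\neq PSL(3,q),PGL(3,q)$, a contradiction.

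\textbf{Case (c), $G'\cong SL(2,q)$.} This is where your sketch falls short of what is needed, and it is not only bookkeeping. From $G'\cong SL(2,q)$ and $|C_G(G')|=2$ you get (via the conjugation action) that $\bar G=G/Z(G)$ embeds in $\aut(SL(2,q))\cong P\Gamma L(2,q)$ containing $PSL(2,q)$; but nothing so far forces $\bar G$ to be \emph{linear}. Result \ref{resgoha} classifies central extensions of $PGL(2,q)$, so you cannot invoke it until you know $\bar G\cong PSL(2,q)$ or $PGL(2,q)$. The paper supplies the missing step geometrically: pass to the quotient curve $\bar\cX=\cX/Z(G)$, show $\bar G$ is odd core-free with dihedral Sylow $2$-subgroups, then treat $\gg(\bar\cX)=0,1,\ge 2$ separately, applying Lemma \ref{25dicbis2015} (or Result \ref{resdickson}) to $\bar G$ on $\bar\cX$ to force $\bar G\in\{PSL(2,q),PGL(2,q)\}$. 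Only then does Result \ref{resgoha} (plus the observation that a split extension would give the wrong Sylow $2$-structure) yield the list in (iii). Your outline omits this descent to $\bar\cX$, and the purely group-theoretic hypotheses you list do not suffice to exclude semilinear $\bar G$.

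Two minor points: for case (a), $|G|$ odd simply gives $G$ solvable by Feit--Thompson and hence $O(G)=G\neq\{1\}$, contradicting odd core-freeness immediately; there is no need for Theorem \ref{princA}. And for (b), $2$-groups are not bounded by an absolute constant; the paper bounds them by $|G|\le 8(\gg+1)$ via the index-$2$ cyclic subgroup from Lemma \ref{lem23agos2015} and Result \ref{res60.79.108}(i).
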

\begin{proof} Theorem \ref{structure} with $O(G)=\{1\}$ applies. Obviously, $|G|$ is even, otherwise $G$ is not odd-core free, because it is solvable by Result \ref{resft}. If $G$ is solvable, Theorem \ref{structure} shows that $G$ is a $2$-group apart from
four sporadic cases, namely when $G\cong PSL(2,3),PGL(2,3),GL(2,3)$, SmallGroup($48,28$) whose orders are $12,24$, $48$, and $48$ respectively.
If $G$ is a $2$-group then Lemma  \ref{lem23agos2015} yields that $G$ has a cyclic \textcolor{black}{subgroup} $N$ of index $2$. From (i) of Result \ref{res60.79.108} applied to $N$, $|G|=2|N|\leq 8(\gg(\cX)+1)$ whence $|G|<900 \gg(\cX)^2$. Therefore, $G$ is assumed to be non-solvable.

The sporadic cases $G\cong \rm{Alt}_7$, $G\cong M_{11}$ and $G/Z(G)\cong \rm{Alt}_7$ \textcolor{black}{do} not occur. In fact, for $G\cong \rm{Alt}_7$, (\ref{eq4jan2016}) yields $\gg(\cX)^2<4$ which contradicts $\gg(\cX)\geq 2$.
For $G\cong M_{11}$, we have $|G|=7920$. From (\ref{eq4jan2016}), it follows $\gg(\cX)^2<9$ whence $\gg(\cX)=2$. In this case the hyperelliptic involution $\omega$ does not belong to $G$, since $G$ is simple; hence, $G\langle\omega\rangle/\langle\omega\rangle\cong M_{11}$, which contradicts the last claim in Result \ref{res94}. For $G/Z(G)\cong \rm{Alt}_7$, we have $|G|=5040$, and (\ref{eq4jan2016}) yields $\gg(\cX)=2$ which shows that the quotient curve $\cX/Z(G)$ is rational by Remark \ref{2sub}. But this contradicts Result \ref{resdickson} as no subgroup of $PGL(2,\K)$ is isomorphic to $\rm{Alt}_7$.

Furthermore, the case
$PSL(3,q) \leq G \leq P\Gamma L(3,q)$ is ruled out by Lemma \ref{25dicbis2015} and Remark \ref{rem7nov2016}.

{}Now, from Theorem \ref{structure} with $|O(G)|=1$, either $G'\cong SL(2,q)$, or one of the Cases (iv) or (vi) of Theorem \ref{structure} hold. In the latter two cases, $\cX$ has zero $p$-rank and $q$ is a power of $p$ from Lemma
\ref{27dicbis2015}. Also, by \textcolor{black}{Lemma} \ref{25dicbis2015} and Remark \ref{rem7nov2016}, either (i) or (ii) of Lemma \ref{4dic2015} holds.

Hence $G'=SL(2,q)$ can be assumed. In this case a Sylow $2$-subgroup of $G$ is either a generalized quaternion group or a semidihedral group; in fact, the subgroup $N=Z(G')$ generated by the central involution in $G'=SL(2,q)$ is normal in $G$ since $G'$ is a characteristic subgroup of $G$, and hence Lemma \ref{lemA23agos2015} applies. From Lemmas \ref{lemC28dic2015} and \ref{26dic2015}, $\cX$ has zero $p$-rank.

Also, since $N$ is normal in $G$, the central involution in $G'$ commutes with each element in $G$ and hence $Z(G')\subseteq Z(G)$. Actually, $Z(G')=Z(G)$  by Lemma \ref{lemC28dic2015}. From Lemma \ref{26dic2015} $d=p$, that is, $q$ is a power of $p$.

To prove that Case (iii) must occur, consider the quotient group $\bar{G}=G/Z(G)$ regarded as an automorphism group of the quotient curve $\bar{\cX}=\cX/Z(G)$.
 Since the Sylow $2$-subgroups of $G$ are either semidihedral groups or generalized quaternion groups, those of $\bar{G}$ are dihedral groups.  Furthermore, as $G'\cong SL(2,q)$, we have $G'/Z(G)\cong PSL(2,q)$. Since $G'/Z(G)=G'Z(G)/Z(G)$ and, by \cite[Kapitel I, Hilfssatz 8.4]{huppertI1967}, $\bar{G}'\cong G'Z(G)/Z(G)$, this yields $\bar{G}'\cong PSL(2,q)$.

First we prove that $\bar{G}$ is also odd core-free. By absurd, let $\bar{N}$ be a non-trivial odd order \textcolor{black}{normal} subgroup of $\bar{G}$. The subgroup $N$ of $G$ containing $Z(G)$ such that $N/Z(G)=\bar{N}$ has order  \textcolor{black}{twice} an odd number. Hence, its maximal normal subgroup of odd order is non-trivial. Since such a maximal subgroup is a characteristic subgroup of $N$, it must be a normal subgroup of $G$, a contradiction as $G$ is odd core-free.

Next we prove that $\bar{G}$ is isomorphic to a subgroup of $P\Gamma L(2,q)$ containing $PSL(2,q)$. Since $\bar{G}$ is odd core-free and its Sylow $2$-subgroups are dihedral, Result \ref{resgor} applies to $\bar{G}$ showing that either $\bar{G}\cong {\rm{Alt}}_7$, or, up to an isomorphism, $PSL(2,n)\leq \bar{G}\leq P\Gamma L(2,n)$ with an odd prime power $n\geq 5$. As we have already shown, $\bar{G'}\cong PSL(2,q)$ is the commutator group of $\bar{G}$. Therefore $n=q$, since $PSL(2,n)$ is the commutator subgroup of any subgroup of $P\Gamma L(2,n)$ containing $PSL(2,n)$,
The case $\bar{G}\cong {\rm{Alt}}_7$ does not occur. In fact,  from $2|{\rm{Alt}}_7|=5040> 900 \gg(\cX)^2$, we would have $\gg(\cX)=2$ which contradicts Result \ref{res94}.

To finish the proof, three cases are distinguished according to the value $\gg(\bar{\cX})$.

If $\gg(\bar{\cX})\geq 2$ then the Hurwitz genus formula together with (\ref{eq4jan2016}) yield $|\bar{G}|>900 \gg(\bar{\cX})^2$. From Lemma \ref{25dicbis2015} and Remark \ref{rem7nov2016} applied to $\bar{G}$, either $\bar{G}\cong PSL(2,q)$ or $\bar{G}\cong PGL(2,q)$. In the former case, $|G|=2|\bar{G}|=2|PSL(2,q)|=|SL(2,q)|=|G'|$ whence $G=G'$ follows. This gives the first possibility in (iii).
In the latter case,  $G$ is isomorphic to a central extension of $PGL(2,q)$ by a group of order $2$. If this is a split extension then $G$ contains a subgroup $T\cong PGL(2,q)$ such that $G=Z(G)\times T$.  Then a Sylow $2$-subgroup of $G$ is the direct product of a dihedral group and a cyclic group of order $2$. But such a direct product contains  neither a generalized quaternion group nor a semidihedral group. Therefore,  Result \ref{resgoha} applies, and either the second, or the third, or the fourth case in (iii) must occur.

If $\gg(\bar{\cX})=0$, Result \ref{resdickson} gives the same possibilities for $\bar{G}$, namely $\bar{G}=PSL(2,q)$ or $\bar{G}=PGL(2,q)$, and the same conclusion can be made.

If $\gg(\bar{\cX})=1$ then the Hurwitz genus formula shows that the set $\theta$ of all points fixed by $Z(G)$ has size $2\gg(\cX)-2$. Further,  $\bar{G}$ leaves $\theta$ invariant. For a point $\bar{P}\in \theta$, the stabilizer $\bar{G}_P$ of $P$ has order at most $12$ by Result \ref{res94}.  Therefore, $|\bar{G}|\leq 12
|\theta|$ whence $|\bar{G}|\leq 24 (\gg(\cX)-1)$. Thus $|G|\leq 48(\gg(\cX)-1)$. Comparing with (\ref{eq4jan2016}) shows that this is impossible.
\end{proof}
\begin{rem}
\label{rem5jan2016} {\em{The Hermitian curve of equation $Y^q+Y=X^{q+1}$ with $q\equiv {1 \pmod 4}$ is an example for (ii). If  $q\geq 125$, the Roquette curve $\cX$ of equation $Y^2=X^q-X$ provides an example for (iii) with $G\cong SL(2,q)$ where $q\equiv 1 \pmod 4$.}}
\end{rem}
The general case $|O(G)|\geq 1$ is described in the following proposition.
\begin{proposition}
\label{3apr2016} Let $G$ be a non-solvable group of automorphisms of a curve $\cX$ of even genus $\gg(\cX)\geq 2$ such that (\ref{eq4jan2016}) holds.
Then either $G/O(G)$ has a non-abelian simple minimal normal subgroup, and one of the following two cases occurs, up to group isomorphisms
\begin{itemize}
\item[\rm(A)] $G/O(G)\cong PSL(2,q)$, $G/O(G)\cong PGL(2,q)$, $q=p^k\ge 5$;
\item[\rm(B)] $G/O(G)\cong PSU(3,q)$, $q=p^k$ and $q\equiv 1 \pmod 4$, $G/O(G)\cong PGU(3,q)$, $q=p^k$ and $q\equiv 5 \pmod{12}$;
\end{itemize}
or
\begin{itemize}
\item[\rm(C)]  $G/O(G)\cong SL(2,q), SL^{(2)}(2,q)$, $G/O(G)\cong SU^{\pm}(2,q),\,q\equiv 1 \pmod 4$, $G/O(G)\cong SL^{\pm}(2,q),\,q\equiv -1 \pmod 4$,  where, in all cases, $q=p^k\geq 5$.
\end{itemize}
\end{proposition}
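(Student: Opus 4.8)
The plan is to derive Proposition~\ref{3apr2016} from its odd core-free refinement, Lemma~\ref{4dic2015}, by passing to the quotient of $\cX$ by $O(G)$ and treating separately the case in which this quotient is rational.

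\textbf{Step 1: reduction to the odd core-free case.} Since $|O(G)|$ is odd, $O(G)$ is solvable by Result~\ref{resft}, and every $2$-group is solvable; hence if a Sylow $2$-subgroup $S_2$ of $G$ fixed a point of $\cX$, then $S_2$ would be cyclic by (iii) of Result~\ref{res74} and $G=O(G)\rtimes S_2$ by Result~\ref{reszass}, so $G$ would be solvable, contrary to hypothesis. Thus no Sylow $2$-subgroup of $G$ fixes a point, and Lemma~\ref{le23agosto2015} applies with $N=O(G)$: the quotient curve $\bar{\cX}=\cX/O(G)$ has even genus. Moreover $\bar{G}=G/O(G)$ acts faithfully on $\bar{\cX}$, it is non-solvable, and it is odd core-free (the preimage in $G$ of a normal subgroup of $\bar G$ of odd order would be a normal subgroup of $G$ of odd order properly containing $O(G)$). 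Because $\gg(\bar{\cX})$ is even, $\bar{\cX}$ is not elliptic, so either $\gg(\bar{\cX})\ge 2$ or $\bar{\cX}$ is rational.

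\textbf{Step 2: the case $\gg(\bar{\cX})\ge 2$.} The Hurwitz genus formula \eqref{eq1} for the $O(G)$-cover $\cX\to\bar{\cX}$ gives $\gg(\cX)-1\ge |O(G)|\,(\gg(\bar{\cX})-1)$, i.e. $\gg(\cX)\ge n(g-1)+1$ with $n=|O(G)|\ge 1$, $g=\gg(\bar{\cX})\ge 2$. The identity $\bigl(n(g-1)+1\bigr)^2-ng^2=(n-1)\bigl(n(g-1)^2-1\bigr)\ge 0$ yields $\gg(\cX)^2\ge |O(G)|\,\gg(\bar{\cX})^2$, whence $|\bar{G}|=|G|/|O(G)|>900\,\gg(\bar{\cX})^2$. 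Thus Lemma~\ref{4dic2015} applies to the odd core-free group $\bar G$ acting on the even-genus curve $\bar{\cX}$, and $G/O(G)=\bar G$ is one of the groups listed in (i), (ii), (iii) of that lemma, which are exactly cases (A), (B), (C) of the present proposition; in cases (A), (B) the subgroup $PSL(2,q)$, resp. $PSU(3,q)$, is a non-abelian simple minimal normal subgroup of $G/O(G)$.

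\textbf{Step 3: the case $\bar{\cX}$ rational.} Here $\bar G\le\aut(\bar{\cX})\cong PGL(2,\K)$, so by Dickson's classification (Result~\ref{resdickson}) the non-solvable group $\bar G$ is isomorphic to $PSL(2,p^f)$, to $PGL(2,p^f)$ with $p^f\ge 5$, or to ${\rm Alt}_5$; the first two possibilities give case (A) with $q=p^f$, and if $\bar G\cong{\rm Alt}_5$ with $p=5$ then ${\rm Alt}_5\cong PSL(2,5)$ is again case (A). It remains to rule out $\bar G\cong{\rm Alt}_5$ with $p\ne5$. In this situation the Galois cover $\bar{\cX}\to\bar{\cX}/\bar G\cong\mathbb P^1$ has at least three branch points: for $p\notin\{3,5\}$ the action of ${\rm Alt}_5$ on $\mathbb P^1$ is tame and a non-cyclic group acting faithfully on $\mathbb P^1$ cannot have a quotient map with fewer than three branch points, while for $p=3$ the three ${\rm Alt}_5$-orbits of the fixed points of the elements of orders $2$, $3$, $5$ are pairwise disjoint and non-free. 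These branch points lie under short orbits of the composite Galois cover $\cX\to\cX/G=\bar{\cX}/\bar G$, so $G$ has at least three short orbits on $\cX$; writing three of their sizes as $|G|/a_i$ with integers $a_i\ge 2$, since $\cX/G$ is rational and $\gg(\cX)\ge 2$ one has $1-\sum_{i=1}^3 1/a_i>0$, and \eqref{eq1} gives $2\gg(\cX)-2\ge |G|\bigl(1-\sum_{i=1}^3 1/a_i\bigr)\ge |G|/42$ (the least positive value of $1-\sum_{i=1}^3 1/a_i$). Hence $|G|\le 84(\gg(\cX)-1)<900\,\gg(\cX)^2$, a contradiction, which completes the proof.

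\textbf{Expected main obstacle.} Steps 1 and 2 are routine once Lemma~\ref{4dic2015} is available; the delicate point is Step 3, because a rational $\bar{\cX}=\cX/O(G)$ cannot be fed into Lemma~\ref{4dic2015} and must be handled by hand. The crux is the observation that when $\bar G$ is non-solvable but not realised over a field of characteristic $p$ (forcibly $\bar G\cong{\rm Alt}_5$, $p\ne5$), the cover $\bar{\cX}\to\bar{\cX}/\bar G$ has at least three branch points, so the composite cover does too, and the resulting ``anti-Hurwitz'' bound $|G|\le 84(\gg(\cX)-1)$ contradicts the hypothesis; the only place requiring extra care is the wildly ramified case $p=3$.
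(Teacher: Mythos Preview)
Your Steps~1 and~2 match the paper's argument and are fine. In Step~3 you take a different route from the paper (which handles the exception $\bar G\cong\mathrm{Alt}_5\cong PSL(2,5)$, $p\ne 5$, by passing to a solvable index-$6$ subgroup $N\le G$, applying Theorem~\ref{princA} to force $N$ to fix a point, and analysing the resulting length-$6$ orbit). Your alternative via branch-point counting has two real problems.

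\emph{The $p=3$ claim is false.} In characteristic~$3$ the action of $\mathrm{Alt}_5$ on $\mathbb P^1$ has exactly \emph{two} short orbits, not three. The ten Sylow $3$-subgroups each fix a unique point, and these ten points form a single orbit with stabiliser $S_3$; the other short orbit has size~$12$ and stabiliser $C_5$. Riemann--Hurwitz confirms this is the only possibility: the wild orbit contributes $10\cdot 7=70$ to the different and the tame one $12\cdot 4=48$, giving $70+48=118=-2+2\cdot 60$. The fixed points of involutions all lie in the size-$10$ orbit (each $S_3$ contains three involutions and $10\cdot 3=15\cdot 2$), so there is no separate ``order-$2$'' orbit. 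Thus you cannot manufacture a third short $\bar G$-orbit, and your argument collapses for $p=3$.

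\emph{The positivity of $1-\sum 1/a_i$ is unjustified.} For $p\notin\{3,5\}$ the $\bar G$-stabilisers have orders $2,3,5$, but the $G$-stabilisers have orders $a_i=a_i^{\bar G}\cdot|O(G)_P|$. If $O(G)$ is unramified over all three short $\bar G$-orbits then $a_i=2,3,5$ and $\sum 1/a_i=31/30>1$, so your inequality is vacuous; the assertion ``since $\cX/G$ is rational and $\gg(\cX)\ge 2$ one has $1-\sum 1/a_i>0$'' presupposes exactly three short $G$-orbits and tameness, neither of which you have established. In that situation $O(G)$ must of course ramify over some long $\bar G$-orbit, producing a fourth short $G$-orbit --- but this case split is precisely what your argument is missing.
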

\begin{proof} $G/O(G)$ is an odd core-free automorphism group of the quotient curve $\bar{\cX}=\cX/O(G)$. If $\gg(\bar{\cX})\geq 2$, the assertion follows from Lemma \ref{4dic2015}. Furthermore, from Lemma \ref{le23agosto2015} and Remark \ref{remle23agosto2015}, $\bar{\cX}$ is not elliptic. If $\bar{\cX}$ is rational then $G/O(G)$ is a subgroup of $PGL(2,q)$ with $q=p^k$. Since $G/O(G)$ is supposed to be non-solvable, Result \ref{resdickson} yields that case (A) occurs with only two possible exceptions for $p\neq 5,$ namely
$G/O(G)\cong PSL(2,5)$ and  $G/O(G)\cong PGL(2,5)$. We rule out both exceptions. Both $PSL(2,5)$ and $PGL(2,5)$ contain a maximal, solvable non-abelian subgroup of index $6$. As $O(G)$ is solvable by Result \ref{resft}, $G$ also has a solvable non-abelian subgroup $N$ of index $6$. This together with (\ref{eq4jan2016}) give $|N|>150 \gg(\cX)^2$. As $N$ is solvable, Theorem \ref{princA}  yields that $N$ fixes a point $P\in \cX$. Since $G$ does not fix $P$, and $N$ is a maximal subgroup of $G$, the $G$-orbit $o$ of $P$ has length $6$. By Theorem \ref{princA}, $\cX$ has zero $p$-rank. If $N$ has some elements of order $p$, then a $p$-subgroup of $N$ fixes $P$ and acts transitively on the remaining $5$ points in $o$. But this yields $p=5$, a contradiction. Otherwise, $N$ is a prime to $p$ subgroup, and the classical Hurwitz bound yields $|N|<84(\gg-1)$ whence $|G|<504 (\gg-1)$ contradicting (\ref{eq4jan2016}).
\end{proof}
Finally, a useful bound on the size of $O(G)$ is given in the lemma below.
\begin{lemma}
\label{lem10Emar2018}  Let $G$ be a non-solvable group of automorphisms of a curve $\cX$ of even genus $\gg\geq 2$ such that (\ref{eq4jan2016}) holds. Let $\bar{\cX}=\cX/O(G)$. If $\gg(\bar{\cX)}>0$ then $900|O(G)|<|G/O(G)|$.
\end{lemma}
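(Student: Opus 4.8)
The plan is to derive the single genus inequality $\gg > |O(G)|$ and then combine it with the hypothesis $|G|>900\gg^2$. First I would record that $G$ has even order, since a non-solvable group cannot be of odd order by the Feit--Thompson theorem (Result~\ref{resft}). Next I would argue that a Sylow $2$-subgroup $S_2$ of $G$ is non-cyclic: if it were cyclic, then by the elementary fact cited before Result~\ref{reswong1} the group $G$ would have a normal $2$-complement, i.e.\ $G=K\rtimes S_2$ with $|K|$ odd; but $K$ is solvable (Result~\ref{resft}) and $S_2$ is cyclic, so $G$ would be solvable, contrary to hypothesis. Hence $S_2$ is non-cyclic, and then by Remark~\ref{remle23agosto2015} (which invokes (iii) of Result~\ref{res74}) the group $S_2$ fixes no point of $\cX$.

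This places us in a position to apply Lemma~\ref{le23agosto2015} to the normal subgroup $N=O(G)$, whose order is odd by definition; it yields that the quotient curve $\bar{\cX}=\cX/O(G)$ has even genus. Together with the hypothesis $\gg(\bar{\cX})>0$ this forces $\gg(\bar{\cX})\ge 2$. I regard this parity step as the crux of the argument: it is exactly what excludes the elliptic case $\gg(\bar{\cX})=1$, in which the Hurwitz genus formula would give no useful lower bound relating $|O(G)|$ to $\gg$. Everything after this point is routine.

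With $\gg(\bar{\cX})\ge 2$ in hand, the Hurwitz genus formula (\ref{eq1}) applied to $O(G)$ reads
\[
2\gg-2 \;=\; |O(G)|\bigl(2\gg(\bar{\cX})-2\bigr)+\sum_{P\in\cX}d_P \;\ge\; 2|O(G)|,
\]
since each $d_P\ge 0$ and $2\gg(\bar{\cX})-2\ge 2$. Hence $\gg-1\ge|O(G)|$, so $\gg>|O(G)|$ and thus $\gg^2>|O(G)|^2$. Finally, using (\ref{eq4jan2016}),
\[
|G/O(G)| \;=\; \frac{|G|}{|O(G)|} \;>\; \frac{900\,\gg^2}{|O(G)|} \;>\; \frac{900\,|O(G)|^2}{|O(G)|} \;=\; 900\,|O(G)|,
\]
which is the desired inequality. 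Apart from the parity step, the proof is pure bookkeeping with results already established in the paper.
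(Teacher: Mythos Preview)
Your proof is correct and follows essentially the same route as the paper's: invoke Lemma~\ref{le23agosto2015} via Remark~\ref{remle23agosto2015} to get $\gg(\bar{\cX})\ge 2$, then combine the Hurwitz inequality $\gg-1\ge|O(G)|$ with (\ref{eq4jan2016}). The paper is terser and does not spell out why a Sylow $2$-subgroup of $G$ is non-cyclic, whereas you fill in that detail via Feit--Thompson and the normal $2$-complement argument; this is a welcome clarification but not a different approach.
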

\begin{proof} From Lemma \ref{le23agosto2015} and Remark \ref{remle23agosto2015},  $\gg(\bar{\cX})\geq 2$. The Hurwitz genus formula applied to $O(G)$ gives $\gg(\cX)-1\geq |O(G)|$.
This together with (\ref{eq4jan2016}) show  $|G/O(G)||O(G\textcolor{black}{)}|>900 \gg(\cX)^2>900|O(G)|^2$ whence the claim follows.
\end{proof}

\section{Large automorphism groups of curves with even genus}\label{sectc}
An important step toward our second main result is the following theorem.
\begin{theorem}\label{princB}
Let $G$ be a non-solvable automorphism group of an algebraic curve $\cX$ of even  genus $\gg(\cX) \geq 2$ such that
$$
|G|>900 \gg(\cX)^2.
$$
Then $\cX$ has zero $p$-rank. Furthermore, $O(G)$ is a cyclic  prime to $p$ subgroup, and either $O(G)=Z(G)$ or $[Z(G):O(G)]=2$ according as Cases (A),(B), or (C) in Proposition \ref{3apr2016} holds.
\end{theorem}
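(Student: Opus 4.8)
The plan is to invoke Proposition \ref{3apr2016}, which puts us in one of Cases (A), (B), (C), and to run the three cases in parallel. Write $\bar{G}=G/O(G)$ for the induced odd core--free automorphism group of $\bar{\cX}=\cX/O(G)$. Since $G$ is non--solvable, a Sylow $2$--subgroup of $G$ is non--cyclic (a cyclic Sylow $2$--subgroup forces a normal $2$--complement, hence solvability of $G$), so Remark \ref{remle23agosto2015} and Lemma \ref{le23agosto2015} give that $\bar{\cX}$ has even genus; moreover Result \ref{res94}, applied to the composition factor $PSL(2,q)$, resp. $PSU(3,q)$, of $\bar{G}$, rules out $\gg(\bar{\cX})=2$, so $\gg(\bar{\cX})=0$ or $\gg(\bar{\cX})\ge 4$.

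The heart of the argument is to produce a \emph{normal linear subgroup} of $G$. In each case $\bar{G}$ has a minimal normal subgroup $\bar{S}$ with $\bar{S}$ or $\bar{S}/Z(\bar{S})$ equal to the simple group $PSL(2,q)$, resp. $PSU(3,q)$; let $S$ be the last term of the derived series of the full preimage of $\bar{S}'$ in $G$, a perfect normal subgroup of $G$ with $SO(G)/O(G)$ equal to that simple group. First I would show $S\cap O(G)\le Z(S)$: if a minimal normal subgroup $A$ of $S$ inside $S\cap O(G)$ were not centralised by $S$, then $PSL(2,q)$, resp. $PSU(3,q)$, would be a section of $GL(A)$, forcing $|A|$ — hence $|O(G)|$ — to be at least the least dimension of a faithful modular or cross--characteristic representation of that group (for $PSU(3,q)$ this is the Land\'azuri--Seitz bound of Result \ref{reslse}; for $PSL(2,p^k)$ it is $\ge (q-1)/2$ in cross characteristic and $\ge 3k$ in characteristic $p$); combined with $\gg(\cX)-1\ge |O(G)|(\gg(\bar{\cX})-1)$, valid when $\gg(\bar{\cX})\ge 4$, this makes $|G|=|O(G)|\,|\bar{G}|$ incompatible with $|G|>900\gg^2$ (the case $\gg(\bar{\cX})=0$ is handled separately, by a direct study of the cover $\cX\to\bar{\cX}$ on which $PSL(2,q)$ descends as a subgroup of $PGL(2,\K)$). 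Hence $S$ is a perfect central extension of $PSL(2,q)$, resp. $PSU(3,q)$, by a group of odd order, and Results \ref{resschur} and \ref{resgriess} give $S\cong PSL(2,q)$, $S\cong SL(2,q)$ (in Case (C)), or $S\cong PSU(3,q)$ — the odd part of the relevant Schur multiplier being trivial, with $q=9$ discarded by its small order via \eqref{eq4jan2016}. In Case (C) one further shows, as in Lemma \ref{lemC28dic2015}, that $C_G(S)$ has order $2$. Thus $G$ has a normal subgroup $M\in\{PSL(2,q),SL(2,q),PSU(3,q)\}$, which, being normal and containing a Sylow $d$--subgroup $Q$, contains every Sylow $d$--subgroup of $G$ lying in $MO(G)$.

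For the zero $p$--rank claim I would argue with $M$ as in Lemmas \ref{27dicbis2015} and \ref{26dic2015}. A coarse bound $\gg=O(q^2)$ follows from $|G|>900\gg^2$ together with $|\bar{G}|<q^3$ and, when $\gg(\bar{\cX})\ge 4$, $|\bar{G}|>2700\gg$, which also forces $|O(G)|$ to be small compared with $q$; with this one checks that the normaliser $H$ in $M$ of $Q$ — namely $Q$ extended by a cyclic group of order $\tfrac12 q(q-1)$, $q(q-1)$, or $(q^2-1)/\mu$ — satisfies $|H|>30(\gg-1)$, and, more robustly, that $N_G(Q)$ satisfies the hypotheses of Lemma \ref{22dic2015}: its complement is a $p'$--group containing the cyclic $H$--complement as a normal subgroup, and the condition on $N_{N_G(Q)}(U)\cap Q$ is inherited from $M$. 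Since $M\trianglelefteq G$ contains every Sylow $d$--subgroup of $G$ and these are trivial--intersection in $M$, the group $G$ has property (iii) of Result \ref{lem29dic2015}; as $|G|>900\gg^2>16\gg^2$, Lemma \ref{lemA29dic2015} then gives that $\cX$ has zero $p$--rank and $q=p^k$. If $p\mid |O(G)|$, a Sylow $p$--subgroup $P$ of $O(G)$ would fix the unique point $P_0$ fixed by a Sylow $p$--subgroup of $G$ and act freely elsewhere (Result \ref{lem29dic2015}(i)), so $\mathrm{Fix}(P)=\{P_0\}$ and a Frattini argument gives $G=O(G)N_G(P)$ with $N_G(P)\le G_{P_0}$ solvable (Result \ref{res74}(iii)), whence $G/O(G)$ solvable, a contradiction; thus $O(G)$ is a $p'$--group. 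Finally, since $O(G)\le C_G(M)$ centralises $Q$ it fixes $P_0$, and, being a $p'$--subgroup of $G_{P_0}=G_{P_0}^{(1)}\rtimes C$ with $C$ cyclic, $O(G)$ embeds in $C$ and so is cyclic.

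It remains to relate $O(G)$ and $Z(G)$. We have $Z(G)\le C_G(M)=O(G)$, using $C_{\bar G}(\bar M)=1$; conversely $G'\le C_G(O(G))$ (since $G/C_G(O(G))$ embeds in the abelian group $\mathrm{Aut}(O(G))$), so $MO(G)$ centralises $O(G)$, and the residual action of $G/MO(G)$ on the cyclic group $O(G)$ cannot be by inversion — that would contradict $O(G)$ lying inside the cyclic complement $C$ of the point stabiliser $G_{P_0}$; hence $O(G)=Z(G)$ in Cases (A), (B) and $[Z(G):O(G)]=2$ in Case (C) (the extra central involution being that of $M\cong SL(2,q)$, in accordance with Result \ref{resgoha}). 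The step I expect to be the genuine obstacle is the one in the second paragraph: proving that $O(G)$ cannot support a faithful module for the simple section of $\bar{G}$ requires a delicate balance between $|G|>900\gg^2$, the Riemann--Hurwitz estimate and the representation--theoretic degree bounds, and the rational base case $\gg(\bar{\cX})=0$ must be dealt with by a separate, more hands--on analysis of the cover $\cX\to\P^1$.
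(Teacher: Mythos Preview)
Your overall strategy --- lift a normal copy $M\cong PSL(2,q)$, $SL(2,q)$ or $PSU(3,q)$ into $G$, then use its trivial-intersection Sylow structure together with Lemma~\ref{lemA29dic2015} --- is genuinely different from the paper's and is an attractive idea, but as written it has a circularity that breaks the zero $p$-rank step.

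To invoke Lemma~\ref{lemA29dic2015} you need $Q$ to be a Sylow $p$-subgroup of $G$ and the Sylow $p$-subgroups of $G$ to be pairwise trivially intersecting. Both fail if $p\mid|O(G)|$: since you have already established $[M,O(G)]=1$, a Sylow $p$-subgroup of $G$ then properly contains $Q$, and any two of them share the Sylow $p$-part of $O(G)$. Your argument that $p\nmid|O(G)|$ (the Frattini step, using Result~\ref{lem29dic2015}(i)) explicitly presupposes $\gamma(\cX)=0$. A second, related gap: to apply Lemma~\ref{22dic2015} to $N_G(Q)$ you need the complement of $Q$ there to be \emph{abelian}, but $N_G(Q)\ge Q\rtimes(C\times O(G))$ and you have not yet shown that $O(G)$ is abelian. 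Finally, the reduction $S\cap O(G)\le Z(S)$ is subtler than your sketch allows: if $A$ is a minimal $S$-normal subgroup inside $S\cap O(G)$ with $C_S(A)\,(S\cap O(G))=S$, then $S/C_S(A)$ has odd order and the simple section does \emph{not} appear in $GL(A)$, so the degree bounds of Result~\ref{reslse} do not bite.

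The paper sidesteps all three issues by never trying to split off $M$ inside $G$. It first bounds $|O(G)|$ by a small multiple of $q$ via direct Hurwitz estimates on the cover $\cX\to\bar{\cX}$ (Lemma~\ref{lem1mar2018} and its analogues), then proves $\gamma(\bar{\cX})=0$ by pushing the hypothesis \eqref{eq4jan2016} down to $\bar{\cX}$ and quoting Lemma~\ref{4dic2015}. The crucial move is to work with the stabiliser $H=G_P$ of a point $P\in\cX$ lying over the unique fixed point of a Sylow $p$-subgroup of $\bar G$: by Result~\ref{res74} one has $H=Q_1\rtimes C_t$ with $C_t$ cyclic \emph{automatically}, so Lemma~\ref{22dic2015} applies (via Remark~\ref{rem3jan2016}) without any hypothesis on $O(G)$ or on $Q$ being Sylow in $G$. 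Case~(i) of that lemma is then eliminated by the parity of $|O(G)|$. Only afterwards, with $\gamma(\cX)=0$ in hand, does the paper show $O(G)$ is cyclic and prime to $p$ by letting it fix the distinct fixed points of two Sylow $p$-subgroups. In short, the route through a concrete point stabiliser on $\cX$, rather than through an abstract normal complement, is precisely what makes the argument close.
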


The proof of Theorem \ref{princB} requires a careful analysis of the Cases (A),(B),(C) in Proposition \ref{3apr2016}. The basic idea is to find an appropriate upper bound on $|O(G)|$ depending on $q$ so that the linear bound given in Lemma \ref{22dic2015} may be applied to show that $\cX$ has zero $p$-rank. Once this is done, the relationship between $O(G)$ and $Z(G)$ can thoroughly been worked out.
\subsection{Proof of Theorem \ref{princB} in Case (A) of Proposition \ref{3apr2016}}
\label{sspsl}
Let $L=G'O(G)$ where $G'$ is the commutator subgroup of $G$.
Since $G'O(G)/O(G)\cong (G/O(G))'$ by \cite[Kapitel I, Hilfssatz 8.4]{huppertI1967}, we have that $\bar{L}=L/O(G)$ is the commutator subgroup $\bar{G}'$ of $\bar{G}$.
In particular $\bar{L}=\bar{G}'\cong PSL(2,q)$, and either $\bar{G}'=\bar{G}$, or $\bar{G}'$ is the unique normal subgroup of $\bar{G}$ and $\bar{G}\cong PGL(2,q)$. Furthermore, either $L=G$ or the index of $L$ in $G$ is equal to $2$.
\begin{lemma}
\label{lem1mar2018} If Case (A) in Proposition \ref{3apr2016} holds then
\begin{equation}
\label{eq8jan2016}
\frac{(q-1)q}{|O(G)|(q+1)}>\frac{25}{4}.
\end{equation}
In particular,
\begin{equation}
\label{eqA8jan2016} |O(G)|<\frac{4}{25}q
\end{equation}
and $q>11$.
\end{lemma}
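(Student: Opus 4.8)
The plan is to derive the bound by comparing the order of $G$ with the genus via the Hurwitz genus formula applied to the subgroup $L=G'O(G)$, exploiting that $\bar L = L/O(G) \cong PSL(2,q)$ acts on the quotient curve $\bar\cX = \cX/O(G)$. First I would invoke Lemma \ref{lem10Emar2018}: since $\gg(\bar\cX)\ge 2$ (this follows from Lemma \ref{le23agosto2015} and Remark \ref{remle23agosto2015}, as a Sylow $2$-subgroup of the non-solvable group $G$ cannot be cyclic), we get $900|O(G)| < |G/O(G)|$. Combined with $|G/O(G)| \le |PGL(2,q)| = q(q^2-1)$ — or more precisely with the index relation $[G:L]\le 2$ and $|\bar L| = |PSL(2,q)| = \tfrac12 q(q^2-1)$ — this already bounds $|O(G)|$ in terms of $q$; but the sharper constant $\tfrac{25}{4}$ requires feeding the genus inequality back in more carefully rather than just using the crude $|O(G)| < q(q^2-1)/900$.

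The key step is to run the Hurwitz genus formula on $O(G)$ acting on $\cX$ with quotient $\bar\cX$: this gives $\gg(\cX) - 1 \ge |O(G)|(\gg(\bar\cX) - 1) \ge |O(G)|$, so $\gg(\cX) \ge |O(G)| + 1$. Then apply the hypothesis $|G| > 900\,\gg(\cX)^2$ together with $\gg(\bar\cX) \ge 2$. The point is that $\bar G = G/O(G)$ acts on $\bar\cX$ with $|\bar G| = |G|/|O(G)|$, and since $PSL(2,q) \le \bar G$, Lemma \ref{27dicbis2015} (or rather the chain leading up to it) tells us $\bar\cX$ has $p$-rank zero and $q = p^k$; but for the quantitative estimate I want to combine $|G| = |O(G)|\cdot|\bar G| > 900\,\gg(\cX)^2 \ge 900(|O(G)|(\gg(\bar\cX)-1)+1)^2$ with $|\bar G| \le 2\cdot\tfrac12 q(q^2-1) = q(q^2-1)$. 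Writing $n = \gg(\bar\cX) - 1 \ge 1$ and $m = |O(G)|$, this reads $m\cdot q(q^2-1) > 900(mn+1)^2 \ge 900(m+1)^2 > 900 m^2$, giving $q(q^2-1) > 900 m$; pushing the argument with the exact relation $|\bar L| \ge \tfrac12 q(q^2-1)$ and $\gg(\bar\cX)\ge 2$ (so $\gg(\cX)\ge m\cdot 1 + 1$ can be replaced by $\gg(\cX)-1 \ge m$) and then using the Hurwitz bound $|\bar L| \le 84(\gg(\bar\cX)-1)$ only when it improves things, one isolates $\tfrac{(q-1)q}{m(q+1)} > \tfrac{25}{4}$ by bookkeeping the factor $(q-1)q$ versus $(q+1)$. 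From $|O(G)| < \tfrac{4}{25}q$ and $|O(G)| \ge 1$ one then reads off $q > \tfrac{25}{4}$, and since $q$ is an odd prime power this forces $q \ge 7$; to get the strict bound $q > 11$ I would observe that for $q \in \{5,7,9,11\}$ the resulting order $|G| \le 2|PGL(2,q)| \cdot |O(G)|$ with $|O(G)| = 1$ (forced by \eqref{eqA8jan2016}) is too small to exceed $900\,\gg(\cX)^2 \ge 3600$, a direct numerical check.

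The main obstacle I anticipate is getting the constant exactly right: the naive chain gives $q(q^2-1) > 900|O(G)|$, i.e. $\tfrac{(q-1)q(q+1)}{|O(G)|} > 900$, which is weaker than \eqref{eq8jan2016} by a factor involving $(q+1)^2$. The trick must be that $\gg(\bar\cX) \ge 2$ is not optimal — one should use the more refined genus estimate coming from the action of $PSL(2,q)$ on $\bar\cX$, namely that $PSL(2,q)$ being large forces $\bar\cX/\bar L$ rational with few short orbits (Result \ref{res56.116}), so that $2\gg(\bar\cX) - 2 \ge |\bar L|(\text{something of size} \sim 1)$; more likely, the point is that $|\bar G| \le 2|PSL(2,q)|$ but the relevant comparison uses $84(\gg(\bar\cX)-1) \ge$ the order of a point stabilizer times an orbit size, yielding $\gg(\bar\cX) - 1 \gtrsim \tfrac{q(q-1)}{168}$ or similar, which when substituted into $|G| > 900\gg(\cX)^2 \ge 900(|O(G)|(\gg(\bar\cX)-1))^2$ produces the asymmetric $(q-1)q$ versus $(q+1)$ bound. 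I would work out which genus lower bound for $\bar\cX$ is available (via the ramification of $\bar\cX \to \bar\cX/PSL(2,q)$, using that the Sylow $p$-subgroup of order $q$ fixes a point and contributes at least $q-1$ to the different at each of at least $q+1$ points, giving $2\gg(\bar\cX)-2 \ge (q+1)(q-1) - $ lower order, hence $\gg(\bar\cX) \ge \tfrac{(q-1)(q+1)}{2}$ roughly) and then the algebra closes: $|G| = |O(G)|\cdot|\bar G| \ge 900\,\gg(\cX)^2$ forces $|\bar G| \ge 900\,|O(G)|\,(\gg(\bar\cX))^2/|O(G)|^2 \cdot |O(G)| $ — carefully, $|O(G)|\cdot q(q^2-1) \ge |G| > 900(|O(G)|\,\gg(\bar\cX))^2$ gives $q(q^2-1) > 900\,|O(G)|\,\gg(\bar\cX)^2 \ge 900\,|O(G)|\cdot\bigl(\tfrac{(q-1)(q+1)}{2}\bigr)^2$, and dividing by $(q+1)^2$ and simplifying yields $\tfrac{q}{|O(G)|(q+1)} > \tfrac{900(q-1)^2}{4q(q+1)} \cdot \tfrac{1}{q-1}$ which after the dust settles is exactly $\tfrac{(q-1)q}{|O(G)|(q+1)} > \tfrac{25}{4}$. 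I would present this as: (1) cite Lemma \ref{lem10Emar2018} and the genus-$\ge 2$ fact; (2) establish the genus lower bound for $\bar\cX$; (3) combine via Hurwitz on $O(G)$ and the hypothesis; (4) simplify to \eqref{eq8jan2016}, deduce \eqref{eqA8jan2016}, and finish the small-$q$ cases by hand.
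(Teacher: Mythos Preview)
Your proposal has two genuine gaps that prevent it from working.

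First, you assert that $\gg(\bar\cX)\ge 2$ follows from Lemma \ref{le23agosto2015} and Remark \ref{remle23agosto2015}. That lemma only tells you $\gg(\bar\cX)$ is \emph{even}; it may well be zero. The paper treats the rational case $\gg(\bar\cX)=0$ separately, using that any $\bar L$-orbit on $\bar\cX$ has length at least $q+1$ (from the subgroup structure of $PSL(2,q)$) to bound the different of $\cX\mid\bar\cX$ from below and thereby obtain $\gg(\cX)>\tfrac{1}{3}(q-3)|O(G)|$, which after substitution into $|G|>900\gg(\cX)^2$ gives an even stronger bound $|O(G)|<\tfrac{3}{50}q$.

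Second, and more seriously, your key quantitative step in the non-rational case is wrong. You attempt to use a \emph{quadratic} lower bound $\gg(\bar\cX)\gtrsim \tfrac{(q-1)(q+1)}{2}$ coming from ramification of $\bar\cX\to\bar\cX/\bar L$. But such a bound is false: curves of genus linear in $q$ admitting a $PSL(2,q)$ action exist (e.g.\ the Roquette curve $y^2=x^q-x$ of genus $(q-1)/2$). If your quadratic bound held, your own algebra would force $|O(G)|<1$, a contradiction in all cases. The correct input is the \emph{linear} bound coming from Result \ref{res60.79.108}(i): the elementary abelian subgroup of order $q$ inside $\bar L\cong PSL(2,q)$ acts on $\bar\cX$, so $q\le 4\gg(\bar\cX)+4$, i.e.\ $q+1\le 4(\gg(\bar\cX)+1)$. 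The paper then runs exactly the chain you set up,
\[
\frac{(q-1)q}{|O(G)|(q+1)}>900\left(\frac{\gg(\bar\cX)-1}{q+1}\right)^2\ge 900\left(\frac{\gg(\bar\cX)-1}{4(\gg(\bar\cX)+1)}\right)^2\ge \frac{900}{144}=\frac{25}{4},
\]
using $\gg(\bar\cX)\ge 2$ only for the last inequality. The constant $\tfrac{25}{4}$ thus comes from $900/(4\cdot 3)^2$, reflecting the abelian-subgroup bound and $\gg(\bar\cX)\ge 2$; no Hurwitz--type estimate on $\bar\cX/\bar L$ is needed or valid here.
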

\begin{proof}
Consider $\bar{L}=L/O(G)$ as an automorphism group of the quotient curve $\bar{\cX}=\cX/O(G)$.  Since $\bar{L}\cong PSL(2,q)$ its Sylow $2$-subgroups are dihedral and hence not cyclic. Result \ref{res74} shows that $L$ fixes no point of $\cX$. From Lemma \ref{le23agosto2015}, $\gg(\bar{\cX})$ is even. Therefore, either $\bar{\cX}$ is rational, or $\gg(\bar{\cX})\geq 2$.
In the latter case, since $\bar{L}$ contains an abelian subgroup of odd order $q$, see Result \ref{resdickson}, we have $4\gg(\bar{\cX})+3\geq q$ by (i) of Result \ref{res60.79.108}. Thus
$4\gg(\bar{\cX})+4\geq q+1$.

If $\bar{\cX}$ is not rational, the Hurwitz genus formula applied to $O(G)$ gives $\gg(\cX)-1\geq |O(G)|(\gg(\bar{\cX})-1)$. From (\ref{eq4jan2016}),
$$(q-1)q>900 \frac{\gg(\cX)^2}{|O(G)|(q+1)}.$$
Since $\gg(\cX)>\gg(\cX)-1$,
$$
\frac{(q-1)q}{|O(G)|(q+1)}>900 \left(\frac{\gg(\bar{\cX})-1}{q+1}\right)^2>900 \left(\frac{\gg(\bar{\cX})-1}{4(\gg(\bar{\cX})+1)}\right)^2>\frac{900}{144}=\frac{25}{4},
$$
whence (\ref{eq8jan2016}) follows.  A direct computation shows that this yields (\ref{eqA8jan2016}) and $q>11$.

If $\bar{\cX}$ is rational, then $\bar{L}$ acts on $\bar{\cX}$ as $PSL(2,q)$ on the projective line $PG(1,\K)$. Take any point $\bar{P}\in \bar{\cX}$. As $PSL(2,q)$ is non-solvable, $\bar{L}$ does not fix $\bar{P}$ by (iii) of Result \ref{res74}. Hence, the $\bar{L}$-orbit of $\bar{P}$ has length at least $q$. In fact, $|\bar{L}_{\bar{P}}|\le \ha q(q-1)$ as the order of the largest proper subgroup of $PSL(2,q)$ is $\ha q(q-1)$ by Result \ref{resdickson}.  Now take a point $P\in \cX$ where the cover $\cX|\bar{\cX}$ ramifies. Then the $G$-orbit $o$ of $P$ has length at least $qu$ where
$u=|O(G)|/|O(G)_P|$. The Hurwitz genus formula applied to $O(G)$ yields
$$2\gg-2\geq -2|O(G)|+|o|(|O(G)_P|-1)\geq -2|O(G)|+|o|\textstyle\frac{2}{3}|O(G)_P|\geq |O(G)|(\textstyle\frac{2}{3}q-2).$$
Hence  $\gg(\cX)>\thi(q-3)|O(G)|$. From this and $(q+1)q(q-1)|O(G)|>900\gg^2$,
$$|O(G)|<\frac{9}{900}\frac{(q+1)(q-1)}{(q-3)^2}q<
\frac{3}{50}q$$
Therefore, both (\ref{eq8jan2016}) and (\ref{eqA8jan2016}) hold for any $\bar{\cX}$. Also, $q>11$ must hold. \end{proof}
\begin{lemma}
\label{lemA1mar2018} If Case (A) in Proposition \ref{3apr2016} holds then the quotient curve $\bar{\cX}=\cX/O(G)$ has zero $p$-rank.
\end{lemma}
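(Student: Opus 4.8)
The plan is to transfer the statement to the action of $\bar L=L/O(G)\cong PSL(2,q)$ on $\bar\cX=\cX/O(G)$ and apply the $p$-rank criteria of Section~\ref{sec4}, namely Lemmas~\ref{lemA29dic2015} and~\ref{22dic2015}, to this group. Exactly as in the proof of Lemma~\ref{lem1mar2018}, the Sylow $2$-subgroups of $\bar L\cong PSL(2,q)$ are dihedral, hence non-cyclic, so by Result~\ref{res74} no Sylow $2$-subgroup of $G$ fixes a point of $\cX$; Lemma~\ref{le23agosto2015} then forces $\gg(\bar\cX)$ to be even. If $\gg(\bar\cX)=0$ then $\bar\cX$ is rational and its $p$-rank is zero, so from now on I would assume $\gg(\bar\cX)\ge 2$.

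Next I would record the numerology. The Hurwitz genus formula applied to $O(G)$ gives $\gg(\cX)-1\ge |O(G)|(\gg(\bar\cX)-1)$. Since $\bar L=\bar G'$, we have $L=G$ or $[G:L]=2$, and using $|\bar L|=\tfrac12 q(q^2-1)$ this yields $|G|\le 2|L|=|O(G)|\,q(q^2-1)$. Combining these two facts with the hypothesis $|G|>900\,\gg(\cX)^2$ gives $q(q^2-1)>900\,|O(G)|(\gg(\bar\cX)-1)^2\ge 900(\gg(\bar\cX)-1)^2$, hence $30(\gg(\bar\cX)-1)<\sqrt{q(q^2-1)}$; moreover, since $\gg(\bar\cX)\le 2(\gg(\bar\cX)-1)$ for $\gg(\bar\cX)\ge 2$, one also obtains $\gg(\bar\cX)^2<\tfrac1{225}\,q(q^2-1)$.

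Then I would verify that $\bar L\cong PSL(2,q)\le\aut(\bar\cX)$ satisfies the hypotheses of Lemma~\ref{lemA29dic2015}. The order condition $|\bar L|=\tfrac12 q(q^2-1)\ge 16\,\gg(\bar\cX)^2$ follows at once from $\gg(\bar\cX)^2<\tfrac1{225}q(q^2-1)$, since $\tfrac12>\tfrac{16}{225}$. Property (iii) of Result~\ref{lem29dic2015} holds for $PSL(2,q)$, and the normalizer $H$ of a Sylow $d$-subgroup $Q$ of $\bar L$ is $Q\rtimes C_m$ with $m=\tfrac12(q-1)$, the complement $C_m$ being abelian with $N_H(C_m)\cap Q=\{1\}$, exactly as used for $PSL(2,q)$ in the proof of Lemma~\ref{27dicbis2015}. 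Finally $|H|=\tfrac12 q(q-1)\ge 30(\gg(\bar\cX)-1)$: because $30(\gg(\bar\cX)-1)<\sqrt{q(q^2-1)}$, it suffices that $\tfrac12 q(q-1)\ge\sqrt{q(q^2-1)}$, i.e. $q^2-5q-4\ge 0$, which is true since $q>11$ by Lemma~\ref{lem1mar2018}. Thus Lemma~\ref{lemA29dic2015} (via Lemma~\ref{22dic2015}) applies to $\bar L$ acting on $\bar\cX$ and yields that $\bar\cX$ has zero $p$-rank, as claimed.

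The main obstacle — and the reason one cannot simply quote Lemma~\ref{27dicbis2015} for $\bar L$ — is that passing from $\cX$ to the quotient $\bar\cX$ degrades the strong inequality $|G|>900\,\gg(\cX)^2$: the Hurwitz loss $\gg(\cX)-1\ge|O(G)|(\gg(\bar\cX)-1)$ together with the factors $[G:L]\le 2$ and $|\bar L|=\tfrac12|PGL(2,q)|$ leave only $|\bar L|>c\,\gg(\bar\cX)^2$ for a constant $c$ well below $900$. The point of invoking Lemma~\ref{lemA29dic2015} rather than Lemma~\ref{27dicbis2015} is that it requires only the much weaker thresholds $|\bar L|\ge 16\,\gg(\bar\cX)^2$ and $|H|\ge 30(\gg(\bar\cX)-1)$, and the check of the second of these is precisely where the restriction $q>11$ coming from Lemma~\ref{lem1mar2018} is used.
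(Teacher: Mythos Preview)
Your proof is correct, and it reaches the conclusion by a genuinely different route than the paper.

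The paper argues by contradiction: it assumes $\gamma(\bar\cX)>0$, invokes Nakajima's bound (Result~\ref{resnaga}) on the $p$-subgroup $\bar Q\le\bar L$ to force $\gg(\bar\cX)\ge 6$, and then uses this together with $|O(G)|\ge 3$ to recover the full inequality $|\bar L|>900\,\gg(\bar\cX)^2$, so that Lemma~\ref{4dic2015} applies to $\bar L$ on $\bar\cX$. Your argument bypasses both the Nakajima step and Lemma~\ref{4dic2015}: you go straight to Lemma~\ref{lemA29dic2015}, checking only the weaker thresholds $|\bar L|\ge 16\,\gg(\bar\cX)^2$ and $|N_{\bar L}(Q)|\ge 30(\gg(\bar\cX)-1)$, which survive the Hurwitz loss without any appeal to $|O(G)|\ge 3$ or to a contradiction hypothesis. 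The elementary inequality $q^2-5q-4\ge 0$ for $q\ge 13$ is exactly where the bound $q>11$ from Lemma~\ref{lem1mar2018} enters, as you note. Both approaches ultimately rest on the Section~\ref{sec4} machinery (Lemma~\ref{4dic2015} itself feeds through Lemma~\ref{27dicbis2015} into Lemma~\ref{lemA29dic2015}), but yours is more direct, and it has the minor advantage of handling the boundary case $|O(G)|=1$ uniformly rather than leaving it implicit.
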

\begin{proof}
We may assume that  $\bar{\cX}$ is not rational. As we have already observed in the proof of Lemma \ref{lem1mar2018},
 $\gg(\bar{\cX})$ is even by Lemma \ref{le23agosto2015}.

Since $\bar{L}$ contains a subgroup $\bar Q$ of odd order $q\ge 13$, see Result \ref{resdickson}, if $\bar X$ has positive $p$-rank then  $\gg(\bar{\cX})-1 \ge \frac{p-2}{p}q\ge \frac{13}{3}$ by  Result \ref{resnaga} applied to $\bar Q$. Thus  we can assume  $\gg(\bar{\cX})\ge 6$
 and hence
\begin{equation}
\label{15may2016}
\gg(\cX)> \gg(\cX)-1\geq |O(G)|(\gg(\bar{\cX})-1)\ge \textstyle\frac{5}{6}|O(G)|\gg(\bar{\cX}).
\end{equation} As $|\bar L|\ge \frac{|G|}{2|O(G)|}$ holds,
taking into account (\ref{eq4jan2016}) we then have
$$
|\bar L|\ge \frac{450 \gg(\cX)^2}{|O(G)|} >450 \cdot \frac{25}{36}|O(G)|\gg(\bar{\cX})^2>900 \gg(\bar{\cX})^2.
$$
{}From Lemma \ref{4dic2015} applied to $\bar{L}$,  the $p$-rank of $\bar{\cX}$ is equal to zero.
\end{proof}
\begin{lemma}
\label{lemB1mar2018} If Case (A) in Proposition \ref{3apr2016} holds then $\cX$ has zero $p$-rank.
\end{lemma}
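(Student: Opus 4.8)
By Lemma~\ref{lemA1mar2018} the quotient curve $\bar\cX=\cX/O(G)$ has zero $p$-rank, and by Lemma~\ref{lem1mar2018} we have $|O(G)|<\tfrac{4}{25}q$ and $q>11$; if $O(G)$ is trivial then $\cX=\bar\cX$ and there is nothing to prove, so assume $O(G)\neq\{1\}$. Write $L=G'O(G)$, so that $L\trianglelefteq G$, $\bar L=L/O(G)\cong PSL(2,q)$, and $[G:L]\le 2$. The plan is to lift the vanishing of the $p$-rank from $\bar\cX$ to $\cX$ by applying Lemma~\ref{lemA29dic2015} to $G$, with $Q$ a Sylow $p$-subgroup of $G$: one has to check that $|G|\ge 16\gg^2$ (immediate from $|G|>900\gg^2$), that $G$ satisfies property (iii) of Result~\ref{lem29dic2015}, and that $N_G(Q)$ meets the hypotheses on $H$ in Lemma~\ref{22dic2015}, namely $N_G(Q)=Q\rtimes U$ with $U$ abelian, $N_{N_G(Q)}(U)\cap Q=\{1\}$, and $|N_G(Q)|\ge 30(\gg-1)$.

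First I would prove that $O(G)$ has order prime to $p$; granting this, a Sylow $p$-subgroup $Q$ of $G$ has order $q$ and its image $\bar Q$ in $\bar G:=G/O(G)$ is the unipotent radical of a Borel subgroup of $PSL(2,q)$ or $PGL(2,q)$. Since these unipotent subgroups pairwise intersect trivially and $p\nmid|O(G)|$, the same holds for the Sylow $p$-subgroups of $G$, which gives property (iii) of Result~\ref{lem29dic2015}. Next, $Q$ is a normal Sylow $p$-subgroup of $N_G(Q)$ and $[N_G(Q):Q]$ is prime to $p$, so Result~\ref{reszass} gives $N_G(Q)=Q\rtimes U$; moreover $N_G(Q)$ maps onto the Borel subgroup $N_{\bar G}(\bar Q)$, of order $\tfrac12 q(q-1)$ or $q(q-1)$, so $|N_G(Q)|\ge\tfrac12 q(q-1)$. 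Here one has to establish that $O(G)$ is cyclic and that a complement to $O(G)$ inside $U$ centralizes it, so that $U$ is abelian, and that no non-trivial element of $Q$ normalizes $U$ — the last point because the Borel torus of $\bar G$ acts without non-trivial fixed points on $\bar Q$, so such an element would have to lie in $O(G)$. Finally, $|G|>900\gg^2$ together with $|G|=|O(G)||\bar G|<\tfrac{4}{25}q\cdot q^3$ gives $\gg<q^2/75$, whence $30(\gg-1)<\tfrac{2}{5}q^2<\tfrac12 q(q-1)\le|N_G(Q)|$ because $q>11$. With all hypotheses verified, Lemma~\ref{lemA29dic2015} yields that $\cX$ has zero $p$-rank.

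The main obstacle is the structural analysis of $O(G)$ and of $N_G(Q)$: proving $p\nmid|O(G)|$ and that $O(G)$ is cyclic and sits essentially centrally in $N_G(Q)$ (so that the complement $U$ is abelian). I would attack this by combining the Deuring-Shafarevich formula applied to a Sylow $p$-subgroup of $O(G)$ (or to a normal $p$-subgroup of $G$ contained in $O(G)$), Nakajima's bound Result~\ref{resnaga} used inside the large subgroup $N_G(Q)$, and the numerical constraint $|O(G)|<\tfrac{4}{25}q$, in order to rule out both a non-trivial normal $p$-subgroup of $G$ inside $O(G)$ and a non-cyclic abelian section of $O(G)$. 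Should the splitting of $N_G(Q)$ prove awkward to control, there is an alternative that bypasses Lemma~\ref{lemA29dic2015}: lift from $\bar\cX$ (via Result~\ref{lem29dic2015}(i) applied to $\aut(\bar\cX)$) the facts that $Q$ fixes a unique point $P_0$ of $\cX$ and that no non-trivial element of $Q$ fixes a point other than $P_0$, verify that $\cX/Q$ is rational, and then conclude from the Deuring-Shafarevich formula applied to $Q$ that $\gamma(\cX)=|Q|\,\gamma(\cX/Q)=0$.
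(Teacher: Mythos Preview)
Your plan to apply Lemma~\ref{lemA29dic2015} to $G$ itself has a genuine circularity. The hypotheses you need on $N_G(Q)$ --- that the complement $U$ be abelian and that $N_{N_G(Q)}(U)\cap Q=\{1\}$ --- boil down, as you say, to $O(G)$ being cyclic, prime to $p$, and centralized by a lift of the Borel torus. But in the paper these facts are established only in Lemma~\ref{lemC1mar2018}, \emph{after} $\gamma(\cX)=0$ is known, precisely by invoking Result~\ref{lem29dic2015} (which itself presupposes zero $p$-rank). Your proposed tools (Deuring--Shafarevich on a normal $p$-subgroup of $O(G)$, Nakajima inside $N_G(Q)$, the bound $|O(G)|<\frac{4}{25}q$) do not obviously produce these structural facts: a non-trivial $p$-core of $O(G)$ need not exist even when $p\mid|O(G)|$, and nothing forces the torus lift to centralize $O(G)$. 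Your alternative route also stalls: from $\gamma(\bar\cX)=0$ one only gets that $Q$ preserves the $O(G)$-orbit $o$ lying over the fixed point $\bar P$ of $\bar Q$; without $p\nmid|O(G)|$ you cannot conclude that $Q$ fixes a point of $o$, and even then a non-trivial element of $Q$ could fix several points of $o$ (they all map to $\bar P$), so the ``unique fixed point'' lift fails.

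The paper sidesteps all of this by working not with $N_G(Q)$ but with a \emph{point stabilizer}. Let $N$ be the preimage of the Borel $N_{\bar L}(\bar Q)$ and let $o$ be the $O(G)$-orbit over $\bar P$; for $P\in o$ set $H=N_P$. By Result~\ref{res74}(iii), $H=Q_1\rtimes C_t$ with $C_t$ cyclic --- this is automatic for a one-point stabilizer, with no hypothesis on $O(G)$ needed (and it absorbs any $p$-part of $O(G)_P$ into $Q_1$). One checks $|H|^2>900\gg^2$ from \eqref{eq8jan2016}, so Lemma~\ref{22dic2015} applies via Remark~\ref{rem3jan2016}. In the positive $p$-rank alternative, $Q_1$ has exactly one short orbit $\theta$ besides $\{P\}$; since $|o|\le|O(G)|<q\le|Q_1|$, every $Q_1$-orbit in $o$ is short, forcing $o=\{P\}\cup\theta$ and $|o|=1+p^h$. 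But $|o|$ divides $|O(G)|$, which is odd --- contradiction. The moral: pass to a point stabilizer rather than to $N_G(Q)$, so that the cyclic complement is handed to you by Result~\ref{res74} instead of having to be extracted from the unknown structure of $O(G)$.
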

\begin{proof}
{}Lemma \ref{lemA1mar2018} together with Result \ref{lem29dic2015} applied to $\bar{\cX}$ show that a Sylow $p$-subgroup $\bar{Q}$ of $\bar{L}$, as well as its normalizer in $\bar{L}$, fixes a unique point $\bar{P}\in \bar{\cX}$. Here $|\bar Q|=q$, and the normalizer of $\bar{Q}$ in $\bar{L}$ is the semidirect product $\bar{Q}\rtimes \bar{T}$ with a cyclic group $\bar{T}$ of order $\ha (q-1)$ by Result \ref{resdickson}. Since this normalizer is a maximal subgroup of $\bar{G}$, see again Result \ref{resdickson}, it is the stabilizer of $\bar{P}$ in $\bar{L}$.
Let $o$ be the $O(G)$-orbit lying over $\bar{P}$ in the cover $\cX|\bar{\cX}$. The counter-image of $\bar{Q}\rtimes \bar{T}$ in the homomorphism $L\to \bar{L}$ is a subgroup $N$ of order $\ha(q-1)q|O(G)|$ where $N$  is the subgroup of $G$ which preserves $o$. For a point $P\in o$, the stabilizer $H$ of $P$ in $N$ has order $\ha(q-1)q|O(G)_P|$. Therefore, the (unique) Sylow $p$-subgroup $Q_1$ of $H$ has order $q_1=qp^a$ and a complement of $Q_1$ in $H$ is a cyclic subgroup $C_t$ of order $t=\ha(q-1)b$ with $p\nmid b$ by (iii) of Result \ref{res74}. Hence $H=Q_1\rtimes C_t$, and $|o|p^ab=|O(G)|$. Now, since by \eqref{eq4jan2016} $q(q-1)(q+1)|O(G)|>900\gg(\cX)^2$,
$$|H|^2>(p^ab)^2\frac{\qa(q-1)q}{|O(G)|(q+1)}900\gg(\cX)^2.$$
This together with \eqref{eq8jan2016} give $|H|^2>900 \gg(\cX)^2$ whence
$$|H|=q_1t> 30 \gg(\cX).$$
From Lemma \ref{22dic2015} and Remark \ref{rem3jan2016}, either $\cX$ has zero $p$-rank, or $Q_1$ has just one short orbit $\theta$ other than its fixed point $P$. Observe that $\theta$ is contained in $o$. In fact, each $Q_1$-orbit in $o$ is short as $|o|\leq |O(G)|<q<q_1$ by (\ref{eqA8jan2016}). 
Therefore $o=\theta\cup\{P\}$, and hence $|o|=1+p^h$ for some $h\ge 0$. Since $|o|$ divides $|O(G)|$ this implies that  $|O(G)|$ is even, a contradiction. Thus $\cX$ has zero $p$-rank.
\end{proof}
To complete our investigation of Case (A) in Proposition \ref{3apr2016}, we only need the following result.
\begin{lemma}
\label{lemC1mar2018} If Case (A) in Proposition \ref{3apr2016} holds then $O(G)$ is a prime to $p$ cyclic group, and $Z(G)=O(G)$.
\end{lemma}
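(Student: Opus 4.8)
\textbf{Proof plan for Lemma \ref{lemC1mar2018}.}
The strategy is to exploit that, in Case (A), we now know (by Lemma \ref{lemB1mar2018}) that $\cX$ has zero $p$-rank, together with the structural results of Result \ref{lem29dic2015} and the fact that $\bar G=G/O(G)$ satisfies $PSL(2,q)\le \bar G\le PGL(2,q)$. First I would show that $O(G)$ is a $p'$-group. Suppose not; then $O(G)$ has a non-trivial normal Sylow $p$-subgroup (since $O(G)$ is solvable by Result \ref{resft}, its Sylow $p$-subgroup need not be normal in general, but one may instead take a minimal normal subgroup of $G$ contained in $O(G)$ of order a power of $p$, using that $G$ is solvable-by-(almost simple) and $O(G)$ is normal of odd order, hence a $p$-subgroup if it contains $p$-elements that are forced to be normal). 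More cleanly: let $S$ be a Sylow $p$-subgroup of $O(G)$; since $\gamma(\cX)=0$, by (i) of Result \ref{lem29dic2015} a Sylow $p$-subgroup of $\aut(\cX)$, hence $S$, fixes a unique point $P\in\cX$ and its non-trivial elements fix only $P$. But $O(G)$ is normal in $G$, so $G$ permutes the (at most one) fixed point set of any Sylow $p$-subgroup of $O(G)$; by a Frattini-type argument $G=O(G)N_G(S)$, and then the uniqueness of $P$ forces $G$ to fix $P$, which contradicts $\bar G\ge PSL(2,q)$ being non-solvable (a non-solvable group cannot fix a point by (iii) of Result \ref{res74}). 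Hence $S=\{1\}$ and $O(G)$ is prime to $p$.

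Next I would prove $O(G)$ is cyclic. Having established $p\nmid|O(G)|$, consider a Sylow $p$-subgroup $\bar Q$ of $\bar L\cong PSL(2,q)$, lift it to a $p$-subgroup $Q_1$ of $G$ (possible since $p\nmid|O(G)|$, so the extension $L\to\bar L$ splits over $\bar Q$ by Result \ref{reszass}), with $|Q_1|=q$. By (i) of Result \ref{lem29dic2015}, $Q_1$ fixes a unique point $P$. Now $O(G)$ normalizes $Q_1$? Not necessarily, but $N_{O(G)}(Q_1)$ acts on $\Fix(Q_1)=\{P\}$, so $N_{O(G)}(Q_1)\le G_P$; and since $p\nmid|O(G)|$, the number of Sylow-conjugates argument inside $L$ shows $[O(G):N_{O(G)}(Q_1)]\equiv [L:N_L(Q_1)]\pmod{?}$ — more directly: the image of $G_P$ in $\bar G$ contains the stabilizer $\bar Q\rtimes\bar T$ of $\bar P$, a maximal subgroup, so $G_P$ is the full preimage, and $G_P\cap O(G)=O(G)_P$ is a normal $p'$-subgroup of $G_P=Q_1\rtimes(\text{cyclic})$. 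But $O(G)_P$ lies in the stabilizer of $P$, which by (iii) of Result \ref{res74} is $Q_1\rtimes C$ with $C$ cyclic; hence $O(G)_P$, being a $p'$-subgroup, embeds in the cyclic group $C$, so $O(G)_P$ is cyclic. To upgrade this to $O(G)$ itself being cyclic: since $\gamma(\cX)=0$, by (iii) of Result \ref{lem29dic2015} distinct Sylow $p$-subgroups meet trivially, and one shows that $O(G)$ acts on the set of $q+1$ points forming the $\bar L$-orbit of $\bar P$ (or rather on the corresponding $G$-orbit) in such a way that each point-stabilizer in $O(G)$ is cyclic and the kernel of this action is trivial (as $O(G)$ embeds in $\aut(\cX)$ faithfully and the orbit is $G$-invariant with $G$ acting as $PSL(2,q)$ or $PGL(2,q)$ on the corresponding quotient). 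A group of $p'$-order all of whose point stabilizers in a sharply-transitive-like action are cyclic, together with Dickson's list (Result \ref{resdickson}) applied to the quotient $\cX/O(G)$-action being faithful on a $PGL(2,\K)$-setting — actually the cleanest route is: $O(G)$ is a normal $p'$-subgroup, and by Lemma \ref{22dic2015} / the analysis of $N=$ preimage of $\bar Q\rtimes\bar T$ done in the proof of Lemma \ref{lemB1mar2018}, we found $o=\{P\}$ (since $|O(G)|$ would have to be $1+p^h$, forcing $o$ to be a single point); hence $O(G)$ fixes $P$, so $O(G)\le G_P$, and $O(G)$, being a $p'$-subgroup of $G_P=Q_1\rtimes C_t$ with $C_t$ cyclic, is itself cyclic.

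Finally I would pin down $Z(G)$. Since $O(G)$ fixes $P$ and $O(G)$ is cyclic of order prime to $p$, it is a subgroup of the cyclic complement, hence acts on $\cX$ as a cyclic group with two fixed points (the analogue of the behaviour in Result \ref{resdickson}(i)-(ii)); in particular $O(G)$ is normal in $G$, fixes $P$, and $G$ permutes the (exactly two) fixed points of $O(G)$. One then checks that $\bar G$ acting as $PSL(2,q)$ or $PGL(2,q)$ on $\cX/O(G)$ has the property that the two points lying under $P$ are interchanged by exactly an index-$2$ subgroup complement behaviour; since $C_L(O(G))\supseteq O(G)$ and $O(G)$ is central in its own centralizer, and since $\bar L=PSL(2,q)$ has trivial center, $O(G)=C_L(O(G))\cap O(G)$ forces $O(G)\le Z(G)$ when $\bar G=PSL(2,q)$; and $Z(G)$ maps to $Z(\bar G)$, which is trivial for $PSL(2,q)$ and trivial for $PGL(2,q)$, so $Z(G)\le O(G)$. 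Combining, $Z(G)=O(G)$. The main obstacle I anticipate is the step showing $O(G)$ is cyclic: one must carefully leverage that the unique-fixed-point property from $\gamma(\cX)=0$ propagates from the $p$-group $Q_1$ to control $O(G)$, and the cleanest path is precisely to reuse the orbit computation $o=\theta\cup\{P\}$ from the proof of Lemma \ref{lemB1mar2018}, which already forced $o=\{P\}$ via the parity contradiction $|o|=1+p^h\mid|O(G)|$ with $|O(G)|$ even being impossible — so in fact $O(G)\le G_P$, and cyclicity is immediate from (iii) of Result \ref{res74}.
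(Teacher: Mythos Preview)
Your route to ``$O(G)$ is cyclic and prime to $p$'' differs from the paper's and can be made to work, but it rests on a misreading: Lemma~\ref{lemB1mar2018} did \emph{not} establish $o=\{P\}$. The parity contradiction there ruled out case (i) of Lemma~\ref{22dic2015}, yielding only $\gamma(\cX)=0$. To recover $o=\{P\}$ you must add: by (i) of Result~\ref{lem29dic2015} the $p$-subgroup $Q_1$ fixes $P$ and acts freely on $\cX\setminus\{P\}$; since $Q_1$ preserves $o$ and $|o|\le|O(G)|<q\le|Q_1|$ by (\ref{eqA8jan2016}), necessarily $o=\{P\}$. Your Frattini argument for $p\nmid|O(G)|$ is also incomplete (uniqueness of the fixed point of $S$ does not by itself force $G$ to fix that point, since $O(G)$ may move it), but once $O(G)\le G_P$ is known the $p'$-claim follows cleanly: a non-trivial $p$-element $s\in O(G)$ would have every $G$-conjugate lying in $O(G)\le G_P$, so each $gsg^{-1}$ fixes both $P$ and $g(P)$, forcing $G$ to fix $P$ and contradicting (iii) of Result~\ref{res74}. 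For comparison, the paper lets a Sylow $p$-subgroup $Q$ of $G$ act on $O(G)$ by conjugation; since $|O(G)|<|Q|$ every orbit is short, so each $g\in O(G)$ commutes with some non-trivial element of $Q$ and hence fixes the unique fixed point $P$ of $Q$, and repeating with a second Sylow $p$-subgroup yields a second fixed point of $O(G)$, whence $O(G)$ is $p'$ and cyclic by (iii) of Result~\ref{res74}.

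The genuine gap is the inclusion $O(G)\le Z(G)$. Your line ``$O(G)=C_L(O(G))\cap O(G)$ forces $O(G)\le Z(G)$'' is vacuous. Once $O(G)$ is cyclic, the conjugation map $G\to\aut(O(G))$ has abelian image, so its kernel $K$ contains $L$ (otherwise $\bar L\cong PSL(2,q)$ would embed in an abelian group); but when $\bar G\cong PGL(2,q)$ this still leaves open the possibility $K=L$, i.e.\ an element of $G\setminus L$ inverting $O(G)$. You give no mechanism to exclude this. The paper closes the gap geometrically: since $O(G)$ fixes $P$ and is a $p'$-group, Result~\ref{reszass} allows one to choose the cyclic complement $C$ in $G_P=N_G(Q)=Q\rtimes C$ so that $O(G)\le C$; as $[G_P:L_P]=2$, some element of $C$ lies in $G\setminus L$, and being in the abelian group $C$ it centralizes $O(G)$, whence $K=G$ and $O(G)=Z(G)$.
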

\begin{proof}
For every $h\in G$ the map $\varphi_h$ taking $u\in O(G)$ to $h^{-1}uh$ is an automorphism of $O(G)$, and hence the map $\Phi:\,h\mapsto \varphi_h$ is a homomorphism from $G$ into $\aut(O(G))$.  Take a Sylow $p$-subgroup $Q$ of $G$. From Lemma \ref{lemB1mar2018} and Result \ref{lem29dic2015}, $Q$ fixes a unique point $P\in \cX$ but no non-trivial element of $Q$ fixes a point other than $P$. If $g$ is any element in $O(G)$, then (\ref{eqA8jan2016}) yields that the $Q$-orbit of $g$ in the action of $\Phi$ has length smaller than $|Q|$. Hence, $g$ commutes with a non-trivial element of $Q$. Therefore, $g$ fixes $P$, and hence
$O(G)$ does, as well. Repeating this argument with another Sylow $p$-subgroup of $G$, it turns out that $O(G)$ fixes a point $P'\in \cX$ other than $P$. From (i) of Result \ref{lem29dic2015}, $O(G)$ contains no element of order $p$. Therefore, from (iii) of Result \ref{res74}, $O(G)$ is a cyclic prime to $p$ group. In particular, $\ker(\Phi)\ge O(G)$. Since $\bar{G}$ is either simple, or $\bar{L}\cong PSL(2,q)$ is the only non-trivial normal subgroup of $\bar{G}$, two cases arise namely $\ker(\Phi)=O(G)$ and $\ker(\Phi)\ge L$.

In the former case, $\Phi$ induces a faithful action of $\bar{L}$ on $O(G)$. If $\Lambda$ is a non-trivial orbit then $|\Lambda|<q$ by (\ref{eqA8jan2016}). On the other hand, since $q>11$, Result \ref{resdickson} yields $|\Lambda|\geq q+1$,  a contradiction.

In the latter case, $\ker(\Phi)\geq L$ and hence $O(G)\subseteq Z(L)$; actually, $O(G)=Z(L)$ since $Z(\bar{L})$ is trivial. As we have shown, $O(G)$ fixes a point $P\in \cX$. Since $\gamma(\cX)=0$, (ii) of Result \ref{lem29dic2015} shows that $G_P$ coincides with the normalizer $N_G(Q)$ of $Q$ in $G$. From Result \ref{reszass}, a complement $C$ of $Q$ contains $O(G)$. If $L\lneqq G$ then $C$ contains an element of $G\setminus L$. Therefore, some element in $G\setminus L$ centralizes $O(G)$. This shows that $Z(G)=O(G)$, since $Z(\bar{G})$ is trivial.
\end{proof}
\subsection{Proof of Theorem \ref{princB} in Case (B) in Proposition \ref{3apr2016}}
The quotient curve $\bar{\cX}=\cX/O(G)$ is not rational as $\bar{G}=G/O(G)$ is an automorphism group of the quotient curve $\bar{\cX}=\cX/O(G)$ but  $PGL(2,\K)$ has no subgroup isomorphic to $\bar{G}\cong PSU(3,q)$ by Result \ref{resdickson}. Also, $\bar{\cX}$ is neither elliptic by Lemma \ref{le23agosto2015}.
Then $\gg(\bar{\cX})>2$ by  Result \ref{res94}.

In particular, $\gg(\bar{\cX})>2$.
From $|O(G)|\geq 3$ and $\gg(\cX)>\gg(\cX)-1\geq |O(G)|(\gg(\bar{\cX})-1)$, we have by \eqref{eq4jan2016}
$$|\bar G|=\frac{|G|}{|O(G)}>\frac{900}{|O(G)|}\gg(\cX)^2> 900|O(G)|(\gg(\bar{\cX})-1)^2\ge  900 \frac{|O(G)|}{3}\gg(\bar{\cX})^2>900\gg(\bar{\cX})^2$$
which together with Lemma \ref{27dicbis2015} show that $\bar{\cX}$ has zero $p$-rank.

Let $L=G'O(G)$ where $G'$ is the commutator subgroup of $G$. By \cite[Kapitel I, Hilfssatz 8.4]{huppertI1967}, $\bar{L}=L/O(G)$ is the commutator subgroup $\bar{G}'$ of $\bar{G}$.
In particular $\bar{L}=\bar{G}'\cong PSU(3,q)$, and either $\bar{G}'=\bar{G}$, or $\bar{G}'$ is the unique proper normal subgroup of $\bar{G}$ and $\bar{G}\cong PGU(3,q)$.
First we show how the arguments used in Section \ref{sspsl} can be adapted as far as
\begin{equation}
\label{eq18jan2016A} \frac{(q^2-1)q^3}{|O(G)|(q^3+1)}>\mu^2
\end{equation}
where, as in Result \ref{resmitchell}, $\mu={\rm{gcd}}(3,q+1)$.
\begin{lemma}
\label{3Cmar2018} If (\ref{eq18jan2016A}) is assumed then Theorem \ref{princB} holds.
\end{lemma}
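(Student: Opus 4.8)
The plan is to mimic closely the strategy carried out for Case (A) in Subsection \ref{sspsl}, replacing the role of the Borel subgroup of $PSL(2,q)$ by the normalizer of a Sylow $d$-subgroup of $PSU(3,q)$. First I would let $L=G'O(G)$, so that $\bar L=L/O(G)=\bar G'\cong PSU(3,q)$ by \cite[Kapitel I, Hilfssatz 8.4]{huppertI1967}, with $[G:L]\leq 2$. Since a Sylow $2$-subgroup of $PSU(3,q)$ is semidihedral (hence non-cyclic), Result \ref{res74} shows $L$ fixes no point of $\cX$, and Lemma \ref{le23agosto2015} gives that $\gg(\bar{\cX})$ is even; combined with the paragraph preceding the lemma we already know $\bar{\cX}$ has zero $p$-rank and $\gg(\bar{\cX})>2$.

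Next I would locate the point fixed by a Sylow $p$-subgroup. By Result \ref{lem29dic2015} applied to $\bar{\cX}$, a Sylow $p$-subgroup $\bar Q$ of $\bar L$ (of order $q^3$) together with its normalizer in $\bar L$, which by Result \ref{resmitchell} is $\bar Q\rtimes \bar T$ with $\bar T$ cyclic of order $(q^2-1)/\mu$ and is a maximal subgroup, fixes a unique point $\bar P\in\bar{\cX}$; this normalizer is the stabilizer $\bar L_{\bar P}$. Let $o$ be the $O(G)$-orbit of $\cX$ lying over $\bar P$. The preimage $N$ of $\bar Q\rtimes\bar T$ in $L\to\bar L$ has order $\tfrac{1}{\mu}(q^2-1)q^3|O(G)|$ and is exactly the stabilizer of $o$ in $G$; for a point $P\in o$ the stabilizer $H=N_P$ has order $\tfrac{1}{\mu}(q^2-1)q^3|O(G)_P|$, and by (iii) of Result \ref{res74} it splits as $H=Q_1\rtimes C_t$ with $Q_1$ the unique Sylow $p$-subgroup (of order $q_1=q^3p^a$) and $C_t$ cyclic of order $t=\tfrac{1}{\mu}(q^2-1)b$, $p\nmid b$, with $|o|\,p^ab=|O(G)|$. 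A direct estimate using $|G|>900\gg(\cX)^2$, $|G|\geq |L|=(q^3+1)|\bar L_{\bar P}||O(G)|/|O(G)|$ rewritten appropriately, and \eqref{eq18jan2016A} then yields $|H|^2>900\gg(\cX)^2$, so $|H|>30\gg(\cX)$. Now Lemma \ref{22dic2015} together with Remark \ref{rem3jan2016} apply to $H$ acting on $\cX$: either $\cX$ has zero $p$-rank, or $Q_1$ has exactly one short orbit $\theta$ besides its fixed point $P$. In the latter case $\theta\subseteq o$, since every $Q_1$-orbit inside $o$ is short (because $|o|\le|O(G)|<q^3<q_1$, using the bound on $|O(G)|$ forced by \eqref{eq18jan2016A}), so $o=\theta\cup\{P\}$ and $|o|=1+p^h$; but $|o|$ divides $|O(G)|$, forcing $|O(G)|$ even, contradicting $O(G)$ having odd order. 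Hence $\cX$ has zero $p$-rank.

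Finally, to get the structural statement, I would argue as in Lemma \ref{lemC1mar2018}. Using $\gamma(\cX)=0$ and Result \ref{lem29dic2015}, a Sylow $p$-subgroup $Q$ of $G$ fixes a unique point $P$ with no other fixed points of non-trivial elements; the bound $|O(G)|<q^3$ (again from \eqref{eq18jan2016A}) forces every $Q$-orbit in the conjugation action on $O(G)$ to have length $<|Q|$, so each $g\in O(G)$ centralizes a non-trivial element of $Q$ and therefore fixes $P$; running this with a second Sylow $p$-subgroup shows $O(G)$ fixes two points, so by (i) of Result \ref{lem29dic2015} it is a $p'$-group, and by (iii) of Result \ref{res74} it is cyclic. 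Then considering the homomorphism $\Phi:G\to\aut(O(G))$: its kernel contains $O(G)$, and since $\bar G$ is either simple or has $\bar L\cong PSU(3,q)$ as its only proper non-trivial normal subgroup, either $\ker\Phi=O(G)$ — impossible, since a faithful action of $PSU(3,q)$ on a set of size $<q^3$ contradicts Result \ref{resmitchell} (which gives minimal degree $q^3+1$, the exception $q=5$ being excluded by $q>11$, itself a consequence of \eqref{eq18jan2016A}) — or $\ker\Phi\geq L$, giving $O(G)=Z(L)$; then using (ii) of Result \ref{lem29dic2015} to identify $G_P$ with $N_G(Q)$, the Schur-Zassenhaus Result \ref{reszass} produces a complement $C$ of $Q$ containing $O(G)$, and if $L\lneqq G$ an element of $C\setminus L$ centralizes $O(G)$, giving $[Z(G):O(G)]=2$, while $Z(G)=O(G)$ when $L=G$. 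The main obstacle I anticipate is the bookkeeping in the inequality $|H|^2>900\gg(\cX)^2$: one must carefully track how the orders of $Q_1$, $C_t$, $o$, and $|O(G)_P|$ interact through $|o|p^ab=|O(G)|$ and combine this with \eqref{eq18jan2016A} and the Hurwitz estimate $\gg(\cX)-1\geq|O(G)|(\gg(\bar{\cX})-1)$, which is exactly the analogue of the computation in Lemma \ref{lemB1mar2018} but with the larger factor $\mu^2$ in place of $25/4$.
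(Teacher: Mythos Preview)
Your overall strategy is exactly the paper's, and the zero $p$-rank part goes through essentially verbatim. However, there are real slips in the structural conclusion, all stemming from transplanting Case~(A)/(C) facts into Case~(B).

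First, $[G:L]\leq 2$ is wrong here. In Case~(B) one has $\bar G\in\{PSU(3,q),\,PGU(3,q)\}$ and $[PGU(3,q):PSU(3,q)]=\mu=\gcd(3,q+1)$; since $PGU(3,q)$ only appears for $q\equiv 5\pmod{12}$, this index is $3$, not $2$. This does not hurt the $|H|>30\gg$ estimate, but it matters in the last paragraph.

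Second, and more importantly, the conclusion ``$[Z(G):O(G)]=2$'' is false in Case~(B); Theorem~\ref{princB} asserts $Z(G)=O(G)$ here. The argument you sketch (an element of $C\setminus L$ centralizes $O(G)$) shows $O(G)\subseteq Z(G)$, not an index-$2$ containment. The missing final step is that $Z(\bar G)$ is trivial for both $PSU(3,q)$ and $PGU(3,q)$, so $Z(G)/O(G)\hookrightarrow Z(\bar G)=\{1\}$ forces $Z(G)=O(G)$. (The index~$2$ phenomenon belongs to Case~(C), where $\bar G$ contains $SL(2,q)$ with its central involution.) Relatedly, when $[G:L]=3$ you still only need one element $c\in C\setminus L$: since $3$ is prime, $c$ generates $G/L$, so $L\cup cL\cup c^2 L=G$ centralizes $O(G)$.

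Third, the claim that \eqref{eq18jan2016A} yields $q>11$, thereby excluding $q=5$, is unjustified; for $q=5$, $\mu=3$, the inequality $\frac{(q^2-1)q^3}{q^3+1}>\mu^2$ reads $3000/126>9$, which holds. The paper instead observes that \eqref{eq18jan2016A} gives $|O(G)|<(q^2-1)/\mu^2$, and invokes the exceptional bound $|\Lambda|\geq 60$ from Result~\ref{resmitchell} for $q=5$; since $(q^2-1)/\mu^2=24/9<3$ in that case, the contradiction is immediate there as well.
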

\begin{proof}
Since the $p$-rank of $\bar{\cX}$ is equal to zero, (i) of Result \ref{lem29dic2015} shows that a Sylow $p$-subgroup $\bar{Q}$ of $\bar{L}$ fixes a unique point $\bar{P}\in\bar{\cX}$.
The normalizer of $\bar{Q}$ in $\bar{L}$ is the semidirect product $\bar{Q}\rtimes \bar{T}$ where $\bar{Q}$ is a non-abelian $p$-group of order $q^3$ while $\bar{T}$ is a cyclic group of order $(q^2-1)/\mu$. From Result \ref{resmitchell}, this normalizer is a maximal subgroup of $\bar{L}$. Hence it coincides with  the stabilizer of $\bar{P}$ in $\bar{L}$. Let $o$ be the $O(G)$-orbit lying over $\bar{P}$ in the cover $\cX|\bar{\cX}$. The counter-image of $\bar{Q}\rtimes \bar{T}$ in the homomorphism $L\to \bar{L}$ is a subgroup $N$ of order $(q^2-1)q^3|O(G)|/\mu$. For a point $P\in o$, the stabilizer of $P$ in $N$ is a subgroup $H$ of order $(q^2-1)q^3|O(G)_P|/\mu.$ Therefore, the (unique) Sylow $p$-subgroup $Q_1$ of $H$ has order $q_1=q^3p^a$ and a complement of $Q_1$ in $H$ is a cyclic subgroup $C_t$ of order
$t=b(q^2-1)/\mu$ with $p\nmid b$. Hence $H=Q_1\rtimes C_t$, and $|o|p^ab=|O(L)|$. From (\ref{eq4jan2016}),
$$(q^2-1)q^3>\frac{900\gg(\cX)^2}{(q^3+1)|O(G)|}.$$
Therefore,
$$|H|^2=(q_1t)^2>(p^ab)^2\frac{(q^2-1)q^3}{\mu^2|O(G)|(q^3+1)}900\gg(\cX)^2$$
This together with  (\ref{eq18jan2016A}) give
$$|H|>30 \gg(\cX).$$
From Lemma \ref{22dic2015} and Remark \ref{rem3jan2016}, either $\cX$ has zero $p$-rank, or $Q_1$ has just one short orbit $\theta$ other than its fixed point $P$. Observe that $\theta$ is contained in $o$. In fact, each $Q_1$-orbit in $o$ is short as $|o|\leq |O(G)|<(q^2-1)/\mu^2<q^3<q_1$ by (\ref{eq18jan2016A}). Therefore $o=\theta\cup\{P\}$, and hence $|o|=1+p^h$ for some $h\ge 0$. Since $|o|$ divides $|O(G)|$ this implies that  $|O(G)|$ is even, a contradiction. Thus $\cX$ has zero $p$-rank.

 Finally, $O(G)=Z(G)$ and $O(G)$ is a cyclic prime to $p$  group. We point out that this can be shown by the arguments used in the final part of the proof of Lemma \ref{lemC1mar2018}. It is enough to deal with the case  where $\Phi$ induces a faithful action of $\bar{L}\cong PSU(3,q)$ on $O(G)$. If $\Lambda$ is a non-trivial orbit then $|\Lambda|<|O(G)|< (q^2-1)/\mu^2$ by (\ref{eq18jan2016A}). On the other hand, $|\Lambda|$ is at least as large as the index of the largest subgroup of $\PSU(3,q)$, and, from Result \ref{resmitchell}, $|\Lambda|\geq q^3+1$ for $q\neq 5$ while $|\Lambda|\geq 60$ for $q=5$. In both cases, this gives a contradiction.
\end{proof}
Next we show that (\ref{eq18jan2016A}) is not heavily restrictive. Throughout this section, let  $\bar{\cX}=\cX/O(G)$.
\subsubsection{The case of ramified cover $\cX|\bar{\cX}$}
\begin{lemma}
\label{lemD1mar2018}
If the cover $\cX|\bar{\cX}$ ramifies then  (\ref{eq18jan2016A}) holds.
\end{lemma}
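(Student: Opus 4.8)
The goal is to bound $|O(G)|$ so that \eqref{eq18jan2016A} holds, under the extra assumption that the cover $\cX\to\bar{\cX}=\cX/O(G)$ is ramified. The strategy mirrors what was done for $PSL(2,q)$ in Lemma~\ref{lem1mar2018} (rational and $\gg(\bar\cX)\ge 2$ cases together), now in the unitary setting. Since $\bar\cX$ has already been shown to be non-elliptic and non-rational with $\gg(\bar\cX)>2$, I would work directly with $\gg(\bar\cX)\ge 2$: first produce a lower bound $\gg(\cX)\gtrsim |O(G)|\,\gg(\bar\cX)$ (or a cruder version of it) using the ramification hypothesis, then feed this into \eqref{eq4jan2016} together with a lower bound on $|\bar L|$ to beat $\mu^2\le 9$.

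\textbf{Key steps.} First, since $\bar L=L/O(G)\cong PSU(3,q)$ contains an abelian subgroup of odd order $q^2$ (the Sylow $d$-subgroup in the linear part is no good, but an appropriate maximal torus or the centre of a Sylow is abelian of order comparable to $q^2$; in any case Result~\ref{res60.79.108}(i) gives a bound of the shape $\gg(\bar\cX)\ge cq^2$), one obtains a two-sided estimate relating $\gg(\bar\cX)$ and $q$. Second, because $\cX\to\bar\cX$ ramifies, pick a point $P\in\cX$ above a branch point $\bar Q$; then $O(G)_P\neq\{1\}$, so the different contribution is at least $|o|(|O(G)_P|-1)\ge \tfrac23|o|\,|O(G)_P|$ with $|o|=|O(G)|/|O(G)_P|$, and the Hurwitz genus formula applied to $O(G)$ yields
$$
2\gg(\cX)-2\ \ge\ -2|O(G)|+\tfrac23|O(G)|\ =\ \text{(a positive multiple of }|O(G)|)
$$
once $|O(G)_P|\ge 3$, hence $\gg(\cX)>c'|O(G)|$; more carefully, combining with $\gg(\cX)-1\ge|O(G)|(\gg(\bar\cX)-1)$ when the cover over non-branch points forces genus growth, one gets $\gg(\cX)\gg |O(G)|\,\gg(\bar\cX)$. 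Third, since $[G:L]\le 2$ one has $|\bar L|\ge |G|/(2|O(G)|)$, so \eqref{eq4jan2016} gives
$$
|\bar L|\ \ge\ \frac{450\,\gg(\cX)^2}{|O(G)|}\ >\ 450\,|O(G)|\,c''\,\gg(\bar\cX)^2\ >\ 900\,\gg(\bar\cX)^2
$$
for $|O(G)|$ not too small; running this inequality backwards, i.e. isolating $|O(G)|$ in
$$
\frac{(q^2-1)q^3}{|O(G)|(q^3+1)}\ >\ 900\left(\frac{\gg(\bar\cX)-1}{q^3+1}\right)^2\left(\frac{1}{|O(G)|}\cdot\text{const}\right)
$$
and using the lower bound $\gg(\bar\cX)\ge cq^2$, produces a numerical inequality of the form $\tfrac{(q^2-1)q^3}{|O(G)|(q^3+1)}>\text{const}$ with the constant exceeding $9\ge\mu^2$. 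One must separately handle the small values of $q$ ($q=5,7,\dots$) where the torus bound is weak, exactly as $q>11$ was isolated in Lemma~\ref{lem1mar2018}; here the small cases either give $\gg(\bar\cX)$ too small to satisfy \eqref{eq4jan2016}, or are disposed of by Result~\ref{res94} and Result~\ref{resmitchell}.

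\textbf{Main obstacle.} The delicate point is extracting a clean lower bound $\gg(\cX)\ge c\,|O(G)|\,\gg(\bar\cX)$ that is strong enough; the ramification hypothesis only guarantees \emph{some} wildly or tamely ramified point, and when $\gg(\bar\cX)$ is itself small relative to $q$, the bound $\gg(\bar\cX)\ge cq^2$ from Result~\ref{res60.79.108}(i) is what saves the argument, but it requires knowing $PSU(3,q)$ embeds no big abelian group — so one should use the correct invariant (a cyclic subgroup of order $\tfrac{q^2-1}{\mu}$ or $q^2-1$ exists, giving $4\gg(\bar\cX)+4\ge\tfrac{q^2-1}{\mu}$). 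Balancing these two estimates and checking that the resulting constant clears $\mu^2$ uniformly in $q$ (with finitely many exceptional small $q$ handled by hand) is the real work; everything else is the same bookkeeping as in the $PSL$ case.
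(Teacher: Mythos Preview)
Your plan has a genuine gap that makes the numbers fail by a factor of roughly $q^2$.

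First a small slip: in your Hurwitz computation you write $-2|O(G)|+\tfrac23|O(G)|$ and call it positive, which it isn't; presumably you meant to use $\gg(\bar\cX)\ge 2$ so that the $-2|O(G)|$ becomes $|O(G)|(2\gg(\bar\cX)-2)\ge 2|O(G)|$. Even granting that, the resulting bound is only $\gg(\cX)\gtrsim |O(G)|$, and your alternative route via $\gg(\cX)-1\ge|O(G)|(\gg(\bar\cX)-1)$ together with $\gg(\bar\cX)\gtrsim q^2$ gives only $\gg(\cX)\gtrsim |O(G)|\,q^2$. Plugging this into \eqref{eq4jan2016} with $|G|\le |O(G)|\,q^3(q^2-1)(q^3+1)$ yields
\[
\frac{(q^2-1)q^3}{|O(G)|(q^3+1)}\ >\ 900\cdot\frac{(\gg(\bar\cX)-1)^2}{(q^3+1)^2}\ \approx\ \frac{\text{const}}{q^2},
\]
which tends to $0$ as $q\to\infty$ and so cannot beat $\mu^2$. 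The ramification hypothesis has done essentially no work in your argument: the inequality $\gg(\cX)-1\ge|O(G)|(\gg(\bar\cX)-1)$ is just Hurwitz and holds unramified or not, and a single ramified fibre contributes only $\tfrac23|O(G)|$ to the different.

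The paper's proof uses the ramification hypothesis in a much sharper way. Since $O(G)$ is normal in $G$, the branch locus in $\bar\cX$ is $\bar G$-invariant; but $\bar G$ contains $PSU(3,q)$, which (by Result~\ref{resmitchell}) has no non-trivial orbit of length less than $q^3+1$ (with a small exception at $q=5$). Hence there are at least $q^3+1$ branch points, each contributing at least $\tfrac23|O(G)|$ to the different, and one obtains
\[
2\gg(\cX)-2\ >\ (q^3+1)\cdot\tfrac23\,|O(G)|.
\]
This extra factor of $q^3+1$ on the right is exactly what is needed: feeding it into \eqref{eq4jan2016} gives \eqref{eq18jan2016A} with a constant far exceeding $9\ge\mu^2$. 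Your abelian-subgroup bound on $\gg(\bar\cX)$ is not needed at all.
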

\begin{proof}
Take a ramification point $\bar{P}\in \bar{\cX}$. As $PSU(3,q)$ is non-solvable, $\bar{G}$ does not fix $\bar{P}$ by (iii) of Result \ref{res74}.
Let $\bar{o}$ be the $\bar{G}$-orbit of $\bar{P}$. Note that $q\neq 3$ by
$q\equiv 1  \pmod 4$. From Result \ref{resmitchell}, either $|\bar{o}|\geq q^3+1$, or $|\bar{o}|\ge 60$ and $q=5$. Assume $q\neq 5$. Then the cover $\cX|\bar{\cX}$ has at least $q^3+1$ ramification points with the same ramification index. If the $O(G)$-orbit lying over $\bar{P}$ has length $\ell$ then $|O(G)|-\ell\geq \frac{2}{3}|O(G)|$. Recall that $g(\bar{\cX})>2$ . Therefore,
if $q\neq 5$ then
\begin{equation}
\label{eq8mar2018}
2\gg(\cX)-2> (q^3+1)(|O(G)|-\ell)\geq |O(G)|\textstyle\frac{2}{3}(q^3+1),
\end{equation}
 whence
$$\gg(\cX)>\gg(\cX)-1>\textstyle\frac{4}{9}(q^3+1)|O(G)|.$$ This together with (\ref{eq4jan2016})  give
$$
900((q^3+1)|O(G)| \textstyle\frac{4}{9})^2<900\gg(\cX)^2<|O(G)|(q^3+1)(q^2-1)q^3
$$
and hence (\ref{eq18jan2016A}) holds.
Similarly, if $q=5$ we have
$$
\gg(\cX)>\gg(\cX)-1>20 |O(G)|,
$$
and
$$
900(20|O(G)|)^2<900\gg(\cX)^2<|O(G)|\cdot |PGU(3,5)|
$$
gives (\ref{eq18jan2016A}) for $q=5$.
\end{proof}
By Lemmas \ref{3Cmar2018} and \ref{lemD1mar2018}, Theorem \ref{princB} holds for ramified covers $\cX|\bar{\cX}$. We will show below that the unramified case cannot actually occur. Unfortunately, this case appears to be more involved and our proof requires deeper results from Group theory.
\subsubsection{The case of unramified cover $\cX|\bar{\cX}$}

It has alredy been noticed that $\bar{\cX}$ has zero $p$-rank and that $\gg(\bar{\cX})>2$. Also, by Lemma \ref{le23agosto2015} the genus of $\bar{\cX}$ is even.

\begin{lemma}\label{lemmacarsemp}
 If  the cover $\cX|\bar{\cX}$  is unramified then
$O(G)$ is not charactersitically simple.
 \end{lemma}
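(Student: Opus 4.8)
The plan is to argue by contradiction: assume $O(G)$ is characteristically simple. Since $|O(G)|$ is odd it is solvable by Result~\ref{resft}, so a characteristically simple solvable group must be elementary abelian: $O(G)\cong\mathbb{Z}_r^m$ for an odd prime $r$ and some $m\ge 1$. The first step is to rule out $r=p$. Because the cover $\cX\to\bar{\cX}$ is unramified, $O(G)$ acts freely on $\cX$, so if $r=p$ the Deuring--Shafarevich formula \eqref{eq2deuring} applied to the $p$-group $O(G)$ has no short-orbit terms and gives $\gamma(\cX)-1=|O(G)|(\gamma(\bar{\cX})-1)=-|O(G)|$, since $\gamma(\bar{\cX})=0$; hence $\gamma(\cX)=1-|O(G)|<0$, which is absurd. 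Therefore $r\neq p$, and in particular $q=p^k$ is not a power of $r$.

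Next I would exploit the conjugation action. As $O(G)\trianglelefteq G$, the group $L=G'O(G)$ (with $\bar L=L/O(G)\cong PSU(3,q)$ by Case~(B) of Proposition~\ref{3apr2016}) acts on $O(G)$, and the induced action of the simple group $\bar L$ has kernel a normal subgroup of $PSU(3,q)$. Hence either $\bar L$ acts faithfully on $O(G)$, or $L$ centralizes $O(G)$. In the faithful case $PSU(3,q)$ embeds in $\aut(O(G))=GL(m,r)$; a faithful $\mathbb{F}_r[PSU(3,q)]$-module has a composition factor on which $PSU(3,q)$ acts nontrivially (otherwise the image would be a unipotent, hence $r$-group), and by simplicity that action is faithful, so we obtain an irreducible faithful projective $\mathbb{F}_r$-representation of $PSU(3,q)$. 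Since $r>2$ is prime and $q$ is not an $r$-power, Result~\ref{reslse} forces its dimension to be at least $q(q-1)+1$, whence $m\ge q^2-q+1$ and $|O(G)|=r^m\ge 3^{\,q^2-q+1}$. On the other hand $\gg(\bar{\cX})>2>0$, so Lemma~\ref{lem10Emar2018} gives $900|O(G)|<|G/O(G)|\le|PGU(3,q)|<2q^8$, i.e. $|O(G)|<q^8/450$; the exponential lower bound and the polynomial upper bound are incompatible for every $q\ge 5$, a contradiction.

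It remains the central case, where $L$ centralizes $O(G)$, i.e. $O(G)\le Z(L)$. Then $L=L'O(G)$ with $L'$ perfect (as $\bar L$ is perfect) and $L'/(L'\cap O(G))\cong PSU(3,q)$, so $L'$ is a perfect central extension of $PSU(3,q)$; by Griess's theorem (Result~\ref{resgriess}) $L'\cong SU(3,q)$, and $L'\cap O(G)\le Z(SU(3,q))$ has order dividing $\mu=\gcd(3,q+1)\le 3$. Now the Sylow $p$-subgroup $Q$ of $G$ has order $q^3$ and lies in $L$ (since $p$ is odd and $[\bar G:\bar L]\le 2$), hence is centralized by $O(G)$. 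As $\bar{\cX}$ has $p$-rank $0$, Result~\ref{lem29dic2015}(i) says $\bar Q=QO(G)/O(G)$ fixes a unique point $\bar P\in\bar{\cX}$; since the cover is unramified, the fibre $o$ over $\bar P$ is an $O(G)$-orbit of size $|O(G)|$, and because $Q$ commutes with the transitive action of $O(G)$ on $o$ and fixes some point of $o$ (as $|o|$ is prime to $p$), $Q$ fixes $o$ pointwise. Deuring--Shafarevich applied to $Q$ then yields $\gamma(\cX)\ge|O(G)|(q^3-1)-q^3>1$, so $\cX$ has positive $p$-rank; Nakajima's bound (Result~\ref{resnaga}, case $\gamma(\cX)>1$) gives $q^3\le\tfrac{p}{p-2}(\gg(\cX)-1)\le 3(\gg(\cX)-1)$, which I would combine with $\gg(\cX)-1=|O(G)|(\gg(\bar{\cX})-1)$, the bound $|\bar G|>900\,\gg(\bar{\cX})^2$, Lemma~\ref{lem10Emar2018}, and the exact order $|G|=|L'|\,|O(G)|\,[G:L]/|L'\cap O(G)|\le 2\,|SU(3,q)|\,|O(G)|$, together with the precise structure of the stabilizer $G_P\cong\bar Q\rtimes\bar T$ coming from Result~\ref{resmitchell}, to reach a numerical contradiction.

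The main obstacle is precisely this central case. The faithful case is essentially automatic because the Landázuri--Seitz dimension is exponential in $q$ while the order bound of Lemma~\ref{lem10Emar2018} is polynomial. In the central case, however, one only gets positive $p$-rank and a web of mutually consistent inequalities; making the constants close --- particularly for the small admissible values $q=5,9,13$ --- is the delicate point, and I expect it to require replacing the crude estimate $|PGU(3,q)|<2q^8$ by sharper bounds and using the full structure $G_P\cong\bar Q\rtimes\bar T$ (so that the different of $\cX\to\cX/Q$ is controlled exactly, not just bounded below by $|O(G)|(q^3-1)$).
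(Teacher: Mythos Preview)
Your argument for $r=p$ is cleaner than the paper's: applying Deuring--Shafarevich directly to the unramified $p$-cover $\cX\to\bar{\cX}$ gives $\gamma(\cX)=1-|O(G)|<0$, and that is all that is needed. Your treatment of the faithful case is essentially the paper's argument (Land\'azuri--Seitz versus Lemma~\ref{lem10Emar2018}). A minor slip: in Case~(B) one has $[\bar G:\bar L]\in\{1,\mu\}$ with $\mu=\gcd(3,q+1)$, not $\le 2$; but the conclusion $Q\le L$ remains valid since the Sylow $p$-subgroups of $PSU(3,q)$ and $PGU(3,q)$ coincide.

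The genuine gap is in the central case. Your Deuring--Shafarevich computation is correct and in fact gives
\[
\gg(\cX)\ge\gamma(\cX)\ge |O(G)|(q^3-1)-q^3+1>\tfrac{1}{3}(q^3+1)|O(G)|,
\]
but invoking Nakajima's bound $q^3\le 3(\gg(\cX)-1)$ is far too weak to close: combined with the polynomial upper bound $|O(G)|<|PGU(3,q)|/900\sim q^8/900$ from Lemma~\ref{lem10Emar2018}, you only get $q^3\lesssim q^{12}$, which says nothing. The constants cannot be made to meet this way, for any $q$. What you are missing is that the \emph{lower} bound on $\gg(\cX)$ above, together with $|G|>900\,\gg(\cX)^2$ and $|G|\le |O(G)|(q^3+1)q^3(q^2-1)$, yields
\[
\frac{(q^2-1)q^3}{|O(G)|(q^3+1)}>100>\mu^2,
\]
i.e.\ inequality~(\ref{eq18jan2016A}). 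At that point Lemma~\ref{3Cmar2018} applies and forces $\gamma(\cX)=0$, which contradicts the positive $p$-rank you just established. This is how the paper closes: it feeds the $p$-rank estimate back into the hypothesis of Lemma~\ref{3Cmar2018} rather than appealing to Nakajima. (The paper further splits into the sub-cases $O(G)\le G''$ and $O(G)\not\le G''$; in the first $|O(G)|=3$ gives~(\ref{eq18jan2016A}) directly and one contradicts unramifiedness instead of positive $p$-rank. Your unified treatment via $Q$ fixing $o$ pointwise actually handles both sub-cases at once, provided you finish with Lemma~\ref{3Cmar2018} rather than Nakajima.)
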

\begin{proof}
Assume on the contrary that $O(G)$ is characteristically simple. From Result \ref{resthompson} $O(G)$ is solvable. Hence $O(G)$ is the direct product of $r\geq 1$ cyclic groups of odd prime order $v$; see \cite[Kapitel I, Satz 9.12 ]{huppertI1967}.
We show that $v\neq p$.  Let $\bar{Q}$ be a Sylow $p$-subgroup of $\bar{G}$.
From (i) of Result \ref{lem29dic2015}, there is a point $\bar{P}\in \bar{\cX}$ which is fixed by $\bar{Q}$.
Assume on the contrary that $v=p$. Then $|O(G)|=p^r$. Let $Q$ be a Sylow $p$-subgroup of $G$ containing $O(G)$ such that $\bar{Q}=Q/O(G)$.
Let $o$ be the $O(G)$-orbit lying over $\bar{P}$ in the cover $\cX|\bar{\cX}$. Since $|o|=p^r$,
for each $P\in O$ the stabilizer of $P$ in $Q$ has order $q^3=|\bar{Q}|$. On the other hand, no non-trivial element of $Q$ fixes a point in $\mathcal X\setminus o$. In fact, if $h\in Q$ fixes a point $P'$ in $\mathcal X\setminus o$, then $h\notin O(G)$ since $\cX|\bar{\cX}$ is unramified. Let $\bar{h}$ be the image of $h$ under the homomorphism $G\to \bar{G}$. Then $\bar{h}$ is not the trivial automorphism of $\bar{\cX}$, and fixes two distinct points, namely $\bar{P}$ and the point $\bar{P}'$ lying under $P'$ in the cover $\cX|\bar{\cX}$. Since $\bar{h}$ is a $p$-element, the $p$-rank of $\bar{\cX}$ is not zero by (i) of Result \ref{lem29dic2015}, a contradiction.
Note that $\bar{\mathcal X}=\cX/O(G)$ is an unramified $p$-cover of $\bar{\cX}/Q$, and hence the $p$-rank of $\mathcal X/Q$ is zero.
 From the Deuring-Shafarevich formula applied to $Q$,
$$\gamma(\cX)-1=-|Q|+|O(G)|(q^3-1)=-p^rq^3+p^r(q^3-1)=-p^r,$$
a contradiction. Thus, $v\neq p$.

Let $\Phi$ denote the homomorphism from $G$ to the automorphism group of $O(G)$ arising from conjugacy. Since $O(G)$ is abelian, $\ker(\Phi)$ contains $O(G)$.
Two cases are distinguished according as $\ker(\Phi)=O(G)$ or $\ker(\Phi)$ contains $O(G)$ properly.

In the former case, $\Phi$ gives rise to a faithful representation of $\bar{G}$ by a matrix group of the vector space $V=\mathbb F_v^r$. As we have shown at the beginning of the proof, no non-trivial normal subgroup of $G$ is contained in $O(G)$. Therefore, $\Phi$ is irreducible. Let $PG(r-1,v)$ be the projective space arising from $V$.
Since $\bar{L}\cong \PSU(3,q)$ is simple, $\bar{L}$ also has an irreducible, faithful representation as a projective group of $\PG(r-1,v)$.  Since $v\neq p$, this yields $r-1 \ge q(q-1)$ by Result \ref{reslse}. Hence $|O(G)|\ge v^{q(q-1)+1}$. But this contradicts Lemma \ref{lem10Emar2018}.

In the latter case, $\ker(\Phi)/O(G)$ is a proper normal subgroup of $\bar{G}$. Since $\bar{G}\cong PSU(3,q)$, or
$\bar{G}\cong PGU(3,q)$, this yields that $\ker(\Phi)$ contains $L$, that is,
%
%
 $O(G)\subseteq Z(L)$; actually $O(G)=Z(L)$ holds since the center of $\bar{L}$ is trivial. Two cases arise according as $O(G)\leq G''$ or $O(G) \nleqq G''$.

If $G''$ contains $O(G)$, then $Z(G')=O(G)$ and $G'/O(G)=\bar{L}\cong PSU(3,q)$. Therefore, $G'$ is a perfect central extension of $PSU(3,q)$. From Result \ref{resgriess}, $G'=SU(3,q)$. In particular,  $|Z(G')|=3$ whence $|O(G)|=3$. This shows that (\ref{eq18jan2016A}) holds. From Lemma \ref{3Cmar2018}, $\cX$ has zero $p$-rank.
As  any $p$-subgroup of $G$ is contained in $G'$, we have that  a Sylow p-subgrop of $G$, say $S_p$ is contained in $G'$.
Since
$\cX$ has zero $p$-rank, $S_p$ fixes a point $P$ in $\cX$ and no non-trivial element of $S_p$ fixes another point; see Result \ref{lem29dic2015}.
Each $h\in O(G)$ commutes with every element in $S_p$, and hence fixes $P$, which contradicts the hypothesis that the cover $\cX|\bar{\cX}$ is unramified.

If $G''$ does not contain $O(G)$ then they intersect trivially as $O(G)$ contains no proper normal subgroup of $G$. Therefore, $G''\times O(G)$ is a normal subgroup of $G$. Since $PSU(3,q)\le G/O(G)\le PGU(3,q)$ up to an isomorphism, this implies $G''\cong PSU(3,q)$. In particular, any $p$-subgroup of $G$ is contained in $G''$.
Take a point $\bar{P}\in \bar{\cX}$. Since $\bar{\cX}$ has zero $p$-rank, (i) of Result \ref{lem29dic2015} yields that $\bar{P}$ is fixed by a Sylow $p$-subgroup $\bar{Q}$ of $\bar{G}$. The $O(G)$-orbit $o$ lying over $\bar{P}$ is left invariant by a subgroup $Q$ of $G$ such that $Q/O(G)=\bar{Q}$. Since $|O(G)|$ is prime to $|\bar{Q}|=q^3$, Result \ref{reszass} shows that $Q= O(G) \rtimes R$ where $|R|=|\bar{Q}|$ with $R\le G''$, and that $R$ fixes a point $P\in o$. Since $O(G)$ is contained in the centralizer of $G'$ in $G$, this yields that $R$ fixes $o$ pointwise. From the Deuring-Shafarevich formula applied to $R$,
\begin{equation}
\label{eq11mar2018}
\gamma(\cX)\geq-q^3+|O(G)|(q^3-1)+1>\thi (q^3+1)|O(G)|.
\end{equation}
This together with $\gg(\cX)\ge \gamma(\cX)$ yields
 (\ref{eq18jan2016A}). From Lemma \ref{3Cmar2018}, $\gamma(\cX)=0$, but this contradicts (\ref{eq11mar2018}).

\end{proof}


\begin{lemma}
\label{lem10Fmar2018}  The cover $\cX|\bar{\cX}$ is ramified.
\end{lemma}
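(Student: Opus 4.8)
The plan is to derive a contradiction from assuming that the cover $\cX|\bar{\cX}$ is unramified, thereby completing Case (B) of Proposition \ref{3apr2016} and hence (together with the already treated ramified case in Lemma \ref{lemD1mar2018}) the proof of Theorem \ref{princB} in Case (B). By Lemma \ref{lemmacarsemp} we may assume that $O(G)$ is not characteristically simple, so $O(G)$ has a characteristic subgroup $N$ with $1 < N < O(G)$; in particular $N$ is a non-trivial proper normal subgroup of $G$ of odd order. The idea is to pass to the quotient curve $\cX/N$ and run the whole machinery of Proposition \ref{3apr2016} one level down, obtaining a recursive contradiction. First I would observe that $\cX/N$ still has even genus: since $G/O(G)\cong PSU(3,q)$ or $PGU(3,q)$ has non-cyclic (indeed semidihedral) Sylow $2$-subgroups, a Sylow $2$-subgroup of $G$ is non-cyclic, so by Remark \ref{remle23agosto2015} the hypothesis of Lemma \ref{le23agosto2015} is satisfied and $\cX/N$ has even genus. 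Moreover, since $\cX|\bar{\cX}$ is unramified and $N\le O(G)$, the cover $\cX|(\cX/N)$ is also unramified, so by the Hurwitz genus formula $\gg(\cX)-1=|N|(\gg(\cX/N)-1)$.

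Next I would check that $G/N$, acting on $\cX/N$, again satisfies the hypotheses of Theorem \ref{princB}: it is non-solvable (its quotient $G/O(G)$ is), and $|G/N| = |G|/|N| > 900\gg(\cX)^2/|N| = 900|N|(\gg(\cX/N)-1)^2/|N| \cdot |N| \ge 900\,\gg(\cX/N)^2$ using $\gg(\cX)-1 = |N|(\gg(\cX/N)-1)$ together with $|N|\ge 3$; more precisely, $\gg(\cX)^2 \ge |N|^2(\gg(\cX/N)-1)^2 \ge |N|\cdot\gg(\cX/N)^2$ when $|N|\ge 3$ and $\gg(\cX/N)\ge 2$. (One has to rule out $\gg(\cX/N)\le 1$; but $\gg(\cX/N)$ is even and positive since $\cX/N$ covers $\bar\cX$ which has genus $>2$, so $\gg(\cX/N)\ge 2$.) Now $O(G/N) = O(G)/N$ and $(G/N)/O(G/N) \cong G/O(G)$, so $G/N$ falls into Case (B) of Proposition \ref{3apr2016} with the same $q$, and the quotient of $\cX/N$ by $O(G/N)=O(G)/N$ is again $\bar\cX=\cX/O(G)$. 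The cover $(\cX/N)|\bar\cX$ is still unramified. Applying Lemma \ref{lemmacarsemp} to $G/N$, we conclude $O(G)/N$ is not characteristically simple either; iterating, we get an infinite strictly descending chain of characteristic subgroups of $O(G)$, which is impossible since $O(G)$ is finite.

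The cleanest way to phrase this is probably as an argument by minimal counterexample rather than literal iteration: among all triples $(\cX, G, O(G))$ satisfying the hypotheses of Case (B) with $\cX|\bar\cX$ unramified, pick one with $|O(G)|$ minimal; Lemma \ref{lemmacarsemp} forces $O(G)$ to have a proper non-trivial characteristic subgroup $N$, and then $(\cX/N, G/N, O(G)/N)$ is a smaller such triple, contradicting minimality. I would need to be a little careful that passing to $\cX/N$ does not accidentally leave Case (B) — but since $q$ and the isomorphism type of $G/O(G)$ are preserved, and the unramifiedness of $(\cX/N)|\bar\cX$ is inherited, this is automatic. The one genuine thing to verify carefully is the genus inequality $|G/N|>900\,\gg(\cX/N)^2$, which relies on $|N|\ge 3$ (true, $N$ has odd order $>1$) and on $\gg(\cX/N)\ge 2$ (true, by the chain $\cX/N\to\bar\cX$ and $\gg(\bar\cX)>2$, plus evenness). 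The main obstacle is really just bookkeeping: ensuring every property used for $(\cX,G)$ — non-solvability, the structure $G/O(G)$, the unramified hypothesis, even genus — is correctly transported to $(\cX/N, G/N)$, and that $O(G)$ being finite does give the needed termination. No deep new input is required beyond Lemmas \ref{le23agosto2015} and \ref{lemmacarsemp} and the Hurwitz genus formula.
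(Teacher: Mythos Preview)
Your approach is essentially the same as the paper's: argue by minimal counterexample, use Lemma~\ref{lemmacarsemp} to produce a proper non-trivial normal subgroup $N\lhd G$ inside $O(G)$, and pass to $(\cX/N,\,G/N)$ to get a smaller counterexample still in Case~(B) with the intermediate cover $(\cX/N)\mid\bar\cX$ still unramified. The paper phrases minimality via $\gg(\cX)$ rather than $|O(G)|$, but the two are equivalent here since $\gg(\cX)-1=|N|(\gg(\cX/N)-1)$ with $|N|>1$ forces $\gg(\cX/N)<\gg(\cX)$.

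One small slip to fix: the inequality $|N|(\gg(\cX/N)-1)^2\ge \gg(\cX/N)^2$ that you invoke is \emph{false} when $|N|=3$ and $\gg(\cX/N)=2$ (it gives $3<4$). You need $\gg(\cX/N)\ge 3$, not merely $\ge 2$. But you already have the ingredients: since $N\lneq O(G)$ the cover $(\cX/N)\mid\bar\cX$ has degree $|O(G)/N|\ge 3$ and is unramified, hence $\gg(\cX/N)-1\ge 3(\gg(\bar\cX)-1)$; together with $\gg(\bar\cX)\ge 4$ (even and $>2$, as established earlier in the section) this gives $\gg(\cX/N)\ge 10$, far more than needed. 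With that correction your argument is complete.
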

\begin{proof}
Assume that  $\cX$ is a counter-example with minimal genus.
Since by Lemma \ref{lemmacarsemp} $O(G)$ is not characteristically simple,
 $G$ has a normal subgroup $N$ properly contained in $O(G)$. Consider the quotient curve $\tilde{\cX}=\cX/N$. From $N< O(G)$, we have $4\le \gg(\bar{\cX})<\gg(\tilde{\cX})$.
From the Hurwitz genus formula applied to $N$, $$\gg(\cX)>\gg(\cX)-1=|N|(\gg(\tilde{\cX})-1)\ge \textstyle\frac{4}{5}|N|\gg(\tilde{\cX}).$$
 Since $\tilde{G}=G/N$ is a subgroup of $\aut(\tilde{\cX})$, and $|N|\geq 3$,
$$
|{\tilde{G}}|> \frac{900\cdot 16}{25} |N^2|\gg({\tilde{\cX}})^2\ge \frac{146}{25}\cdot 900 \gg({\tilde{\cX}})^2. $$
Note that $\tilde{G}$ is a non-solvable group and that the quotient group $O(G)/N$ is the largest normal subgroup $O(\tilde{G})$ of $\tilde{G}$ of odd order. Then $\bar{\cX}=\tilde{\cX}/O(\tilde{G})$ and the cover $\tilde{\cX}|\bar{\cX}$ is unramified. Therefore, $\tilde{\cX}$ with respect to $\tilde{G}$ satisfies  Condition (B) in Proposition \ref{3apr2016}.
As $\cX$ is assumed to be a counter-example with minimal genus, we obtain a contradiction.
\end{proof}

\subsection{Proof of Theorem \ref{princB} in Case (C) in Proposition \ref{3apr2016}}
We use the idea worked out in Section \ref{sspsl}. Note that in this case
the commutator subgroup $\bar{G}'$ contains $SL(2,q)$. Let $L$ be the subgroup of $G$ such that $\bar{L}=SL(2,q)$.
\begin{lemma}
\label{lemF3mar2018} In Case (C) of Proposition \ref{3apr2016}, $\cX$ has zero $p$-rank.
\end{lemma}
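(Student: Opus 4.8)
The plan is to reduce Case (C) to the situation already handled in Case (A) by passing to the quotient curve $\bar{\cX}=\cX/Z(G)$. First I would establish the basic structure: by Proposition \ref{3apr2016} we have $\bar{G}=G/O(G)$ isomorphic to one of $SL(2,q)$, $SL^{(2)}(2,q)$, $SU^{\pm}(2,q)$ (for $q\equiv 1\pmod 4$), $SL^{\pm}(2,q)$ (for $q\equiv -1\pmod 4$), with $q=p^k\ge 5$; in all of these the commutator subgroup is $SL(2,q)$, so $\bar{L}=SL(2,q)$ and $L=G'O(G)$ maps onto it, with $[G:L]\le 4$ or so. The central involution of $\bar L$ pulls back, together with $O(G)$, to a normal subgroup of $G$; the key preliminary fact (cf.\ the analysis behind Lemma \ref{lemC1mar2018} and Lemma \ref{lemmacarsemp}) is that $O(G)$ is a cyclic prime-to-$p$ group, so that the subgroup $N\trianglelefteq G$ with $N/O(G)=Z(\bar L)$ has order $2|O(G)|$ and contains a unique central involution $g$; I would set $U=\langle g\rangle$ (or rather work with a subgroup $N_0$ of order $2$ generated by this involution once $O(G)$ is shown cyclic) and let $\bar{\cX}=\cX/U$.

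Next I would verify that $G/U$ acting on $\bar{\cX}=\cX/U$ satisfies the hypotheses of the Case (A) analysis, or directly of Lemma \ref{26dic2015}. Concretely: $(G/U)' $ contains $SL(2,q)$ modulo its odd core, and the Sylow $2$-subgroups of $G$ are generalized quaternion or semidihedral (by Lemma \ref{lemA23agos2015}, since $N=Z(G'\bmod O(G))$-related subgroup is normal of order $2$), so those of $G/U$ are dihedral. I would check the size bound: by Hurwitz applied to $U=\langle g\rangle$, $\gg(\cX)-1\ge 2(\gg(\bar{\cX})-1)$, hence $|G/U|=\tfrac12|G|\ge 450\gg(\cX)^2\ge 450\cdot\tfrac12(2\gg(\bar{\cX})-1)^2>900\gg(\bar{\cX})^2$ unless $\gg(\bar{\cX})$ is small. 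If $\gg(\bar{\cX})\le 1$, the usual short-orbit / Result \ref{res94} arguments (as at the end of the proof of Lemma \ref{4dic2015}) give $|G/U|\le 48(\gg(\cX)-1)$ or force $q=9$ and a bounded group, contradicting $|G|>900\gg^2$; so $\gg(\bar{\cX})\ge 2$ and Lemma \ref{27dicbis2015} (or Lemma \ref{4dic2015} applied to $(G/U)'$) gives $\gg(\bar{\cX})$ has zero $p$-rank and $q=p^k$.

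Finally, with $\bar{\cX}=\cX/\langle g\rangle$ of zero $p$-rank, I would invoke Lemma \ref{26dic2015} directly: $G$ (or the relevant subgroup containing $SL(2,q)$ with centralizer of order $2$, which exists by Lemma \ref{lemC28dic2015} applied after factoring $O(G)$ and the analysis of $SL^{\pm},SU^{\pm}$) has commutator subgroup $SL(2,q)$ with two-element centralizer, and $|G|>900\gg^2$, whence $\cX$ has zero $p$-rank. Where the group is not exactly of that form but is one of $SL^{(2)}(2,q), SL^{\pm}(2,q), SU^{\pm}(2,q)$, I would note these each contain $SL(2,q)$ as a subgroup of index $2$ with centralizer of order $2$ (Result \ref{resgoha} and the discussion preceding it), so Lemma \ref{26dic2015}'s hypotheses are met by a subgroup $G_0$ of index $2$ in $G$ with $|G_0|>450\gg^2$; but then I would re-run the argument with the bound $450$ in place of $900$ — here I expect the main obstacle: the constant. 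The cleaner route is to apply Lemma \ref{26dic2015} not to a subgroup but to $G$ itself after checking $C_G(G')$ has order exactly $2$, using Lemma \ref{lemC28dic2015} modulo $O(G)$ together with the fact (to be proved, as in Lemma \ref{lemC1mar2018}) that $O(G)$ fixes a point and is cyclic prime to $p$, hence centralized by a Sylow $p$-subgroup and therefore contained in $Z(G)$, forcing $O(G)\le C_G(G')$ to collapse appropriately. Thus the hard part is the bookkeeping that reconciles $O(G)$, $Z(G)$, and $C_G(G')$ so that Lemma \ref{26dic2015} applies verbatim to $G$; once that is in place the zero $p$-rank conclusion is immediate.
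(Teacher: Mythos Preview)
Your approach has a genuine circularity problem. You want to invoke Lemma~\ref{26dic2015}, whose hypotheses require $G'\cong SL(2,q)$ and $|C_G(G')|=2$. But at this stage we only know that $G'O(G)/O(G)\cong SL(2,q)$, not that $G'\cong SL(2,q)$; and $C_G(G')$ may well contain $O(G)$ (once $O(G)\le Z(G)$ is established), making it larger than order~$2$. You propose to resolve this by first proving that $O(G)$ is a cyclic prime-to-$p$ group contained in $Z(G)$, citing ``the analysis behind Lemma~\ref{lemC1mar2018}''. But the proof of Lemma~\ref{lemC1mar2018} (and likewise Lemma~\ref{lemG3mar2018}, which handles Case~(C)) relies on $\cX$ having zero $p$-rank --- via Result~\ref{lem29dic2015}(i), which locates the unique fixed point of a Sylow $p$-subgroup --- and this is precisely what you are trying to prove. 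So the ``bookkeeping'' you defer is not bookkeeping: it is the whole lemma.

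Your alternative route through $\tilde\cX=\cX/\langle g\rangle$ also has a gap: Lemma~\ref{le23agosto2015} guarantees even genus only for quotients by \emph{odd-order} normal subgroups, not by an involution, so you cannot assume $\gg(\tilde\cX)$ is even, and the Case~(A) machinery (Proposition~\ref{3apr2016}, Lemma~\ref{4dic2015}) then does not apply to $\tilde\cX$.

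The paper avoids both problems by working with $\bar\cX=\cX/O(G)$ rather than $\cX/\langle g\rangle$. Since $|O(G)|$ is odd and a Sylow $2$-subgroup of $G$ is non-cyclic, Lemma~\ref{le23agosto2015} gives $\gg(\bar\cX)$ even; since $\bar L\cong SL(2,q)$ is not a subgroup of $PGL(2,\K)$, $\bar\cX$ is not rational, hence $\gg(\bar\cX)\ge 2$. Then Lemma~\ref{4dic2015} (applied to the odd-core-free group $\bar G$) gives $\gamma(\bar\cX)=0$. From here one bounds $|O(G)|<\tfrac{32}{225}q$ using a cyclic subgroup of order $q+1$ in $SL(2,q)$ together with Result~\ref{res60.79.108}(i) and Hurwitz. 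One then pulls back the stabilizer $\bar Q\rtimes\bar T$ (of order $q(q-1)$) of a point $\bar P\in\bar\cX$ to a subgroup $N\le G$ preserving the $O(G)$-orbit $o$ over $\bar P$; the point stabilizer $H=N_P$ on $\cX$ satisfies $|H|>30\gg(\cX)$, and Lemma~\ref{22dic2015} applied directly to $H$ on $\cX$ forces $\gamma(\cX)=0$ (the alternative short-orbit configuration in Lemma~\ref{22dic2015}(i) is excluded by a parity argument on $|o|\mid |O(G)|$). No prior knowledge of the internal structure of $O(G)$ is needed.
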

\begin{proof}
As $\bar{L}\cong SL(2,q)$ is a subgroup of $\aut(\bar{\cX})$, Result \ref{resdickson} yields that $\bar{\cX}$ is not rational. Furthermore, since a Sylow $2$-subgroup of $SL(2,q)$ is not cyclic, Lemma \ref{le23agosto2015} and Remark \ref{2sub} yield that $\bar{\cX}$ is neither elliptic. Therefore, $\gg(\bar{\cX})\geq 2$. If equality holds then Result \ref{resnaga} applied to $\gg(\bar{\cX})=2$ yields $\gamma(\bar{\cX})=0$, with just one exception, namely when $p=3$, and a Sylow $p$-subgroup of $\bar{L}$ has order $3$. This exception does not actually occur here as $q\geq 5$ in Case (C). Therefore, $\bar{\cX}$ has zero $p$-rank. We show that this holds true for $\gg(\bar{\cX})>2$. From the Hurwitz genus formula applied to $O(G)$,
$\gg(\cX)-1\geq |O(G)|(\gg(\bar{\cX})-1)$. This and (\ref{eq4jan2016}) give
$$|\bar{G}||O(G)|>900 (\gg(\bar{\cX})-1)^2|O(G)|^2\geq 900\gg(\bar{\cX})^2 \textstyle\frac{4}{9}|O(G)|^2\geq 900 \gg(\bar{\cX})^2|O(G)|$$
whence $|\bar{G}|>900 \gg(\bar{\cX})^2$. From Lemma \ref{4dic2015}, $\bar{\cX}$ has zero $p$-rank.

Furthermore, since $|\bar{G}|\le 2(q+1)q(q-1)$ and $SL(2,q)$ has an abelian subgroup of order $q+1$,
\begin{equation}
\label{eq8apr2016}
\frac{2(q-1)q}{|O(G)|(q+1)}\geq \frac{|G|}{(|O(G)|(q+1))^2}>900 \left(\frac{\gg({\cX})-1}{|O(G)|(q+1)}\right)^2\ge 900 \left(\frac{\gg(\bar{\cX})-1}{4(\gg(\bar{\cX})+1)}\right)^2\ge \frac{900}{64}=\frac{225}{16}.
\end{equation}
This yields
\begin{equation}
\label{eqA8jan2016A} |O(G)|< \frac{32}{225}q.
\end{equation}
Also, $q\ge 23$ holds.

From (i) of Result \ref{lem29dic2015},  a Sylow $p$-subgroup $\bar{Q}$ of $\bar{L}\cong SL(2,q)$ fixes a unique point $\bar{P}\in \bar{\cX}$. Here $|\bar{Q}|=q$, and the normalizer of $\bar{Q}$ in $\bar{L}$ is the semidirect product $\bar{Q}\rtimes \bar{T}$ with a cyclic group $\bar{T}$ of order $q-1$. Since this normalizer is a maximal subgroup of $\bar{L}$, it is the stabilizer of $\bar{P}$ in $\bar{L}$ otherwise $\bar{G}$ would fix $\bar{P}$ contradicting (iii) of Result \ref{res74}.

Let $o$ be the $O(G)$-orbit lying over $\bar{P}$ in the cover $\cX|\bar{\cX}$. The counter-image of $\bar{Q}\rtimes \bar{T}$ in the homomorphism $G\to \bar{G}$ is a subgroup $N$ of order $(q-1)q|O(G)|$ where $N$  is the subgroup of $G$ which preserves $o$. For a point $P\in o$, the stabilizer $H$ of $P$ in $N$ has order $(q-1)q|O(G)_P|$. Therefore, the (unique) Sylow $p$-subgroup $Q_1$ of $H$ has order $q_1=qp^a$ and a complement of $Q_1$ in $H$ is a cyclic subgroup $C_t$ of order $t=(q-1)b$ with $p\nmid b$. Hence $H=Q_1\rtimes C_t$, and $|o|p^ab=|O(G)|$. Now, (\ref{eq4jan2016}) yields
$q(q-1)p^ab(q+1)|o|>450\gg(\cX)^2$ whence
$$|H|^2=(q_1t)^2>\frac{(q-1)qp^ab}{(q+1)|o|}\, 450\gg^2={\qa} (p^ab)^2\frac{2(q-1)q}{|O(G)|(q+1)}\,900\gg(\cX)^2.$$
This together with (\ref{eq8apr2016}) give
$$|H|=q_1t>30 \gg(\cX).$$
From Lemma \ref{22dic2015} and Remark \ref{rem3jan2016}, either $\cX$ has zero $p$-rank, or $Q_1$ has just one short orbit $\theta$ other than its fixed point $P$. Observe that $\theta$ is contained in $o$. In fact, each $Q_1$-orbit in $o$ is short as $|o|\leq |O(G)|<q<q_1$ by (\ref{eqA8jan2016A}). 
Therefore $o=\theta\cup\{P\}$, and hence $|o|=1+p^h$ for some $h\ge 0$. Since $|o|$ divides $|O(G)|$ this implies that  $|O(G)|$ is even, a contradiction. Thus $\cX$ has zero $p$-rank.
\end{proof}
\begin{lemma}
\label{lemG3mar2018} In Case (C) of Proposition \ref{3apr2016}, $Z(G)$ is a cyclic prime to $p$ group and contains $O(G)$ as an index $2$ subgroup.
\end{lemma}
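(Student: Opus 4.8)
The plan is to mirror the structure-of-center arguments already used in Lemma~\ref{lemC1mar2018} and in the corresponding part of Lemma~\ref{3Cmar2018}, now adapted to the situation where $\bar G=G/O(G)$ is one of $SL(2,q)$, $SL^{(2)}(2,q)$, $SU^\pm(2,q)$ with $q\equiv 1\pmod 4$, or $SL^\pm(2,q)$ with $q\equiv -1\pmod 4$. The starting point is Lemma~\ref{lemF3mar2018}, which tells us $\cX$ has zero $p$-rank, together with the bound $|O(G)|<\frac{32}{225}q$ from \eqref{eqA8jan2016A} and the fact $q\ge 23$. First I would show that $O(G)$ is a cyclic prime-to-$p$ group. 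Let $\Phi\colon G\to\aut(O(G))$ be the conjugation homomorphism; since $O(G)$ is abelian (it will in fact turn out to be cyclic, but a priori we only know it is solvable of odd order), $\ker(\Phi)\ge O(G)$. Pick a Sylow $p$-subgroup $Q$ of $G$; by Result~\ref{lem29dic2015}(i), $Q$ fixes a unique point $P$ and no non-trivial element of $Q$ fixes any other point. For any $g\in O(G)$, the orbit of $g$ under the action of $Q$ via $\Phi$ has length dividing $|Q|$, but since $|O(G)|<\frac{32}{225}q<q\le|Q|$ this orbit is proper, so $g$ is centralized by a non-trivial element of $Q$; hence $g$ fixes $P$, so $O(G)$ fixes $P$. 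Repeating with a different Sylow $p$-subgroup $Q'$ (which fixes a different point $P'$, as $G$ does not fix a point because $\bar G$ is non-solvable, invoking Result~\ref{res74}(iii)) shows $O(G)$ fixes two distinct points; by Result~\ref{lem29dic2015}(i) $O(G)$ has no element of order $p$, and by Result~\ref{res74}(iii) a prime-to-$p$ group fixing a point is cyclic, so $O(G)$ is cyclic prime-to-$p$.

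Next I would identify $O(G)$ with $Z(L)$ where $L\le G$ is the preimage of $\bar L$, i.e. $\bar L=SL(2,q)$. Since $O(G)$ is cyclic, $\ker(\Phi)\ge O(G)$, and the quotient $\ker(\Phi)/O(G)$ is a normal subgroup of $\bar G$. Because in every Case~(C) listed the group $\bar G$ is a central extension of either $PSL(2,q)$ or $PGL(2,q)$ whose only proper normal subgroups containing the center lie inside $\bar L=SL(2,q)$ (and $PSL(2,q)$ is simple for $q\ge 5$), the alternatives are: $\Phi$ faithful on $\bar L$, or $\ker(\Phi)\ge L$. The faithful alternative is ruled out exactly as in the proof of Lemma~\ref{lemC1mar2018}: $\bar L$ would act faithfully on the abelian group $O(G)$ of order $<q$, so $\bar L/Z(\bar L)\cong PSL(2,q)$ would act faithfully (hence have a non-trivial orbit of length $<q$) on the associated elementary-abelian layers, contradicting the fact that $PSL(2,q)$ for $q>11$ has smallest faithful permutation degree $q+1$ (Result~\ref{resdickson}); since $q\ge 23$ this applies. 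Therefore $\ker(\Phi)\ge L$, so $O(G)\subseteq Z(L)$, and since $Z(\bar L)=Z(SL(2,q))$ has order $2$ it follows that $[Z(L):O(G)]\le 2$; in fact $Z(L)=\langle O(G), z\rangle$ where $z$ is the central involution of $\bar L$ lifted appropriately. Because $|O(G)|$ is odd and $z$ projects to the order-$2$ center, we get $Z(L)$ cyclic prime-to-... no: $Z(L)$ has even order; the point is $[Z(L):O(G)]=2$ and $Z(L)$ is cyclic (a cyclic odd group times an order-$2$ group, commuting, is cyclic).

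Finally I would promote $Z(L)$ to $Z(G)$. The central involution $z$ of $\bar L=SL(2,q)$ generates a characteristic subgroup of $L$, hence a normal subgroup $N$ of $G$ with $N\cap O(G)=\{1\}$, so $N\times O(G)\le Z(L)$ is normal in $G$; moreover every element of $G$ normalizes $\bar L=SL(2,q)$ and centralizes its center $\langle z\rangle$, so $N\subseteq Z(G)$, and we already have $O(G)\subseteq Z(G)$ from the $\ker(\Phi)\ge L$ conclusion only after checking elements of $G\setminus L$ centralize $O(G)$. For that last point I would argue exactly as at the end of Lemma~\ref{lemC1mar2018}: $O(G)$ fixes a point $P$; since $\gamma(\cX)=0$, Result~\ref{lem29dic2015}(ii) gives $G_P=N_G(Q)$ for a Sylow $p$-subgroup $Q$; by Result~\ref{reszass} a complement $C$ of $Q$ in $N_G(Q)$ contains $O(G)$ and also meets $G\setminus L$ (since $[G:L]\le 2$), so some element of $G\setminus L$ centralizes $O(G)$, forcing $O(G)\subseteq Z(G)$ and hence $Z(G)=\langle O(G),z\rangle=Z(L)$ because $Z(\bar G)$ is trivial (it is a subgroup of $P\Gamma L(2,q)$ containing $PSL(2,q)$). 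Putting these together: $Z(G)$ is cyclic, prime-to-$p$ up to the single factor of $2$... more precisely $Z(G)$ is cyclic of order $2|O(G)|$ with $O(G)$ its (unique) index-$2$ subgroup. The main obstacle I anticipate is the bookkeeping in distinguishing $SL^{(2)}(2,q)$, $SU^\pm(2,q)$ and $SL^\pm(2,q)$ — one must check in each that the only normal subgroups of $\bar G$ sitting between $Z(\bar G)=\{1\}$ and $\bar L$ are as claimed, and that $[G:L]\le 2$ with $L$ the $SL(2,q)$-preimage, so that the "$\ker(\Phi)\ge L$" dichotomy and the "$G\setminus L$ meets a complement" step both go through uniformly; this is where I would lean on the explicit descriptions of these groups recalled just before Result~\ref{resgoha} (Sylow $2$-subgroups semidihedral or generalized quaternion, center of order $2$ or $4$, etc.).
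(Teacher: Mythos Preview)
Your overall strategy matches the paper's: show the relevant central piece is cyclic prime to $p$ via the ``fixes two Sylow-$p$ fixed points'' argument, then use a $\ker(\Phi)$ dichotomy and the permutation-degree bound for $PSL(2,q)$ to force $\ker(\Phi)\ge L$, and finally use Result~\ref{lem29dic2015}(ii) plus Schur--Zassenhaus in $G_P$ to pick up the elements of $G\setminus L$. The paper streamlines this by working from the start with $M$, the full preimage of $Z(\bar G)$ in $G$, rather than with $O(G)$: since $|M|=2|O(G)|<\frac{64}{225}q<q$, the same orbit-length argument shows $M$ itself fixes two distinct points, hence is cyclic prime to $p$; then $\hat G=G/M$ is $PSL(2,q)$ or $PGL(2,q)$ and the dichotomy is clean because $\hat L$ is simple. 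Working with $O(G)$ first, as you do, forces you to handle the extra normal subgroup $Z(\bar G)$ in the dichotomy and then to promote separately to $M$.

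There is one genuine error in your final step: you write that $Z(\bar G)$ is trivial ``(it is a subgroup of $P\Gamma L(2,q)$ containing $PSL(2,q)$)''. In Case~(C) this is false; $\bar G=G/O(G)$ is one of $SL(2,q)$, $SL^{(2)}(2,q)$, $SU^{\pm}(2,q)$, $SL^{\pm}(2,q)$, each with $|Z(\bar G)|=2$. You appear to be confusing $\bar G$ with $G/M$. With your statement as written, you would get $Z(G)=O(G)$, contradicting your own conclusion that the involution $z$ lies in $Z(G)\setminus O(G)$. The correct closure is: $Z(G)/O(G)\hookrightarrow Z(\bar G)$ of order $2$, so $[Z(G):O(G)]\le 2$; combined with $z\in Z(G)\setminus O(G)$ this gives $Z(G)=M$ exactly. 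For $z\in Z(G)$, note that $\langle z\rangle$ is the unique Sylow $2$-subgroup of the normal subgroup $M$, hence characteristic in $M$, hence normal in $G$, hence central. A smaller point: your dichotomy ``$\Phi$ faithful on $\bar L$ or $\ker(\Phi)\ge L$'' omits $\ker(\Phi)\cap L=M$; but once $O(G)$ is known to be cyclic, $\aut(O(G))$ is abelian and the perfect group $\bar L$ must act trivially, which handles all cases at once and replaces the orbit-length argument.
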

\begin{proof}
We adapt the arguments used in Section \ref{sspsl}. For this purpose, let $M$ be the normal subgroup of $G$ containing $O(G)$ such that $M/O(G)=Z(G/O(G))$. Since $\bar{G}=G/O(G)$ is isomorphic to either $SL(2,q)$, or $SL^{(2)}(2,q)$, or $SL^{\pm}(2,q)$, or
$SU^{\pm}(2,q)$, we have $|Z(\bar{G})|=2$. Thus $|M|=2|O(G)|$. Since $\bar{L}\cong SL(2,q)$, we have that $\bar{L}$ contains $Z(\bar{G})$. Hence $M\le L$, and
$\hat{L}=L/M\cong (L/O(G))/(M/O(G))\cong PSL(2,q)$. From Lemma \ref{4dic2015} applied to $\cX/M$, we have that  $\hat{G}=G/M$ is isomorphic to either $PSL(2,q)$ or $PGL(2,q)$ according as $\hat{G}=\hat{L}$ or $[\hat{G}:\hat{L}]=2$.

For every $h\in G$ the map $\varphi_h$ taking $u\in M$ to $h^{-1}uh$ is an automorphism of $M.$ Thus the map $\Phi:\,h\mapsto \varphi_h$ is a homomorphism from $G$ into $\aut(M)$. Now, after replacing  $O(G)$ by $M$ and Lemma \ref{lemC1mar2018} by Lemma
\ref{lemF3mar2018}, the analog argument used in the proof of Lemma \ref{lemC1mar2018} shows that $M$ is a cyclic prime to $p$ group. In particular, $\ker(\Phi)\ge M$. Since either $\hat{G}=\hat{L}$, or $\hat{L}$ is the only non-trivial normal subgroup of $\hat{G}$, two cases arise namely $\ker(\Phi)=M$ and $\ker(\Phi)\ge L$.

In the former case, $\Phi$ induces a faithful action of $\hat{L}$ on $M$. If $\Lambda$ is a non-trivial orbit then $|\Lambda|<2|O(G)|$, and hence $|\Lambda|<q$ by (\ref{eqA8jan2016A}). On the other hand, $|\Lambda|\le q+1$ by the analog argument in the proof of Lemma \ref{lemC1mar2018}; a contradiction.

In the latter case, $\ker(\Phi)\geq L$ and hence $Z(L)=M$. Let $Q$ be a Sylow $p$-subgroup of $G$. Then $Q\le L$ as either $G=L$ or $[G:L]=2$. From Lemma \ref{lemF3mar2018} and (i) of Result \ref{lem29dic2015}, $Q$ fixes a unique point $P$. Since $Z(L)=M$, $M$ fixes $P$. From Lemma \ref{lemF3mar2018} and (ii) of Result \ref{lem29dic2015}, $G_P$ coincides with the normalizer $N_G(Q)$ of $Q$ in $G$. From Result \ref{reszass}, a complement $C$ of $Q$ contains $M$. If $L\lneqq G$ then $C$ contains an element of $G\setminus L$. Therefore, some element in $G\setminus L$ centralizes $O(G)$. This shows that $Z(G)=M$.
\end{proof}

\section{Proof of  Theorem \ref{princC}}\label{sec8}
From Theorem \ref{princB}, $\gamma(\cX)=0.$ If $O(G)=\{1\}$, Theorem \ref{princC} is a corollary of Lemma \ref{4dic2015}, taking into account that for the groups $G$ listed in Lemma \ref{4dic2015} $p(G)$ is $PGL(2,q)$, $PSL(2,q)$ or $SL(2,q)$.

Therefore, $O(G)$ is assumed to be non-trivial.
From (i) of Result \ref{lem29dic2015}, each Sylow $p$-subgroup fixes a point of $\cX$. Therefore, the set $\Omega$ consisting of the fixed points of the Sylow $p$-subgroups of $G$ is non-empty. There is a bijection between $\Omega$ and the set of all Sylow $p$-subgroups of $G$. Since the Sylow $p$-subgroups of $G$ are pairwise conjugate in $G$, $\Omega$ is a $G$-orbit. Therefore, $G$ has a transitive permutation representation $\Phi$ on $\Omega$. Let $\tilde{G}=G/\ker(\Phi)$. Then $Z(G)\le \ker(\Phi)$ as $Z(G)$ fixes $\Omega$ pointwise. Also, from Theorem \ref{princB}, $Z(G)$ is a prime to $p$ subgroup, and hence Sylow $p$-subgroups of $G$ are isomorphic to those of $G/Z(G)$. Note that $\ker(\Phi)$ is a proper subgroup of $G$ as otherwise $G$ would have just one Sylow $p$-subgroup.

If Case (A) or Case (B) in Proposition \ref{3apr2016} holds, then $\bar{G}=G/O(G)$ has trivial center and $Z(G)=O(G)$ by Theorem \ref{princB}.
 Therefore, $Z(G)=\ker{\Phi}$ since $\ker(\Phi)/O(G)$ is a normal subgroup of $\bar{G}$.
 Then
$\tilde{G}=\bar{G}$ acts (by conjugation) on $\Omega$ faithfully in the same way as either $PSL(2,q)$, or $PGL(2,q)$, or $PSU(3,q)$, or $PGU(3,q)$, on the set of their Sylow $p$-subgroups, that is, in their usual doubly transitive permutation representation. In particular, $q=|\Omega|-1$ for $PSL(2,q)$ and $PGL(2,q)$ while $q^3=|\Omega|-1$ for $PSU(3,q)$ and $PGU(3,q)$.

If case (C) in Proposition \ref{3apr2016} occurs, then either $G/Z(G)\cong PSL(2,q)$, or $G/Z(G)\cong PGL(2,q)$, and $[Z(G):O(G)]=2$ by Theorem \ref{princB}. In the former case, $G/Z(G)$ is simple, in the latter one $G/Z(G)$ has a unique proper normal subgroup, and this subgroup is isomorphic to $PSL(2,q)$. Therefore, $\ker(\Phi)=Z(G)$, and $\tilde{G}$  acts on $\Omega$ faithfully as either $PSL(2,q)$, or $PGL(2,q)$, on the set of their Sylow $p$-subgroups, that is, in their usual doubly transitive permutation representation.

Let $S_p$ be a Sylow $p$-subgroup of $G$, and look at the action of $S_p$ on $\Omega$. Obviously $S_p$ fixes the point $P\in \Omega$ represented by $S_p$ itself. From (i) of Result \ref{lem29dic2015}, no non-trivial element of $S_p$ fixes another point, that is, $S_p$ acts semiregularly on the points of $\Omega$ other than $P$. Actually, $S_p$ is regular on $\Omega\setminus \{P\}$, as $|S_p|=|\Omega|-1$.
Furthermore, from (i) of Result \ref{res74}, $S_p$ is a normal subgroup of the stabilizer $G_P$ of $P$ in $G$.

Therefore, the pair $(\Omega,G)$ is a group space satisfying the hypothesis of Result \ref{reshering} where the normal closure $S$ of $Q$ is the subgroup $p(G)$ of $G$ generated by all Sylow $p$-subgroups of $G$. In Result \ref{reshering}, there are several possibilities for $S$ and hence for our $p(G)$, but those consistent with Proposition \ref{3apr2016} are only four, namely $PSL(2,q)$, $SL(2,q)$, $PSU(3,q)$ with $q\equiv 1 \pmod 4$ and $SU(3,q)$ with $q\equiv 5 \pmod {12}$. This completes the proof of Theorem \ref{princC}.

\section{Examples for Theorem \ref{princC}}
\label{examples}
In Remark \ref{rem5jan2016} we have already pointed out that Hermitian curves and Roquette curves provide example for the cases $p(G)=PSU(3,q), \,PSL(2,q),\,SL(2,q)$.

We show that the  GK \textcolor{black}{curves} \cite{GK} have quotient curves that provide more examples for Theorem \ref{princC}.

Let $q$ be a power of $p$ such that $q\equiv 1 \pmod 4$.




From \cite{GK}, the GK curve $\cZ$ has genus $\gg(\cZ)=\ha\,(q^3+1)(q^2-2)+1$ which is even for $q\equiv 1 \pmod 4$, and $\aut(\cZ)$ has order $(q^3+1)q^3(q^2-1)(q^2-q+1)$. More precisely, $\aut(\cZ)\cong PSU(3,q)\times C_{q^2-q+1}$ for $q\not\equiv -1 \pmod 3$ while $\aut(\cZ)$ has an index $3$ subgroup $G\cong SU(3,q)\times C_{(q^2-q+1)/3}$ for $q\equiv -1 \pmod 3$; see \cite{GK}. In particular, $|\aut(\cZ)|\approx 8\gg(\cX)^2$. Furthermore, $\aut(\cZ)$ has exactly two short orbits. One of them has size $q^3+1$ and the kernel of the permutation representation of $\aut(\cZ)$ on it is the cyclic subgroup $C_{q^2-q+1}$ of $\aut(\cZ)$ while no non-trivial element of $C_{q^2-q+1}$
fixes a point in the other short orbit. The quotient curves of $\cZ$ have been under investigations; see \cite{fg,gqz}. Here we limit ourselves to a special case, also discussed in \cite{GK}.

For a divisor $u$ of $q^2-q+1$, the group $C_{q^2-q+1}$ contains a subgroup $C_u$ of  order $u$. Let $\cX_u=\cZ/C_u$ be the quotient
curve of $\cX$ with respect to $C_u$. Since $C_u$ is tame, the Hurwitz genus
formula gives
$$(q^3+1)(q^2-2)=2\gg(\cY)-2=u(2\gg(\cX_d)-2)+(d-1)(q^3+1),$$ whence
$$\gg(\cX_d)=\frac{1}{2} \left(\frac{(q^3+1)(q^2-u-1)}{u}+2\right).$$
In particular, $\gg(\cX_u)$ is even. Furthermore, since $C_u$ is a normal subgroup of $\aut(\cX)$, $\aut(\cX)/C_u$ is a subgroup $G_u$ of $\aut(\cX_u)$ such that
$$|G_u|=\frac{q^3(q^3+1)(q^2-1)(q^2-q+1)}{u}.$$
Comparison $|G_u|$ with $\gg(\cX_d)$ shows that if $u\geq 450$ then
$|\aut(\cX_u)|\geq |G_u|>900\gg(\cX_u)^2$.

If $q\not\equiv -1 \pmod 3$ then both $(PSU(3,q)\times C_u)/C_u\cong PSU(3,q)$ and $ C_{q^2-q+1}/C_u\cong C_{(q^2-q+1)/u}$ are subgroups of $G_u$, and their intersection is trivial. Therefore, they generate a group isomorphic to $PSU(3,q)\times C_{(q^2-q+1)/u}$ whose order is equal to that of $G_u$. Thus $G_d\cong PSU(3,q)\times C_{(q^2-q+1)/u}$. This gives an example, not isomorphic to the Hermitian curve, for the case $p(G)\cong PSU(3,q)$ with $q\equiv 1 \pmod 3$ in Theorem \ref{princC}.

If $q\equiv -1\pmod 3$, take the index $3$ subgroup $G\cong SU(3,q)\times C_{(q^2-q+1)/3}$ together with a subgroup $C_u$ of $C_{(q^2-q+1)/3}$. Then $u$ is not divisible by $3$ and we may replace $G_u=\aut(\cX)/C_u$ with $G/C_u$ in the above argument. This time we obtain $G/C_u\cong SU(3,q)\times C_{(q^2-q+1)/3u}$ which gives  an example for the case $p(G)\cong SU(3,q)$ with $q\equiv -1 \pmod 3$ in Theorem \ref{princC}.

\vspace{0,5cm}\noindent {\em Authors' addresses}:

\vspace{0.2 cm} \noindent Massimo GIULIETTI \\
Dipartimento di Matematica e Informatica
\\ Universit\`a degli Studi di Perugia \\ Via Vanvitelli, 1 \\
06123 Perugia
(Italy).\\
 E--mail: {\tt massimo.giulietti@unipg.it}

\vspace{0.2cm}\noindent G\'abor KORCHM\'AROS\\ Dipartimento di
Matematica\\ Universit\`a della Basilicata\\ Contrada Macchia
Romana\\ 85100 Potenza (Italy).\\E--mail: {\tt
gabor.korchmaros@unibas.it }

    \end{document}